\begin{document}

%%%%%%%%%%%%%%%%%%%%%%%%%%%%%%%%%%%%%%%%%%%%%%%%%%%%%%%%%%%%%%%%%%%%%%%%%%%

\title{{\Huge Processus al\'eatoires \\ 
et applications} \\
\bigskip
\bigskip
\bigskip
\bigskip
{\Large Master 2 Pro de Math\'ematiques}\\
\bigskip
\bigskip
{\Large Universit\'e d'Orl\'eans}}
\bigskip
\author{Nils Berglund}
\bigskip
\date{Version de Janvier 2014}   

\maketitle
\vfill
\newpage
\thispagestyle{empty}
\cleardoublepage
%\phantom{.}
\setcounter{page}{-1}
\thispagestyle{empty}
\tableofcontents
\newpage
\thispagestyle{empty}

%%%%%%%%%%%%%%%%%%%%%%%%%%%%%%%%%%%%%%%%%%%%%%%%%%%%%%%%%%%%%%%%%%%%%%%%%%%

\part{Cha\^\i nes de Markov}
\label{part_markov}

%%%%%%%%%%%%%%%%%%%%%%%%%%%%%%%%%%%%%%%%%%%%%%%%%%%%%%%%%%%%%%%%%%%%%%%%%%%

\chapter{Cha\^\i nes de Markov sur un ensemble fini}
\label{chap_fini}

%%%%%%%%%%%%%%%%%%%%%%%%%%%%%%%%%%%%%%%%%%%%%%%%%%%%%%%%%%%%%%%%%%%%%%%%%%%

\section{Exemples de \chaine s de Markov}
\label{sec_f_ex}

Les \chaine s de Markov sont intuitivement tr\`es simples \`a d\'efinir. 
Un syst\`eme peut admettre un certain nombre d'\'etats diff\'erents.
L'\'etat change au cours du temps discret. A chaque changement, le nouvel
\'etat est choisi avec une distribution de probabilit\'e fix\'ee au
pr\'ealable, et ne d\'ependant que de l'\'etat pr\'esent. 

\begin{example}[La souris dans le labyrinthe]
\label{ex_souris}
Une souris se d\'eplace dans le labyrinthe de la figure~\ref{fig_souris}. 
Initialement, elle se trouve dans la case 1. A chaque minute, elle change 
de case en choisissant, de mani\`ere \'equiprobable, l'une des cases
adjacentes. D\`es qu'elle atteint soit la nourriture (case 4),  soit sa
tani\`ere (case 5), elle y reste. 

\begin{figure}
 \centerline{
 \includegraphics*[clip=true,height=40mm]{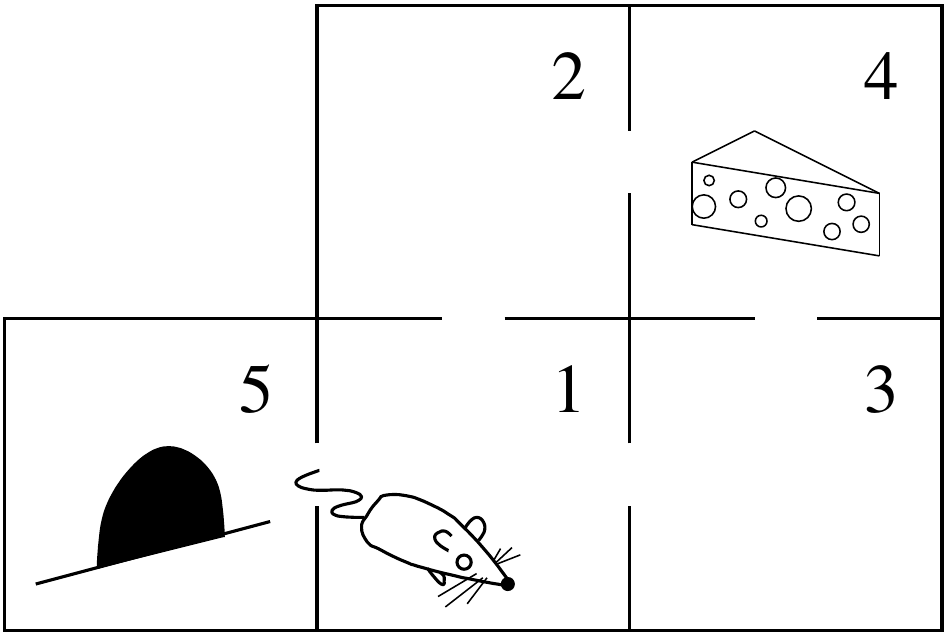}
 }
 \figtext{
 }
 \caption[]{Le labyrinthe dans lequel vit la souris.}
 \label{fig_souris}
\end{figure}

On se pose alors les questions suivantes :
\begin{enum}
\item	Avec quelle probabilit\'e la souris atteint-elle la nourriture
plut\^ot que sa tani\`ere? 
\item	Au bout de combien de temps atteint-elle sa tani\`ere ou la
nourriture? 
\end{enum}
On peut essayer de r\'epondre \`a ces questions en construisant un arbre 
d\'ecrivant les chemins possibles. Par exemple, il est clair que la souris
se retrouve dans sa tani\`ere au bout d'une minute avec probabilit\'e
$1/3$. Sinon, elle passe soit dans la case 2, soit dans la case 3, et
depuis chacune de ces cases elle a une chance sur deux de trouver la
nourriture. Il y a donc une probabilit\'e de $1/6$ que la souris trouve la
nourriture au bout de deux minutes. Dans les autres cas, elle se retrouve
dans la case de d\'epart, ce qui permet d'\'etablir une formule de
r\'ecurrence pour les probabilit\'es cherch\'ees. 

Cette mani\`ere de faire est toutefois assez compliqu\'ee, et devient
rapidement impossible \`a mettre en oeuvre quand la taille du labyrinthe
augmente. Dans la suite, nous allons d\'evelopper une m\'ethode plus
efficace pour r\'esoudre le probl\`eme, bas\'ee sur une
repr\'esentation matricielle.  
\end{example}

\begin{example}[Jeu de Pile ou Face]
\label{ex_PF}
Anatole et Barnab\'e jouent \`a la variante suivante de Pile ou Face. 
Ils jettent une pi\`ece de monnaie (parfaitement \'equilibr\'ee) de
mani\`ere r\'ep\'et\'ee. Anatole gagne d\`es que la pi\`ece tombe trois
fois de suite sur Face, alors que Barnab\'e gagne d\`es que la suite
Pile-Face-Pile appara\^\i t. 

\begin{figure}
%  \centerline{
%  \includegraphics*[clip=true,height=50mm]{figs/PFP}
%  }
%  \figtext{
%  \writefig       3.9     2.2     $1/2$
%  \writefig       5.5     2.5     $1/2$
%  \writefig       8.0     2.4     $1/2$
%  \writefig       3.2     4.9     $1/2$
%  \writefig       5.8     4.0     $1/2$
%  \writefig       9.2     4.0     $1/2$
%  \writefig       6.0     0.6     $1/2$
%  \writefig       9.2     0.6     $1/2$
%  }
\vspace{-10mm}
 \begin{center}
\begin{tikzpicture}[->,>=stealth',shorten >=2pt,shorten <=2pt,auto,node
distance=4.0cm, thick,main node/.style={circle,scale=0.7,minimum size=1.2cm,
fill=blue!20,draw,font=\sffamily\Large}
]

  \node[main node] (PP) {PP};
  \node[main node] (PF) [right of=PP] {PF};
  \node[main node]  (B) [right of=PF] {B};
  \node[main node] (FP) [below of=PP] {FP};
  \node[main node] (FF) [right of=FP] {FF};
  \node[main node]  (A) [right of=FF] {A};

  \path[every node/.style={font=\sffamily\small}]
    (PP) edge [loop above,left,distance=2cm,out=170,in=100] node
{$1/2$} (PP)
    (PP) edge [above] node {$1/2$} (PF)
    (PF) edge [above] node {$1/2$} (B)
    (FF) edge [above] node {$1/2$} (FP)
    (FF) edge [above] node {$1/2$} (A)
    (FP) edge [left] node {$1/2$} (PP)
    (FP) edge [above left] node {$1/2$} (PF)
    (PF) edge [left] node {$1/2$} (FF)
    ;
\end{tikzpicture}
\end{center}
 \caption[]{Graphe associ\'e au jeu de Pile ou Face. Chaque symbole de
deux lettres repr\'esente le r\'esultat des deux derniers jets de pi\`ece.
Anatole gagne si la pi\`ece tombe trois fois de suite sur Face, Barnab\'e
gagne si la pi\`ece tombe sur Pile-Face-Pile.}
 \label{fig_PFP}
\end{figure}
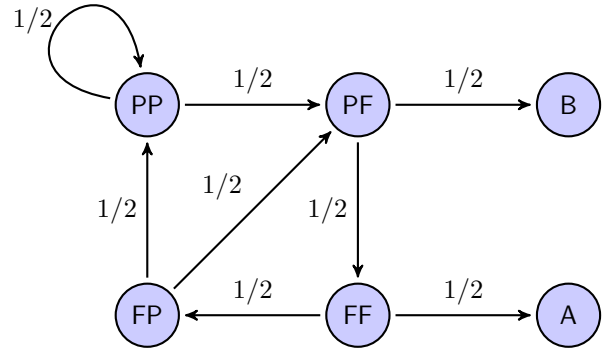

On se pose les questions suivantes :
\begin{enum}
\item 	Avec quelle probabilit\'e est-ce Anatole qui gagne le jeu? 
\item	Au bout de combien de jets de la pi\`ece l'un des deux joueurs
gagne-t-il? 
\end{enum}
La situation est en fait assez semblable \`a celle de l'exemple
pr\'ec\'edent. Un peu de r\'eflexion montre que si personne n'a gagn\'e au
bout de $n$ jets de la pi\`ece, la probabilit\'e que l'un des deux joueurs
gagne au coup suivant ne d\'epend que des deux derniers r\'esultats. On
peut alors d\'ecrire le jeu par une \chaine\ de Markov sur l'ensemble 
\begin{equation}
\cX = \set{\text{PP}, \text{PF} ,\text{FP} ,\text{FF} ,\text{A
gagne}, \text{B gagne}}\;, 
\end{equation}
o\`u par exemple PP signifie que la pi\`ece est
tomb\'ee sur Pile lors des deux derniers jets. On d\'etermine alors les
probabilit\'es de transition entre les cinq \'etats, et on retrouve un
probl\`eme semblable \`a celui de la souris. 
\end{example}

\begin{example}[Mod\`ele d'Ehrenfest]
\label{ex_Ehrenfest}
C'est un syst\`eme motiv\'e par la physique, qui a \'et\'e introduit pour
mod\'eliser de mani\`ere simple la r\'epartition d'un gaz entre deux
r\'ecipients.  
$N$ boules, num\'erot\'ees de $1$ \`a $N$, sont r\'eparties sur deux
urnes. De mani\`ere r\'ep\'et\'ee, on tire au hasard, de fa\c con
\'equiprobable, un num\'ero entre $1$ et $N$, et on change d'urne la boule
correspondante. 

On voudrait savoir comment ce syst\`eme se comporte asymptotiquement en
temps : 
\begin{enum}
\item	Est-ce que la loi du nombre de boules dans chaque urne approche
une loi limite?
\item	Quelle est cette loi? 
\item	Avec quelle fr\'equence toutes les
boules se trouvent-elles toutes dans la m\^eme urne? 
\end{enum}
On peut \`a nouveau d\'ecrire le syst\`eme par une \chaine\ de Markov,
cette fois sur l'espace des \'etats $\cX=\set{0,1,\dots,N}$, o\`u le
num\'ero de l'\'etat correspond au nombre de boules dans l'urne de gauche,
par exemple. 
\end{example}

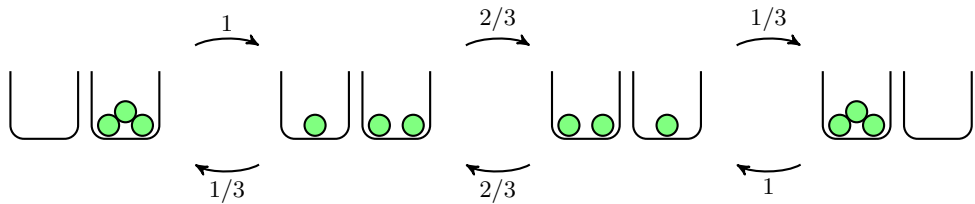
\begin{figure}
%  \centerline{
%  \includegraphics*[clip=true,width=140mm]{figs/ehrenfest}
%  }
%  \figtext{
%  \writefig       3.3     1.4     $1$
%  \writefig       7.0     1.4     $2/3$
%  \writefig      11.0     1.4     $1/3$
%  \writefig       3.2     0.3     $1/3$
%  \writefig       7.2     0.3     $2/3$
%  \writefig      11.3     0.3     $1$
%  }
\vspace{-3mm}
\begin{center}
\begin{tikzpicture}[->,>=stealth',auto,scale=0.9,node
distance=3.0cm, thick,main node/.style={circle,scale=0.7,minimum size=0.4cm,
fill=green!50,draw,font=\sffamily}]
  
  \pos{0}{0} \urntikz
  \pos{1.2}{0} \urntikz
  
  \node[main node] at(0.35,0.2) {};
  \node[main node] at(0.85,0.2) {};
  \node[main node] at(0.6,0.4) {};
  
  \pos{4}{0} \urntikz
  \pos{5.2}{0} \urntikz

  \node[main node] at(4.35,0.2) {};
  \node[main node] at(4.85,0.2) {};
  \node[main node] at(3.4,0.2) {};

  \pos{8}{0} \urntikz
  \pos{9.2}{0} \urntikz

  \node[main node] at(7.15,0.2) {};
  \node[main node] at(7.65,0.2) {};
  \node[main node] at(8.6,0.2) {};

  \pos{12}{0} \urntikz
  \pos{13.2}{0} \urntikz

  \node[main node] at(11.15,0.2) {};
  \node[main node] at(11.65,0.2) {};
  \node[main node] at(11.4,0.4) {};
  
  \node[minimum size=2.2cm] (0) at (0.1,0.5) {};
  \node[minimum size=2.2cm] (1) at (4.1,0.5) {};
  \node[minimum size=2.2cm] (2) at (8.1,0.5) {};
  \node[minimum size=2.2cm] (3) at (12.1,0.5) {};
  
  \path[shorten >=.3cm,shorten <=.3cm,every
        node/.style={font=\sffamily\footnotesize}]
    (0) edge [bend left,above] node {$1$} (1)
    (1) edge [bend left,above] node {$2/3$} (2)
    (2) edge [bend left,above] node {$1/3$} (3)
    (3) edge [bend left,below] node {$1$} (2)
    (2) edge [bend left,below] node {$2/3$} (1)
    (1) edge [bend left,below] node {$1/3$} (0)
    ;
\end{tikzpicture}
\end{center}
\vspace{-7mm}
 \caption[]{Le mod\`ele d'urnes d'Ehrenfest, dans le cas de $3$ boules.}
 \label{fig_ehrenfest}
\end{figure}

\begin{example}[Texte al\'eatoires]
\label{ex_texte}
Voici trois \lq\lq textes\rq\rq\ g\'en\'er\'es de mani\`ere al\'eatoire :

\begin{enumerate}
\item[A.] 
{\sf
YxUV,luUqHCLvE?,MRiKaoiWjyhg nEYKrMFD!rUFUy.qvW;e:FflN.udbBdo!, \\
ZpGwTEOFcA;;RrSMvPjA'Xtn.vP?JNZA;xWP, Cm?;i'MzLqVsAnlqHyk,ghDT  \\
:PwSwrnJojRhVjSe?dFkoVRN!MTfiFeemBXITdj m.h d'ea;Jkjx,XvHIBPfFT \\
s I'SLcSX;'X!S, ODjX.eMoLnQttneLnNE!qGRgCJ:BuYAauJXoOCCsQkLcyPO \\
MulKLRtSm;PNpFfp'PfgvIJNrUr t l aXtlA?;TPhPxU:,ZmVGr,,'DIjqZDBY \\
DrkPRiKDYRknDhivt;, LYXDuxNKpjegMvrtfz:JpNTDj'LFmHzXxotRM u.iya \\
UUrgZRcA QmCZffwsNWhddBUPAhJIFJvs.CkKFLJoXef;kCnXrv'uWNcpULYsnl \\
Kg OURmysAnxFjHawwsSpM H;PWPsMaFYLMFyvRWOjbdPlLQIaaspNZkuO'Ns.l \\
jEXO,lxQ'GS;n;H:DH:VWJN :t'JMTUVpKCkVZ'NyKJMGiIbQFXEgDEcWxMBiyo \\
ybRIWIAC deMJnnL;SBAZ?:.UuGnC:B.!lBUT,pT?tyHHLlCvN, mKZgwlMJOJd \\
HHobua;KU.;kADVM?jr'v.SCq:hZLR;lqkmLkhn:ajhBM,gKexDAro,HlczWTv \\
cFmNPt.MudUWPO, sTrWlJdgjoiJd.:d;CpJkJCW;FIRnpMGa;umFysOMAqQtmT \\
pPaYZKtOFYppeE.KFX?SuvcbaDrQ XECelD;cfoQKf?'jCTUaISS;fV:gqoWfSq \\
k:Tf!YuPBANtKhewiNg'ImOFs:UhcExmBjsAaMhBf UVP, 'dcFk;gxJMQGyXI; \\
nVwwfWxS:YXQMELEIObTJiilUYSlOsg.gCqlrN:nEU:irHM'nOLXWUbJLTU re' \\
kk vAwMgt'KgWSxwxqJe,z'OBCrnoIshSCDlZirla,rWNPkc?UgZm GOBX.QylY \\
jOtuF
}

\item[B.]	
{\sf
nsunragetnetelpnlac.  pieln tJmends d e.imnqu  caa  aneezsconns re.tc oml d
 e  c, paeisfuaul irt ssna l df.ieulat a ese t hre edn ro  m eeel
slsplotasstp etuoMeiiseeaenemzeaeuqpeer  enuoco  sfehnnir p ts 'mpisu qrd
iraLp nFetesa,opQeey rieeaduset MuuisecG il e m  ru daeiafasousfnircot i
eeedracev ever.nsn iaeulu!,mtel lpa rdbjdide  tolr'murunlr bteaaua
ieasilureseuavrmoce ntvqm qnurnaunsa.mraayVarinanr  eumsu cnponf ciuo
.pssre  elreeY snrrq aani psu oqoddaiaaomrssloe'avia,loei va
eroltrsurdeduuoe ffusir 'th'niIt has,slluoooe tee  ?eoxaea slsii i u
edtvsear e,Mesatnd o o rvdocaeagiua  apugiqn rclt  smtee.te, gceade etsn e
v in eag ent so  ra te,  oi seGndd  i eeet!dii e  ese nanu d sp ul afeen
aqelonens ssisaaoe cs     eectadegotuudlru  i  'c, uuuuts 'tt , dir
atermdmuciqedn  esovsioieieerxdroie mqso,es rrvteen,r dtei xcalrionuaae e
vtmplsz miuqa   u aboir br gmcdexptedn pEua't vm vnic eeren ereaa,eegeta u
rss nlmxomas ea nsbnt s,eEpeteae teiasbo cd ee tu em ue quee en, sd
eeneepeot 
}

\item[C.]
{\sf
cesalu'act, bouleuivoie melarous die ndant leuvoiblue poit pesois
deuntaciroverchu llie e lle s r lerchar, laisueuayaissabes vet s cuetr i
as, rdetite se d'iretie, de.. nendoules, le pablur e d ! copomouns ppait
limmix a r aux urars laie Le r lercret ce c. n'are four nsirepapole pa vr
s, nte le efit. itesit, le faun e ju estatusuet usoin prcilaisanonnout ssss
l tosesace cole sientt, dent pontrtires. e, l mentoufssss chat Laneus c
Chontrouc Ce e. Et deses j'ecci uleus mmon s mauit paga lanse l cont
ciquner e c Cha s l'a Jes des s'erattrlunt es de sacouen erends. ve e quns
som'a aisajouraite eux lala pour ! a levionible plaint n ss, danetrc ponce
con du lez, l danoit, dirvecs'u ce ga vesai : chleme eesanl Pa chiontotes
anent fomberie vaud'untitez e esonsan t a ! bondesal'is Ilaies, vapa e !
Lers jestsiee celesu unallas, t. ces. ta ce aielironi mmmileue cecoupe et
dennt vanen A la ajole quieet, scemmu tomtemotit me aisontouimmet Le s
Prage ges peavoneuse ! blec douffomurrd ntis.. rur, ns ablain i pouilait
lertoipr ape. leus icoitth me e e, poiroia s. ! atuepout somise e la as
}

\end{enumerate}

Il est clair qu'aucun de ces textes n'a de signification. Toutefois,
le texte B.\ semble moins arbitraire que le texte A., et C.\ para\^\i
t moins \'eloign\'e d'un texte fran\c cais que B. Il suffit pour cela
d'essayer de lire les textes \`a haute voix. 

Voici comment ces textes ont \'et\'e g\'en\'er\'es. Dans les trois cas, on
utilise le m\^eme alphabet de 60 lettres (les 26 minuscules et majuscules, 
quelques signes de ponctuation et l'espace). 
\begin{enum}
\item	Pour le premier texte, on a simplement tir\'e au hasard, de
mani\`ere ind\'ependante et avec la loi uniforme, des lettres de
l'alphabet. 

\item	Pour le second texte, on a tir\'e les lettres de mani\`ere
ind\'ependante, mais pas avec la loi uniforme. Les probabilit\'es des
diff\'erentes lettres correspondent aux fr\'equences de ces lettres dans
un texte de r\'ef\'erence fran\c cais (en l’occurrence, un extrait du {\sl
Colonel Chabert}\/ de Balzac). Les fr\'equences des diff\'erentes
lettres du texte al\'eatoire sont donc plus naturelles, par exemple la
lettre {\sf e} appara\^\i t plus fr\'equemment (dans $13\%$ des cas) que la
lettre {\sf z} ($0.2\%$). 

\item	Pour le dernier texte, enfin, les lettres n'ont pas \'et\'e
tir\'ees de mani\`ere ind\'ependante, mais d\'ependant de la lettre
pr\'ec\'edente. Dans le m\^eme texte de r\'ef\'erence que
pr\'e\-c\'edemment, on a d\'etermin\'e avec quelle fr\'equence la lettre
{\sf a} est suivie de {\sf a} (jamais), {\sf b} (dans $3\%$ des cas), et
ainsi de suite, et de m\^eme pour toutes les autres lettres. Ces
fr\'equences ont ensuite \'et\'e
choisies comme probabilit\'es de transition lors de la g\'en\'eration du
texte. 
\end{enum}

Ce proc\'ed\'e peut facilement \^etre am\'elior\'e, par exemple en faisant
d\'ependre chaque nouvelle lettre de plusieurs lettres pr\'ec\'edentes. 
Mais m\^eme avec une seule lettre pr\'ec\'edente, il est remarquable que 
les textes engendr\'es permettent assez facilement de reconna\^\i tre la
langue du texte de r\'ef\'erence, comme en t\'emoignent ces deux exemples:

\begin{enumerate}
\item[D.]
{\sf 
deser Eld s at heve tee opears s cof shan; os wikey coure tstheevons irads;
Uneer I tomul moove t nendoot Heilotetateloreagis his ud ang l ars thine
br, we tinond end cksile: hersest tear, Sove Whey tht in t ce tloour ld t
as my aruswend Ne t nere es alte s ubrk, t r s; penchike sowo
Spotoucthistey psushen, ron icoowe l Whese's oft Aneds t aneiksanging t
ungl o whommade bome, ghe; s, ne. torththilinen's, peny. d llloine's anets
but whsto a It hoo tspinds l nafr Aneve powit tof f I afatichif m as tres,
ime h but a wrove Les des wined orr; t he ff teas be hende pith hty ll ven
bube. g Bube d hitorend tr, Mand nd nklichis okers r whindandy, Sovede brk
f Wheye o edsucoure, thatovigh ld Annaix; an eer, andst Sowery looublyereis
isthalle Base whon ey h herotan wict of les, h tou dends m'dys h Wh
on'swerossictendoro whaloclocotolfrrovatel aled ouph rtrsspok,
ear'sustithimiovelime From alshis ffad, Spake's wen ee: hoves aloorth
erthis n t Spagovekl stat hetubr tes, Thuthiss oud s hind t s potrearall's
ts dofe 
}\footnote{Texte de r\'ef\'erence: Quelques sonnets de Shakespeare.}

\item[E.]
{\sf
dendewoch wich iere Daf' lacht zuerckrech, st, Gebr d, Bes.
jenditerullacht, keie Un! etot' in To sendenus scht, ubteinraben Qun Jue
die m arun dilesch d e Denuherelererufein ien. seurdan s ire Zein. es min?
dest, in. maur as s san Gedein it Ziend en desckruschn kt vontimelan. in,
No Wimmmschrstich vom delst, esichm ispr jencht sch Nende Buchichtannnlin
Sphrr s Klldiche dichwieichst. ser Bollesilenztoprs uferm e mierchlls aner,
d Spph! wuck e ing Erenich n sach Men. Sin s Gllaser zege schteun d,
Gehrstren ite Spe Kun h Umischr Ihngertt, ms ie. es, bs de! ieichtt f;
Ginns Ihe d aftalt veine im t'seir; He Zicknerssolanust, fllll. mmichnennd
wigeirdie h Zierewithennd, wast naun Wag, autonbe Wehn eietichank We
dessonindeuchein ltichlich bsch n, Ichritienstam Lich uchodigem Din eieiers
die it f tlo nensseicichenko Mechtarzaunuchrtzubuch aldert; l von. fteschan
nn ih geier Schich Geitelten Deichst Fager Zule fer in vischtrn; Schtih Un
Hit ach, dit? at ichuch Eihra! Hich g ure vollle Est unvochtelirn An 
}\footnote{Texte de r\'ef\'erence: Un extrait du {\sl Faust}\/ de Goethe.}
\end{enumerate}

Cela donne, inversement, une m\'ethode assez \'economique permettant \`a
une machine de d\'eterminer automatiquement dans quelle langue un texte est
\'ecrit.  

\end{example}

\begin{example}[Le mod\`ele d'Ising]
\label{ex_Ising}
Comme le mod\`ele d'Ehrenfest, ce mod\`ele vient de la physique, plus
particuli\`erement de la physique statistique. Il est sens\'e d\'ecrire un
ferroaimant, qui a la propri\'et\'e de s'aimanter spontan\'ement \`a
temp\'erature suffisamment basse. On consid\`ere une partie (connexe)
$\Lambda$ du r\'eseau $\Z^d$ ($d$ \'etant la dimension du syst\`eme, par
exemple $3$), contenant $N$ sites. A chaque site, on attache un \lq\lq
spin\rq\rq\ (une sorte d'aimant \'el\'ementaire), prenant valeurs $+1$ ou
$-1$. Un choix d'orientations de tous les spins s'appelle une
configuration, c'est donc un \'el\'ement de l'espace de configuration
$\cX=\set{-1,1}^\Lambda$. A une configuration $\sigma$, on associe
l'\'energie 
\begin{equation}
\label{intro1}
H(\sigma) = -\sum_{<i,j>\in\Lambda} \sigma_i\sigma_j 
- h \sum_{i\in\Lambda}\sigma_i\;.
\end{equation}
Ici, la notation $<i,j>$ indique que l'on ne somme que sur les paires de
spins plus proches voisins du r\'eseau, c'est--\`a--dire \`a une distance
$1$. Le premier terme est donc d'autant plus grand qu'il y a de spins
voisins diff\'erents. Le second terme d\'ecrit l'interaction avec un champ
magn\'etique ext\'erieur $h$. Il est d'autant plus grand qu'il y a de
spins oppos\'es au champ magn\'etique. 

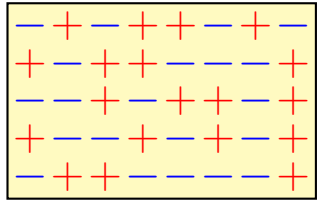
\begin{figure}
%  \centerline{
%  \includegraphics*[clip=true,height=30mm]{figs/Ising}
%  }
%  \figtext{
%  }
 \begin{center}
\begin{tikzpicture}[thick,auto,node distance=0.5cm,every
node/.style={font=\sffamily\LARGE}]

  \draw [fill=yellow!30] (-0.3,-0.3) rectangle (3.8,2.3);

  \node[blue] (00) {$-$};
  \node[red]  (10) [right of=00] {$+$};
  \node[red]  (20) [right of=10] {$+$};
  \node[blue] (30) [right of=20] {$-$};
  \node[blue] (40) [right of=30] {$-$};
  \node[blue] (50) [right of=40] {$-$};
  \node[blue] (60) [right of=50] {$-$};
  \node[red]  (70) [right of=60] {$+$};

  \node[red]  (01) [above of=00] {$+$};
  \node[blue] (11) [right of=01] {$-$};
  \node[blue] (21) [right of=11] {$-$};
  \node[red]  (31) [right of=21] {$+$};
  \node[blue] (41) [right of=31] {$-$};
  \node[red]  (51) [right of=41] {$+$};
  \node[blue] (61) [right of=51] {$-$};
  \node[red]  (71) [right of=61] {$+$};

  \node[blue] (02) [above of=01] {$-$};
  \node[blue] (12) [right of=02] {$-$};
  \node[red]  (22) [right of=12] {$+$};
  \node[blue] (32) [right of=22] {$-$};
  \node[red]  (42) [right of=32] {$+$};
  \node[red]  (52) [right of=42] {$+$};
  \node[blue] (62) [right of=52] {$-$};
  \node[red]  (72) [right of=62] {$+$};

  \node[red]  (03) [above of=02] {$+$};
  \node[blue] (13) [right of=03] {$-$};
  \node[red]  (23) [right of=13] {$+$};
  \node[red]  (33) [right of=23] {$+$};
  \node[blue] (43) [right of=33] {$-$};
  \node[blue] (53) [right of=43] {$-$};
  \node[blue] (63) [right of=53] {$-$};
  \node[red]  (73) [right of=63] {$+$};

  \node[blue] (04) [above of=03] {$-$};
  \node[red]  (14) [right of=04] {$+$};
  \node[blue] (24) [right of=14] {$-$};
  \node[red]  (34) [right of=24] {$+$};
  \node[red]  (44) [right of=34] {$+$};
  \node[blue] (54) [right of=44] {$-$};
  \node[red]  (64) [right of=54] {$+$};
  \node[blue] (74) [right of=64] {$-$};

  \end{tikzpicture}
\end{center}
\vspace{-5mm}
 \caption[]{Une configuration du mod\`ele d'Ising en dimension $d=2$.}
 \label{fig_ising}
\end{figure}

Un principe de base de la physique statistique est que si un syst\`eme est
en \'equilibre thermique \`a temp\'erature $T$, alors il se trouve dans la
configuration $\sigma$ avec probabilit\'e proportionnelle \`a $\e^{-\beta
H(\sigma)}$ (mesure de Gibbs), o\`u $\beta=1/T$. A temp\'erature faible, le
syst\`eme privil\'egie les configurations de basse \'energie, alors que
lorsque la temp\'erature tend vers l'infini, toutes les configurations
deviennent \'equiprobables. 

L'aimantation totale de l'\'echantillon est donn\'ee par la variable
al\'eatoire 
\begin{equation}
\label{intro2}
m(\sigma) = \sum_{i\in\Lambda} \sigma_i\;,
\end{equation}
et son esp\'erance vaut 
\begin{equation}
\label{intro3}
\expec m = 
\frac{\sum_{\sigma\in\cX} m(\sigma)
\e^{-\beta H(\sigma)}}
{\sum_{\sigma\in\cX}\e^{-\beta
H(\sigma)}}\;.
\end{equation}
L'int\'er\^et du mod\`ele d'Ising est qu'on peut montrer l'existence d'une
transition de phase, en dimension $d$ sup\'erieure ou \'egale \`a $2$.
Dans ce cas il existe une temp\'erature critique en-dessous de laquelle
l'aimantation varie de mani\`ere discontinue en fonction de $h$ dans la
limite $N\to\infty$. Pour des temp\'eratures sup\'erieures \`a la valeur
critique, l'aimantation est continue en $h$. 

Si l'on veut d\'eterminer num\'eriquement l'aimantation, il suffit en
principe de calculer la somme~\eqref{intro3}. Toutefois, cette somme
comprend $2^N$ termes, ce qui cro\^it tr\`es rapidement avec la taille du
syst\`eme. Par exemple pour un cube de $10\times10\times10$ spins, le
nombre de termes vaut $2^{1000}$, ce qui est de l'ordre de
$10^{300}$. Un ordinateur calculant $10^{10}$ termes par seconde
mettrait beaucoup plus que l'\^age de l'univers \`a calculer la somme. 

Une alternative est d'utiliser un algorithme dit de Metropolis. Au lieu
de parcourir toutes les configurations possibles de $\cX$, on n'en
parcourt qu'un nombre limit\'e, de mani\`ere bien choisie, \`a l'aide
d'une \chaine\ de Markov. Pour cela, on part dans une configuration
initiale $\sigma$, puis on transforme cette configuration en retournant un
spin choisi au hasard. Plus pr\'ecis\'ement, on n'op\`ere cette
transition qu'avec une certaine probabilit\'e, qui d\'epend de la
diff\'erence d'\'energie entre les configurations de d\'epart et
d'arriv\'ee. L'id\'ee est que si les probabilit\'es de transition sont
bien choisies, alors la \chaine\ de Markov va \'echantillonner l'espace de
configuration de telle mani\`ere qu'il suffira de lui faire parcourir une
petite fraction de toutes les configurations possibles pour obtenir une
bonne approximation de l'aimantation $\expec{m}$. Les questions sont alors 
\begin{enum}
\item	De quelle mani\`ere choisir ces probabilit\'es de transition?
\item	Combien de pas faut-il effectuer pour approcher $\expec{m}$ avec
une pr\'ecision donn\'ee?
\end{enum}
\end{example}

\begin{example}[Le probl\`eme du voyageur de commerce]
\label{ex_voyageur}
C'est un exemple classique de probl\`eme d'optimisation. Un voyageur de
commerce doit visiter $N$ villes, en revenant \`a son point de d\'epart
apr\`es \^etre pass\'e exactement une fois par chaque ville. Comment
choisir l'ordre des villes de mani\`ere \`a minimiser la longueur du
circuit? 

La difficult\'e est que le nombre de circuits possibles cro\^\i t
extr\^emement vite avec le nombre $N$ de villes, beaucoup plus vite
qu'exponentiellement. En effet, il y a $N!$ permutations possibles de
l'ordre des villes. Si l'on ne tient compte ni de la ville de d\'epart, ni
du sens de parcours, il reste $(N-1)!/2$ circuits possibles. Calculer les
longueurs de tous ces circuits devient irr\'ealisable d\`es que $N$
d\'epasse $20$ environ. 

\begin{figure}
 \centerline{
 \includegraphics*[clip=true,height=80mm]{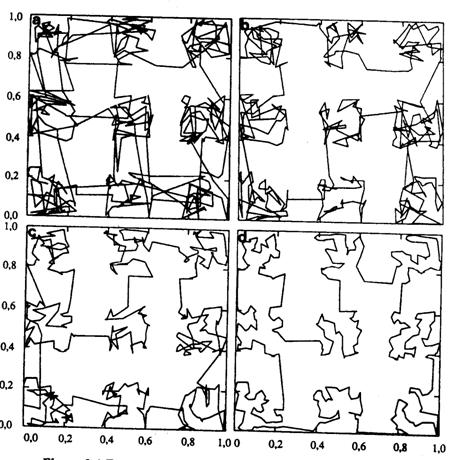}
 }
 \figtext{
 }
 \caption[]{Approximations successives de la solution du probl\`eme du
 voyageur de commerce par la m\'ethode du recuit simul\'e (tir\'e de
 l'article original : S. Kirkpatrick, C. Gelatt et M. Vecchi,
 {\it Optimization by Simulated Annealing}, Science, 220 (1983), pp.
 671--680, copyright 1983 by A.A.A.S.)).}
 \label{fig_voyageur}
\end{figure}

On peut tenter de trouver une solution approch\'ee par approximations
successives. Partant d'un circuit initial, on le modifie l\'eg\`erement,
par exemple en \'echangeant deux villes. Si cette modification raccourcit
la longueur du circuit, on continue avec le circuit modifi\'e. Si elle le
rallonge, par contre, on rejette la modification et on en essaie une
autre. 

Le probl\`eme avec cette m\'ethode est que le syst\`eme peut se retrouver
pi\'eg\'e dans un minimum local, qui est tr\`es diff\'erent du minimum
global recherch\'e de la longueur. On peut en effet se retrouver \lq\lq
bloqu\'e\rq\rq\ dans un circuit plus court que tous ses voisins (obtenus
en permutant deux villes), mais une permutation de plus de deux villes
pourrait raccourcir le circuit. 

Une variante plus efficace de cette m\'ethode est celle du \defwd{recuit
simul\'e}\/. Dans ce cas, on ne rejette pas toutes les modifications qui
allongent le circuit, mais on les accepte avec une certaine probabilit\'e,
qui d\'ecro\^\i t avec l'allongement. De cette mani\`ere, le processus peut
s'\'echapper du minimum local et a une chance de trouver un minimum plus
profond. La terminologie vient de la m\'etallurgie : Dans un alliage, les
atomes des diff\'erents m\'etaux sont dispos\'es de mani\`ere plus ou
moins r\'eguli\`ere, mais avec des imperfections. Moins il y a
d'imperfections, plus l'alliage est solide. En r\'echauffant et
refroidissant plusieurs fois l'alliage, on donne aux atomes la
possibilit\'e de se r\'earranger de mani\`ere plus r\'eguli\`ere,
c'est-\`a-dire en diminuant l'\'energie potentielle. 

A nouveau, on se pose les questions suivantes :
\begin{enum}
\item 	Comment choisir les probabilit\'es d'acceptation des
modifications? 
\item	Comment la probabilit\'e de s'approcher \`a une certaine distance
du minimum cherch\'e d\'epend-elle de la longueur de la simulation?
\end{enum}
\end{example}

%%%%%%%%%%%%%%%%%%%%%%%%%%%%%%%%%%%%%%%%%%%%%%%%%%%%%%%%%%%%%%%%%%%%%%%%%%%

\section{D\'efinitions}
\label{sec_f_def}

\begin{definition}
\label{def_markov1}
Soit $N$ un entier strictement positif. Une matrice $P$ de taille $N\times
N$ est une \defwd{matrice stochastique} si ses \'el\'ements de matrice
$p_{ij}=(P)_{ij}$ satisfont 
\begin{equation}
\label{fdef1}
0 \leqs p_{ij} \leqs 1 \qquad \forall i,j\; 
\end{equation}
et 
\begin{equation}
\label{fdef2}
\sum_{j=1}^N p_{ij} = 1 \qquad \forall i\;.
\end{equation}
\end{definition}

On v\'erifie facilement que si $P$ et $Q$ sont deux matrices stochastiques,
alors le produit $PQ$ est encore une matrice stochastique. En particulier, 
toutes les puissances $P^n$ de $P$ sont encore des matrices
stochastiques. 
Les \'el\'ements $p_{ij}$ vont d\'efinir les probabilit\'es de transition
de la \chaine\ de Markov de l'\'etat $i$ vers l'\'etat $j$. 

\begin{definition}
\label{def_markov2}
Soit $\cX=\set{1,\dots,N}$ un ensemble fini et $P$ une matrice
stochastique de taille $N$. Une \defwd{\chaine\ de Markov} sur $\cX$ de
matrice de transition $P$ est une suite $(X_0,X_1,X_2,\dots)$ de variables
al\'eatoires \`a valeurs dans $\cX$, satisfaisant la \defwd{propri\'et\'e
de Markov}
\begin{align}
\nonumber
\bigpcond{X_n=j}{X_{n-1}=i_{n-1},X_{n-2}=i_{n-2},\dots,X_0=i_0} 
&= \bigpcond{X_n=j}{X_{n-1}=i_{n-1}} \\
&= p_{i_{n-1}j}
\label{fdef3}
\end{align}
pour tout temps $n\geqs 1$ et tout choix $(i_0,i_1,i_{n-1},j)$
d'\'el\'ements de $\cX$. La loi de $X_0$, que nous noterons $\nu$, est
appel\'ee la \defwd{distribution initiale}\/ de la \chaine.
\end{definition}

Pour s'assurer que cette d\'efinition fait bien sens, il faut
v\'erifier que les $X_n$ construits comme ci-dessus sont bien des
variables al\'eatoires, c'est-\`a-dire que la somme sur tous les $j\in\cX$
des probabilit\'es $\prob{X_n=j}$ vaut $1$. Ceci est imm\'ediat par
r\'ecurrence sur $n$. Si les $n$ variables $X_0$, \dots, $X_{n-1}$ sont
des variables al\'eatoires, alors on a :
\begin{align}
\nonumber
\sum_{j\in\cX} \prob{X_n=j} 
&= \sum_{j\in\cX} \prob{X_n=j,X_{n-1}\in\cX,\dots,X_0\in\cX} \\
\nonumber
&= \sum_{j\in\cX} \sum_{i_{n-1}\in\cX} \dots \sum_{i_0\in\cX} 
\prob{X_n=j,X_{n-1}=i_{n-1},\dots,X_0=i_0} \\ 
\nonumber
&= \sum_{i_{n-1}\in\cX} \underbrace{\sum_{j\in\cX} p_{i_{n-1}j}}_{=1}
\underbrace{\sum_{i_{n-2}\in\cX} \dots \sum_{i_0\in\cX} 
\prob{X_{n-1}=i_{n-1},\dots,X_0=i_0}}_{=\prob{X_{n-1}=i_{n-1},
X_{n-2}\in\cX, \dots , X_0\in\cX
}=\prob{X_{n-1}=i_{n-1}}} \\
&= \prob{X_{n-1}\in\cX} = 1\;.
\label{fdef4} 
\end{align}

\begin{notation}
\label{not_markov1}
Si la loi initiale $\nu$ est fix\'ee, nous noterons souvent $\fP_\nu$
la loi de la \chaine\ de Markov associ\'ee. Si $\nu$ est concentr\'ee en
un seul site $i$ ($\nu=\delta_i$), on notera la loi de la \chaine\ $\fP_i$
au lieu de $\fP_{\delta_i}$. Enfin nous \'ecrirons parfois $X_{[n,m]}$ au
lieu de $(X_n,X_{n+1},\dots,X_m)$, et $\prob{X_{[n,m]}=i_{[n,m]}}$ au lieu
de $\prob{X_n=i_n,X_{n+1}=i_{n+1},\dots,X_m=i_m}$. $X_{[n,m]}$ est appel\'e
la \emph{trajectoire}\/ de la \chaine\ entre les temps $n$ et $m$. 
\end{notation}

\begin{example}
Dans le cas de l'exemple~\ref{ex_souris} de la souris dans le labyrinthe, 
la matrice de transition est donn\'ee par 
\begin{equation}
\label{fdef5}
P = 
\begin{pmatrix}
0 & 1/3 & 1/3 & 0 & 1/3 \\
1/2 & 0 & 0 & 1/2 & 0 \\
1/2 & 0 & 0 & 1/2 & 0 \\
0 & 0 & 0 & 1 & 0 \\
0 & 0 & 0 & 0 & 1
\end{pmatrix}\;.
\end{equation} 
Dans le cas de l'exemple~\ref{ex_PF} du jeu de Pile ou Face, la matrice de
transition vaut
\begin{equation}
\label{fdef6}
P = 
\begin{pmatrix}
1/2 & 1/2 & 0 & 0 & 0 & 0 \\
0 & 0 & 0 & 1/2 & 0 & 1/2 \\
1/2 & 1/2 & 0 & 0 & 0 & 0 \\
0 & 0 & 1/2 & 0 & 1/2 & 0 \\
0 & 0 & 0 & 0 & 1 & 0 \\
0 & 0 & 0 & 0 & 0 & 1
\end{pmatrix}\;.
\end{equation}
\end{example}

Voici tout d'abord une caract\'erisation d'une \chaine\ de Markov en
termes de ses trajectoires. 

\begin{theorem}
\label{thm_fdef1}
Soit $\set{X_n}_{n\in\N}$ une suite de variables al\'eatoires \`a valeurs
dans $\cX$, $\nu$ une distribution de probabilit\'e sur $\cX$, et $P$ une
matrice stochastique. 
Alors $\set{X_n}_{n\in\N}$ est une \chaine\ de Markov de matrice de
transition $P$ et de distribution initiale $\nu$ si et seulement si pour
tout $n\geqs0$, et pour tout choix de $i_0,i_1,\dots,i_n$ d'\'el\'ements
de $\cX$, on a 
\begin{equation}
\label{fdef7}
\bigprob{X_{[0,n]}=i_{[0,n]}} = \nu_{i_0}p_{i_0i_1}p_{i_1i_2}\dots
p_{i_{n-1}i_n}\;.
\end{equation}
\end{theorem}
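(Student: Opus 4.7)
Mon plan est de traiter s\'epar\'ement les deux implications, en utilisant la formule des probabilit\'es conditionnelles pour d\'ecomposer la probabilit\'e d'une trajectoire en un produit.

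\textbf{Sens direct.} Supposons que $\{X_n\}_{n\in\N}$ soit une \chaine\ de Markov de matrice $P$ et de loi initiale $\nu$. Je proc\'ederais par r\'ecurrence sur $n$. Pour $n=0$, la formule~\eqref{fdef7} se r\'eduit \`a $\prob{X_0 = i_0} = \nu_{i_0}$, ce qui est exactement la d\'efinition de la loi initiale. Pour l'h\'er\'edit\'e, j'\'ecrirais
\begin{equation*}
\bigprob{X_{[0,n]}=i_{[0,n]}}
= \bigpcond{X_n = i_n}{X_{[0,n-1]} = i_{[0,n-1]}} \cdot
\bigprob{X_{[0,n-1]} = i_{[0,n-1]}}\;,
\end{equation*}
puis j'appliquerais la propri\'et\'e de Markov~\eqref{fdef3} au premier facteur (qui vaut alors $p_{i_{n-1}i_n}$) et l'hypoth\`ese de r\'ecurrence au second. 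Le cas o\`u $\prob{X_{[0,n-1]} = i_{[0,n-1]}}=0$ se traite \`a part en remarquant que les deux membres de~\eqref{fdef7} sont alors nuls (le membre de droite l'est par r\'ecurrence, le membre de gauche par inclusion d'\'ev\'enements).

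\textbf{Sens r\'eciproque.} Supposons maintenant que~\eqref{fdef7} soit satisfaite pour tout $n$ et tout choix de $i_0, \dots, i_n$. Je dois v\'erifier deux choses : que la loi de $X_0$ est bien $\nu$, et que la propri\'et\'e de Markov~\eqref{fdef3} est satisfaite. Le premier point est imm\'ediat en prenant $n=0$ dans~\eqref{fdef7}. Pour le second, je consid\`ere un $(n-1)$-uplet $(i_0, \dots, i_{n-1})$ tel que $\prob{X_{[0,n-1]} = i_{[0,n-1]}} > 0$, et j'\'ecris
\begin{equation*}
\bigpcond{X_n=j}{X_{[0,n-1]}=i_{[0,n-1]}}
= \frac{\bigprob{X_{[0,n]}=(i_{[0,n-1]},j)}}{\bigprob{X_{[0,n-1]}=i_{[0,n-1]}}}
= \frac{\nu_{i_0}p_{i_0i_1}\cdots p_{i_{n-2}i_{n-1}}p_{i_{n-1}j}}{\nu_{i_0}p_{i_0i_1}\cdots p_{i_{n-2}i_{n-1}}} = p_{i_{n-1}j}\;,
\end{equation*}
les num\'erateur et d\'enominateur \'etant calcul\'es par la formule~\eqref{fdef7}.

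Il reste \`a montrer que $\pcond{X_n = j}{X_{n-1} = i_{n-1}}$ vaut \'egalement $p_{i_{n-1}j}$. Pour cela, je sommerais l'identit\'e~\eqref{fdef7} (appliqu\'ee au $(n+1)$-uplet $(i_0, \dots, i_{n-1}, j)$) sur toutes les valeurs possibles de $i_0, \dots, i_{n-2}$ : on obtient d'une part $\prob{X_{n-1} = i_{n-1}, X_n = j}$, d'autre part le produit $p_{i_{n-1}j} \cdot \prob{X_{n-1} = i_{n-1}}$, en utilisant que la somme sur $i_0, \dots, i_{n-2}$ de $\nu_{i_0}p_{i_0i_1}\cdots p_{i_{n-2}i_{n-1}}$ redonne $\prob{X_{n-1} = i_{n-1}}$ (toujours par~\eqref{fdef7}). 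La division donne alors le r\'esultat.

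\textbf{Principale difficult\'e.} Il n'y a pas vraiment d'obstacle conceptuel, la preuve \'etant essentiellement une manipulation m\'ecanique de probabilit\'es conditionnelles et de la d\'efinition. Le seul point qui demande un peu d'attention est de bien traiter le cas des \'ev\'enements de probabilit\'e nulle, pour lesquels les probabilit\'es conditionnelles ne sont pas d\'efinies --- il suffit de v\'erifier que la formule~\eqref{fdef7} reste vraie (les deux membres \'etant nuls) et que la propri\'et\'e de Markov ne doit \^etre v\'erifi\'ee que conditionnellement \`a des \'ev\'enements de probabilit\'e strictement positive.
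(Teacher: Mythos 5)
Votre preuve est correcte et suit essentiellement la m\^eme d\'emarche que celle du texte : r\'ecurrence sur $n$ pour le sens direct, et d\'efinition de la probabilit\'e conditionnelle combin\'ee \`a~\eqref{fdef7} pour la r\'eciproque. Vous \^etes m\^eme plus soigneux que le texte sur deux points : le traitement des \'ev\'enements de probabilit\'e nulle, et la v\'erification (par sommation sur $i_0,\dots,i_{n-2}$) que la probabilit\'e conditionn\'ee \`a $X_{n-1}$ seul vaut aussi $p_{i_{n-1}j}$, seconde \'egalit\'e de~\eqref{fdef3} que la preuve du texte passe sous silence.
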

\begin{proof}\hfill
\begin{itemize}
\item[$\Rightarrow:$]
Par r\'ecurrence sur $n$. C'est clair pour $n=0$. Si c'est vrai pour $n$,
alors 
 \begin{align}
\nonumber
\bigprob{X_{[0,n+1]}=i_{[0,n+1]}} &= 
\bigpcond{X_{n+1}=i_{n+1}}{X_{[0,n]}=i_{[0,n]}}
\bigprob{X_{[0,n]}=i_{[0,n]}} \\ 
&= p_{i_ni_{n+1}}\nu_{i_0}p_{i_0i_1}p_{i_1i_2}\dots
p_{i_{n-1}i_n}\;.
\label{fdef8:1}
\end{align}

\item[$\Leftarrow:$] 
Par d\'efinition de la probabilit\'e conditionnelle, on a 
\begin{equation}
\label{fdef8:2}
\bigpcond{X_n=i_n}{X_{[0,n-1]}=i_{[0,n-1]}} =
\frac{\prob{X_{[0,n]}=i_{[0,n]}}}{\prob{X_{[0,n-1]}=i_{[0,n-1]}}}
= p_{i_{n-1}i_n}\;,
\end{equation}
la derni\`ere \'egalit\'e suivant de~\eqref{fdef7}.
\qed
\end{itemize}
\renewcommand{\qed}{}
\end{proof}

L'\'equation~\eqref{fdef7} donne la probabilit\'e de la trajectoire
$X_{[0,n]}$. Le r\'esultat suivant montre que la propri\'et\'e de Markov
reste vraie pour des trajectoires : l'\'evolution sur un intervalle de
temps $[n+1,m]$ ne d\'epend que de l'\'etat au temps $n$, et pas de la
trajectoire pass\'ee de la \chaine.

\begin{prop}
\label{prop_fdef1}
Si $\set{X_n}_{n\in\N}$ est une \chaine\ de Markov sur $\cX$, alors pour
tous temps $n < m\in\N$, tous $i_n\in\cX$, $A\subset\cX^n$ et
$B\subset\cX^{m-n}$ tels que $\prob{X_n=i_n,X_{[0,n-1]}\in A}>0$ on~a 
\begin{equation}
\label{fdef9}
\bigpcond{X_{[n+1,m]}\in B}{X_n=i_n, X_{[0,n-1]}\in A} 
= \bigpcond{X_{[n+1,m]}\in B}{X_n=i_n}\;.
\end{equation}
\end{prop}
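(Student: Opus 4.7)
The plan is to reduce both sides of~\eqref{fdef9} to the explicit trajectory formula of Theorem~\ref{thm_fdef1} and observe the key factorisation: the weight of a trajectory splits as a product of a ``past'' factor (depending only on $i_0,\dots,i_n$) and a ``future'' factor (depending only on $i_n,\dots,i_m$). Once this factorisation is in hand, the ratio defining the conditional probability simplifies and the event $A$ disappears from the answer.

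First I would write, by the definition of the conditional probability and by partitioning the events,
\begin{equation*}
\bigpcond{X_{[n+1,m]}\in B}{X_n=i_n,X_{[0,n-1]}\in A}
= \frac{\displaystyle\sum_{i_{[0,n-1]}\in A}\sum_{i_{[n+1,m]}\in B}
\bigprob{X_{[0,m]}=i_{[0,m]}}}
{\displaystyle\sum_{i_{[0,n-1]}\in A}\bigprob{X_{[0,n]}=i_{[0,n]}}}\;.
\end{equation*}
Applying Theorem~\ref{thm_fdef1} to every term and grouping the product $\nu_{i_0}p_{i_0i_1}\dots p_{i_{n-1}i_n}$ on one side and $p_{i_ni_{n+1}}\dots p_{i_{m-1}i_m}$ on the other, the numerator factors as
\begin{equation*}
\Bigg[\sum_{i_{[0,n-1]}\in A}\nu_{i_0}p_{i_0i_1}\dots p_{i_{n-1}i_n}\Bigg]
\Bigg[\sum_{i_{[n+1,m]}\in B}p_{i_ni_{n+1}}\dots p_{i_{m-1}i_m}\Bigg]\;.
\end{equation*}
The first bracket is exactly the denominator (hence nonzero by hypothesis), and after simplification the conditional probability equals the second bracket, which depends only on $i_n$ and $B$.

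Next I would show that the same expression is obtained for the right-hand side of~\eqref{fdef9}. Writing
\begin{equation*}
\bigpcond{X_{[n+1,m]}\in B}{X_n=i_n}
= \frac{\displaystyle\sum_{i_{[0,n-1]}\in\cX^n}\sum_{i_{[n+1,m]}\in B}
\bigprob{X_{[0,m]}=i_{[0,m]}}}
{\displaystyle\sum_{i_{[0,n-1]}\in\cX^n}\bigprob{X_{[0,n]}=i_{[0,n]}}}
\end{equation*}
and performing the identical factorisation, the ``past'' sums cancel between numerator and denominator and one recovers $\sum_{i_{[n+1,m]}\in B}p_{i_ni_{n+1}}\dots p_{i_{m-1}i_m}$. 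Comparing the two computations gives~\eqref{fdef9}.

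No real obstacle is expected; the only point to watch is a bookkeeping one, namely that the ``past'' factor $\sum_{i_{[0,n-1]}\in A}\nu_{i_0}p_{i_0i_1}\dots p_{i_{n-1}i_n}$ appears both in the numerator and in the denominator, so its value is irrelevant provided it is strictly positive, which is guaranteed by the hypothesis $\prob{X_n=i_n,X_{[0,n-1]}\in A}>0$. The essential input is the product form~\eqref{fdef7}, which is what makes the past and the future decouple as soon as the present state $X_n=i_n$ is fixed.
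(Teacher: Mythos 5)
Votre preuve est correcte et suit essentiellement la m\^eme d\'emarche que celle du polycopi\'e : on exprime la probabilit\'e conditionnelle comme un quotient de sommes de probabilit\'es de trajectoires via le Th\'eor\`eme~\ref{thm_fdef1}, la factorisation pass\'e/futur fait dispara\^\i tre l'ensemble $A$, et le choix $A=\cX^n$ identifie le r\'esultat au membre de droite de~\eqref{fdef9}. Rien \`a redire.
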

\begin{proof}
On a 
\begin{align}
\nonumber
\bigpcond{X_{[n+1,m]}\in B}{X_n=i_n, X_{[0,n-1]}\in A}
&= \frac{\bigprob{X_{[n+1,m]}\in B,X_n=i_n,X_{[0,n-1]}\in A}}
{\bigprob{X_n=i_n,X_{[0,n-1]}\in A}} \\
\nonumber
&= \frac{\displaystyle\sum_{i_{[n+1,m]}\in B} \sum_{i_{[0,n-1]}\in A}
\nu_{i_0}p_{i_0i_1}\dots p_{i_{m-1}i_m}}
{\displaystyle\sum_{i_{[0,n-1]}\in A}
\nu_{i_0}p_{i_0i_1}\dots p_{i_{n-1}i_n}} \\
&= \sum_{i_{[n+1,m]}\in B} p_{i_ni_{n+1}}\dots p_{i_{m-1}i_m}\;,
\label{fdef10:1}
\end{align}
qui ne d\'epend pas de l'ensemble $A$. En particulier, choisissant
$A=\cX^n$ dans l'\'egalit\'e ci-dessus, on trouve 
\begin{equation}
\label{fdef10:2}
\bigpcond{X_{[n+1,m]}\in B}{X_n=i_n}
= \sum_{i_{[n+1,m]}\in B} p_{i_ni_{n+1}}\dots p_{i_{m-1}i_m}\;,
\end{equation}
d'o\`u le r\'esultat, en comparant~\eqref{fdef10:1} et~\eqref{fdef10:2}. 
\end{proof}

Un cas particulier important est celui o\`u
$B=\cX^{m-n-1}\times\set{j}$, c'est-\`a-dire qu'on s'int\'eresse
\`a toutes les trajectoires se terminant en $i_m=j$ au temps $m$. 
Dans ce cas, la relation~\eqref{fdef10:2} donne 
\begin{equation}
\label{fdef11}
\bigpcond{X_m=j}{X_n=i}
= \sum_{i_{n+1}\in \cX} \dots \sum_{i_{m-1}\in
\cX}p_{ii_{n+1}}p_{i_{n+1}i_{n+2}}\dots p_{i_{m-1}j}\;. 
\end{equation}
Par d\'efinition du produit matriciel, la somme ci-dessus n'est autre que
l'\'el\'ement de matrice $(i,j)$ de la matrice $P^{m-n}$, que nous
noterons $p^{(m-n)}_{ij}$. On remarquera que le membre de droite
de~\eqref{fdef11} ne d\'epend que de la diff\'erence $m-n$. On a donc pour
tout $0<n<m$
\begin{equation}
\label{fdef12}
\bigpcond{X_m=j}{X_n=i}
= p^{(m-n)}_{ij}
= \bigpcond{X_{m-n}=j}{X_0=i}
\end{equation}
(\defwd{propri\'et\'e des incr\'ements stationnaires}\/). 
Enfin, pour tout $n>0$, 
\begin{equation}
\label{fdef13}
\bigprobin{\nu}{X_m=j} = \sum_{i\in\cX}\bigprobin{\nu}{X_m=j,X_0=i}
%= \sum_{i\in\cX}\bigprob{X_0=i}\bigpcond{X_n=j}{X_0=i}
= \sum_{i\in\cX}\nu_ip^{(m)}_{ij}\;. 
\end{equation}
La matrice $P^m$ donne donc les probabilit\'es de transition en $m$ pas. 

\begin{example}
\label{ex_fdef2}
Pour la matrice de transition~\eqref{fdef5} de la souris dans le
labyrinthe, on trouve 
\begin{equation}
\label{fdef14}
P^2 = 
\begin{pmatrix}
1/3 & 0 & 0 & 1/3 & 1/3 \\
0 & 1/6 & 1/6 & 1/2 & 1/6 \\
0 & 1/6 & 1/6 & 1/2 & 1/6 \\
0 & 0 & 0 & 1 & 0 \\
0 & 0 & 0 & 0 & 1
\end{pmatrix}\;.
\end{equation} 
La $i^\text{\`eme}$ ligne de la matrice correspond aux probabilit\'es
d'\^etre dans les diff\'erents \'etats si la souris est partie de la case
$i$. Ainsi, si elle est partie de la case $1$, on voit qu'elle se retrouve
au temps $2$ soit en $1$, soit dans sa tani\`ere, soit aupr\`es de la
nourriture, \` a chaque fois avec m\^eme probabilit\'e $1/3$. Si elle est
partie de l'une des cases $2$ ou $3$, elle a une chance sur $2$ d'avoir
trouv\'e la nourriture au temps $2$, et une chance sur $6$ de se retrouver
dans l'une des cases $2$, $3$ ou $5$. 
\end{example}

%%%%%%%%%%%%%%%%%%%%%%%%%%%%%%%%%%%%%%%%%%%%%%%%%%%%%%%%%%%%%%%%%%%%%%%%%%%

\section{\Chaine s de Markov absorbantes}
\label{sec_f_abs}

\begin{definition}
\label{def_fabs1}
On dit qu'un \'etat $j\in\cX$ est \defwd{accessible}\/ depuis un autre
\'etat $i\in\cX$, et on note $i\reaches j$, s'il existe un temps $n\in\N$
tel que $p^{(n)}_{ij}>0$, c'est-\`a-dire que partant de $i$, on atteint
$j$ avec probabilit\'e positive en un nombre fini de pas. On notera $i\sim
j$ si on a \`a la fois $i\reaches j$ et $j\reaches i$. 
\end{definition}

On v\'erifie facilement que la relation $\reaches$ est r\'eflexive et
transitive, et que $\sim$ est une relation d'\'equivalence. 

\begin{definition}
\label{def_fabs2}
Un \'etat $i\in\cX$ est dit \defwd{absorbant}\/ si $p_{ii}=1$ (et donc
n\'ecessairement $p_{ij}=0$ pour tout $j\neq i$). 
Une \chaine\ de Markov est dite \defwd{absorbante}\/ s'il existe, pour
tout \'etat de $\cX$, un \'etat absorbant accessible depuis cet \'etat. 
\end{definition}

Dans le reste de cette section, nous allons consid\'erer des \chaine s
absorbantes avec $r\geqs1$ \'etats absorbants. Les exemples~\ref{ex_souris}
de la souris et~\ref{ex_PF} du jeu de Pile ou Face sont des exemples de
\chaine s absorbantes avec deux \'etats absorbants. 

Nous conviendrons de num\'eroter les \'etats de mani\`ere \`a placer
d'abord les $q=N-r$ \'etats non absorbants, et ensuite les $r$ \'etats
absorbants. La matrice de transition prend alors la \defwd{forme
canonique}\/ 
\begin{equation}
\label{fabs1}
P = 
\begin{pmatrix}
Q & R \\ 0 & I 
\end{pmatrix}\;,
\end{equation}
o\`u $Q$ est une matrice de taille $q\times q$, $R$ est une matrice de
taille $q\times r$, $0$ d\'esigne la matrice nulle de taille $r\times q$,
et $I$ la matrice identit\'e de taille $r$. 
Il est facile de montrer par r\'ecurrence que 
\begin{equation}
\label{fabs2}
P^n = 
\begin{pmatrix}
Q^n & \brak{I+Q+\dots+Q^{n-1}}R \\ 0 & I 
\end{pmatrix}\;. 
\end{equation}

\begin{prop}
\label{prop_fabs1}
Soit $P$ la matrice de transition d'une \chaine\ de Markov absorbante,
\'ecrite sous forme canonique. Alors
\begin{enum}
\item	On a 
\begin{equation}
\label{fabs3}
\lim_{n\to\infty} Q^n=0\;.
\end{equation}
\item	La matrice $I-Q$ est inversible, et son inverse vaut 
\begin{equation}
\label{fabs4}
\bigbrak{I-Q}^{-1} = \sum_{k=0}^\infty Q^k\;.
\end{equation}
\end{enum}
\end{prop}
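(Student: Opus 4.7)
The plan is to first establish a uniform exponential decay for $Q^n$ (part (i)), and then deduce the invertibility of $I-Q$ and the formula for its inverse from a standard Neumann-series identity (part (ii)).

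For part (i), I would exploit the absorbing hypothesis to extract a uniform rate of escape into the absorbing set. By definition, for every non-absorbing state $i \in \set{1,\dots,q}$ there exists an absorbing state $j$ with $p^{(n_i)}_{ij} > 0$ for some $n_i \geqs 1$. Because absorbing states remain absorbed, $p^{(n)}_{ij}$ stays positive for all $n \geqs n_i$, so setting $n_0 = \max_i n_i$ yields $\eps > 0$ such that, for every non-absorbing $i$, the probability of still being in a non-absorbing state at time $n_0$ starting from $i$ is at most $1 - \eps$. By~\eqref{fabs2}, this probability is precisely the $i$th row sum of $Q^{n_0}$. I would then observe that row sums of $Q^n$ are non-increasing in $n$ (each row of $Q$ sums to at most $1$, since the corresponding row of $P$ sums to $1$), and iterate: the row sums of $Q^{kn_0}$ are bounded by $(1-\eps)^k$, hence so are those of $Q^m$ for $m \geqs kn_0$. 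Since each entry of a non-negative matrix is dominated by its row sum, we conclude $Q^n \to 0$ entrywise.

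For part (ii), I would use the telescoping identity
\[
(I - Q)\sum_{k=0}^{n-1} Q^k \;=\; I - Q^n \;=\; \sum_{k=0}^{n-1} Q^k\,(I-Q),
\]
verified by direct computation. By part (i), the middle expression tends to $I$ as $n \to \infty$; the geometric bound established above guarantees that the partial sums $\sum_{k=0}^{n-1} Q^k$ converge entrywise to a finite matrix $S$. Passing to the limit gives $(I-Q)S = S(I-Q) = I$, so $I-Q$ is invertible with $(I-Q)^{-1} = \sum_{k=0}^{\infty} Q^k$.

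The main obstacle is really part (i): promoting the purely qualitative absorbing hypothesis into a uniform, quantitative decay rate $(1-\eps)$ on the row sums of $Q^{n_0}$. The trick is to align the different escape times $n_i$ by taking their maximum and using that an absorbing state, once reached, is never left. Once this uniform geometric decay is in hand, part (ii) reduces to algebraic bookkeeping with a Neumann series.
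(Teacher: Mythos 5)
Your argument for part (i) is essentially the paper's: the paper likewise takes the maximum $M$ of the minimal absorption times $m_i$ over the finitely many non-absorbing states, bounds the non-absorption probability at time $M$ by some $p<1$ uniformly in the starting state, iterates to get the bound $p^n$ at time $Mn$, and uses monotonicity of the non-absorption probability to conclude; your $(n_0,1-\eps)$ plays exactly the role of their $(M,p)$, and your observation that absorbing states are never left is the same alignment trick. The only real divergence is in part (ii): the paper first proves invertibility of $I-Q$ separately, by noting that $Qx=x$ forces $x=Q^nx\to 0$ so that $1$ is not an eigenvalue, and only then multiplies the telescoping identity $(I-Q)\sum_{k=0}^nQ^k=I-Q^{n+1}$ on the left by $(I-Q)^{-1}$; you instead use the quantitative geometric decay to show the Neumann series converges outright and read off a two-sided inverse from the same telescoping identity. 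Both are correct; your route avoids the separate eigenvalue step at the cost of needing the summability of the entries of $Q^k$ (which your part (i) does supply), while the paper's route only needs the qualitative statement $Q^n\to 0$.
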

\begin{proof}\hfill
\begin{enum}
\item
Soit $i\leqs q$ un \'etat non absorbant.  L'\'el\'ement de matrice
$(Q^n)_{ij}$ de $Q^n$ est la probabilit\'e de se trouver dans l'\'etat non
absorbant $j$, apr\`es $n$ pas, partant de $i$. Par cons\'equent,
$(Q^n)_{ij}$ est inf\'erieur ou \'egal \`a la probabilit\'e de ne pas avoir
atteint d'\'etat absorbant en $n$ pas. Soit 
\begin{equation}
\label{fabs4:1}
m_i = \min\setsuch{n\geqs1}{\exists  k>q,\,(P^n)_{ik}>0}
\end{equation}
le nombre minimal de pas n\'ecessaire \`a atteindre un \'etat absorbant
$k$ depuis $i$. Soit 
\begin{equation}
\label{fabs4:2}
p_i = \probin{i}{X_{m_i}\leqs q} < 1
\end{equation}
la probabilit\'e de ne pas atteindre d'\'etat absorbant en $m_i$ pas,
partant de $i$. Soit enfin 
\begin{equation}
\label{fabs4:3}
M = \max_{i=1,\dots,q} m_i\;,
\qquad
p = \max_{i=1,\dots,q} p_i\;. 
\end{equation}
Alors la probabilit\'e de ne pas atteindre d'\'etat absorbant en $M$ pas,
partant de n'impor\-te quel \'etat non absorbant, est born\'ee par $p$. Il
suit que la probabilit\'e de ne pas atteindre d'\'etat absorbant en $Mn$
pas est born\'ee par $p^n$. Cette probabilit\'e tend vers $0$ lorsque $n$
tend vers l'infini. La probabilit\'e de ne pas \^etre absorb\'e apr\`es un
nombre arbitraire $m$ de pas \'etant une fonction d\'ecroissante de
$m$, elle tend n\'ecessairement vers $0$. Par cons\'equent, $(Q^n)_{ij}$
tend vers z\'ero lorsque $n$ tend vers l'infini, pour tout
$j\in\set{1,\dots,q}$.  

\item	Supposons qu'il existe un vecteur $x$ tel que $Qx=x$. Dans ce cas
on a 
\begin{equation}
\label{fabs4:4}
x = Qx = Q^2x = \dots=Q^nx = \dots=\lim_{n\to\infty} Q^nx = 0\;,
\end{equation}
ce qui montre que $Q$ n'admet pas la valeur propre $1$. Par cons\'equent,
$I-Q$ est inversible. Enfin, comme
 \begin{equation}
\label{fabs4:5}
\bigbrak{I-Q}\sum_{k=0}^n Q^k = I - Q^{n+1} \to I 
\qquad\text{lorsque $n\to\infty$}\;,
\end{equation}
on obtient la relation~\eqref{fabs4} en multipliant \`a gauche par
$\brak{I-Q}^{-1}$. 
\qed
\end{enum}
\renewcommand{\qed}{}
\end{proof}

Nous noterons $F$ la matrice $\brak{I-Q}^{-1}$, et nous l'appellerons la
\defwd{matrice fondamentale}\/ de la \chaine. La relation~\eqref{fabs2}
montre que 
\begin{equation}
\label{fabs5}
\lim_{n\to\infty} P^n = 
\begin{pmatrix}
0 & FR \\ 0 & I 
\end{pmatrix}\;. 
\end{equation}
Le fait que $Q^n$ tend vers z\'ero traduit donc le fait que la
probabilit\'e d'absorption tend vers $1$ lorsque le temps tend vers
l'infini. La matrice $B=FR$ devrait repr\'esenter les probabilit\'es de
transition, dans la limite des temps infinis, entre \'etats non absorbants
et absorbants. Ceci est confirm\'e par le r\'esultat suivant.

\goodbreak 

\begin{theorem}
\label{thm_fabs1}
Soit $F$ la matrice fondamentale d'une \chaine\ de Markov absorbante. 
\begin{enum}
\item	L'\'el\'ement de matrice $f_{ij}$ de $F$ est l'esp\'erance du
nombre de passages en $j$ partant de~$i$~:
\begin{equation}
\label{fabs6}
f_{ij} = \Bigexpecin{i}{\sum_{n\geqs0}\indexfct{X_n=j}}\;.
\end{equation}

\item	Soit $\tau=\inf\setsuch{n\geqs1}{X_n>q}$ la variable al\'eatoire
donnant le temps jusqu'\`a absorption. Alors 
\begin{equation}
\label{fabs7}
\expecin{i}{\tau} = \sum_{j=1}^q f_{ij}\;.
\end{equation}

\item	Les \'el\'ements de matrice $b_{ik}$ de $B=FR$ donnent les
probabilit\'es d'\^etre absorb\'es dans les diff\'erents \'etats :
\begin{equation}
\label{fabs8}
b_{ik} = \bigprobin{i}{X_\tau=k}\;.
\end{equation}
\end{enum}
\end{theorem}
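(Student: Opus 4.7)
Mon plan est d'\'etablir les trois points dans l'ordre propos\'e, en tirant parti de la forme par blocs (\ref{fabs2}) de $P^n$ et du d\'eveloppement $F=\sum_{k\geqs 0}Q^k$ de la proposition~\ref{prop_fabs1}.

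Pour le point (i), l'id\'ee est d'\'echanger somme et esp\'erance (ce qui est licite par le th\'eor\`eme de Fubini-Tonelli puisque les indicatrices sont positives) et d'\'ecrire
\begin{equation*}
\Bigexpecin{i}{\sum_{n\geqs 0}\indexfct{X_n=j}}
= \sum_{n\geqs 0} \bigprobin{i}{X_n=j}
= \sum_{n\geqs 0} p^{(n)}_{ij}\;.
\end{equation*}
Comme $i,j\leqs q$ sont non absorbants, la formule~\eqref{fabs2} montre que $p^{(n)}_{ij}=(Q^n)_{ij}$, et l'on conclut par la formule~\eqref{fabs4} que cette somme vaut $f_{ij}$.

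Pour le point (ii), l'observation cl\'e est l'\'egalit\'e presque s\^ure $\tau = \sum_{n\geqs 0}\indexfct{X_n\leqs q}$. En effet, partant de $i\leqs q$ non absorbant, les \'etats $X_0,X_1,\dots,X_{\tau-1}$ sont tous non absorbants (car sinon on aurait trouv\'e un temps d'absorption strictement inf\'erieur \`a $\tau$), alors que $X_n>q$ pour tout $n\geqs\tau$ puisque les \'etats absorbants restent absorb\'es ; il y a donc exactement $\tau$ indices $n$ tels que $X_n\leqs q$. En prenant l'esp\'erance, en \'echangeant somme et esp\'erance (positivit\'e), et en appliquant le point (i) \`a chaque $j\in\set{1,\dots,q}$, on obtient $\expecin{i}{\tau}=\sum_{j=1}^q f_{ij}$.

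Pour le point (iii), je d\'ecompose selon la valeur de $\tau$ :
\begin{equation*}
\bigprobin{i}{X_\tau=k}
= \sum_{n\geqs 1}\bigprobin{i}{\tau=n,X_n=k}\;.
\end{equation*}
L'\'ev\'enement $\set{\tau=n,X_n=k}$ correspond aux trajectoires $(i,j_1,\dots,j_{n-1},k)$ o\`u $j_1,\dots,j_{n-1}\leqs q$, et le th\'eor\`eme~\ref{thm_fdef1} donne sa probabilit\'e sous forme de produit. Pour $n=1$ on obtient $p_{ik}=r_{ik}$, et pour $n\geqs 2$ on obtient $\sum_{j\leqs q}(Q^{n-1})_{ij}r_{jk}$. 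En sommant sur $n$ et en \'echangeant les deux sommations, on reconna\^\i t $\sum_{j=1}^q\bigl(\sum_{m\geqs 0}(Q^m)_{ij}\bigr)r_{jk}=\sum_{j=1}^q f_{ij}r_{jk}=(FR)_{ik}=b_{ik}$.

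La principale difficult\'e technique sera le point (iii) : il faut soigneusement justifier l'intervertion des sommes (convergence absolue fournie par le fait que $\sum_n\probin{i}{\tau=n,X_n=k}\leqs 1$) et v\'erifier l'identification de la trajectoire avec le produit matriciel, en prenant garde au cas particulier $n=1$ qui correspond au terme $Q^0=I$ dans le d\'eveloppement de $F$. Les points (i) et (ii) sont essentiellement de la comptabilit\'e sur indicatrices, une fois acquise la forme par blocs~\eqref{fabs2}.
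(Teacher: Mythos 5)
Votre proposition est correcte et suit pour l'essentiel la m\^eme d\'emarche que le texte~: le point (i) par \'echange somme--esp\'erance et la forme par blocs de $P^n$, le point (ii) via l'identit\'e $\tau=\sum_{n\geqs0}\indexfct{X_n\leqs q}$ (le texte passe par $\sum_n\probin{i}{\tau>n}$, ce qui revient au m\^eme), et le point (iii) par d\'ecomposition sur l'instant d'absorption aboutissant \`a $\sum_{n\geqs0}(Q^nR)_{ik}$. Les justifications d'interversion que vous annoncez sont bien celles qu'il faut.
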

\begin{proof}
\hfill
\begin{enum}
\item	Soit la variable de Bernoulli $Y_{n,j}=\indexfct{X_n=j}$. On a
$\expecin{i}{Y_{n,j}}=\probin{i}{Y_{n,j}=1}=(Q^n)_{ij}$, et donc 
\begin{equation}
\label{fabs9:1}
\Bigexpecin{i}{\sum_{n\geqs0}Y_{n,j}} = \sum_{n\geqs0}(Q^n)_{ij}
= (F)_{ij} = f_{ij}\;.
\end{equation}

\item	Sommant la relation ci-dessus sur tous les \'etats non absorbants,
on a 
\begin{align}
\nonumber
\sum_{j=1}^q f_{ij} &= \Bigexpecin{i}{\sum_{n\geqs0}\sum_{j=1}^qY_{n,j}}
= \Bigexpecin{i}{\sum_{n\geqs0}\indexfct{X_n\leqs q}} \\
&= \sum_{n\geqs0} \probin{i}{\tau>n} 
= \sum_{n\geqs0} n\probin{i}{\tau=n} = \expecin{i}{\tau}\;.
\label{fabs9:2}
\end{align}

\item	En d\'ecomposant sur les valeurs possibles $n$ de $\tau-1$, 
puis sur les valeurs possibles $j$ de $X_n$, 
\begin{align}
\nonumber
\probin{i}{X_\tau=k} &= \sum_{n\geqs0}\probin{i}{X_n\leqs q, X_{n+1}=k} \\
\nonumber
&= \sum_{n\geqs0} \sum_{j=1}^q \probin{i}{X_n=j}\pcond{X_{n+1}=k}{X_n=j} \\
&= \sum_{n\geqs0} \sum_{j=1}^q (Q^n)_{ij} (R)_{jk} 
= \sum_{n\geqs0} (Q^nR)_{ik} = (FR)_{ik}\;.
\label{fabs9:3}
\end{align}
\qed
\end{enum}
\renewcommand{\qed}{}
\end{proof}

\begin{example}
Pour l'exemple~\ref{ex_PF} du jeu de Pile ou Face, les matrices $Q$ et $
R$ sont donn\'ees par 
\begin{equation}
\label{fabs10}
Q = 
\begin{pmatrix}
1/2 & 1/2 & 0 & 0 \\
0 & 0 & 0 & 1/2  \\
1/2 & 1/2 & 0 & 0 \\
0 & 0 & 1/2 & 0  
\end{pmatrix}\;,
\qquad
R = 
\begin{pmatrix}
0 & 0 \\
0 & 1/2 \\
0 & 0  \\
1/2 & 0 
\end{pmatrix}\;.
\end{equation}
On calcule alors la matrice fondamentale 
\begin{equation}
\label{fabs11}
F = \brak{I-Q}^{-1} = 
\begin{pmatrix}
7/3 & 4/3 & 1/3 & 2/3 \\
1/3 & 4/3 & 1/3 & 2/3  \\
4/3 & 4/3 & 4/3 & 2/3 \\
2/3 & 2/3 & 2/3 & 4/3  
\end{pmatrix}\;, 
\end{equation}
et la matrice donnant les probabilit\'es d'absorption 
\begin{equation}
\label{fabs12}
B = FR = 
\begin{pmatrix}
1/3 & 2/3 \\
1/3 & 2/3 \\
1/3 & 2/3 \\
2/3 & 1/3  
\end{pmatrix}\;. 
\end{equation}
Ainsi, partant de l'un des \'etats PP, PF ou FP, Anatole gagne avec
probabilit\'e $1/3$, et Barnab\'e gagne avec probabilit\'e $2/3$. Partant
de l'\'etat FF, c'est Barnab\'e qui gagne avec probabilit\'e $1/3$, et 
Anatole qui gagne avec probabilit\'e $2/3$. Comme personne ne gagne lors
des deux premiers jets, et que les quatre \'etats PP, PF, FP et FF
sont atteints avec la m\^eme probabilit\'e, il faut choisir la
distribution initiale $\nu=(1/4,1/4,1/4,1/4,0,0)$. Par cons\'equent,
Anatole gagne le jeu avec probabilit\'e 
\begin{equation}
\label{fabs13}
\probin{\nu}{X_\tau=\text{\lq\lq A gagne\rq\rq}} = 
\sum_{i=1}^4 \nu_i b_{i1} = \frac5{12}\;.
\end{equation}
Quelle est la dur\'ee moyenne du jeu? La relation~\eqref{fabs7} montre que
la somme des \'el\'ements de la ligne $i$ de $F$ donne l'esp\'erance du
temps d'absorption partant de $i$, donc par exemple
$\expecin{1}{\tau}=14/3$. En moyennant sur la distribution initiale, on
trouve $\expecin{\nu}{\tau}=23/6$. La dur\'ee moyenne du jeu est donc de
$2+23/6=35/6$, soit un peu moins de $6$ jets de pi\`ece.
\end{example}

%%%%%%%%%%%%%%%%%%%%%%%%%%%%%%%%%%%%%%%%%%%%%%%%%%%%%%%%%%%%%%%%%%%%%%%%%%%

\section{\Chaine s de Markov irr\'eductibles}
\label{sec_f_irred}

\begin{definition}
\label{def_firred1}
Une \chaine\ de Markov est dite \defwd{irr\'eductible}\/ ou
\defwd{ergodique}\/ si $i\sim j$ $\forall i, j\in\cX$. La \chaine\ est
dite \defwd{r\'eguli\`ere}\/ s'il existe une puissance $P^n$ de $P$ dont
tous les \'el\'ements sont strictement positifs. 
\end{definition}

Une \chaine\ de Markov r\'eguli\`ere est n\'ecessairement irr\'eductible,
car tout \'etat est accessible depuis tout autre en $n$ pas au plus. La
r\'eciproque n'est pas vraie, car dans la d\'efinition de
l'irr\'eductibilit\'e on n'a pas sp\'ecifi\'e le nombre de pas. 

\begin{example}
\label{ex_irred1}
La \chaine\ d\'ecrivant le mod\`ele d'Ehrenfest est irr\'eductible. En
effet, quel que soit le nombre de boules dans l'urne de gauche, on peut
atteindre tout autre \'etat en d\'epla\c cant au plus $N$ boules d'une
urne \`a l'autre. Cependant, la \chaine\ n'est pas r\'eguli\`ere. En
effet, comme \`a chaque pas de temps on d\'eplace exactement une boule, 
le nombre de boules dans l'urne de gauche sera alternativement pair et
impair. Par cons\'equent, chaque \'el\'ement de matrice des puissance
$P^n$ sera nul pour un $n$ sur deux. 
\end{example}

\begin{definition}
\label{def_firred2}
Pour un sous-ensemble $A\subset\cX$, on appelle \defwd{temps de premier
passage}\/ de la \chaine\ dans $A$ la variable al\'eatoire 
\begin{equation}
\label{firred0}
\tau_A = \inf\setsuch{n>0}{X_n\in A}\;.
\end{equation}
Dans le cas o\`u $A=\set{i}$ consiste en un seul point, nous \'ecrirons
aussi $\tau_i$ au lieu de $\tau_{\set{i}}$. 
\end{definition}

Une diff\'erence importante entre \chaine s absorbantes et irr\'eductibles
est que ces der\-ni\`eres finissent toujours par revenir dans chacun de
leurs \'etats. 

\begin{prop}
\label{prop_firred1}
Pour une \chaine\ de Markov irr\'eductible sur un ensemble fini $\cX$, le
temps de premier passage en tout sous-ensemble $A\subset\cX$ est fini
presque s\^urement :
\begin{equation}
\label{firred0B}
\bigprob{\tau_A<\infty} \defby 
\lim_{n\to\infty} \bigprob{\tau_A\leqs n} = 1\;.
\end{equation}
\end{prop}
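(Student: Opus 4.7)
The plan is to use irreducibility together with the finiteness of $\cX$ to get a uniform geometric tail bound on $\tau_A$.

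First I would fix some target state $j_0 \in A$. By irreducibility, for every state $i \in \cX\setminus A$ there exists $n_i \geqs 1$ with $p^{(n_i)}_{ij_0} > 0$. Since $X_{n_i}=j_0 \in A$ forces $\tau_A \leqs n_i$, this gives $\probin{i}{\tau_A \leqs n_i} > 0$. Because $\cX$ is finite, I can set $N = \max_{i \notin A} n_i < \infty$; then for every $i \notin A$,
\begin{equation*}
\probin{i}{\tau_A \leqs N} \geqs \probin{i}{\tau_A \leqs n_i} > 0,
\end{equation*}
and defining $p = \min_{i \notin A} \probin{i}{\tau_A \leqs N}$ yields $p > 0$ (finite minimum of strictly positive numbers).

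Next I would iterate this uniform bound over successive time blocks of length $N$. The key tool is Proposition~\ref{prop_fdef1}: conditional on the trajectory up to time $kN$, the event $\set{\tau_A > (k+1)N}$ depends only on $X_{kN}$, and on the event $\set{\tau_A > kN}$ we have $X_{kN} \notin A$. Hence, using the stationarity of increments (equation~\eqref{fdef12}),
\begin{equation*}
\bigpcond{\tau_A > (k+1)N}{\tau_A > kN,\, X_{[0,kN]}}
= \probin{X_{kN}}{\tau_A > N} \leqs 1-p.
\end{equation*}
By induction on $k$, $\prob{\tau_A > kN} \leqs (1-p)^k$, which tends to $0$ as $k \to \infty$.

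Finally, since $n \mapsto \prob{\tau_A > n}$ is non-increasing and the subsequence $\prob{\tau_A > kN}$ converges to $0$, the full sequence converges to $0$, which is exactly~\eqref{firred0B}. The main technical point to handle carefully is the conditioning step: one must argue using the Markov property that the chain restarted at time $kN$ has the same transition law, so that the probability of escaping $A$ during the next $N$ steps is controlled uniformly by $1-p$ regardless of the past. Everything else is essentially bookkeeping.
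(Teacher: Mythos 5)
Your proof is correct and rests on the same mechanism as the paper's: a uniform lower bound $p>0$ on the probability of hitting $A$ within $N$ steps from any starting state outside $A$, iterated over blocks of length $N$ via the Markov property to give geometric decay of $\prob{\tau_A>kN}$, then monotonicity. The paper merely packages this differently — it makes the states of $A$ absorbing and invokes Proposition~\ref{prop_fabs1}, whose own proof is exactly your block argument — so the two approaches are essentially the same.
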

\begin{proof}
Consid\'erons une autre \chaine\ de Markov de matrice de
transition $\widehat P$, obtenue \`a partir de la \chaine\ de d\'epart en
rendant absorbants les \'etats de $A$ : 
\begin{equation}
\label{firred0B:1}
\hat p_{ij} = 
\begin{cases}
\delta_{ij} & \text{si $i\in A$\;,} \\
p_{ij} & \text{sinon\;.}
\end{cases}
\end{equation}
Les trajectoires de la \chaine\ initiale et de la \chaine\ modifi\'ee
co\"incident jusqu'au temps $\tau_A$. Il suffit donc de
montrer~\eqref{firred0B} pour la \chaine\ absorbante. Or dans ce cas, le
r\'esultat est une cons\'equence directe de la
Proposition~\ref{prop_fabs1}. En effet, la probabilit\'e
$\probin{i}{\tau_A>n}$ de ne pas avoir \'et\'e absorb\'e jusqu'au temps
$n$, partant de $i$, est donn\'ee par la somme des $(Q^n)_{ij}$ sur les
$j\in A$, qui tend vers $0$. 
\end{proof}

Il est important de remarquer que le r\'esultat ci-dessus n'est plus
forc\'ement vrai lorsque $\cX$ n'est pas fini! Nous reviendrons sur ce
point dans le chapitre suivant. 

Nous \'etudions maintenant de plus pr\`es les \chaine s de Markov
r\'eguli\`eres. Leur propri\'et\'e principale est le fait remarquable
suivant. 

\begin{theorem}
\label{thm_firred1}
Soit $P$ la matrice de transition d'une \chaine\ de Markov r\'eguli\`ere.
Alors il existe des nombres $\pi_1,\dots,\pi_N>0$, dont la somme vaut $1$,
tels que 
\begin{equation}
\label{firred1}
\lim_{n\to\infty} P^n = \Pi \defby
\begin{pmatrix}
\pi_1 & \pi_2 & \dots & \pi_N \\
\pi_1 & \pi_2 & \dots & \pi_N \\
\vdots & \vdots & & \vdots \\
\pi_1 & \pi_2 & \dots & \pi_N 
\end{pmatrix}\;.
\end{equation}
De plus, le vecteur ligne $\pi=(\pi_1,\pi_2,\dots,\pi_N)$ satisfait 
\begin{equation}
\label{firred1A}
\pi P = \pi\;.
\end{equation}
\end{theorem}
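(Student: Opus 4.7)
The plan is to prove convergence of $P^n$ by a contraction argument applied to the oscillation of vectors under the action of $P$. Since the \chaine\ is r\'eguli\`ere, there exists $n_0\geqs 1$ such that all entries of $P_0\defby P^{n_0}$ are bounded below by some $\delta>0$. Since row sums equal $1$, necessarily $\delta\leqs 1/N$, so $\delta<1/2$ (the case $N=1$ is trivial). For $y\in\R^N$, write $M(y)=\max_i y_i$ and $m(y)=\min_i y_i$, and set $\mathrm{osc}(y)=M(y)-m(y)$.

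The key step is to show that $\mathrm{osc}(P_0 y)\leqs(1-2\delta)\,\mathrm{osc}(y)$ for every $y$. Indeed, for each $i$, $(P_0 y)_i=\sum_j (P_0)_{ij}y_j$ is a convex combination of the $y_j$ with all weights at least $\delta$. Forcing at least weight $\delta$ on the coordinate where $y$ attains its minimum yields $(P_0 y)_i\leqs \delta\,m(y)+(1-\delta)M(y)$; forcing at least weight $\delta$ on the coordinate where $y$ attains its maximum yields $(P_0 y)_i\geqs \delta\,M(y)+(1-\delta)m(y)$. Subtracting gives the claimed inequality. Moreover, for \emph{any} stochastic matrix $P$, the quantities $M(P^n y)$ and $m(P^n y)$ are respectively non-increasing and non-decreasing in $n$, so $\mathrm{osc}(P^n y)$ is monotone non-increasing. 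This is the main technical obstacle, but it is settled by the combinatorial bound above.

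Combining monotonicity with the contraction $\mathrm{osc}(P_0^k y)\leqs(1-2\delta)^k\,\mathrm{osc}(y)\to 0$, the two sequences $M(P^n y)$ and $m(P^n y)$ converge to a common limit $\ell(y)$, so $P^n y\to \ell(y)\,\vone$ in $\R^N$ for every $y$. Applying this with $y$ equal to the $j$-th canonical basis vector, the $j$-th column of $P^n$ converges to $\pi_j\vone$ with $\pi_j\defby \ell(e_j)\geqs 0$. This is exactly the convergence $P^n\to\Pi$ in \eqref{firred1}.

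It remains to verify the claimed properties of $\pi$. Each row of $P^n$ sums to $1$, and this passes to the limit, giving $\sum_j\pi_j=1$. Positivity $\pi_j>0$ follows because $(P^{n_0})_{ij}\geqs\delta$ and $M(P^n e_j)$ is non-increasing, so the limit $\pi_j$ satisfies $\pi_j\geqs\delta>0$. Finally, writing $P^{n+1}=P^n P$ and letting $n\to\infty$ yields $\Pi=\Pi P$; reading any row of this identity gives $\pi P=\pi$, establishing \eqref{firred1A}.
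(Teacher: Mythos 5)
Your proof is correct and follows essentially the same contraction-of-oscillation argument as the paper: the paper first treats the case where $P$ itself has all entries $\geqs d>0$, derives the bounds $M_1-m_1=(1-2d)(M_0-m_0)$ exactly as you do, and then handles general regular chains via the monotonicity of the max and min under an arbitrary stochastic matrix, just as in your reduction to $P_0=P^{n_0}$. One small slip: to conclude $\pi_j\geqs\delta$ you should invoke the fact that $m(P^n e_j)$ is non-decreasing together with $m(P^{n_0}e_j)\geqs\delta$, not the monotonicity of $M(P^n e_j)$.
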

\begin{proof}
Si la cha\^\i ne n'a qu'un \'etat, le r\'esultat est imm\'ediat, donc nous
pouvons admettre que $N\geqs 2$. Supposons pour commencer que tous les
\'el\'ements de $P^k$ sont strictement positifs pour $k=1$, et soit $d>0$ le
plus petit \'el\'ement de $P$. Alors $d\leqs 1/2$,
puisque $Nd\leqs 1$. Soit $y$ un vecteur colonne tel que 
\begin{equation}
\label{firred1:1}
0 \leqs m_0 \leqs y_i \leqs M_0 
\qquad \forall i\in\set{1,\dots,N}\;.
\end{equation}
Soit $z=Py$. La plus grande valeur possible d'une composante $z_j$ de $z$
est obtenue si $y^T=(m_0,M_0,\dots,M_0)$ et $p_{k1}=d$. Dans ce cas,  la
somme des $N-1$ derniers \'el\'ements de la ligne $j$ de $P$ vaut $1-d$, et
par cons\'equent $z_j = dm_0 + (1-d) M_0$.   On a donc n\'ecessairement  
\begin{equation}
\label{firred1:2}
z_j \leqs dm_0 + (1-d) M_0 \bydef M_1
\qquad \forall j\in\set{1,\dots,N}\;.
\end{equation}
Un raisonnement similaire montre que  
\begin{equation}
\label{firred1:3}
z_j \geqs dM_0 + (1-d) m_0 \bydef m_1
\qquad \forall j\in\set{1,\dots,N}\;.
\end{equation}
Par cons\'equent, nous avons $m_1\leqs z_j\leqs M_1$, avec 
\begin{equation}
\label{firred1:4}
M_1 - m_1 = (1-2d) (M_0-m_0)\;.
\end{equation}
De plus, on voit facilement que $m_1\geqs m_0$ et $M_1\leqs M_0$. 
Apr\`es $n$ it\'erations, les composantes de $P^n y$ seront comprises entre
des nombres $m_n$ et $M_n$, satisfaisant 
\begin{equation}
\label{firred1:5}
M_n - m_n = (1-2d)^n (M_0-m_0)
\end{equation}
et 
\begin{equation}
\label{firred1:6}
m_0 \leqs m_1 \leqs \dots \leqs m_n \leqs M_n \leqs \dots \leqs M_1 \leqs
M_0\;.
\end{equation}
Les suites $\set{m_n}_{n\geqs1}$ et $\set{M_n}_{n\geqs1}$ sont donc
adjacentes, et convergent vers une m\^eme limite $u$. On a donc 
\begin{equation}
\label{firred1:7}
\lim_{n\to\infty} P^n y = 
\begin{pmatrix}
u \\ \vdots \\ u
\end{pmatrix}\;,
\end{equation}
o\`u $u$ d\'epend de $y$. Appliquons cette relation sur les vecteurs de
base $e_1,\dots,e_N$. Il existe des nombres $\pi_i$ tels que  
\begin{equation}
\label{firred1:8}
\lim_{n\to\infty} P^n e_i = 
\begin{pmatrix}
\pi_i \\ \vdots \\ \pi_i
\end{pmatrix}
\end{equation}
pour chaque $i$. Or $P^n e_i$ est la $i$-\`eme colonne de $P^n$, nous avons
donc prouv\'e la relation~\eqref{firred1}. Par ailleurs, comme dans le cas
$y=e_i$ on a $m_0=0$ et $M_0=1$, la relation~\eqref{firred1:3} donne
$m_1\geqs d$, donc $\pi_i\geqs d$. Par cons\'equent, tous les $\pi_i$ sont
strictement positifs. La somme des $\pi$ vaut $1$ car toute puissance de
$P$ est une matrice stochastique.

Enfin, pour montrer~\eqref{firred1A}, il suffit d'observer que 
$\Pi P=\lim_{n\to\infty}P^{n+1}=\Pi$. Chaque ligne de cette \'equation
matricielle est \'equivalente \`a~\eqref{firred1A}. 

Consid\'erons finalement le cas o\`u tous les \'el\'ements de $P^k$ sont
positifs pour un $k>1$. Le raisonnement ci-dessus peut \^etre r\'ep\'et\'e pour
montrer que les composantes de $P^{kn}z$ sont comprises entre deux bornes
$m_n$ et $M_n$ satisfaisant $M_{kn} - m_{kn} = (1-2d)^n (M_0-m_0)$. Pour les
\'etapes interm\'ediaires, on peut appliquer~\eqref{firred1:2}
et~\eqref{firred1:3} avec $d=0$ pour conclure que $M_{n+1}-m_{n+1}\leqs M_n-m_n$
pour tout $n$. Cela montre \`a nouveau que $M_n-m_n$ tend vers z\'ero.
\end{proof}

\begin{remark}
\label{rem_firred1}
Le r\'esultat pr\'ec\'edent montre que toute matrice stochastique
r\'eguli\`ere $P$ admet $1$ comme valeur propre. En fait, nous aurions
d\'ej\`a pu le remarquer avant, car la d\'efinition~\eqref{fdef2} d'une
matrice stochastique (quelconque) implique que $P\vone=\vone$, o\`u
$\vone=\transpose{(1,1\dots,1)}$. La relation~\eqref{firred1} montre en
plus que pour une matrice stochastique r\'eguli\`ere, la valeur propre $1$
est simple, et toutes les autres valeurs propres sont strictement
inf\'erieures \`a $1$ en module. 
\end{remark}

\begin{remark}
\label{rem_firred1b}
On d\'eduit facilement de l'expression~\eqref{firred1} que 
\begin{equation}
 \label{firred1D}
\Pi^2 = \Pi\;. 
\end{equation}
Une matrice satisfaisant cette relation est appel\'ee un \defwd{projecteur}\/. 
En l’occurrence, $\Pi$ projette tout vecteur ligne $\nu$ sur un multiple de
$\pi$
(c'est-\`a-dire $\nu\Pi\parallel\pi$), et tout vecteur colonne $x$ sur un
multiple du vecteur $\vone$ (c'est-\`a-dire $\Pi x\parallel\vone$). En
particulier, si $\nu$ est une distribution de probabilit\'e (donc
$\sum_i\nu_i=1$), alors on v\'erifie que $\nu\Pi = \pi$.
\end{remark}

Le vecteur ligne $\pi$ a plusieurs propri\'et\'es importantes : 
\begin{enum}
\item	Par \eqref{firred1} on a, $\forall i, j\in\cX$,  
\begin{equation}
\label{firred1B}
\lim_{n\to\infty} \bigprobin{i}{X_n=j} = \lim_{n\to\infty} (P^n)_{ij} =
\pi_j\;.
\end{equation}
$\pi$ d\'ecrit donc la distribution de probabilit\'e asymptotique de la
\chaine, qui est ind\'epen\-dante de l'\'etat initial.

\item	L'\'equation \eqref{firred1A} implique que pour tout temps $n$, 
\begin{equation}
\label{firred1C}
\bigprobin{\pi}{X_n=j} = \sum_{i\in\cX} \pi_i (P^n)_{ij} =
(\pi P^n)_j = \pi_j\;,
\end{equation}
ce qui motive la d\'efinition suivante. 
\end{enum}

\begin{definition}
\label{def_firred3}
La distribution de probabilit\'e $\pi=(\pi_1,\pi_2,\dots,\pi_N)$
satisfaisant la relation~\eqref{firred1} est appel\'ee \defwd{distribution
stationnaire}\/ (ou \defwd{invariante}) de la \chaine\ de Markov. 
\end{definition}

Enfin, on a le r\'esultat g\'en\'eral suivant : 

\begin{theorem}
\label{thm_firred2}
Pour une \chaine\ r\'eguli\`ere et toute distribution initiale $\nu$, on a
\begin{equation}
\label{firred2}
\lim_{n\to\infty} \probin{\nu}{X_n=j} = \pi_j\quad\forall j\in\cX\;.
\end{equation}
\end{theorem}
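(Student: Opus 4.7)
The plan is to reduce the statement directly to Theorem~\ref{thm_firred1}, which already gives the pointwise convergence $p^{(n)}_{ij}\to\pi_j$ for every pair $(i,j)$. The only additional ingredient is to average over the initial distribution $\nu$ and exchange the limit with a finite sum.

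First I would write, using formula~\eqref{fdef13},
\begin{equation*}
\bigprobin{\nu}{X_n=j} = \sum_{i\in\cX}\nu_i\, p^{(n)}_{ij}\;.
\end{equation*}
Since $\cX$ is finite, this is a finite linear combination, so I can pass the limit under the sum without any convergence subtlety (no dominated convergence argument is needed).

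Then, applying Theorem~\ref{thm_firred1}, each $p^{(n)}_{ij}$ tends to $\pi_j$ as $n\to\infty$. Therefore
\begin{equation*}
\lim_{n\to\infty}\bigprobin{\nu}{X_n=j}
= \sum_{i\in\cX}\nu_i\lim_{n\to\infty} p^{(n)}_{ij}
= \sum_{i\in\cX}\nu_i\,\pi_j
= \pi_j\sum_{i\in\cX}\nu_i = \pi_j\;,
\end{equation*}
since $\nu$ is a probability distribution so $\sum_i\nu_i=1$. This is exactly~\eqref{firred2}.

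There is essentially no obstacle: all the work is already contained in Theorem~\ref{thm_firred1}, and the present statement is the elementary observation that the matrix identity $\nu P^n\to\nu\Pi=\pi$ (cf.\ Remarque~\ref{rem_firred1b}) translates into convergence of the marginal law of $X_n$. If one wished to phrase the argument more compactly, one could simply note that~\eqref{firred1} gives $\nu P^n\to\nu\Pi$, and that $\nu\Pi=\pi$ for any probability vector $\nu$, which yields~\eqref{firred2} component by component.
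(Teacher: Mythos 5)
Votre argument est correct et coïncide exactement avec la première preuve donnée dans le texte, qui observe que $\nu P^n\to\nu\Pi=\pi$ en vertu du Théorème~\ref{thm_firred1} et de la Remarque~\ref{rem_firred1b}. Le cours présente ensuite une seconde démonstration par couplage (argument de Doeblin), plus longue mais qui a l'avantage de fournir une estimation de la vitesse de convergence via le temps de couplage $\tau_A$; votre réduction directe est néanmoins complète et sans lacune.
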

\begin{proof}
Une premi\`ere preuve tr\`es simple consiste \`a observer que la loi
asymptotique de $X_n$ est donn\'ee par le vecteur ligne 
\begin{equation}
 \label{firred3:0}
\lim_{n\to\infty} \nu P^n = \nu \Pi = \pi\;, 
\end{equation} 
en vertu de la Remarque~\ref{rem_firred1b}. 
Il est toutefois plus int\'eressant de pr\'esenter une autre preuve, tr\`es
\'el\'egante, due \`a Doeblin. Consid\'erons une autre \chaine\ de Markov,
d\'efinie sur l'espace $\cX\times\cX$. Ses probabilit\'es de transition
$P^\star$ sont donn\'ees par 
\begin{equation}
\label{firred3:1}
p^\star_{(i,j),(k,l)} = p_{ik}p_{jl}\;.
\end{equation}
Nous supposons que la distribution initiale de cette \chaine\ est une
mesure produit $\rho=\nu\otimes\pi$, c'est-\`a-dire que 
\begin{equation}
\label{firred3:2}
\rho((i,j))=\nu_i\pi_j \quad \forall (i,j)\in\cX\times\cX\;.
\end{equation}
Nous d\'enotons cette \chaine\ par $\set{(X_n,Y_n)}_{n\geqs0}$. Par
construction, les variables al\'eatoires $X_0$ et $Y_0$ sont
ind\'ependantes. Il suit alors de la d\'efinition~\eqref{firred3:1} des
probabilit\'es de transition que $X_n$ et $Y_n$ sont ind\'ependantes pour
tout $n$, et que les suites $\set{X_n}_{n\geqs0}$ et $\set{Y_n}_{n\geqs0}$
sont en fait deux \chaine s de Markov sur $\cX$ de matrice de transition
$P$, et de distributions initiales respectivement donn\'ees par $\nu$ et
$\pi$.

La matrice de transition $P^\star$ est \'egalement r\'eguli\`ere : il
suffit de se convaincre que les \'el\'ements de matrice des puissances
$(P^\star)^n$ sont donn\'es par des produits
$\smash{p^{(n)}_{ik}p^{(n)}_{jl}}$. 
Consid\'erons alors l'ensemble 
\begin{equation}
\label{firred3:3}
A = \setsuch{(i,i)}{i\in\cX} \subset \cX\times\cX\;.
\end{equation}
Le temps de premier passage $\tau_A$ peut aussi s'\'ecrire 
\begin{equation}
\label{firred3:4}
\tau_A = \inf\setsuch{n>0}{X_n=Y_n}\;.
\end{equation}
Nous pr\'etendons que les deux \chaine s ont la m\^eme loi pour
$n\geqs\tau_A$. Plus pr\'ecis\'ement, 
\begin{equation}
\label{firred3:5}
\probin{\rho}{X_n=j, \tau_A\leqs n} = 
\probin{\rho}{Y_n=j, \tau_A\leqs n} 
\qquad \forall j\in\cX\;, \forall n\geqs0\;.
\end{equation}
Pour montrer cela, nous introduisons un nouveau processus
$\set{Z_n}_{n\in\N}$ d\'efini par 
\begin{equation}
\label{firred3:6}
Z_n = 
\begin{cases}
X_n & \text{pour $n\leqs\tau_A$\;,}\\
Y_n & \text{pour $n>\tau_A$\;.}
\end{cases}
\end{equation}
On v\'erifie par un calcul direct, en d\'ecomposant sur les valeurs
possibles de $\tau_A$, que 
\begin{equation}
\label{firred3:7}
\probin{\rho}{[Z_{[0,n]}=i_{[0,n]}}
= \nu_{i_0}\prod_{m=1}^n p_{i_{m-1}i_m}
\end{equation}
pour tout $n\geqs 0$ et tout choix de $i_{[0,n]}\in\cX^{n+1}$. Par le
Th\'eor\`eme~\ref{thm_fdef1}, il suit que $\set{Z_n}_{n\in\N}$ est une
\chaine\ de Markov de distribution initiale $\nu$ et matrice de transition
$P$, et est donc \'egale en loi \`a $\set{X_n}_{n\in\N}$. Ceci 
prouve~\eqref{firred3:5}. Finalement, on a 
\begin{align}
\nonumber
\probin{\nu}{X_n=j} &= \probin{\rho}{X_n=j, \tau_A\leqs n}
+ \probin{\rho}{X_n=j, \tau_A > n}\;,\\
\pi_j = \probin{\pi}{Y_n=j} &= \probin{\rho}{Y_n=j, \tau_A\leqs n}
+ \probin{\rho}{Y_n=j, \tau_A > n}\;.
\label{firred3:8}
\end{align}
En prenant la diff\'erence et en utilisant~\eqref{firred3:5}, il vient 
\begin{align}
\nonumber
\bigabs{\probin{\nu}{X_n=j}-\pi_j} 
&\leqs \bigabs{\probin{\rho}{X_n=j, \tau_A > n} 
- \probin{\rho}{Y_n=j, \tau_A > n}} \\
&\leqs 2\probin{\rho}{\tau_A > n}\;.
\label{firred3:9}
\end{align}
Or cette derni\`ere quantit\'e tend vers z\'ero lorsque $n\to\infty$,
puisque $\tau_A$ est fini presque s\^urement en vertu de la
Proposition~\ref{prop_firred1}. 
\end{proof}

\begin{remark}
\label{rem_firred2}
La preuve de Doeblin permet aussi d'exprimer la vitesse de convergence
vers la distribution stationnaire \`a l'aide du temps $\tau_A$ introduit
dans la preuve. En effet, en sommant la premi\`ere ligne
de~\eqref{firred3:9} sur tous les $j\in\cX$, on obtient  
\begin{equation}
\label{firred4}
\sum_{j\in\cX} \bigabs{\probin{\nu}{X_n=j}-\pi_j} \leqs 2
\probin{\nu\otimes\pi}{\tau_A>n}\;.
\end{equation}
Le membre de gauche peut \^etre consid\'er\'e comme la distance $\ell_1$
entre la distribution de $X_n$ et la distribution stationnaire. 
Ce genre d'argument est appel\'e un argument de couplage. 
%
% \begin{proof}
% Posons $f_i=\probin{\nu}{X_n=i}$ et $B=\setsuch{i\in\cX}{f_i>\nu_i}$.
% Alors on a 
% \begin{align}
% \nonumber
% \sum_{i\in\cX} \bigabs{f_i-\pi_i}
% &= \sum_{i\in B} (f_i-\pi_i) - \sum_{i\in\cX\setminus B} (f_i-\pi_i) \\
% \nonumber
% &= 2\sum_{i\in B} (f_i-\pi_i) - \underbrace{\sum_{i\in \cX}
% (f_i-\pi_i)}_{=1-1=0} \\
% \nonumber
% &= 2\bigbrak{\prob{X_n\in B} - \prob{Y_n\in B}} \\
% \nonumber
% &\leqs 2\bigbrak{\prob{X_n\in B,Y_n\in B} + \prob{X_n\in B,Y_n\neq X_n} -
% \prob{Y_n\in B}} \\
% \nonumber
% &\leqs 2\bigbrak{\prob{X_n\in B,Y_n\in B} + \prob{Y_n\neq X_n} -
% \prob{Y_n\in B}} \\
% &\leqs 2\prob{Y_n\neq X_n}\;,
% \label{firred5}
% \end{align}
% d'o\`u le r\'esultat, vu que $\set{Y_n\neq
% X_n}\Rightarrow\set{\tau_A>n}$.
% \end{proof}
\end{remark}

Nous revenons maintenant au cas g\'en\'eral de \chaine s de Markov
irr\'eductibles. Dans ce cas, la loi de $X_n$ ne converge pas
n\'ecessairement vers une loi $\pi$ donn\'ee. Toutefois, une partie des
r\'esultats pr\'ec\'edents reste vraie : 

\begin{prop}
\label{prop_firred2}
Soit $P$ la matrice de transition d'une \chaine\ de Markov irr\'eductible.
Alors $P$ admet $1$ comme valeur propre simple. L'unique vecteur propre
\`a gauche $\pi$ de $P$ pour la valeur propre $1$ tel que $\sum_i\pi_i=1$
sera \`a nouveau appel\'e la\/ \defwd{distribution stationnaire}\/ de la
\chaine.
\end{prop}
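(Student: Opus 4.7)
L'id\'ee serait de se ramener au cas d\'ej\`a trait\'e des \chaine s r\'eguli\`eres par un proc\'ed\'e de r\'egularisation \'el\'ementaire. Je consid\'ererais la matrice auxiliaire $\tilde P = \frac{1}{2}(I+P)$, qui reste stochastique en tant que combinaison convexe de matrices stochastiques. Cette matrice pr\'esente deux avantages d\'ecisifs : elle \'elimine toute p\'eriodicit\'e \'eventuelle (l'ajout de $I$ autorise la \chaine\ modifi\'ee \`a rester sur place), et son spectre au voisinage de la valeur propre $1$ co\"incide avec celui de $P$, puisque $\tilde P - I = \frac{1}{2}(P-I)$ entra\^\i ne $xP=x \Leftrightarrow x\tilde P = x$, ainsi que l'\'equivalence analogue pour les vecteurs colonnes et pour les sous-espaces caract\'eristiques associ\'es.

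L'\'etape principale, et la plus d\'elicate, serait la v\'erification que $\tilde P$ est r\'eguli\`ere. L'irr\'eductibilit\'e de $P$ fournit, pour chaque paire $(i,j)\in\cX\times\cX$, un entier $n_{ij}\geqs0$ tel que $p^{(n_{ij})}_{ij}>0$, et la finitude de $\cX$ permet de poser $N = \max_{i,j} n_{ij} < \infty$. En d\'eveloppant
\[
(\tilde P)^N = \frac{1}{2^N} \sum_{k=0}^N \binom{N}{k} P^k\;,
\]
chaque coefficient $(\tilde P)^N_{ij}$ est minor\'e par $2^{-N} \binom{N}{n_{ij}} p^{(n_{ij})}_{ij} > 0$, ce qui \'etablit la r\'egularit\'e. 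C'est exactement ici qu'intervient de mani\`ere essentielle la finitude de $\cX$.

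Il suffirait alors d'appliquer le Th\'eor\`eme~\ref{thm_firred1} \`a $\tilde P$ pour obtenir une distribution stationnaire $\pi$ unique (\`a composantes strictement positives sommant \`a $1$), ainsi que la simplicit\'e de la valeur propre $1$ pour $\tilde P$. Par l'\'equivalence spectrale mentionn\'ee au premier paragraphe, ces deux conclusions se transf\`erent directement \`a $P$ : $1$ est valeur propre simple de $P$, et $\pi$ est l'unique vecteur propre \`a gauche normalis\'e associ\'e.

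L'obstacle principal est pr\'ecis\'ement la r\'egularit\'e de $\tilde P$, car on ne peut pas esp\'erer que $P$ elle-m\^eme soit r\'eguli\`ere (cf.\ le ph\'enom\`ene de p\'eriodicit\'e du mod\`ele d'Ehrenfest, Exemple~\ref{ex_irred1}). Le r\^ole du terme $I$ dans la d\'efinition de $\tilde P$ est pr\'ecis\'ement de briser cette p\'eriodicit\'e, tout en pr\'eservant la structure spectrale qui nous int\'eresse.
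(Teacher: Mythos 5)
Votre preuve est correcte et suit essentiellement la m\^eme d\'emarche que celle du polycopi\'e : on y consid\`ere exactement la m\^eme matrice auxiliaire $Q=\frac12[P+I]$, et sa r\'egularit\'e y est \'etablie par le m\^eme d\'eveloppement binomial de $Q^m$, o\`u $m$ est le maximum sur les couples $(i,j)$ des temps d'acc\`es minimaux. Votre r\'edaction est m\^eme un peu plus explicite que celle du cours sur le transfert spectral (noyaux de $(P-I)$ et de $(\tilde P-I)$ et de leurs puissances) qui justifie la simplicit\'e de la valeur propre $1$ pour $P$ elle-m\^eme.
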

\begin{proof}
Consid\'erons la matrice stochastique $Q=\frac12[P+I]$. Soit 
\begin{equation}
\label{firred6:1}
m = \max_{i,j\in\cX} \bigset{\min\setsuch{n\geqs1}{p^{(n)}_{ij}>0}}\;.
\end{equation}
Consid\'erons la matrice 
\begin{equation}
\label{firred6:2}
Q^m = \frac{1}{2^m} \biggbrak{I + \binom m1 P + \binom m2 P^2 + \dots + 
\binom m{m-1} P^{m-1} + P^m}\;.
\end{equation}
Pour tout couple $(i,j)$, il existe un terme de cette somme dont
l'\'el\'ement de matrice $(i,j)$ soit strictement positif. Comme tous les
autres \'el\'ements de matrice sont non-n\'egatifs, on conclut que
$(Q^m)_{i,j}>0$. Par cons\'equent, $Q$ est la matrice de transition d'une
\chaine\ r\'eguli\`ere. Par le th\'eor\`eme~\ref{thm_firred1}, il existe
une unique distribution de probabilit\'e $\pi$ telle que $\pi Q=\pi$, ce
qui implique $\frac12\brak{\pi+\pi P}=\pi$, donc $\pi P=\pi$. 
\end{proof}

\begin{example}
\label{ex_irred2}
On v\'erifie facilement par calcul direct que la distribution stationnaire 
du mod\`ele d'Ehrenfest est binomiale de param\`etre $1/2$ : 
$\nu_i = 2^{-N}\binom Ni$. 
Nous verrons plus loin une interpr\'etation plus intuitive de ce
r\'esultat.
\end{example}

Quelle est l'interpr\'etation de la distribution stationnaire? D'une part,
nous savons d\'ej\`a que si $X_n$ suit la loi $\pi$ \`a un temps $n$,
alors $X_m$ suivra la m\^eme loi $\pi$ \`a tous les temps ult\'erieurs
$m>n$. En revanche, les Th\'eor\`emes~\ref{thm_mas1} et~\ref{thm_mas2} ne
sont plus n\'ecessairement vrais dans ce cas : Il suffit de consid\'erer
l'exemple du mod\`ele d'Ehrenfest. Toutefois, on a encore convergence
vers la distribution stationnaire dans le sens de la moyenne ergodique (ou
moyenne de Cesaro) : 

\begin{theorem}
\label{thm_firred3}
Pour une \chaine\ de Markov irr\'eductible, et pour toute distribution
initiale $\nu$, la fr\'equence moyenne de passage en tout \'etat $j$
converge vers $\pi_j$: 
\begin{equation}
\label{firred7}
\lim_{n\to\infty} \frac1n
\biggexpecin{\nu}{\sum_{m=0}^{n-1}\indexfct{X_m=j}} = \pi_j
\qquad
\forall j\in\cX\;.
\end{equation}
\end{theorem}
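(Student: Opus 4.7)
\emph{Plan de preuve.} L'id\'ee est de montrer que la moyenne de Ces\`aro
\begin{equation*}
A_n \defby \frac{1}{n}\sum_{m=0}^{n-1} P^m
\end{equation*}
converge vers la matrice $\Pi$ introduite \`a la Remarque~\ref{rem_firred1b}, dont toutes les lignes valent $\pi$. En effet, par lin\'earit\'e de l'esp\'erance et d'apr\`es~\eqref{fdef13}, le membre de gauche de~\eqref{firred7} vaut $(\nu A_n)_j$; comme $\sum_i\nu_i=1$, on aura alors $(\nu\Pi)_j = \pi_j$, ce qui donnera le r\'esultat annonc\'e.

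Pour \'etablir $A_n\to\Pi$, je pars de l'identit\'e t\'elescopique
\begin{equation*}
PA_n - A_n = A_n P - A_n = \frac{1}{n}(P^n - I)\;,
\end{equation*}
dont le membre de droite a tous ses coefficients born\'es en valeur absolue par $2/n$, et tend donc vers la matrice nulle. Les matrices $A_n$ \'etant des combinaisons convexes de matrices stochastiques, leurs coefficients restent dans $[0,1]$; par compacit\'e on peut extraire une sous-suite $A_{n_k}\to A_\infty$, et le passage \`a la limite donne $PA_\infty = A_\infty$ et $A_\infty P = A_\infty$.

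D'apr\`es la Proposition~\ref{prop_firred2}, $1$ est valeur propre simple de $P$; son sous-espace propre \`a gauche est la droite $\R\pi$, et son sous-espace propre \`a droite est la droite $\R\vone$ (puisque $P\vone=\vone$ pour toute matrice stochastique et que la simplicit\'e impose une dimension $1$). Chaque colonne de $A_\infty$ est donc proportionnelle \`a $\vone$, et chaque ligne \`a $\pi$, ce qui force $A_\infty = c\,\vone\pi$ pour un certain $c\in\R$. Comme les $A_n$ sont stochastiques, on a $A_\infty\vone=\vone$, et puisque $\pi\vone=\sum_i\pi_i=1$, cela impose $c=1$ et donc $A_\infty=\Pi$. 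Toute valeur d'adh\'erence de la suite born\'ee $(A_n)$ vaut ainsi $\Pi$, d'o\`u $A_n\to\Pi$. Le point \`a justifier avec soin est l'\'egalit\'e des dimensions des sous-espaces propres \`a gauche et \`a droite pour la valeur propre $1$, mais cela r\'esulte directement de la simplicit\'e alg\'ebrique \'etablie dans la Proposition~\ref{prop_firred2} via la r\'egularisation $Q=\frac12(P+I)$.
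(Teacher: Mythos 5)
Votre preuve est correcte, mais elle suit une route r\'eellement diff\'erente de celle du polycopi\'e. Le texte proc\`ede de mani\`ere constructive~: il \'etablit l'identit\'e $(I+P+\dots+P^{n-1})(I-P+\Pi)=I-P^n+n\Pi$, d\'emontre que $I-P+\Pi$ est inversible (c'est l\`a qu'intervient la simplicit\'e de la valeur propre $1$), puis en d\'eduit la formule explicite $\frac1n\sum_{m=0}^{n-1}P^m=\frac1n(I-P^n)Z+\Pi$ avec $Z=(I-P+\Pi)^{-1}$, d'o\`u la convergence avec une vitesse en $\Order{1/n}$. Vous \'evitez enti\`erement la construction de $Z$ au profit d'un argument de compacit\'e~: l'identit\'e t\'elescopique $PA_n-A_n=A_nP-A_n=\frac1n(P^n-I)\to0$ force toute valeur d'adh\'erence $A_\infty$ \`a v\'erifier $PA_\infty=A_\infty P=A_\infty$, et la simplicit\'e de la valeur propre $1$ (Proposition~\ref{prop_firred2}, qui identifie les espaces propres \`a gauche et \`a droite \`a $\R\pi$ et $\R\vone$) impose $A_\infty=\Pi$; l'unicit\'e de la valeur d'adh\'erence d'une suite born\'ee en dimension finie conclut. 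Les deux arguments reposent sur le m\^eme ingr\'edient essentiel (la simplicit\'e de la valeur propre $1$), mais le v\^otre est plus \lq\lq mou\rq\rq\ et ne fournit aucune vitesse de convergence, tandis que celui du polycopi\'e donne gratuitement la borne $\norm{A_n-\Pi}=\Order{1/n}$; en contrepartie, vous \'economisez la v\'erification de l'inversibilit\'e de $I-P+\Pi$. Votre r\'eduction initiale (le membre de gauche de~\eqref{firred7} vaut $(\nu A_n)_j$ et $\nu\Pi=\pi$) et l'identification $A_\infty=c\,\vone\pi$ avec $c=1$ par stochasticit\'e sont correctes; notez simplement que la compacit\'e utilis\'ee suppose $\cX$ fini, ce qui est bien le cadre de ce chapitre.
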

\begin{proof}
Soit $\Pi$ la matrice dont toutes les lignes sont \'egales
\`a $\pi$, cf.~\eqref{firred1}. Alors on a $\Pi P=\Pi$, et le fait que
$P\vone=\vone$ implique qu'on a \'egalement $P\Pi=\Pi$. Il suit que 
\begin{equation}
\label{firred9:1}
(I+P+\dots+P^{n-1}) (I-P+\Pi) = I -P^n + n\Pi\;.
\end{equation}
Montrons que la matrice $I-P+\Pi$ est inversible. Soit $x$ un vecteur
colonne tel que $(I-P+\Pi)x=0$. Alors on a 
\begin{equation}
\label{firred9:2}
0 = \pi (I-P+\Pi) x = \underbrace{\pi(I-P)}_{=0} x + \pi \Pi x = \pi x\;,
\end{equation}
puisque $\pi\Pi=\pi$ en raison du fait que $\sum_i\pi_i=1$. Il suit que
$\Pi x=0$, et donc $(I-P)x=0$. Comme $P$ admet $1$ comme valeur propre
simple, avec vecteur propre \`a droite $\vone$, ceci implique que
$x\parallel\vone$, ce qui n'est possible que si $x=0$ puisque $\pi x=0$ et
tous les $\pi_i$ sont positifs. La matrice $I-P+\Pi$ est donc bien
inversible. 

Soit $Z=(I-P+\Pi)^{-1}$. Comme $\pi(I-P+\Pi)=\pi$, on a aussi $\pi=\pi Z$
et $\Pi = \Pi Z$. En multipliant~\eqref{firred9:1} \`a droite par $Z$, il
vient 
\begin{equation}
\label{firred9:3}
I + P + \dots + P^{n-1} = (I-P^n) Z + n\Pi Z = (I-P^n) Z + n\Pi\;.
\end{equation}
Or nous avons, pour tout \'etat initial $i$, 
\begin{equation}
\label{firred9:4}
\frac1n\biggexpecin{i}{\sum_{m=0}^{n-1}\indexfct{X_m=j}}
= \frac1n\sum_{m=0}^{n-1} \bigpar{P^m}_{ij} 
= \biggbrak{\frac1n(I-P^n) Z + \Pi}_{ij}\;.
\end{equation}
Comme les \'el\'ements de matrice de $P^n$ sont uniform\'ement born\'es
par $1$, cette quantit\'e converge vers $(\Pi)_{ij}=\pi_j$ lorsque
$n\to\infty$. Pour une distribution initiale quelconque $\nu$, on obtient
de la m\^eme mani\`ere la convergence vers $(\nu\Pi)_j=\pi_j$. 
\end{proof}

\begin{remark}
Cette preuve peut ne pas sembler tr\`es transparente. On peut en fait
l'\'eclairer avec quelques notions de calcul matriciel. Soit la matrice
$Q=P-\Pi$, d\'ecrivant l'\'ecart entre la matrice stochastique et le projecteur
sur la distribution stationnaire. Il suit des \'egalit\'es $\Pi
P=P\Pi=\Pi^2=\Pi$ que $\Pi Q=Q\Pi=0$, et on en d\'eduit 
\begin{equation}
 \label{firred9:A}
P^n = \Pi + Q^n
\end{equation} 
(voir aussi l'exercice~\ref{exo_mf09}). 
Dans le cas o\`u $P$ est r\'eguli\`ere, $Q^n$ tend vers $0$ lorsque
$n\to\infty$. Si $P$ est irr\'eductible mais pas r\'eguli\`ere, la
fonction $n\mapsto Q^n$ est oscillante. La preuve montre cependant que $Q$
n'admet pas la valeur propre $1$, et que la moyenne des $Q^n$ tend vers z\'ero,
donc la moyenne des $P^n$ tend vers $\Pi$.  
\end{remark}

La distribution stationnaire $\pi$ a \'egalement un lien int\'eressant
avec l'esp\'erance du temps de premier retour en un site $i$, appel\'e
\defwd{temps de r\'ecurrence moyen}\/ en $i$ :

\begin{theorem}
\label{thm_firred4}
Pour une \chaine\ de Markov irr\'eductible de distribution stationnaire
$\pi$, les temps de r\'ecurrence moyens sont donn\'es par 
\begin{equation}
\label{firred17}
\expecin{i}{\tau_i} = \frac1{\pi_i}\;.
\end{equation}
\end{theorem}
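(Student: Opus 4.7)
La strat\'egie consiste \`a relier, par un argument de renouvellement, l'esp\'erance $\expecin{i}{\tau_i}$ \`a la fr\'equence asymptotique de passage en $i$ donn\'ee par le Th\'eor\`eme~\ref{thm_firred3}.

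D'abord, il convient de v\'erifier que $\expecin{i}{\tau_i}<\infty$. Pour cela, on consid\`ere la \chaine\ modifi\'ee obtenue en rendant $i$ absorbant, comme dans la preuve de la Proposition~\ref{prop_firred1}. Cette \chaine\ est absorbante (par irr\'eductibilit\'e et finitude de $\cX$), et la Proposition~\ref{prop_fabs1} entra\^\i ne que $\probin{i}{\tau_i>n}$ d\'ecro\^\i t g\'eom\'etriquement en $n$, donc $\expecin{i}{\tau_i}<\infty$.

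Posons $N_n = \sum_{m=0}^{n-1} \indexfct{X_m=i}$, le nombre de passages en $i$ avant le temps $n$. D'une part, le Th\'eor\`eme~\ref{thm_firred3} donne
\begin{equation*}
\lim_{n\to\infty} \frac1n \expecin{i}{N_n} = \pi_i\;.
\end{equation*}
D'autre part, notons $T_1=\tau_i$ et, pour $k\geqs 2$, $T_k$ le temps \'ecoul\'e entre le $(k-1)$-\`eme et le $k$-\`eme passage de la \chaine\ en $i$, puis $S_k=T_1+\dots+T_k$. En d\'ecomposant sur les valeurs possibles de $S_{k-1}$ et en appliquant la propri\'et\'e de Markov (Proposition~\ref{prop_fdef1}) \`a chaque instant d\'eterministe, on v\'erifie que les $T_k$ sont i.i.d. de m\^eme loi que $\tau_i$ sous $\fP_i$. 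La loi forte des grands nombres donne alors $S_k/k \to \expecin{i}{\tau_i}$ presque s\^urement. Comme $S_{N_n}\leqs n < S_{N_n+1}$ et que $N_n\to\infty$ presque s\^urement (cons\'equence de la Proposition~\ref{prop_firred1}), on en d\'eduit
\begin{equation*}
\lim_{n\to\infty}\frac{N_n}{n} = \frac{1}{\expecin{i}{\tau_i}} \qquad \fP_i\text{-p.s.}
\end{equation*}
Comme $0 \leqs N_n/n \leqs 1$, le th\'eor\`eme de convergence domin\'ee transmet cette convergence \`a l'esp\'erance, et la comparaison avec la limite pr\'ec\'edente donne $\pi_i\expecin{i}{\tau_i} = 1$, d'o\`u~\eqref{firred17}.

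Le point le plus d\'elicat sera de rendre rigoureuse l'ind\'ependance et l'\'equidistribution des inter-passages $T_k$~: cela rel\`eve d'une version simple de la propri\'et\'e de Markov forte appliqu\'ee aux temps d'arr\^et $S_k$, que l'on formalise en d\'ecomposant l'\'ev\'enement $\set{T_1=t_1,\dots,T_k=t_k}$ sur les trajectoires compatibles et en invoquant la propri\'et\'e de Markov \`a l'instant d\'eterministe $S_{k-1}=t_1+\dots+t_{k-1}$ pour faire appara\^\i tre un facteur $\probin{i}{\tau_i=t_k}$.
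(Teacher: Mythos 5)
Votre preuve est correcte, mais elle suit une route v\'eritablement diff\'erente de celle du polycopi\'e. Le texte proc\`ede de mani\`ere purement alg\'ebrique~: une d\'ecomposition sur le premier pas donne la relation $\expecin{i}{\tau_j} = 1 + \sum_{k\neq j} p_{ik}\expecin{k}{\tau_j}$, puis la multiplication par $\pi_i$ et la sommation sur $i$ font dispara\^\i tre tous les termes gr\^ace \`a la stationnarit\'e $\pi_k=\sum_i\pi_i p_{ik}$, ce qui livre directement $\pi_j\expecin{j}{\tau_j}=1$ sans aucun th\'eor\`eme limite. Vous passez au contraire par un argument de renouvellement~: ind\'ependance et \'equidistribution des temps inter-passages (propri\'et\'e de Markov forte), loi forte des grands nombres, encadrement $S_{N_n}\leqs n<S_{N_n+1}$, convergence domin\'ee, puis identification avec la moyenne ergodique du Th\'eor\`eme~\ref{thm_firred3}. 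Les deux approches sont valables~: la v\^otre est plus co\^uteuse en pr\'erequis (elle exige de formaliser la propri\'et\'e de Markov forte, que le cours n'\'enonce pas explicitement \`a ce stade, ainsi que la LGN), mais elle donne une interpr\'etation probabiliste transparente de la formule de Kac et s'adapte mieux au cas d\'enombrable; celle du polycopi\'e est \'el\'ementaire et enti\`erement alg\'ebrique, au prix d'un calcul moins intuitif. Notez que votre v\'erification pr\'eliminaire de $\expecin{i}{\tau_i}<\infty$ via la cha\^\i ne rendue absorbante est bien n\'ecessaire \`a votre argument (pour appliquer la LGN avec une limite finie) et elle est correcte dans le cadre d'un espace d'\'etats fini; le m\^eme point est implicitement requis dans la preuve du texte pour justifier les r\'earrangements de sommes.
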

\begin{proof}
Nous commen\c cons par \'etablir une \'equation reliant divers temps de
premier passage moyens. Pour $i,j\in\cX$, on a 
\begin{align}
\nonumber
\expecin{i}{\tau_j}
&= \probin{i}{\tau_j=1} + \sum_{n\geqs2} n\probin{i}{\tau_j=n}\\
\nonumber
&= p_{ij} + \sum_{n\geqs2} n \sum_{k\neq j} \probin{i}{\tau_j=n,X_1=k}\\
\nonumber
&= p_{ij} + \sum_{n\geqs2} n \sum_{k\neq j} p_{ik} \probin{k}{\tau_j=n-1}\\
\nonumber
&= p_{ij} + \sum_{k\neq j} p_{ik} \sum_{m\geqs1} (m+1)
\probin{k}{\tau_j=m}\\
\nonumber
&= p_{ij} + \sum_{k\neq j} p_{ik} 
\biggbrak{\expecin{k}{\tau_j} + \underbrace{\sum_{m\geqs1}
\probin{k}{\tau_j=m}}_{=1}} \\
&= 1 + \sum_{k\neq j} p_{ik}\expecin{k}{\tau_j}\;.
\label{firred18:1}
\end{align}
Cette relation peut \^etre r\'ecrite sous la forme 
\begin{equation}
\label{firred18:2}
1-\expecin{i}{\tau_j} = -\sum_{k\in\cX} (1-\delta_{kj})
p_{ik}\expecin{k}{\tau_j}\;.
\end{equation}
Il suit que 
\begin{align}
\nonumber
1-\pi_j\expecin{j}{\tau_j}
&= \sum_{i\in\cX} \pi_i \bigbrak{1-\delta_{ij}\expecin{i}{\tau_j}} \\
\nonumber
&= \sum_{i\in\cX} \pi_i
\bigbrak{1-\expecin{i}{\tau_j} +(1-\delta_{ij})\expecin{i}{\tau_j}} \\
\nonumber
&= \sum_{k\in\cX} \sum_{i\in\cX} \pi_i
(\delta_{ik}-p_{ik})(1-\delta_{kj})\expecin{k}{\tau_j} \;.
\label{firred18:3}
\end{align}
La somme sur $i$ s'annule, puisque $\pi_k=\sum_{i}\pi_ip_{ik}$.
\end{proof}

\begin{example}
\label{ex_irred3}
Dans le cas du mod\`ele d'Ehrenfest avec $N$ boules, le temps de
r\'e\-cur\-rence moyen vers l'\'etat \`a $i$ boules est donn\'e par  
\begin{equation}
\label{firred19}
\expecin{i}{\tau_i} = \frac1{\nu_i} 
= 2^N\frac{i!(N-i)!}{N!}\;.
\end{equation}
En particulier, le temps moyen entre configurations o\`u toutes les boules
sont dans l'urne de gauche est de $2^N$. Ce temps devient gigantesque
pour des nombres de boules de l'ordre du nombre d'Avogadro, c'est-\`a-dire
du nombre de mol\'ecules dans un \'echantillon d'une mole de gaz. Ce
mod\`ele simple peut donc justifier pourquoi, lorsque deux r\'ecipients
contenant des gaz sont mis en contact, on n'observe jamais toutes les
mol\'ecules dans le m\^eme r\'ecipient.
\end{example}

%%%%%%%%%%%%%%%%%%%%%%%%%%%%%%%%%%%%%%%%%%%%%%%%%%%%%%%%%%%%%%%%%%%%%%%%%%%

\section{\Chaine s de Markov r\'eversibles}
\label{sec_rev}

Dans cette section, l'ensemble $\cX$ peut \^etre infini (mais
doit \^etre d\'enombrable). 
Rappelons que $E^\cX$ d\'enote l'ensemble des applications $f:\cX\to E$. 

\begin{definition}
\label{def_rev1}
Soit $P$ une matrice stochastique. Un vecteur
$\alpha=\set{\alpha_i}_{i\in\cX}\in[0,\infty)^\cX$, $\alpha\neq0$, est dit
\defwd{r\'eversible}\/ par rapport \`a $P$ si 
\begin{equation}
\label{rev1}
\alpha_i p_{ij} = \alpha_j p_{ji} 
\qquad \forall i, j\in\cX\;.
\end{equation}
Une \chaine\ de Markov est dite \defwd{r\'eversible}\/ si sa matrice admet
un vecteur r\'eversible. 
\end{definition}

La condition~\eqref{rev1} est appel\'ee \defwd{condition d'\'equilibre
d\'etaill\'e}\/ en physique. Elle signifie que si les \'etats $i$ et $j$
sont occup\'es avec probabilit\'es proportionnelles \`a $\alpha_i$ et
$\alpha_j$ respectivement, alors les taux de transition de $i$ \`a $j$ et
de $j$ \`a $i$ sont \'egaux.

\begin{theorem}
\label{thm_rev1}
Soit $P$ %$=\set{p_{ij}}_{i,j\in\cX}$ 
une matrice stochastique et
$\alpha\in[0,\infty)^\cX$ un vecteur non nul.
\begin{enum}
\item	Si $\alpha$ est r\'eversible par rapport \`a $P$, alors $\alpha$
est une mesure invariante. 
\item	Si $\alpha$ est r\'eversible par rapport \`a $P$, et
$\sum_{j\in\cX}\alpha_j<\infty$, alors la mesure $\pi$ d\'efinie par
$\pi_i=\alpha_i/\sum_{j\in\cX}\alpha_j$ est une distribution stationnaire.
\item	Si $\pi$ est une distribution stationnaire, alors 
\begin{equation}
\label{rev2}
\probin{\pi}{X_0=i_0,X_1=i_1,\dots,X_n=i_n} = 
\probin{\pi}{X_0=i_n,X_1=i_{n-1},\dots,X_n=i_0}
\end{equation}
pour tout $n\in\N$ et tout choix de $i_0,\dots,i_n\in\cX$.
\end{enum}
\end{theorem}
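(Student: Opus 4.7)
The plan is to address the three claims in order, exploiting the detailed balance relation \eqref{rev1} with increasing strength.

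For part (1), the argument is essentially one line: summing \eqref{rev1} over $i\in\cX$ yields
\[
\sum_{i\in\cX} \alpha_i\, p_{ij} \;=\; \sum_{i\in\cX} \alpha_j\, p_{ji} \;=\; \alpha_j \sum_{i\in\cX} p_{ji} \;=\; \alpha_j\;,
\]
where the last equality uses that $P$ is stochastic (rows sum to $1$). This is exactly $\alpha P = \alpha$, so $\alpha$ is an invariant measure.

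For part (2), once $S \defby \sum_{j\in\cX} \alpha_j$ is finite, I would set $\pi_i = \alpha_i/S$. Invariance $\alpha P = \alpha$ is preserved under scalar multiplication, so $\pi P = \pi$, and by construction $\sum_i \pi_i = 1$; hence $\pi$ is a stationary distribution.

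For part (3), I read the statement in the context of this section as meaning that $\pi$ itself is reversible (i.e.\ satisfies \eqref{rev1}), which is the only setting in which a trajectory reversal symmetry can hold. The key tool is Theorem~\ref{thm_fdef1}, which gives the explicit trajectory probability
\[
\probin{\pi}{X_{[0,n]}=i_{[0,n]}} \;=\; \pi_{i_0}\, p_{i_0 i_1}\, p_{i_1 i_2} \cdots p_{i_{n-1} i_n}\;.
\]
I would then apply \eqref{rev1} to the leftmost weighted pair, $\pi_{i_0} p_{i_0 i_1} = \pi_{i_1} p_{i_1 i_0}$, producing the product $p_{i_1 i_0}\,\pi_{i_1} p_{i_1 i_2} \cdots p_{i_{n-1} i_n}$. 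Iterating the same step on the new leftmost weighted pair $\pi_{i_1}p_{i_1 i_2}$, and continuing by induction on $n$, the full product is transformed into $\pi_{i_n}\, p_{i_n i_{n-1}} \cdots p_{i_1 i_0}$, which by a second application of Theorem~\ref{thm_fdef1} is exactly $\probin{\pi}{X_{[0,n]} = (i_n, i_{n-1}, \dots, i_0)}$.

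The main obstacle is not computational but interpretive: one must recognise that (3) is meant under the reversibility assumption on $\pi$, since the reversal identity fails for a generic non-reversible stationary distribution. A useful sanity check, and arguably the content of the converse implication, is to specialise the identity to $n=1$: it reduces precisely to \eqref{rev1}, so detailed balance is in fact equivalent to the trajectory time-reversal symmetry.
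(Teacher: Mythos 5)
Your proof is correct and follows essentially the same route as the paper's: summing the detailed-balance relation over $i$ for part (1), normalising for part (2), and for part (3) applying Théorème~\ref{thm_fdef1} and then sliding $\pi$ from the left end of the product to the right by repeated use of \eqref{rev1}. Your interpretive remark is also on target --- the paper's own proof of (3) uses $\pi_{i_0}p_{i_0i_1}=p_{i_1i_0}\pi_{i_1}$, i.e.\ reversibility of $\pi$ and not mere stationarity, so the hypothesis of (3) is indeed to be read in the reversible setting established by parts (1)--(2).
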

\begin{proof}
\hfill
\begin{enum}
\item	On a
\begin{equation}
\label{rev2:1}
\sum_{i\in\cX} \alpha_i p_{ij} = \alpha_j \sum_{i\in\cX} p_{ji} =
\alpha_j\;.
\end{equation}
\item	Suit imm\'ediatement de 1.
\item	Par le Th\'eor\`eme~\ref{thm_fdef1},
\begin{align}
\nonumber
\probin{\pi}{X_0=i_0,X_1=i_1,\dots,X_n=i_n}
&= \pi_{i_0} p_{i_0i_1} p_{i_1i_2} \dots p_{i_{n-1}i_n} \\
\nonumber
&= p_{i_1i_0} \pi_{i_1} p_{i_1i_2} \dots p_{i_{n-1}i_n} \\
&= \dots = p_{i_1i_0} p_{i_2i_1}\dots p_{i_ni_{n-1}} \pi_{i_n}\;,
\label{rev2:2}
\end{align}
qui est \'egal \`a $\probin{\pi}{X_0=i_n,X_1=i_{n-1},\dots,X_n=i_0}$.
\qed
\end{enum}
\renewcommand{\qed}{}
\end{proof}

La relation~\eqref{rev2} signifie qu'une trajectoire a la m\^eme
probabilit\'e que la trajectoire renvers\'ee dans le temps. C'est ce qui
justifie le terme de r\'eversibilit\'e.

\begin{example}[Marche al\'eatoire du cavalier]
On suppose qu'un cavalier se d\'eplace sur un \'echiquier, en choisissant
\`a chaque unit\'e de temps, de mani\`ere \'equiprobable, l'un des
mouvements autoris\'es par les r\`egles des Echecs. Combien de temps se
passe-t-il en moyenne entre deux passages du cavalier au coin inf\'erieur
gauche de l'\'echiquier?

\begin{figure}
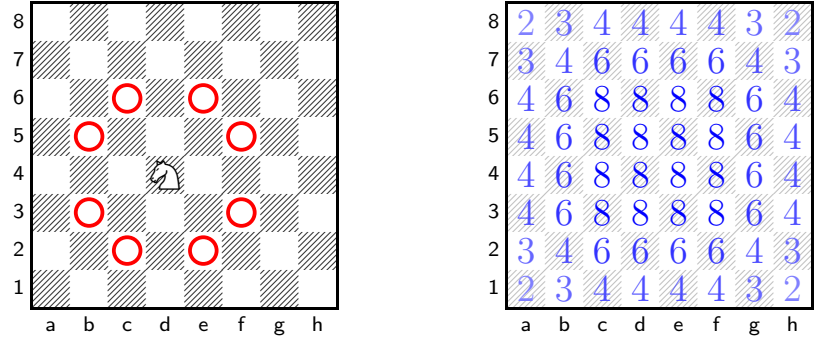

%  \centerline{
%  \includegraphics*[clip=true,height=42mm]{figs/echec1}
%  }
% \figtext{
% }
% \begin{tikzpicture}
% \pgfmathsetmacro{\boardsize}{8}
%  
%     \foreach \i in {1,...,\boardsize}{
%         \foreach \j in {1,...,\boardsize}{
%             \pgfmathsetmacro{\weight}{(1 + (-1)^(\i+\j))*50};
%             \node[rectangle,fill=gray!\weight,minimum size=1cm] (node\i-\j) at
%              (\i,\j) {};
%              \draw (\i-0.5,\j-0.5) rectangle (\i +0.5,\j +0.5);
%         }
%     }
%     
% \end{tikzpicture}
\begin{center}
\vspace{-5mm}
\chessboard[smallboard,
  boardfontsize=14.4pt,
  setwhite={nd4},showmover=false,
  color=red,
  padding=-0.2em,
  pgfstyle=circle,
  markfields={b3,b5,c2,c6,e2,e6,f3,f5}
  ]
\hspace{10mm}
\setchessboard{
blackfieldcolor=black!30,
setfontcolors}
\chessboard[smallboard,
  showmover=false,
  boardfontsize=14.4pt,
  pgfstyle=text,
  color=blue,
  text=$8$\bfseries\sffamily,
  markregion=c3-c3,
  markregion=d3-d3,
  markregion=e3-e3,
  markregion=f3-f3,
  markregion=c4-c4,
  markregion=d4-d4,
  markregion=e4-e4,
  markregion=f4-f4,
  markregion=c5-c5,
  markregion=d5-d5,
  markregion=e5-e5,
  markregion=f5-f5,
  markregion=c6-c6,
  markregion=d6-d6,
  markregion=e6-e6,
  markregion=f6-f6,
  color=blue!80,
  text=$6$\bfseries\sffamily,
  markregion=c2-c2,
  markregion=d2-d2,
  markregion=e2-e2,
  markregion=f2-f2,
  markregion=c7-c7,
  markregion=d7-d7,
  markregion=e7-e7,
  markregion=f7-f7,
  markregion=b3-b3,
  markregion=b4-b4,
  markregion=b5-b5,
  markregion=b6-b6,
  markregion=g3-g3,
  markregion=g4-g4,
  markregion=g5-g5,
  markregion=g6-g6,
  color=blue!70,
  text=$4$\bfseries\sffamily,
  markregion=c1-c1,
  markregion=d1-d1,
  markregion=e1-e1,
  markregion=f1-f1,
  markregion=c8-c8,
  markregion=d8-d8,
  markregion=e8-e8,
  markregion=f8-f8,
  markregion=a3-a3,
  markregion=a4-a4,
  markregion=a5-a5,
  markregion=a6-a6,
  markregion=h3-h3,
  markregion=h4-h4,
  markregion=h5-h5,
  markregion=h6-h6,
  markregion=b2-b2,
  markregion=g2-g2,
  markregion=b7-b7,
  markregion=g7-g7,
  color=blue!60,
  text=$3$\bfseries\sffamily,
  markregion=b1-b1,
  markregion=a2-a2,
  markregion=g1-g1,
  markregion=h2-h2,
  markregion=b8-b8,
  markregion=a7-a7,
  markregion=g8-g8,
  markregion=h7-h7,
  color=blue!50,
  text=$2$\bfseries\sffamily,
  markregion=a1-a1,
  markregion=h1-h1,
  markregion=a8-a8,
  markregion=h8-h8
  ]  
\end{center}
\vspace{-5mm}
 \caption[]{Mouvements permis du cavalier sur l'\'echiquier. Nombre de
 mouvements possibles \`a partir de chaque case.}
 \label{fig_echecs}
\end{figure}

Soit $n_i$ le nombre de mouvements possibles \`a partir de la case $i$
(\figref{fig_echecs}). La trajectoire du cavalier est donc d\'ecrite par
une \chaine\ de Markov de probabilit\'es de transition
\begin{equation}
\label{rev3}
p_{ij} = 
\begin{cases}
\myvrule{16pt}{14pt}{0pt} 
\dfrac1{n_i} & \text{si le mouvement $i\mapsto j$ est permis\;,} \\
0 & \text{sinon\;.}
\end{cases}
\end{equation}
On v\'erifie alors facilement que $n=\set{n_i}_{i\in\cX}$ est un vecteur
r\'eversible de la \chaine. Par cons\'equent, 
\begin{equation}
\label{rev4}
\pi_i = \frac{n_i}{\sum_{j\in\cX}n_j} = \frac{n_i}{336}
\end{equation}
est la distribution stationnaire de la \chaine. Il suit alors du
Th\'eor\`eme~\ref{thm_stat1} que le temps de r\'ecurrence moyen vers le
coin inf\'erieur gauche est donn\'e par $1/\pi_{(1,1)} = 336/2 = 168$. 
\end{example}

\begin{example}[Mod\`ele d'Ehrenfest]
\label{ex_rev2}
Nous sommes maintenant en mesure d'expliquer pourquoi la distribution
invariante du mod\`ele d'Ehrenfest est binomiale (cf.
Exemples~\ref{ex_Ehrenfest} et~\ref{ex_irred2}). Pour cela, au lieu de
consid\'erer le mod\`ele comme une \chaine\ de Markov sur l'ensemble
$\set{0,\dots,N}$, nous le consid\'erons comme une \chaine\ sur
$\cX=\set{0,1}^N$ (qu'on peut consid\'erer comme un hypercube de
dimension $N$). La composante $x_i$ de l'\'etat $x=(x_1,\dots,x_N)\in\cX$
vaut $0$ si la $i^\text{\`eme}$ boule est dans l'urne de gauche, et $1$ si
elle est dans l'urne de droite. 

Depuis chaque \'etat $x\in\cX$, on peut atteindre exactement $N$ autres
\'etats, obtenus en changeant exactement une composante de $x$, chaque fois
avec probabilit\'e $1/N$. Par cons\'equent, tout vecteur constant est
r\'eversible, et la distribution stationnaire est uniforme: $\pi_x=2^{-N}
\forall x\in\cX$. Toutefois, il peut y avoir beaucoup d'\'etats de $\cX$
correspondant \`a un nombre donn\'e de boules dans une urne. En fait, 
\begin{equation}
\label{rev5}
\probin{\pi}{\text{$m$ boules dans l'urne de droite}}
= \sum_{x\colon \sum x_i=m}\pi_x
= \binom{N}{m} \frac{1}{2^N}\;.
\end{equation}
On retrouve le fait que la distribution stationnaire du mod\`ele
d'Ehrenfest est binomiale.
\end{example}

%%%%%%%%%%%%%%%%%%%%%%%%%%%%%%%%%%%%%%%%%%%%%%%%%%%%%%%%%%%%%%%%%%%%%%%%%%%

\goodbreak
\section{Exercices}
\label{sec_exo_markov_fini}

\begin{exercice}
\label{exo_mf01} 

Un alpiniste veut faire l'ascension du Mont Blanc. Lors de son ascension,
il d\'ecide de passer la nuit au refuge de T\^ete Rousse, et \'egalement
au refuge de l'Aiguille du Go\^uter. Chaque matin, il observe la
m\'et\'eo. Si celle-ci lui para\^\i t bonne, alors il poursuit son
ascension jusqu'\`a l'\'etape suivante. En revanche, si la m\'et\'eo semble
mauvaise, il redescend d'une \'etape. 
On suppose que l'alpiniste est initialement au refuge de T\^ete Rousse, et
que s'il est oblig\'e de redescendre au Nid d'Aigle, alors il abandonne
son projet d'ascension. Enfin, on suppose que la m\'et\'eo est bonne avec
probabilit\'e $p$, et mauvaise avec probabilit\'e $q=1-p$, et
ind\'ependante de la m\'et\'eo des jours pr\'ec\'edents. 

% \bigskip
% \centerline{\includegraphics*[clip=true,height=50mm]{figs/MontBlanc}}
% \medskip

\begin{center}
\begin{tikzpicture}[->,>=stealth',shorten >=5pt,shorten <=5pt,auto,node
distance=3.0cm, thick,every node/.style={font=\sffamily\footnotesize}]

\shade (0.0,0.0) -- (1.5,1.8) -- (2.5,1.8) -- (4.0,3.8) -- (5.0,3.8)  
-- (6.5,5.8) -- (8.5,5.8) -- (10.0,0) -- (0.0,0.0);

  \node (NA) at(0,0) {Nid d'Aigle (2500 m)};
  \node (TR) at(2.5,2) {T\^ete Rousse (3100 m)};
  \node (AG) at(5.0,4) {Aiguille du Go\^uter (3800 m)};
  \node (MB) at(7.5,6) {Mont Blanc (4800 m)};

  \path[every node/.style={font=\sffamily\small}]
    (TR) edge [bend left,below right] node {$q$} (NA)
    (AG) edge [bend left,below right] node {$q$} (TR)
    (TR) edge [bend left,above left] node {$p$} (AG)
    (AG) edge [bend left,above left] node {$p$} (MB)
    ;
\end{tikzpicture}
\end{center}

\begin{enum}
\item	Montrer que le probl\`eme peut \^etre d\'ecrit par une cha\^\i ne de
Markov absorbante, et calculer sa matrice fondamentale.
\item	Calculer, en fonction de $p$, la probabilit\'e que l'alpiniste
atteigne le sommet.
\item	D\'eterminer la valeur $p^\star$ de $p$ pour qu'il 
ait une chance sur deux d'atteindre le sommet.
\item	Calculer le nombre moyen de jours de l'ascension pour $p=p^\star$.
\end{enum}
\end{exercice}

\goodbreak

\begin{exercice}
\label{exo_mf02} 

R\'esoudre le probl\`eme de la souris dans le labyrinthe (Exemple~3.1.1.)~:

\begin{enum}
\item	D\'eterminer la matrice fondamentale de la cha\^\i ne.
\item	Calculer la probabilit\'e que la souris atteigne la nourriture.
\item	Calculer le temps moyen du parcours de la souris.
\end{enum}
\end{exercice}

\goodbreak

\begin{exercice} 
\label{exo_mf03} 

Deux joueurs A et B s'affrontent dans une partie de tennis. Chaque point jou\'e 
est gagn\'e par le joueur A avec une probabilit\'e de $3/5$, sinon il est
gagn\'e par B. On suppose les points ind\'ependants. 

Initialement, les deux joueurs sont \`a \'egalit\'e. Pour gagner la partie, un
joueur doit obtenir une avance de deux points sur son opposant. 

\begin{enum}
\item	Mod\'eliser le jeu par une \chaine\ de Markov absorbante \`a $5$
\'etats: Egalit\'e (deuce), Avantage A, Avantage B, A gagne, et B gagne.
Donner la matrice de transition de cette \chaine. 

\item	Montrer que la matrice fondamentale de la \chaine\ est donn\'ee par 
\[
N = \frac{1}{13}
\begin{pmatrix}
25 & 15 & 10 \\
10 & 19 &  4 \\
15 &  9 & 19
\end{pmatrix}\;.
\]

\item	Calculer la probabilit\'e que A gagne, si les joueurs sont initialement
\`a \'egalit\'e. 

\item	Calculer la dur\'ee moyenne du jeu si les joueurs sont initialement
\`a \'egalit\'e. 
\end{enum}

\end{exercice}

\goodbreak

\begin{exercice} 
\label{exo_mf031} 

Bilbo le hobbit est perdu dans les cavernes des orques, o\`u r\`egne une
obscurit\'e totale. Partant de la caverne 1, il choisit de mani\`ere
\'equiprobable l'une des galeries partant de cette caverne, et continue de cette
mani\`ere jusqu'\`a ce qu'il aboutisse soit \`a la caverne de Gollum (caverne
4), soit \`a l'air libre (num\'ero 5). 

\bigskip
\centerline{
 \includegraphics*[clip=true,height=70mm]{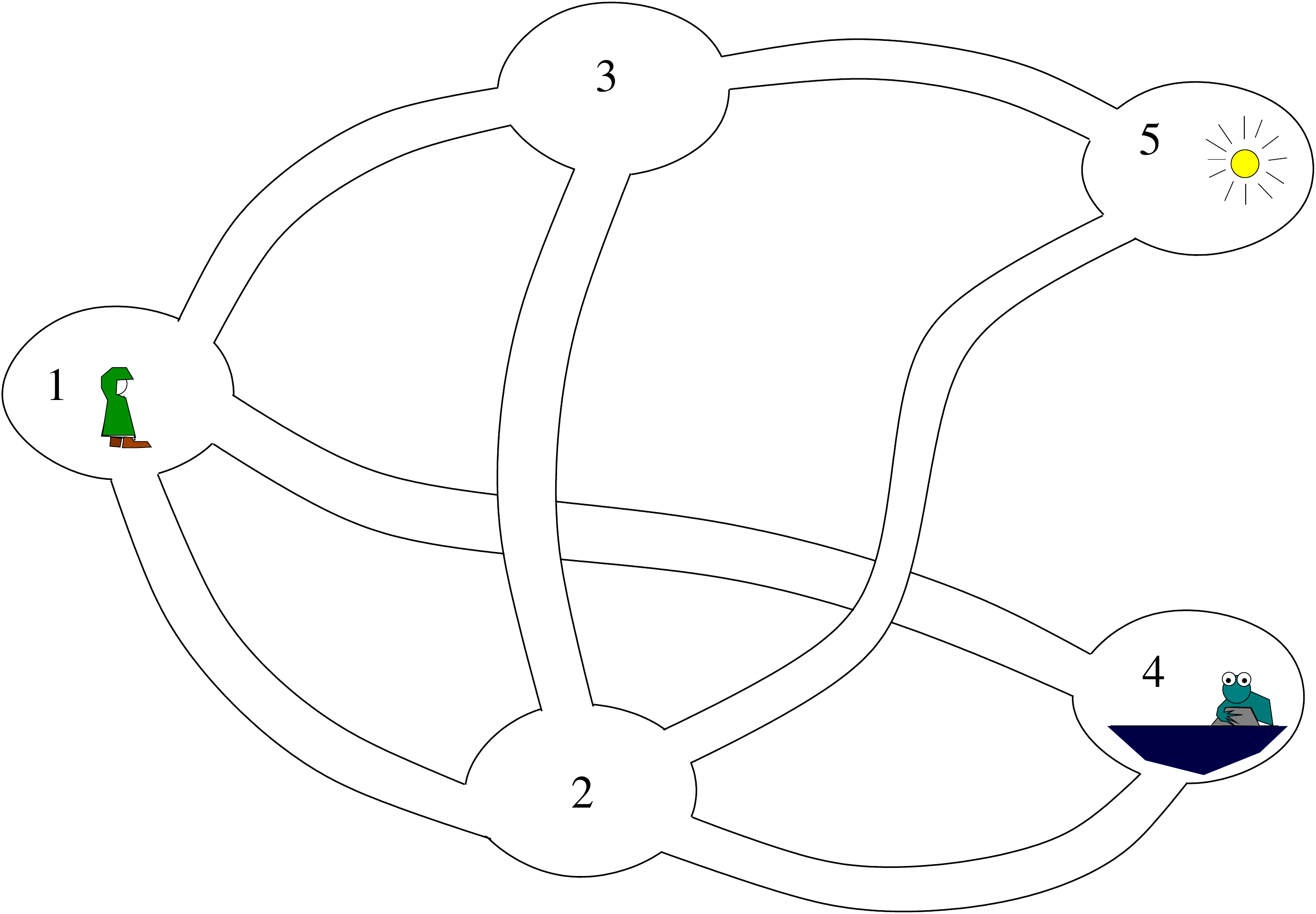}
 }
\medskip

\begin{enum}
\item	D\'ecrire l'itin\'eraire de Bilbo par une cha\^ine de Markov, dont on
donnera la matrice de transition et la matrice fondamentale. 

\item	Partant de 1, quelle est la probabilit\'e que Bilbo trouve la sortie
plut\^ot que de d\'eboucher dans la caverne de Gollum? 

\item 	Combien de galeries Bilbo aura-t-il travers\'e en moyenne, avant de
d\'eboucher dans la caverne de Gollum ou a l'air libre? 
\end{enum}

\end{exercice}

\goodbreak

\begin{exercice} 
\label{exo_mf04} 

On consid\`ere une \chaine\ de Markov sur $\cX=\set{1,2,3,4}$, de matrice
de transition
\[
P = 
\begin{pmatrix}
0 & 1/2 & 1/2 & 0 \\
1/16 & 7/16 & 0 & 1/2 \\
1/16 & 0 & 7/16 & 1/2 \\
0 & 1/4 & 1/4 & 1/2 
\end{pmatrix}
\]
\begin{enum}
\item	Montrer que cette \chaine\ est irr\'eductible.
\item	La \chaine\ est-elle r\'eguli\`ere?
\item	Calculer la distribution stationnaire de la \chaine. 
\end{enum}

\end{exercice}

\goodbreak

\begin{exercice}
\label{exo_mf05} 

On consid\`ere la \chaine\ de Markov suivante~:
\medskip

% \centerline{
%  \includegraphics*[clip=true,height=30mm]{figs/chain1_exam_09}
%  }
 \begin{center}
\begin{tikzpicture}[->,>=stealth',shorten >=2pt,shorten <=2pt,auto,node
distance=4.0cm, thick,main node/.style={circle,scale=0.7,minimum size=1.2cm,
fill=blue!20,draw,font=\sffamily\Large}]

  \node[main node] (1) {1};
  \node[main node] (2) [right of=1] {2};
  \node[main node] (3) [right of=2] {3};
  \node[main node] (4) [right of=3] {4};

  \path[every node/.style={font=\sffamily\small}]
    (1) edge [bend left, above] node {$1$} (2)
    (2) edge [bend left, above] node {$1/4$} (3)
    (3) edge [bend left, above] node {$1/2$} (4)
    (4) edge [bend left, below] node {$1$} (3)
    (3) edge [bend left, below] node {$1/2$} (2)
    (2) edge [bend left, below] node {$1/4$} (1)
    (2) edge [loop right, above,distance=1.5cm,out=120,in=60] node {$1/2$} (2)
  ;
\end{tikzpicture}
\end{center}

\begin{enum}
\item 	Donner la matrice de transition $P$ de la \chaine.
\item	La \chaine\ est-elle irr\'eductible?
\item	La \chaine\ est-elle r\'eguli\`ere? 
\item	D\'eterminer la distribution stationnaire de la \chaine.
\item	La \chaine\ est-elle r\'eversible?
\end{enum}

\end{exercice}

\goodbreak

\begin{exercice} 
\label{exo_mf051} 

Soit la cha\^ine de Markov sur $\cX=\set{1,2,3,4}$ dont le graphe est le
suivant:

\begin{center}
\begin{tikzpicture}[->,>=stealth',shorten >=2pt,shorten <=2pt,auto,node
distance=4.0cm, thick,main node/.style={circle,scale=0.7,minimum size=1.2cm,
fill=blue!20,draw,font=\sffamily\Large}]

  \node[main node] (1) {1};
  \node[main node] (3) [above right of=1] {3};
  \node[main node] (2) [below right of=3] {2};
  \node[main node] (4) [below right of=1] {4};

  \path[every node/.style={font=\sffamily\small}]
    (1) edge [bend left,above] node {$1/2$} (2)
    (2) edge [bend left,below] node {$1/2$} (1)
    (1) edge [bend right, below left] node {$1/2$} (4)
    (4) edge [bend right, below right] node {$1/2$} (2)
    (2) edge [bend right, above right] node {$1/2$} (3)
    (3) edge [bend right, above left] node {$1/2$} (1)
    (3) edge [loop left, above,distance=1cm,out=60,in=120] node {$1/2$} (3)
    (4) edge [loop left, below,distance=1cm,out=-120,in=-60] node {$1/2$} (4)
   ;
\end{tikzpicture}
\end{center}

\begin{enum}
\item	La cha\^ine est-elle irr\'eductible?
\item	La cha\^ine est-elle r\'eguli\`ere?
\item 	D\'eterminer la distribution stationnaire $\pi$ de la cha\^ine.
\item 	La cha\^ine est-elle r\'eversible? 
\end{enum}

\end{exercice}

\goodbreak

\begin{exercice}
\label{exo_mf06} 

Le temps qu'il fait dans une certaine ville est class\'e en trois types: Pluie,
neige et beau temps. On suppose que s'il pleut, il y a une chance sur trois
qu'il pleuve le lendemain, une chance sur six qu'il neige, et une chance sur
deux qu'il fasse beau. S'il neige, il y a une chance sur deux qu'il pleuve le
lendemain, et une chance sur deux qu'il neige. S'il fait beau, il y a une
chance sur quatre qu'il fasse beau le lendemain, et trois chance sur quatre
qu'il pleuve. On suppose que le temps du jour $n$ ne d\'epend que du temps
qu'il fait le jour $n-1$. 

\begin{enum}
\item	Formuler le probl\`eme comme une \chaine\ de Markov en temps discret.
De quel type de \chaine\ s'agit-il?
\item	D\'eterminer la probabilit\'e asymptotique qu'il pleuve, qu'il neige ou
qu'il fasse beau.
\item	Quel est l'intervalle moyen entre deux jours de neige?
\end{enum}

\end{exercice}

\goodbreak

\begin{exercice}
\label{exo_mf07} 

Ang\`ele poss\`ede 3 parapluies. Chaque jour, elle va au bureau le matin, et
revient \`a son domicile le soir. Pour chaque trajet, elle emporte avec elle un
parapluie s'il pleut, et s'il y en a au moins un sur place. Elle n'emporte pas
de parapluie s'il ne pleut pas. On suppose que la probabilit\'e qu'il pleuve au
d\'ebut de chaque trajet est de $1/3$, et qu'elle est ind\'ependante de la
m\'et\'eo lors de tous les autres trajets. Soit $X_n$ le nombre de parapluies
qu'Ang\`ele poss\`ede sur place avant de d\'ebuter le $n$i\`eme trajet. 

\begin{enum}
\item	Montrer que $\set{X_n}_n$ est une \chaine\ de Markov, et donner sa
matrice de transition.
\item	De quel type de \chaine\ s'agit-il? 
\item	Quelle est la probabilit\'e, asymptotiquement au bout d'un grand nombre
de voyages, qu'Ang\`ele ne dispose pas de parapluie sur place au moment de
partir?
\item	Quelle est la probabilit\'e asymptotique qu'elle se fasse mouiller
b\^etement, c'est-\`a-dire qu'elle n'ait pas de parapluie \`a sa disposition
alors qu'il pleut d\`es son d\'epart? 
\end{enum}
\end{exercice}

\goodbreak

\begin{exercice}
\label{exo_mf08} 

Montrer que la distribution binomiale est stationnaire
pour le mod\`ele d'Ehrenfest.
\end{exercice}

\goodbreak

\begin{exercice}
\label{exo_mf09} 

Toute matrice stochastique de taille $2\times2$ peut s'\'ecrire
\[
P = 
\begin{pmatrix}
1-p & p \\
q & 1-q
\end{pmatrix}
\qquad
\text{avec $p, q\in[0,1]$. }
\]

\begin{enum}
\item	Discuter, en fonction de $p$ et $q$, quand $P$ est absorbante,
irr\'eductible, r\'eguli\`ere. 
\item	Montrer que la matrice 
\[
\Pi = \frac{1}{p+q}
\begin{pmatrix}
q & p \\
q & p
\end{pmatrix}
\]
est un projecteur ($\Pi^2=\Pi$) qui commute avec $P$. Calculer son noyau et
son image. 

\item	Soit $Q=P-\Pi$. Montrer que $Q\Pi=\Pi Q=0$. Calculer $Q^2$, puis
$Q^n$ pour tout $n$. 

\item	D\'eduire des r\'esultats pr\'ec\'edents $P^n$ pour tout $n$.
Discuter la limite de $P^n$ lorsque $n\to\infty$ en fonction de $p$ et $q$.
\end{enum}
\end{exercice}

\goodbreak

\begin{exercice}
\label{exo_mf10} 

Une puce se d\'eplace sur le r\'eseau ci-dessous, en choisissant \`a chaque
saut l'une des cases adjacentes, au hasard de mani\`ere uniforme. 

% \bigskip
% 
% \centerline{
%  \includegraphics*[clip=true,height=40mm]{figs/lattice}
%  }
 
\begin{center}
\begin{tikzpicture}[-,auto,node
distance=3.0cm, thick,main node/.style={circle,scale=0.7,minimum size=0.5cm,
fill=green!20,draw,font=\sffamily\Large}]

  \node[main node] (00) at(0.0,0.0) {};
  \node[main node] (10) at(-0.5,-0.75) {};
  \node[main node] (11) at(0.5,-0.75) {};
  \node[main node] (20) at(-1.0,-1.5) {};
  \node[main node] (21) at(0.0,-1.5) {};
  \node[main node] (22) at(1.0,-1.5) {};
  \node[main node] (30) at(-1.5,-2.25) {};
  \node[main node] (31) at(-0.5,-2.25) {};
  \node[main node] (32) at(0.5,-2.25) {};
  \node[main node] (33) at(1.5,-2.25) {};
  \node[main node] (40) at(-2.0,-3.0) {};
  \node[main node] (41) at(-1.0,-3.0) {};
  \node[main node] (42) at(0.0,-3.0) {};
  \node[main node] (43) at(1.0,-3.0) {};
  \node[main node] (44) at(2.0,-3.0) {};

  \path[every node/.style={font=\sffamily\small}]
    (00) edge (10) edge (11) 
    (10) edge (20) edge (21) edge (11)
    (11) edge (21) edge (22)
    (20) edge (30) edge (31) edge (21)
    (21) edge (31) edge (32) edge (22)
    (22) edge (32) edge (33)
    (30) edge (40) edge (41) edge (31)
    (31) edge (41) edge (42) edge (32)
    (32) edge (42) edge (43) edge (33)
    (33) edge (43) edge (44)
    (40) edge (41)
    (41) edge (42)
    (42) edge (43)
    (43) edge (44)
   ;
\end{tikzpicture}
\end{center}

\vspace{-3mm}
\noindent
D\'eterminer le temps de r\'ecurrence moyen vers le coin inf\'erieur
gauche.
\end{exercice}

\begin{exercice}
\label{exo_mf11} 
On consid\`ere 
\begin{enum}
\item	un roi;
\item	une dame;
\item	un fou
\end{enum}
se d\'epla\c cant sur un \'echiquier, en
choisissant \`a chaque fois, de mani\`ere \'equiprobable,  l'un des
mouvements permis par les r\`egles des \'echecs.

\begin{center}
\vspace{-5mm}
\chessboard[tinyboard,
  setwhite={Kc4},showmover=false,
  color=red,
  padding=-0.2em,
  pgfstyle=circle,
  markfields={b3,b4,b5,c3,c5,d3,d4,d5}
  ]
\hspace{10mm}
\chessboard[tinyboard,
  setwhite={Qc4},showmover=false,
  color=red,
  padding=-0.2em,
  pgfstyle=circle,
  markfields={b3,a2,b5,a6,d3,e2,f1,d5,e6,f7,g8,c1,c2,c3,c5,c6,c7,c8,a4,b4,d4,
  e4,f4,g4,h4}
  ]
\hspace{10mm}
\chessboard[tinyboard,
  setwhite={bc4},showmover=false,
  color=red,
  padding=-0.2em,
  pgfstyle=circle,
  markfields={b3,a2,b5,a6,d3,e2,f1,d5,e6,f7,g8}
  ]
\end{center}

\vspace{-3mm}
\noindent
D\'eterminer dans chaque cas le temps de r\'ecurrence moyen vers le coin
inf\'erieur gauche. 
\end{exercice}

\begin{exercice}
\label{exo_mf12} 

Soit $X_n$ la \chaine\ de Markov sur $\cX=\set{1,2,\dots,N}$ de probabilit\'es
de transition
\[
p_{ij} = 
\begin{cases}
p & \text{si $j=i+1$ ou $i=N$ et $j=1$\;} \\
1-p & \text{si $j=i-1$ ou $i=1$ et $j=N$\;.} 
\end{cases}
\]
\begin{enum}
 \item	D\'eterminer la distribution stationnaire de la \chaine.
 \item	Pour quelles valeurs de $p$ la \chaine\ est-elle r\'eversible?
\end{enum}
\end{exercice}

\begin{exercice}
\label{exo_mf13} 

Soit $\cG=(V,E)$ un graphe non orient\'e connexe fini. Soit $X_n$ la \chaine\ de
Markov sur $V$ construite en choisissant pour $X_{n+1}$, de mani\`ere
\'equiprobable, l'un des sommets adjacents \`a $X_n$. 
\begin{enum}
\item	Montrer que le nombre de voisins de chaque site forme un vecteur
r\'eversible.
\item	En d\'eduire une expression pour la distribution stationnaire de la
\chaine.
\end{enum}
\end{exercice}

%%%%%%%%%%%%%%%%%%%%%%%%%%%%%%%%%%%%%%%%%%%%%%%%%%%%%%%%%%%%%%%%%%%%%%%%%%%

%\end{document}

\chapter{Cha\^\i nes de Markov sur un ensemble d\'enombrable}
\label{chap_den}

%%%%%%%%%%%%%%%%%%%%%%%%%%%%%%%%%%%%%%%%%%%%%%%%%%%%%%%%%%%%%%%%%%%%%%%%%%%

\section{Marches al\'eatoires}
\label{sec_rw}

Les marches al\'eatoires constituent un exemple relativement simple,
et n\'eanmoins tr\`es important de \chaine s de Markov sur un ensemble
d\'enombrable infini. Dans ce cas, en effet, $\cX=\Z^d$ est un r\'eseau
infini, de dimension $d\in\N^*$. D'habitude, on consid\`ere que la
\chaine\ d\'emarre en $X_0=0$. Ensuite, elle choisit \`a chaque instant
l'un des $2d$ sites voisins, selon une loi fix\'ee d'avance. 

\begin{definition}
\label{def_rw1}
Une \defwd{marche al\'eatoire}\/ sur $\Z^d$ est une \chaine\ de Markov \`a
valeurs dans $\Z^d$, de distribution initiale $\nu=\delta_0$, et de
probabilit\'es de transition satisfaisant 
\begin{equation}
\label{rw1}
p_{ij} = 0
\qquad
\text{si $i=j$ ou $\norm{i-j}>1$\;.}
\end{equation}
La marche est dite \defwd{sym\'etrique}\/ si 
\begin{equation}
\label{rw2}
p_{ij} = \frac1{2d}
\qquad
\text{pour $\norm{i-j}=1$\;.}
\end{equation}
\end{definition}

\begin{figure}
%  \centerline{
%  \includegraphics*[clip=true,height=50mm]{figs/rw2d}
%  }
%  \figtext{
%  }
 \begin{center}
\begin{tikzpicture}[-,scale=0.5,auto,node
distance=1.0cm, thick,main node/.style={draw,circle,fill=white,minimum
size=3pt,inner sep=0pt}]

  \path[->,>=stealth'] 
     (-4,0) edge (8,0)
     (0,-5) edge (0,3)
  ;

  \draw[very thick] (0,0) node[main node,thick] {} 
  -- (0,1) node[main node,thick] {} 
  -- (1,1) node[main node,thick] {} 
  -- (1,0) node[main node,thick] {} 
  -- (2,0) node[main node,thick] {} 
  -- (2,-1) node[main node,thick] {} 
  -- (1,-1) node[main node,thick] {} 
  -- (1,-2) node[main node,thick] {} 
  -- (2,-2) node[main node,thick] {} 
  -- (2,-3) node[main node,thick] {} 
  -- (1,-3) node[main node,thick] {} 
  -- (0,-3) node[main node,thick] {} 
  -- (-1,-3) node[main node,thick] {} 
  -- (-2,-3) node[main node,thick] {} 
  -- (-2,-2) node[main node,thick] {} 
  -- (-1,-2) node[main node,thick] {} 
  -- (-1,-3) node[main node,thick] {} 
  -- (-1,-4) node[main node,thick] {} 
  -- (0,-4) node[main node,thick] {} 
  -- (0,-3) node[main node,thick] {} 
  -- (1,-3) node[main node,thick] {} 
  -- (1,-4) node[main node,thick] {} 
  -- (2,-4) node[main node,thick] {} 
  -- (3,-4) node[main node,thick] {} 
  -- (4,-4) node[main node,thick] {} 
  -- (5,-4) node[main node,thick] {} 
  -- (5,-3) node[main node,thick] {} 
  -- (5,-2) node[main node,thick] {} 
  -- (4,-2) node[main node,thick] {} 
  -- (4,-3) node[main node,thick] {} 
  -- (5,-3) node[main node,thick] {} 
  -- (6,-3) node[main node,thick] {} 
  ;
\end{tikzpicture}
\end{center}
\vspace{-5mm}
 \caption[]{Une trajectoire d'une marche al\'eatoire en dimension $d=2$.}
 \label{fig_rw2d}
\end{figure}
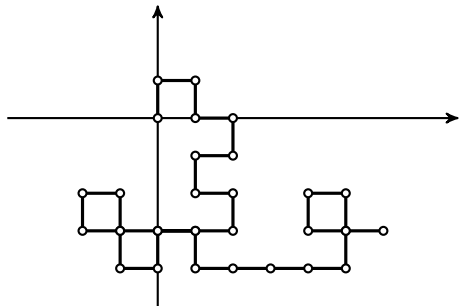

Les trajectoires de la marche al\'eatoire sont des suites de points de
$\Z^d$ \`a distance $1$, qu'on a coutume d'identifier \`a la ligne
bris\'ee reliant ces points (\figref{fig_rw2d}). 

Dans le cas sym\'etrique, il suit directement du
Th\'eor\`eme~\ref{thm_fdef1} que chaque segment de trajectoire $X_{[0,n]}$
a probabilit\'e~$(2d)^{-n}$. On peut facilement d\'eterminer quelques
propri\'et\'es de la loi de $X_n$. 

\begin{prop}
\label{prop_rw1}
Pour la marche al\'eatoire sym\'etrique sur $\Z^d$, les variables
al\'ea\-toires $X_n$ satisfont 
\begin{equation}
\label{rw3}
\expec{X_n} = 0
\qquad
\text{et}
\qquad
\cov(X_n) = \frac nd I
\end{equation}
pour tout temps $n$.
De plus, lorsque $n\to\infty$ on a 
\begin{equation}
\label{rw4}
\frac{X_n}{\sqrt{n}} \stackrel{\cL}{\to} \cN\Bigpar{0,\frac1d I}\;,
\end{equation}
o\`u $\cL$ d\'esigne la convergence en loi, et $\cN(0,\Sigma)$ d\'enote la
loi normale centr\'ee de matrice de covariance $\Sigma$.
\end{prop}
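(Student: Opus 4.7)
L'id\'ee principale est de d\'ecomposer $X_n$ comme somme de ses incr\'ements. Posons $\xi_k = X_k - X_{k-1}$ pour $k \geqs 1$, de sorte que $X_n = \sum_{k=1}^n \xi_k$. Par la propri\'et\'e de Markov combin\'ee au fait que les probabilit\'es de transition~\eqref{rw2} ne d\'ependent que de la diff\'erence $j-i$, les variables $\xi_1, \dots, \xi_n$ sont ind\'ependantes et identiquement distribu\'ees, chacune prenant de mani\`ere \'equiprobable l'une des $2d$ valeurs $\pm e_1, \dots, \pm e_d$, o\`u $(e_1,\dots,e_d)$ d\'esigne la base canonique de $\Z^d$.

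Je calcule ensuite l'esp\'erance et la covariance d'un seul incr\'ement. Comme la loi de $\xi_k$ est invariante par $\xi \mapsto -\xi$, on a $\expec{\xi_k} = 0$. Pour la covariance, la $i$-\`eme coordonn\'ee $(\xi_k)_i$ vaut $\pm 1$ avec probabilit\'e $1/(2d)$ chacun et $0$ sinon, d'o\`u $\variance((\xi_k)_i) = 1/d$. De plus, pour $i \neq j$, les coordonn\'ees $(\xi_k)_i$ et $(\xi_k)_j$ ne peuvent \^etre simultan\'ement non nulles, donc $\expec{(\xi_k)_i (\xi_k)_j} = 0$. Par cons\'equent $\cov(\xi_k) = I/d$.

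La premi\`ere partie~\eqref{rw3} en d\'ecoule imm\'ediatement par lin\'earit\'e de l'esp\'erance et par ind\'ependance des incr\'ements~: $\expec{X_n} = \sum_{k=1}^n \expec{\xi_k} = 0$ et $\cov(X_n) = \sum_{k=1}^n \cov(\xi_k) = (n/d)\,I$. Quant \`a la convergence en loi~\eqref{rw4}, elle r\'esulte directement du th\'eor\`eme central limite multidimensionnel appliqu\'e \`a la suite i.i.d.\ $(\xi_k)_{k \geqs 1}$, centr\'ee et de matrice de covariance $I/d$, ce qui donne $X_n/\sqrt{n} = n^{-1/2}\sum_{k=1}^n \xi_k \to \cN(0, I/d)$ en loi.

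Le seul point d\'elicat est la v\'erification propre du caract\`ere i.i.d.\ des incr\'ements, mais il d\'ecoule de la formule~\eqref{fdef7} pour la loi des trajectoires~: la probabilit\'e jointe $\prob{\xi_1 = v_1, \dots, \xi_n = v_n}$ se factorise comme $\prod_{k=1}^n (2d)^{-1} = \prod_{k=1}^n \prob{\xi_k = v_k}$ pour tout choix admissible des $v_k$. Tout le reste n'est qu'une application du TCL classique, que l'on suppose connu.
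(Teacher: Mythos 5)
Votre d\'emonstration est correcte et suit exactement la m\^eme strat\'egie que celle du texte~: d\'ecomposition de $X_n$ en incr\'ements i.i.d.\ centr\'es de covariance $\frac1d I$, puis application du th\'eor\`eme central limite. Vous explicitez simplement davantage les calculs (loi d'un incr\'ement, ind\'ependance via la formule~\eqref{fdef7}) que le texte laisse au lecteur.
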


\begin{proof}
On v\'erifie facilement que les variables al\'eatoires $Y_n=X_n-X_{n-1}$
sont i.i.d., d'esp\'erance nulle et de matrice de covariance $\frac1d I$,
ce qui implique~\eqref{rw3}. La relation \eqref{rw4} suit
alors directement du th\'eor\`eme central limite.  
\end{proof}

En cons\'equence, la position de la marche al\'eatoire au temps $n$ se
trouvera avec grande probabilit\'e dans une boule de rayon d'ordre
$\sqrt{n}$ autour de l'origine. On dit que la marche al\'eatoire a un
comportement \defwd{diffusif} (par opposition \`a \defwd{ballistique},
o\`u la distance \`a l'origine cro\^\i trait proportionnellement \`a $n$).

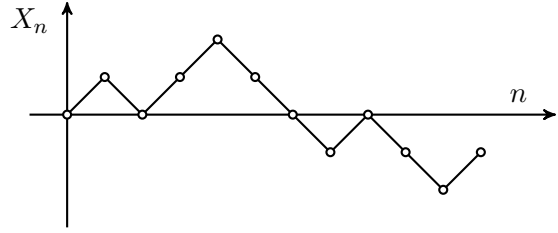
\begin{figure}
%  \centerline{
%  \includegraphics*[clip=true,height=25mm]{figs/marche1}
%  }
%  \figtext{
%  	\writefig	3.5	2.5	{$X_n$}
%  	\writefig	10.4	1.8	{$n$}
%  }
\begin{center}
\begin{tikzpicture}[-,scale=0.5,auto,node
distance=1.0cm, thick,main node/.style={draw,circle,fill=white,minimum
size=3pt,inner sep=0pt}]

  \path[->,>=stealth'] 
     (-1,0) edge (13,0)
     (0,-3) edge (0,3)
  ;

  \node at (12.0,0.5) {$n$};
  \node at (-1.0,2.5) {$X_n$};

  \draw (0,0) node[main node] {} 
  -- (1,1) node[main node] {} 
  -- (2,0) node[main node] {} 
  -- (3,1) node[main node] {} 
  -- (4,2) node[main node] {} 
  -- (5,1) node[main node] {} 
  -- (6,0) node[main node] {} 
  -- (7,-1) node[main node] {} 
  -- (8,0) node[main node] {} 
  -- (9,-1) node[main node] {} 
  -- (10,-2) node[main node] {} 
  -- (11,-1) node[main node] {} 
  ;
\end{tikzpicture}
\end{center}
\vspace{-5mm}
 \caption[]{Une r\'ealisation d'une marche al\'eatoire
unidimensionnelle.}
 \label{fig_marche1}
\end{figure}

Nous consid\'erons maintenant plus particuli\`erement le cas de la marche
al\'eatoire unidimensionnelle ($d=1$) sym\'etrique. Dans ce cas, on voit
facilement que la loi de $X_n$ est binomiale centr\'ee :
\begin{equation}
\prob{X_n=k} 
= \frac1{2^n}\binom{n}{\frac{n+k}2}
\qquad
\forall k\in\set{-n,-n+2,\dots,n-2,n}\;.
\label{mas3}
\end{equation}
En particulier, la probabilit\'e que le processus se trouve en $0$ au
$n^{\text{\`eme}}$ pas est donn\'ee par 
\begin{equation}
\label{mas4}
\prob{X_n=0} = 
\begin{cases}
0 & \text{si $n$ est impair\;,} \\
\dfrac{(2m)!}{2^{2m}(m!)^2}
& \text{si $n=2m$ est pair\;.}
\end{cases}
\end{equation}
Remarquons que la formule de Stirling implique que pour $m$ grand, 
\begin{equation}
\label{mas5}
\prob{X_{2m}=0} \sim \frac1{2^{2m}}
\frac{\sqrt{4\pi m}\e^{-2m}(2m)^{2m}}{2\pi m\e^{-2m}m^{2m}}
= \frac1{\sqrt{\pi m}}\;.
\end{equation}
En tout temps pair, l'origine est l'endroit le plus probable o\`u trouver
la marche, mais cette probabilit\'e d\'ecro\^\i t avec le temps.

\begin{figure}[t]
%  \centerline{
%  \includegraphics*[clip=true,height=30mm]{figs/marche2} 
%  }
%  \figtext{
%  	\writefig	3.5	3.0	{$X_n$}
%  	\writefig	10.0	1.35	{$\tau_0$}
%  	\writefig	11.0	1.35	{$n$}
%  }
\begin{center}
\begin{tikzpicture}[-,scale=0.5,auto,node
distance=1.0cm, thick,main node/.style={draw,circle,fill=white,minimum
size=3pt,inner sep=0pt}]

  \path[->,>=stealth'] 
     (-1,0) edge (14,0)
     (0,-3) edge (0,4)
  ;

  \node at (13.0,0.5) {$n$};
  \node at (-1.0,2.5) {$X_n$};
  \node at (12.0,-0.7) {$\tau_0$};

  \draw (0,0) node[main node] {} 
  -- (1,1) node[main node] {} 
  -- (2,2) node[main node] {} 
  -- (3,1) node[main node] {} 
  -- (4,2) node[main node] {} 
  -- (5,3) node[main node] {} 
  -- (6,2) node[main node] {} 
  -- (7,3) node[main node] {} 
  -- (8,2) node[main node] {} 
  -- (9,1) node[main node] {} 
  -- (10,2) node[main node] {} 
  -- (11,1) node[main node] {} 
  -- (12,0) node[main node] {} 
  ;
\end{tikzpicture}
\end{center}
\vspace{-5mm}
 \caption[]{Une r\'ealisation d'une marche al\'eatoire
 unidimensionnelle pour laquelle $\tau_0=12$.}
 \label{fig_marche2}
\end{figure}
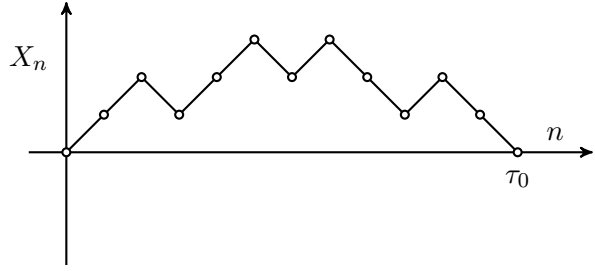

Cependant, la loi de chaque $X_n$ ne d\'etermine pas le processus, les
$X_n$ n'\'etant pas ind\'e\-pendants, et on peut \'etudier beaucoup
d'autres propri\'et\'es de la marche al\'eatoire. Une premi\`ere
quantit\'e int\'eressante est le temps $\tau_0$ du premier retour du
processus en 0 (\figref{fig_marche2}) : 
\begin{equation}
\label{mas9}
\tau_0 = \inf\setsuch{n\geqs 1}{X_n=0}\;.
\end{equation}
Il est clair que $\tau_0$ ne peut
prendre que des valeurs paires. De plus, si $\tau_0=n$ alors $X_n=0$, donc 
$\prob{\tau_0=n}\leqs\prob{X_n=0}$. En fait, il nous faut d\'eterminer 
\begin{equation}
\label{mas10}
\prob{\tau_0=n} 
= \prob{X_1\neq0,X_2\neq0,\dots,X_{n-1}\neq0,X_n=0}\;.
\end{equation}

\begin{theorem}
\label{thm_mas1}
La loi de $\tau_0$ est donn\'ee par  
\begin{equation}
\label{mas11}
\prob{\tau_0=n} = 
\begin{cases}
0 & \text{pour $n$ impair\;,} \\
\myvrule{18pt}{6pt}{0pt} 
\dfrac1n \prob{X_{n-2}=0} & \text{pour $n$ pair\;.}
\end{cases}
\end{equation}
\end{theorem}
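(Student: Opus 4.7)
Plan: I would use generating functions to obtain the identity in one stroke. The parity of $X_n$ equals that of $n$, so $X_n = 0$ forces $n$ even; the odd case is trivial. Fix $n = 2m$ henceforth.

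Set
\begin{equation*}
U(z) = \sum_{n \geqs 0} \prob{X_n = 0}\, z^n,
\qquad
F(z) = \sum_{n \geqs 1} \prob{\tau_0 = n}\, z^n.
\end{equation*}
The standard renewal decomposition of a trajectory from $0$ to $0$ of length $n \geqs 1$ according to the first return time $\tau_0$ yields $\prob{X_n = 0} = \sum_{k=1}^{n}\prob{\tau_0 = k}\,\prob{X_{n-k}=0}$, i.e.\ $U - 1 = F\cdot U$, so $F = 1 - 1/U$.

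The key step is to identify $U$ in closed form. Using $\prob{X_{2m}=0} = \binom{2m}{m}/4^m$ (and $\prob{X_{2m+1}=0}=0$) together with Newton's binomial series $(1-4x)^{-1/2} = \sum_{m\geqs 0}\binom{2m}{m} x^m$ gives $U(z) = (1-z^2)^{-1/2}$, whence $F(z) = 1 - \sqrt{1-z^2}$. Differentiating yields $F'(z) = z\,(1-z^2)^{-1/2} = z\, U(z)$, and matching coefficients of $z^{n-1}$ on both sides gives precisely $n\,\prob{\tau_0 = n} = \prob{X_{n-2} = 0}$, which is the claim.

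The main technical input is the closed form for $U$, which rests on Newton's expansion of $(1-4x)^{-1/2}$. A more combinatorial alternative would condition on $X_1 = \pm 1$, reduce by symmetry and the Markov property to counting walks of length $n-1$ from $0$ to $-1$ that stay non-negative until time $n-2$, identify these with Dyck paths of semilength $m-1$ followed by a forced down step, and invoke the Catalan count $C_{m-1} = \frac{1}{m}\binom{2m-2}{m-1}$ (obtained from the reflection principle: paths from $0$ to $0$ touching $-1$ are in bijection with paths from $-2$ to $0$). In either route, the nontrivial content is the factor $1/n$: it appears through differentiation of $F$ in the analytic approach, and through the Catalan identity in the combinatorial one.
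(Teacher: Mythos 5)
Your proof is correct, but it takes a genuinely different route from the paper's. The paper argues combinatorially: it decomposes on the sign of the excursion, uses the Markov property to peel off the last step, and then applies the reflection principle to rewrite $\prob{\tau_0=n}$ as $\frac12\bigbrak{\prob{X_{n-2}=0}-\prob{X_{n-2}=2}}$, finishing with a direct ratio computation of binomial coefficients that produces the factor $2/n$. You instead combine the renewal identity $U-1=FU$ with the closed form $U(z)=(1-z^2)^{-1/2}$ and obtain the factor $1/n$ by differentiating $F(z)=1-\sqrt{1-z^2}$ and matching coefficients; all the series manipulations are legitimate for $\abs{z}<1$. Two remarks on the comparison. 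First, the renewal equation you invoke is exactly the paper's Proposition~\ref{prop_rt1} (specialized to $i=j=0$), but in the text that result is only established in Section~\ref{sec_rt}, after this theorem; your proof is therefore not self-contained at this point of the exposition unless you prove the one-line decomposition on the first return time, which you correctly state. Second, each approach buys something: yours gives $F(1)=1$ (recurrence of the origin) and the singularity structure of $F$ near $z=1$ essentially for free, which streamlines Corollaire~\ref{cor_mas}; the paper's reflection argument is more elementary and is reused verbatim for Th\'eor\`eme~\ref{thm_mas2} on $\tau_i$, where the generating-function route would require more work to identify the analogue of $U$.
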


\begin{proof}
Supposons que $\tau_0=n$. 
Comme le processus ne peut pas changer de signe sans passer par $0$, on a 
\begin{align}
\nonumber
\prob{\tau_0=n} 
={}& \prob{X_1>0,X_2>0,\dots,X_{n-1}>0,X_n=0} \\
\nonumber
&{}+
\prob{X_1<0,X_2<0,\dots,X_{k-1}<0,X_n=0} \\
\nonumber
={}& 2 \prob{X_1>0,X_2>0,\dots,X_{n-1}>0,X_n=0} \\
\nonumber
={}& 2 \prob{X_1=1,X_2>0,\dots,X_{n-2}>0,X_{n-1}=1,X_n=0} \\
%\nonumber
={}& 2 \pcond{X_n=0}{X_{n-1}=1} 
\prob{X_1=1,X_2>0,\dots,X_{n-2}>0,X_{n-1}=1} \;,
\label{mas11a}
\end{align}
o\`u nous avons utilis\'e la propri\'et\'e de Markov dans la derni\`ere
ligne. La propri\'et\'e des incr\'ements stationnaires (cf.~\eqref{fdef12})
implique 
\begin{equation}
\pcond{X_n=0}{X_{n-1}=1} = \prob{X_1=-1} = \frac12\;.
\label{mas11b}
\end{equation}
Il suit que 
\begin{align}
\label{mas11c}
\prob{\tau_0=n} 
&= \prob{X_1=1,X_2>0,\dots,X_{n-2}>0,X_{n-1}=1}\\
&= \prob{X_1=X_{n-1}=1} - \prob{X_1=X_{n-1}=1,\,
\exists m\in\set{2,\dots,n-2}:\,X_m=0}\;.
\nonumber
\end{align}
Nous utilisons maintenant un argument important, appel\'e le
\defwd{principe de r\'eflexion}\/~: A tout chemin allant de $(1,1)$ \`a
$(n-1,1)$ passant par $0$, on peut faire correspondre un unique chemin de
$(-1,1)$ \`a $(n-1,1)$, obtenu en r\'efl\'echissant par rapport \`a l'axe
des abscisses la partie du chemin ant\'erieure au premier passage en $0$
(\figref{fig_marche3}).
On a donc 
\begin{equation}
\label{mas11d}
\prob{X_1=X_{n-1}=1,\,\exists m\in\set{2,\dots,n-2}:\,X_m=0}
= \prob{X_1=-1,X_{n-1}=1}\;.
\end{equation}
Finalement, en appliquant de nouveau la propri\'et\'e des incr\'ements
stationnaires, on a
\begin{align}
\nonumber
\prob{X_1=1,X_{n-1}=1} &= \pcond{X_{n-1}=1}{X_1=1}\prob{X_1=1}
= \prob{X_{n-2}=0}\cdot\frac12\;, \\
\prob{X_1=-1,X_{n-1}=1} &= \pcond{X_{n-1}=1}{X_1=-1}\prob{X_1=-1}
= \prob{X_{n-2}=2}\cdot\frac12\;.
\label{mas11e}
\end{align}
En rempla\c cant dans~\eqref{mas11c}, il vient 
\begin{equation}
\label{mas11f}
\prob{\tau_0=n} = \frac12 \bigbrak{\prob{X_{n-2}=0} - \prob{X_{n-2}=2}}\;.
\end{equation}
Le reste de la preuve est un calcul direct. Comme
\begin{equation}
\label{mas11g}
\frac{\prob{X_{n-2}=2}}{\prob{X_{n-2}=0}}
= \frac{\binom{n-2}{n/2}}{\binom{n-2}{n/2-1}}
= \frac{\bigpar{\frac n2-1}!\bigpar{\frac n2-1}!}
{\bigpar{\frac n2}!\bigpar{\frac n2-2}!}
= \frac{\frac n2 - 1}{\frac n2} = 1 - \frac2n\;,
\end{equation}
on obtient 
\begin{equation}
\label{mas11h}
\prob{\tau_0=n} = \frac12 \prob{X_{n-2}=0}\biggbrak{1 - 1 + \frac2n} 
= \frac1n \prob{X_{n-2}=0}\;,
\end{equation}
ce qui conclut la d\'emonstration. 
\end{proof}

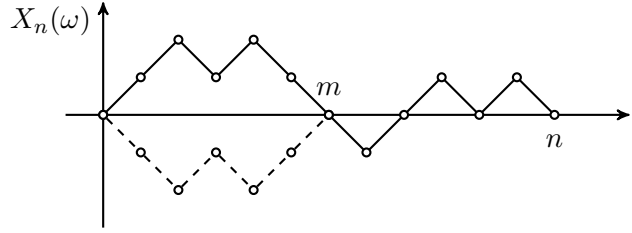
\begin{figure}
%  \centerline{
%  \includegraphics*[clip=true,height=30mm]{figs/marche3} 
%  }
%  \figtext{
%  	\writefig	3.0	3.0	{$X_n(\omega)$}
%  	\writefig	10.0	1.35	{$n$}
%  	\writefig	7.1	1.9	{$m$}
%  }
 \begin{center}
\begin{tikzpicture}[-,scale=0.5,auto,node
distance=1.0cm, thick,main node/.style={draw,circle,fill=white,minimum
size=3pt,inner sep=0pt}]

  \path[->,>=stealth'] 
     (-1,0) edge (14,0)
     (0,-3) edge (0,3)
  ;

  \node at (6.0,0.7) {$m$};
  \node at (-1.4,2.5) {$X_n(\omega)$};
  \node at (12.0,-0.7) {$n$};

  \draw (0,0) node[main node] {} 
  -- (1,1) node[main node] {} 
  -- (2,2) node[main node] {} 
  -- (3,1) node[main node] {} 
  -- (4,2) node[main node] {} 
  -- (5,1) node[main node] {} 
  -- (6,0) node[main node] {} 
  -- (7,-1) node[main node] {} 
  -- (8,0) node[main node] {} 
  -- (9,1) node[main node] {} 
  -- (10,0) node[main node] {} 
  -- (11,1) node[main node] {} 
  -- (12,0) node[main node] {} 
  ;
  
  \node[main node] (0) at (0,0) {};
  \node[main node] (1) at (1,-1) {};
  \node[main node] (2) at (2,-2) {};
  \node[main node] (3) at (3,-1) {};
  \node[main node] (4) at (4,-2) {};
  \node[main node] (5) at (5,-1) {};
  \node[main node] (6) at (6,-0) {};
  
  \path[dashed]
    (0) edge (1)
    (1) edge (2)
    (2) edge (3)
    (3) edge (4)
    (4) edge (5)
    (5) edge (6)
  ;
\end{tikzpicture}
\end{center}
\vspace{-3mm}
 \caption[]{Pour chaque r\'ealisation d'une marche al\'eatoire avec
 $\tau_0=m<n$ telle que $X_1=1$, il existe une autre r\'ealisation telle
 que $\tau_0=m$ et $X_1=-1$, obtenue par r\'eflexion par rapport \`a
 l'axe des abscisses.}
 \label{fig_marche3}
\end{figure}

Le tableau suivant donne les premi\`eres valeurs de la loi et de la fonction
de r\'epartition de $\tau_0$ :
\begin{center}
\begin{tabular}{|c||c|c|c|c|c|c|c|}
\hline
\myvrule{10pt}{0pt}{0pt}
$n$ & $2$ & $4$ & $6$ & $8$ & $10$ & $12$ & $14$ \\
\hline
\myvrule{12pt}{8pt}{0pt}
$\prob{\tau_0=n}$ 
& $\frac12$ & $\frac18$ & $\frac1{16}$ & $\frac5{128}$ & $\frac7{256}$ &
$\frac{21}{1024}$ & $\frac{33}{2048}$ \\
\myvrule{8pt}{6pt}{0pt}
& $=0.5$ & $=0.125$ & $\cong0.063$ & $\cong0.039$ & $\cong0.027$ 
& $\cong0.021$ & $\cong0.016$ \\
\hline
\myvrule{12pt}{8pt}{0pt}
$\prob{\tau_0\leqs n}$ 
& $=0.5$ & $=0.625$ & $\cong0.688$ & $\cong0.727$ & $\cong0.754$ 
& $\cong0.774$ & $\cong0.791$ \\
\hline
\end{tabular}
\end{center}
Il est donc assez probable de revenir rapidement en $0$, puis la loi prend
des valeurs plut\^ot faibles, tout en d\'ecroissant lentement. Il suit
de~\eqref{mas5} que pour des grands $n$, $\prob{\tau_0=n}$ d\'ecro\^\i t
comme
$1/n^{3/2}$. Ce fait a une cons\'equence surprenante :

\goodbreak

\begin{cor}
\label{cor_mas}
$\expec{\tau_0}=+\infty$. 
\end{cor}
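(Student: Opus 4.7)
The plan is to compute $\expec{\tau_0}$ directly from the law obtained in Theorem~\ref{thm_mas1} and show that the resulting series diverges thanks to the Stirling asymptotics~\eqref{mas5}.

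First, since $\tau_0$ is supported on even integers, I would write
\begin{equation*}
\expec{\tau_0} = \sum_{n\geqs1} n \prob{\tau_0=n}
= \sum_{m\geqs1} 2m\, \prob{\tau_0=2m}\;.
\end{equation*}
Substituting the formula $\prob{\tau_0=2m} = \frac{1}{2m} \prob{X_{2m-2}=0}$ from~\eqref{mas11} produces a crucial cancellation: the factor $2m$ in front disappears, leaving
\begin{equation*}
\expec{\tau_0} = \sum_{m\geqs1} \prob{X_{2m-2}=0}\;.
\end{equation*}

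Next, I would invoke the asymptotic~\eqref{mas5}, which gives $\prob{X_{2m}=0} \sim 1/\sqrt{\pi m}$ as $m\to\infty$. Hence the general term of the series above behaves like $1/\sqrt{\pi(m-1)}$ for large $m$. Since $\sum_{m\geqs2} 1/\sqrt{m-1}$ diverges (it is comparable to the harmonic-type series $\sum 1/\sqrt{m}$), the series defining $\expec{\tau_0}$ diverges as well, which establishes $\expec{\tau_0}=+\infty$.

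There is no real obstacle here: the algebraic cancellation between the factor $n=2m$ and the $1/n$ in the return-time formula is what makes the result work, and once this cancellation is noticed, the divergence is immediate from the already proved Stirling estimate. The only minor care required is to check that the finite initial terms (e.g.\ $m=1$, where $X_0=0$ deterministically) are handled correctly, but they contribute finite positive quantities and do not affect the conclusion.
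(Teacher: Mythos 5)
Your proof is correct and follows exactly the same route as the paper: the cancellation of the factor $2m$ against the $1/(2m)$ from Theorem~\ref{thm_mas1} reduces $\expec{\tau_0}$ to $\sum_{m\geqs1}\prob{X_{2m-2}=0}$, which diverges by the Stirling estimate~\eqref{mas5}. Your extra remark about the $m=1$ term is a fine point of care but changes nothing.
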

\begin{proof}
On a 
\begin{equation}
\label{mas12}
\expec{\tau_0} = \sum_{n\geqs 1}n\prob{\tau_0=n} 
= \sum_{m\geqs1} 2m \frac1{2m} \prob{X_{2m-2}=0} 
\sim \sum_{m\geqs1} \frac1{\sqrt{\pi m}} 
= +\infty\;.
\end{equation}
\end{proof}

En d'autres termes, la marche al\'eatoire finit toujours par revenir en
$0$, mais la loi de $\tau_0$ d\'ecro\^\i t trop lentement pour que son
esp\'erance soit finie. Cela est li\'e au fait que si la marche al\'eatoire
s'\'eloigne beaucoup de $0$, il lui faut longtemps pour y revenir. 

Par un raisonnement analogue, on peut d\'eterminer la loi du temps de
passage  
\begin{equation}
\label{mas13}
\tau_i = \inf\setsuch{n\geqs0}{X_n=i}
\end{equation}
Nous donnons simplement le r\'esultat, la d\'emonstration est laiss\'ee en
exercice. 

\begin{theorem}
\label{thm_mas2}
La loi de $\tau_i$ est donn\'ee par 
\begin{equation}
\label{mas14}
\prob{\tau_i=n} = 
\begin{cases}
\myvrule{18pt}{16pt}{0pt} 
\dfrac{\abs{i}}n \prob{X_n=i} & \text{pour $n\in\set{\abs{i}, \abs{i}+2,
\dots}$\;,} \\
0 & \text{sinon\;.} 
\end{cases}
\end{equation}
\end{theorem}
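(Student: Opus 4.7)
The strategy is to mimic the proof of Theorem~\ref{thm_mas1}, reflecting this time about the level~$i$ rather than the axis. By the symmetry $X_n \mapsto -X_n$ I may assume $i>0$. The parity constraint and the restriction $n\geqs i$ are forced by the fact that the walk has $\pm1$ increments, so both sides of~\eqref{mas14} vanish outside $n\in\set{i,i+2,\dots}$.

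For $n$ in this range, I would first translate the hitting event into a counting problem. Reaching $i$ for the first time at step~$n$ means that $X_k\leqs i-1$ for all $1\leqs k\leqs n-1$ and $X_n=i$; the last increment is then necessarily $+1$, and applying the Markov property at time $n-1$ (together with $\pcond{X_n=i}{X_{n-1}=i-1}=1/2$) gives
\begin{equation*}
\prob{\tau_i=n}
= \tfrac12\,\prob{X_1\leqs i-1,\dots,X_{n-2}\leqs i-1,\,X_{n-1}=i-1}\;.
\end{equation*}
Each admissible trajectory of length $n-1$ has probability $(1/2)^{n-1}$, so it remains to count the number $N_n$ of nearest-neighbour paths from $0$ to $i-1$ in $n-1$ steps that never touch the level~$i$.

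The heart of the argument is the reflection principle. Paths from $0$ to $i-1$ that do touch level~$i$ are in bijection with paths from $2i$ to $i-1$, the bijection being obtained by reflecting about the horizontal line $y=i$ the portion of the trajectory up to its first visit to~$i$. Translating by $-2i$ (or equivalently applying $X\mapsto -X$), this is the same as the set of paths from $0$ to $i+1$ in $n-1$ steps. Hence
\begin{equation*}
N_n
= \binom{n-1}{\frac{n+i}{2}-1} - \binom{n-1}{\frac{n+i}{2}}\;,
\end{equation*}
and setting $k=(n+i)/2$ the routine identity
$\binom{n-1}{k-1}-\binom{n-1}{k} = \frac{2k-n}{n}\binom{n}{k} = \frac{i}{n}\binom{n}{k}$
yields $\prob{\tau_i=n} = N_n/2^n = (i/n)\prob{X_n=i}$, as claimed.

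The main obstacle is the bijective step: one must verify carefully that first-passage reflection about the line $y=i$ is genuinely a bijection between paths of length $n-1$ from $0$ to $i-1$ that cross~$i$ and arbitrary paths from $2i$ to $i-1$ of the same length, with no double counting. This is the exact analogue of the reflection argument used in the proof of Theorem~\ref{thm_mas1}, only executed about the level~$i$ instead of the axis; once this geometric point is granted, the rest is just a parity check and the binomial identity above.
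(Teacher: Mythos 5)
Votre preuve est correcte et suit exactement la d\'emarche que le polycopi\'e sugg\`ere (la d\'emonstration y est laiss\'ee en exercice, avec pour indications la sym\'etrie $i>0$, la d\'ecomposition au temps $n-1$ et le principe de r\'eflexion par rapport au niveau $i$, menant \`a $\prob{\tau_i=n}=\frac12[\prob{X_{n-1}=i-1}-\prob{X_{n-1}=i+1}]$) ; votre comptage de chemins $N_n=\binom{n-1}{(n+i)/2-1}-\binom{n-1}{(n+i)/2}$ est pr\'ecis\'ement cette m\^eme identit\'e \'ecrite en termes combinatoires, et l'identit\'e binomiale finale est juste. C'est donc essentiellement la m\^eme preuve que celle attendue, analogue au Th\'eor\`eme~\ref{thm_mas1}.
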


Pour des raisons similaires \`a celles du cas du retour en $0$, la loi de
$\tau_L$ d\'ecro\^\i t en $1/n^{3/2}$, et son esp\'erance est donc infinie.

%%%%%%%%%%%%%%%%%%%%%%%%%%%%%%%%%%%%%%%%%%%%%%%%%%%%%%%%%%%%%%%%%%%%%%%%%%%

\section{G\'en\'eralit\'es sur les processus stochastiques}
\label{sec_gps}

Jusqu'\`a pr\'esent, nous avons parl\'e de \chaine s de Markov, qui sont
des processus stochastiques particuliers, sans pr\'eciser l'espace
probabilis\'e sous-jacent. Cela ne pose pas de probl\`eme tant qu'on parle
de segments de trajectoire finis : Il suffit de consid\'erer des espaces
produits comprenant un nombre fini de termes. Le cas de trajectoires
infinies n\'ecessite quelques pr\'ecautions suppl\'ementaires. Nous
donnons ici un survol de la cons\-truction g\'en\'erale des processus
stochastiques en temps discret.

Soit $(E,\cE)$ un espace mesurable. L'ensemble $E^n = E\times E\times
\dots \times E$ peut \^etre muni
d'une tribu $\cE^{\otimes n}$, d\'efinie comme la tribu engendr\'ee par
tous les \'ev\'enements du type 
\begin{equation}
\label{gps1}
A^{(i)} = \setsuch{\omega\in E^n}{\omega_i\in A}\;, 
\qquad A\in\cE\;,
\end{equation}
appel\'es \defwd{cylindres}.
On d\'enote par $E^\N$ l'ensemble des applications $x: \N\to E$,
c'est-\`a-dire l'ensemble des suites $(x_0, x_1, x_2, \dots)$ \`a valeurs
dans $E$. Cet ensemble peut \`a nouveau \^etre muni d'une tribu construite
\`a partir de tous les cylindres, not\'ee $\cE^{\otimes \N}$.

\begin{definition}
\label{def_gps1}
\hfill
\begin{itemiz}
\item	Un \defwd{processus stochastique}\/ \`a valeurs dans $(E,\cE)$ est
une suite $\set{X_n}_{n\in\N}$ de variables al\'eatoires \`a valeurs dans
$(E,\cE)$, d\'efinies sur un m\^eme espace probabilis\'e $(\Omega,\cF,\fP)$
(autrement dit, chaque $X_n$ est une application $\cF$-$\cE$-mesurable de
$\Omega$ dans $E$). C'est donc \'egalement une variable al\'eatoire \`a
valeurs dans $(E^\N,\cE^{\otimes\N})$. 

\item	Soit $\Q$ une mesure de probabilit\'e sur
$(E^\N,\cE^{\otimes\N})$. Les \defwd{distributions de dimension finie}\/
de $\Q$ sont les mesures sur $(E^{n+1},\cE^{\otimes n+1})$ d\'efinies par 
\begin{equation}
\label{gps2}
\Q^{(n)} = \Q \circ (\pi^{(n)})^{-1}\;,
\end{equation}
o\`u $\pi^{(n)}$ est la projection $\pi^{(n)}:E^\N\to E^{n+1}$, $(x_0, x_1,
x_2, \dots) \mapsto (x_0, \dots, x_n)$.
\end{itemiz}
\end{definition}

On se convainc facilement que la suite des $\set{\Q^{(n)}}_{n\in\N}$
d\'etermine $\Q$ univoquement. Inversement, pour qu'une suite donn\'ee
$\set{\Q^{(n)}}_{n\in\N}$ corresponde effectivement \`a une mesure $\Q$,
les $\Q^{(n)}$ doivent satisfaire une \defwd{condition de compatibilit\'e}
: 

Soit $\ph_n$ la projection $\ph_n:E^{n+1}\to E^n$, $(x_0, \dots, x_n)
\mapsto (x_0, \dots, x_{n-1})$. Alors on a
$\pi^{(n-1)}=\ph_n\circ\pi^{(n)}$, donc pour tout $A\in\cE^{\otimes n}$, 
$(\pi^{(n-1)})^{-1}(A) = (\pi^{(n)})^{-1}(\ph_n^{-1}(A))$. La condition
de compatibilit\'e s'\'ecrit donc 
\begin{equation}
\label{gps3}
\Q^{(n-1)} = \Q^{(n)} \circ \ph_n^{-1}\;.
\end{equation}
Le diagramme suivant illustre la situation (toutes les projections
\'etant mesurables, on peut les consid\'erer \`a la fois comme
applications entre ensembles et entre tribus) : 

% \begin{minipage}{14.0cm}
% \begin{center}
%  \vspace{7mm}
%  \includegraphics*[clip=true,height=40mm]{figs/cd1} \\
%  \figtext{
%  	\writefig	5.25	4.65	{$(E^\N,\cE^{\otimes\N})$}
%  	\writefig	6.2	3.6	{$\pi^{(n)}$}
%  	\writefig	8.4	3.8	{$\Q$}
%  	\writefig	8.0	2.6	{$\Q^{(n)}$}
%  	\writefig	3.15	2.35	{$\pi^{(n-1)}$}
%  	\writefig	4.75	2.35	{$(E^{n+1},\cE^{\otimes n+1})$}
%  	\writefig	9.7	2.35	{$[0,1]$}
%  	\writefig	6.2	1.25	{$\ph_n$}
%  	\writefig	8.4	0.95	{$\Q^{(n-1)}$}
%  	\writefig	5.25	0.1	{$(E^n,\cE^{\otimes n})$}
%  }
%  \vspace{5mm}
% \end{center}
% \end{minipage}

\begin{center}
\begin{tikzpicture}[->,>=stealth',shorten >=2pt,shorten <=2pt,auto,node
distance=2.5cm, thick
]

  \node (N) {$(E^\N,\cE^{\otimes\N})$};
  \node (n+1) [below of=N] {$(E^{n+1},\cE^{\otimes n+1})$};
  \node (n) [below of=n+1] {$(E^n,\cE^{\otimes n})$};
  \node[node distance=0.9cm] (N-) [left of=N] {};
  \node[node distance=0.9cm] (n-) [left of=n] {};
  \node[node distance=0.8cm] (N+) [right of=N] {};
  \node[node distance=0.8cm] (n+) [right of=n] {};
  \node[node distance=1.1cm] (n+1+) [right of=n+1] {};
  \node[node distance=4.5cm] (01) [right of=n+1] {$[0,1]$};

   \path
     (N) edge [right] node {$\pi^{(n)}$} (n+1)
     (n+1) edge [right] node {$\ph_n$} (n)
     (N-) edge [bend right, distance=2.0cm, left] node {$\pi^{(n-1)}$} (n-)
     (N+) edge [above right] node {$\Q$} (01)
     (n+1+) edge [above] node {$\Q^{(n)}$} (01)
     (n+) edge [below right] node {$\Q^{(n-1)}$} (01)
;
\end{tikzpicture}
\end{center}

\noindent
Nous allons voir comment construire une suite de $\Q^{(n)}$ satisfaisant
la condition~\eqref{gps3}. 

\begin{definition}
\label{def_gps2}
Soient $(E_1,\cE_1)$ et $(E_2,\cE_2)$ deux espaces mesurables. Un
\defwd{noyau marko\-vien de $(E_1,\cE_1)$ vers $(E_2,\cE_2)$}\/  est une
application $K: E_1\times\cE_2 \to [0,1]$ satisfaisant les deux conditions 
\begin{enum}
\item	Pour tout $x\in E_1$, $K(x,\cdot)$ est une mesure de
probabilit\'e sur $(E_2,\cE_2)$. 
\item	Pour tout $A\in\cE_2$, $K(\cdot,A)$ est une application
$\cE_1$-mesurable. 
\end{enum}
\end{definition}

\begin{example}
\label{ex_gps1}
\hfill
\begin{enum}
\item	Soit $\mu$ une mesure de probabilit\'e sur $(E_2,\cE_2)$. Alors
$K$ d\'efini par $K(x,A)=\mu(A)$ pour tout $x\in E_1$ est un noyau
markovien. 

\item	Soit $f:E_1\to E_2$ une application mesurable. Alors $K$ d\'efini
par  $K(x,A)=\indicator{A}(f(x))$ est un noyau markovien.

\item	Soit $\cX=\set{1,\dots,N}$ un ensemble fini, et posons
$E_1=E_2=\cX$ et $\cE_1=\cE_2=\cP(\cX)$. Alors $K$ d\'efini par 
\begin{equation}
\label{gps4}
K(i,A) = \sum_{j\in A} p_{ij}\;,
\end{equation}
o\`u $P=(p_{ij})_{i,j\in\cX}$ est une matrice stochastique, est un noyau
markovien.
\end{enum}
\end{example}

Si $\mu$ est une mesure de probabilit\'e sur $(E_1,\cE_1)$ et $K$ est un
noyau markovien de $(E_1,\cE_1)$ vers $(E_2,\cE_2)$, on d\'efinit une
mesure de probabilit\'e $\mu\otimes K$ sur $\cE_1\otimes\cE_2$ par 
\begin{equation}
\label{gps5}
(\mu\otimes K)(A) \defby \int_{E_1} K(x_1,A_{x_1})
\mu(\6x_1)\;,
\end{equation}
o\`u $A_{x_1} = \setsuch{x_2\in E_2}{(x_1,x_2)\in A} \in \cE_2$ est la
\defwd{section}\/ de $A$ en $x_1$. On v\'erifie que c'est bien une mesure
de probabilit\'e. Afin de comprendre sa signification, calculons ses
marginales. Soient $\pi_1$ et $\pi_2$ les projections d\'efinies par
$\pi_i(x_1,x_2)=x_i$, $i=1,2$. 
\begin{enum}
\item	Pour tout ensemble mesurable $A_1\in\cE_1$, on a 
\begin{align}
\nonumber
\bigpar{(\mu\otimes K)\circ\pi_1^{-1}}(A_1) 
&= (\mu\otimes K)(A_1\times E_2) \\
\nonumber
&= \int_{E_1} \indicator{A_1}(x_1)K(x_1,E_2) \mu(\6x_1) \\
&= \int_{A_1} K(x_1,E_2) \mu(\6x_1)
= \mu(A_1)\;,
\label{gps6}
\end{align}
o\`u on a utilis\'e le fait que la section $(A_1\times\cE_1)_{x_1}$ est
donn\'ee par $E_2$ si $x_1\in A_1$, et $\emptyset$ sinon. Ceci implique 
\begin{equation}
\label{gps7}
(\mu\otimes K)\circ\pi_1^{-1} = \mu\;.
\end{equation}
La premi\`ere marginale de $\mu\otimes K$ est donc simplement $\mu$. 

\item	Pour tout ensemble mesurable $A_2\in\cE_2$, on a 
\begin{align}
\nonumber
\bigpar{(\mu\otimes K)\circ\pi_2^{-1}}(A_2) 
&= (\mu\otimes K)(E_1\times A_2) \\
&= \int_{E_1} K(x_1,A_2) \mu(\6x_1) \;.
\label{gps8}
\end{align}
La seconde marginale de $\mu\otimes K$ s’interpr\`ete comme suit: c'est la
mesure sur $E_2$ obtenue en partant avec la mesure $\mu$ sur $E_1$, et en
\lq\lq allant de tout $x\in E_1$ vers $A_2\in\cE_2$ avec
probabilit\'e $K(x_1,A_2)$\rq\rq. 
\end{enum}

Enfin, par une variante du th\'eor\`eme de Fubini--Tonelli, on v\'erifie
que  pour toute fonction $(\mu\otimes K)$-int\'egrable $f:E_1\times
E_2\to\R$, on a  
\begin{equation}
\label{gps9}
\int_{E_1\times E_2} f \6\,(\mu\otimes K) 
= \int_{E_1} \biggpar{\int_{E_2} f(x_1,x_2) K(x_1,\6x_2)} \mu(\6x_1)\;.
\end{equation}

Nous pouvons maintenant proc\'eder \`a la construction de la suite
$\set{\Q^{(n)}}_{n\in\N}$ de distributions de dimension finie,
satisfaisant la condition de compatibilit\'e~\eqref{gps3}. Sur
l'espace mesurable $(E,\cE)$, on se donne une mesure de probabilit\'e
$\nu$, appel\'ee \defwd{mesure initiale}. On se donne pour tout $n\in\N$
un noyau markovien $K_n$ de $(E^{n+1},\cE^{\otimes {n+1}})$ vers $(E,\cE)$. 
On d\'efinit alors la suite $\set{\Q^{(n)}}_{n\in\N}$ de mesures de
probabilit\'e sur $(E^{n+1},\cE^{n+1})$ r\'ecursivement par 
\begin{align}
\nonumber
\Q^{(0)} &= \nu \;,\\
\Q^{(n)} &= \Q^{(n-1)}\otimes K_{n-1}\;, 
& 
n&\geqs 2\;.
\label{gps10}
\end{align}
Par~\eqref{gps7}, on a $\Q^{(n)}\circ\ph_n^{-1}=(\Q^{(n-1)}\otimes
K_{n-1})\circ\ph_n^{-1}=\Q^{(n-1)}$, donc la condition de compatibilit\'e
est bien satisfaite. 

L'interpr\'etation de~\eqref{gps10} est simplement que chaque noyau
markovien $K_n$ d\'ecrit les probabilit\'es de transition entre les temps
$n$ et $n+1$, et permet ainsi de d\'efinir une mesure sur les
segments de trajectoire plus longs d'une unit\'e. Remarquons enfin qu'on
peut \'egalement construire pour tout $m, n$ un noyau $K_{n,m}$ de
$(E^n,\cE^{\otimes n})$ vers $(E^m,\cE^{\otimes m})$ tel que
$\Q^{(n+m)}=\Q^{(n)}\otimes K_{n,m}$. 

On peut noter par ailleurs que si $\psi_n:E^{n+1}\to E$ d\'esigne la
projection sur la derni\`ere composante $(x_0,\dots,x_n)\mapsto x_n$,
alors la formule~\eqref{gps8} montre que la loi de $X_n$, qui est
donn\'ee par la marginale $\nu_n=\Q^{(n)}\circ\psi_n^{-1}$, s'exprime
comme 
\begin{equation}
\label{gps10B}
\prob{X_n\in A} = 
\nu_n(A) = \int_{E^n} K_{n-1}(x,A) \Q^{(n-1)}(\6x)\;.
\end{equation}
La situation est illustr\'ee par le diagramme suivant :

% \begin{minipage}{14.0cm}
% \begin{center}
%  \vspace{7mm}
%  \includegraphics*[clip=true,height=40mm]{figs/cd2} \\
%  \figtext{
%  	\writefig	4.85	4.65	{$(E,\cE)$}
%  	\writefig	5.3	3.6	{$\psi_n$}
%  	\writefig	7.5	3.8	{$\nu_n$}
%  	\writefig	7.1	2.6	{$\Q^{(n)}$}
%   	\writefig	3.85	2.35	{$(E^{n+1},\cE^{\otimes n+1})$}
%  	\writefig	8.8	2.35	{$[0,1]$}
%  	\writefig	5.3	1.25	{$\ph_n$}
%  	\writefig	7.5	0.95	{$\Q^{(n-1)}$}
%  	\writefig	4.35	0.1	{$(E^n,\cE^{\otimes n})$}
%  }
%  \vspace{7mm}
% \end{center}
% \end{minipage}

\begin{center}
\begin{tikzpicture}[->,>=stealth',shorten >=2pt,shorten <=2pt,auto,node
distance=2.5cm, thick
]

  \node (N) {$(E,\cE)$};
  \node (n+1) [below of=N] {$(E^{n+1},\cE^{\otimes n+1})$};
  \node (n) [below of=n+1] {$(E^n,\cE^{\otimes n})$};
  \node[node distance=0.8cm] (N+) [right of=N] {};
  \node[node distance=0.8cm] (n+) [right of=n] {};
  \node[node distance=1.1cm] (n+1+) [right of=n+1] {};
  \node[node distance=4.5cm] (01) [right of=n+1] {$[0,1]$};

   \path
     (n+1) edge [right] node {$\psi_n$} (N)
     (n+1) edge [right] node {$\ph_n$} (n)
     (N+) edge [above right] node {$\nu_n$} (01)
     (n+1+) edge [above] node {$\Q^{(n)}$} (01)
     (n+) edge [below right] node {$\Q^{(n-1)}$} (01)
;
\end{tikzpicture}
\end{center}

Nous donnons maintenant, sans d\'emonstration, le r\'esultat g\'en\'eral
assurant la l\'egitimit\'e de toute la proc\'edure. 

\begin{theorem}[Ionescu--Tulcea]
Pour la suite de mesures $\set{\Q^{(n)}}_{n\in\N}$ construites
selon~\eqref{gps10}, il existe une unique mesure de probabilit\'e $\Q$ sur
$(E^\N,\cE^{\otimes \N})$ telle que $\Q^{(n)}=\Q\circ(\pi^{(n)})^{-1}$
pour tout $n$, c'est-\`a-dire que les $\Q^{(n)}$ sont les distributions de
dimension finie de $\Q$. 
\end{theorem}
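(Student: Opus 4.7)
La strat\'egie est d'appliquer le th\'eor\`eme d'extension de Carath\'eodory. Je commencerais par consid\'erer l'alg\`ebre $\cA$ des cylindres finis-dimensionnels de $E^\N$, c'est-\`a-dire les ensembles de la forme $C=(\pi^{(n)})^{-1}(B)$ avec $n\in\N$ et $B\in\cE^{\otimes n+1}$, et d\'efinirais une fonction d'ensembles $\Q_0$ sur $\cA$ par $\Q_0(C)=\Q^{(n)}(B)$. La condition de compatibilit\'e~\eqref{gps3} garantit que cette valeur ne d\'epend pas du $n$ choisi pour repr\'esenter $C$~: si $C=(\pi^{(n)})^{-1}(B)=(\pi^{(n+1)})^{-1}(\ph_{n+1}^{-1}(B))$, alors $\Q^{(n+1)}(\ph_{n+1}^{-1}(B))=\Q^{(n)}(B)$. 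L'additivit\'e finie de $\Q_0$ sur $\cA$ s'en d\'eduit en ramenant deux cylindres disjoints \`a un m\^eme niveau.

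Le c\oe{}ur de la preuve consistera \`a \'etablir la $\sigma$-additivit\'e de $\Q_0$, ce qui \'equivaut \`a sa continuit\'e en $\emptyset$~: pour toute suite d\'ecroissante $(C_n)_n$ d'\'el\'ements de $\cA$ v\'erifiant $\bigcap_n C_n=\emptyset$, on veut $\Q_0(C_n)\to 0$. Je proc\'ederais par contrapos\'ee~: en supposant $\Q_0(C_n)\geqs\eps>0$ pour tout $n$, il s'agira d'exhiber un \'el\'ement de $\bigcap_n C_n$. Quitte \`a r\'eindexer et \`a raffiner, on peut supposer $C_n=(\pi^{(n)})^{-1}(B_n)$ avec $B_{n+1}\subset\ph_{n+1}^{-1}(B_n)$. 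Pour $k\leqs n$, j'introduirais la fonction obtenue par int\'egration partielle contre les noyaux successifs~:
\begin{equation*}
g_n^{(k)}(x_{[0,k]}) = \int \indicator{B_n}(x_{[0,n]})\, K_k(x_{[0,k]},\6x_{k+1})\cdots K_{n-1}(x_{[0,n-1]},\6x_n)\;,
\end{equation*}
mesurable par application successive de la seconde condition de la D\'efinition~\ref{def_gps2}. La monotonie des $B_n$ en pr\'eimage entra\^\i ne $g_{n+1}^{(k)}\leqs g_n^{(k)}$, et la limite d\'ecroissante $h^{(k)}=\lim_n g_n^{(k)}$ est donc bien d\'efinie et mesurable.

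La construction du point de $\bigcap_n C_n$ se fait ensuite par r\'ecurrence diagonale. Par convergence monotone, $\int h^{(0)}(x_0)\,\nu(\6x_0)=\lim_n\Q_0(C_n)\geqs\eps>0$, donc il existe $x_0^\star\in E$ avec $h^{(0)}(x_0^\star)>0$. La relation r\'ecursive $g_n^{(k)}(x_{[0,k]})=\int g_n^{(k+1)}(x_{[0,k]},x_{k+1})\,K_k(x_{[0,k]},\6x_{k+1})$ passe \`a la limite par convergence monotone, et permet, une fois s\'electionn\'e $(x_0^\star,\dots,x_k^\star)$ avec $h^{(k)}(x_{[0,k]}^\star)>0$, d'obtenir $x_{k+1}^\star$ tel que $h^{(k+1)}(x_{[0,k+1]}^\star)>0$. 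Comme $g_n^{(n)}=\indicator{B_n}$, on aura $x_{[0,n]}^\star\in B_n$ pour tout $n$, donc $(x_0^\star,x_1^\star,\dots)\in\bigcap_n C_n$, contradiction. Le th\'eor\`eme d'extension de Carath\'eodory fournira alors l'unique mesure $\Q$ sur $\cE^{\otimes\N}$ prolongeant $\Q_0$, l'unicit\'e r\'esultant du fait que les cylindres forment un $\pi$-syst\`eme engendrant $\cE^{\otimes\N}$.

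Le principal obstacle est pr\'ecis\'ement cet argument diagonal d'Ionescu--Tulcea~: il faut simultan\'ement \'etablir la mesurabilit\'e des $g_n^{(k)}$ (par induction sur $n-k$) et justifier rigoureusement chaque passage \`a la limite. C'est ici qu'intervient de mani\`ere essentielle la structure markovienne, puisque contrairement au th\'eor\`eme d'extension de Kolmogorov, aucune hypoth\`ese topologique sur $(E,\cE)$ n'est requise~: les noyaux fournissent un analogue probabiliste du lemme de K\"onig, permettant de construire la trajectoire infinie \`a partir de marginales finies seulement.
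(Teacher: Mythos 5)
Le polycopi\'e \'enonce ce th\'eor\`eme explicitement \emph{sans d\'emonstration}; il n'y a donc pas de preuve de r\'ef\'erence \`a laquelle comparer votre proposition, et je l'\'evalue pour elle-m\^eme. Votre sch\'ema est la d\'emonstration standard d'Ionescu--Tulcea et il est correct dans ses grandes lignes~: d\'efinition d'une pr\'emesure $\Q_0$ sur l'alg\`ebre des cylindres (bien d\'efinie gr\^ace \`a la compatibilit\'e~\eqref{gps3}), r\'eduction de la $\sigma$-additivit\'e \`a la continuit\'e en $\emptyset$, puis l'argument cl\'e de s\'election r\'ecursive d'une trajectoire $(x_0^\star,x_1^\star,\dots)$ dans $\bigcap_n C_n$ via les fonctions $g_n^{(k)}$ obtenues par int\'egration partielle contre les noyaux, et enfin l'extension de Carath\'eodory avec unicit\'e par l'argument de $\pi$-syst\`eme. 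Vous identifiez correctement le point o\`u la structure markovienne remplace toute hypoth\`ese topologique sur $(E,\cE)$, ce qui distingue bien ce r\'esultat du th\'eor\`eme d'extension de Kolmogorov.

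Deux points restent \`a l'\'etat d'esquisse et devraient \^etre r\'edig\'es pour une preuve compl\`ete~: d'une part la mesurabilit\'e de $x\mapsto\int f(x,y)\,K(x,\6y)$ pour $f$ mesurable born\'ee quelconque ne d\'ecoule pas imm\'ediatement de la seule condition 2 de la D\'efinition~\ref{def_gps2} (qui ne la donne que pour $f=\indicator{A}$) --- il faut un argument de classe monotone avant de lancer l'induction sur $n-k$; d'autre part l'identit\'e $\int h^{(0)}\,\6\nu=\lim_n\Q_0(C_n)$ repose sur le th\'eor\`eme de convergence domin\'ee pour la suite d\'ecroissante $g_n^{(0)}\leqs 1$, ce qu'il convient de dire explicitement. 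Ce sont des compl\'ements de routine~: l'architecture de votre preuve est la bonne.
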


\begin{example}
\label{ex_gps2}
\hfill
\begin{enum}
\item	{\bf Mesures produit:} On se donne une suite
$\set{\mu_n}_{n\in\N}$ de mesures de probabilit\'e sur l'espace mesurable
$(E,\cE)$. Soit, pour tout $n$, 
$\Q^{(n)}=\mu_0\otimes\mu_1\otimes\dots\otimes\mu_n$ la mesure produit. 
C'est une mesure de la forme ci-dessus, avec noyau markovien
\begin{equation}
\label{gps11}
K_n(x,A) = \mu_{n+1}(A) 
\qquad \forall x\in E^n, \forall A\in\cE\;.
\end{equation}
La relation~\eqref{gps10B} montre que la loi $\nu_n$ de $X_n$ est donn\'ee
par $\mu_n$. On dit que les variables al\'eatoires $X_n$ sont
\defwd{ind\'ependantes}. Si tous les $\mu_n$ sont les m\^emes, on dit
qu'elles sont \defwd{ind\'ependantes et identiquement distribu\'ees
(i.i.d.)}\/.

\item	{\bf Syst\`eme dynamique:} On se donne une application mesurable
$f:E\to E$, une mesure de probabilit\'e initiale $\nu$ sur $E$. Soit pour
tout $n$ le noyau markovien 
\begin{equation}
\label{gps12}
K_n(x,A) = \indicator{A}{f(x_n)}\;, 
\end{equation}
et construisons les $\Q^{(n)}$ comme ci-dessus. Alors la
formule~\eqref{gps10B} montre qu'on a pour tout $A\in E$ 
\begin{align}
\nonumber
\nu_{n+1}(A) 
&= \int_{E^{n+1}} \indicator{A}{f(x_n)} \Q^{(n)}(\6x) \\
\nonumber
&= \int_{E} \indicator{A}{f(x_n)} \nu_n(\6x_n) \\
&= \int_{f^{-1}(A)} \nu_n(\6x_n) 
= \nu_n(f^{-1}(A))\;.
\label{gps13}
\end{align}
Il suit que 
\begin{equation}
\label{gps14}
\nu_n = \nu\circ f^{-n}
\qquad 
\forall n\in\N\;.
\end{equation}
Cette situation correspond \`a un syst\`eme dynamique d\'eterministe. Par
exemple, si $f$ est bijective et $\nu=\delta_{x_0}$ est concentr\'ee en un
point, on a $\nu_n = \delta_{f^n(x_0)}$.

\item	{\bf \Chaine s de Markov:}
Soit $\cX$ un ensemble fini ou d\'enombrable, muni de la tribu $\cP(\cX)$,
et $P=(p_{ij})_{i,j\in\cX}$ une matrice stochastique sur $\cX$,
c'est-\`a-dire que $0\leqs p_{ij}\leqs 1$ $\forall i,j\in\cX$ 
et $\sum_{j\in\cX}p_{ij}=1$ $\forall i\in\cX$. On se donne une mesure de
probabilit\'e $\nu$ sur $\cX$, et une suite $\smash{\Q^{(n)}}$ construite
\`a partir de~\eqref{gps10} avec pour noyaux markoviens 
\begin{equation}
\label{gps15}
K_n(i_{[0,n-1]},i_n) = p_{i_{n-1}i_n}\;.
\end{equation}
Le processus stochastique de mesure $\Q$, dont les distributions de
dimension finie sont les $\Q^{(n)}$, est la \chaine\ de Markov sur $\cX$
de distribution initiale $\nu$ et de matrice de transition $P$. Dans ce
cas la relation~\eqref{gps10B} se traduit en
\begin{equation}
\label{gps16}
\prob{X_n\in A} = \nu_n(A) = \sum_{i\in\cX} \sum_{j\in A} p_{ij}
\nu_{n-1}(\set{i}) = \sum_{j\in A} \sum_{i\in\cX} \prob{X_{n-1}=i}p_{ij}\;.
\end{equation}
\end{enum}
\end{example}

%%%%%%%%%%%%%%%%%%%%%%%%%%%%%%%%%%%%%%%%%%%%%%%%%%%%%%%%%%%%%%%%%%%%%%%%%%%

\section{R\'ecurrence, transience et p\'eriode}
\label{sec_rt}

Nous revenons maintenant \`a l'\'etude des propri\'et\'es de \chaine s de
Markov sur un ensemble d\'enombrable $\cX$. Rappelons que le \defwd{temps
de premier passage}\/ de la \chaine\ en un site $i\in\cX$ est la variable
al\'eatoire 
\begin{equation}
\label{rt1}
\tau_i = \inf\setsuch{n\geqs 1}{X_n=i}\;,
\end{equation}
avec la convention que $\tau_i=\infty$ si $X_n\neq i$ $\forall n\geqs1$.
Dans le cas o\`u la \chaine\ d\'emarre dans l'\'etat $i$ au temps $0$,
$\tau_i$ s'appelle \'egalement le \defwd{temps de premier retour en
$i$}\/. 

Dans le cas o\`u $\cX$ est fini, nous avons vu que pour une \chaine\
irr\'eductible, $\tau_i$ \'etait fini presque s\^urement (cf.
Proposition~\ref{prop_firred1}). Dans le cas o\`u $\cX$ est infini, ce
n'est plus forc\'ement le cas. En effet, la preuve que nous avons donn\'ee
de la Proposition~\ref{prop_firred1} utilise la
Proposition~\ref{prop_fabs1} sur les \chaine s absorbantes, dont la preuve
ne marche plus dans le cas infini (la d\'efinition~\eqref{fabs4:3} de $p$
n'interdit pas que $p=1$). On est donc amen\'e \`a introduire la
distinction suivante.

\begin{definition}
\label{def_rt1}
Un \'etat $i\in\cX$  est dit \defwd{r\'ecurrent}\/ si 
\begin{equation}
\label{rt2}
\probin{i}{\tau_i<\infty} \defby
\lim_{N\to\infty} \probin{i}{\tau\leqs N} = 
\sum_{n=1}^\infty
\probin{i}{\tau_i=n} = 1\;.
\end{equation}
Dans le cas contraire, il est dit \defwd{transient}\/. 
La \chaine\ de Markov est appel\'ee \defwd{r\'ecurrente}\/, respectivement
\defwd{transiente}\/, si tous ses \'etats sont r\'ecurrents,
respectivement transients.
\end{definition}

Le fait qu'un \'etat soit r\'ecurrent signifie que la \chaine\ revient
vers cet \'etat presque s\^urement, et donc qu'elle y revient infiniment
souvent. Le fait qu'un \'etat soit transient signifie que la \chaine\ a
une probabilit\'e positive de ne jamais retourner dans cet \'etat.

La condition~\eqref{rt2} n'est en g\'en\'eral pas ais\'ee \`a v\'erifier,
car elle pr\'esuppose la connaissance exacte de la loi de $\tau_i$.
Toutefois, on peut obtenir une condition \'equivalente beaucoup plus
facile \`a v\'erifier. Pour cela, nous commen\c cons par d\'emontrer une
\'equation dite de renouvellement.

\begin{prop}
\label{prop_rt1}
Pour tout $i, j\in\cX$ et tout temps $n\in\N$ on a la relation 
\begin{equation}
\label{rt3}
\probin{i}{X_n=j} = \sum_{m=1}^n \probin{i}{\tau_j=m}
\probin{j}{X_{n-m}=j}\;.
\end{equation}
\end{prop}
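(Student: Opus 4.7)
The plan is to condition on the value of the first-passage time $\tau_j$. The key observation is that the event $\{X_n=j\}$ (under $\fP_i$) implies $\tau_j\leqs n$, so by partitioning according to the values of $\tau_j\in\set{1,\dots,n}$ we obtain
\begin{equation*}
\probin{i}{X_n=j} = \sum_{m=1}^n \bigprobin{i}{X_n=j,\tau_j=m}\;.
\end{equation*}
The renewal equation will follow once we show that each summand factorises as $\probin{i}{\tau_j=m}\probin{j}{X_{n-m}=j}$.

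To do this, I would write $\{\tau_j=m\}$ as the disjoint union over all finite trajectories $i_{[0,m]}\in\cX^{m+1}$ with $i_0=i$, $i_m=j$ and $i_k\neq j$ for $1\leqs k\leqs m-1$. On each such event,
\begin{equation*}
\bigprobin{i}{X_n=j,X_{[0,m]}=i_{[0,m]}}
= \bigprobin{i}{X_{[0,m]}=i_{[0,m]}} \bigpcondin{i}{X_n=j}{X_{[0,m]}=i_{[0,m]}}\;,
\end{equation*}
and applying the Markov property in the form of Proposition~\ref{prop_fdef1} (with the set $A$ encoding the constraints on $X_{[0,m-1]}$) the conditional probability reduces to $\probin{i}{X_n=j \mid X_m=j}$. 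By the stationary-increments property~\eqref{fdef12}, this is $p^{(n-m)}_{jj} = \probin{j}{X_{n-m}=j}$, a quantity that does not depend on the particular trajectory $i_{[0,m]}$. Summing over all such trajectories pulls out the factor $\probin{j}{X_{n-m}=j}$ and leaves $\probin{i}{\tau_j=m}$, which yields~\eqref{rt3}.

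The only real subtlety is to make sure the conditioning on $\{\tau_j=m\}$ is handled correctly when some of the sub-events have probability zero; this is avoided by doing the decomposition directly at the level of trajectories $X_{[0,m]}$ and then summing, rather than dividing by $\probin{i}{\tau_j=m}$. No new ingredient is needed beyond Theorem~\ref{thm_fdef1}, Proposition~\ref{prop_fdef1} and the stationary-increments identity~\eqref{fdef12}.
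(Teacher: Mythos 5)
Your proposal is correct and follows essentially the same route as the paper: decompose on the first-passage time $\tau_j=m$, apply the Markov property to reduce the conditional probability to $\pcondin{i}{X_n=j}{X_m=j}$, and conclude with the stationary-increments identity~\eqref{fdef12}. Your extra care in working at the level of individual trajectories to avoid conditioning on null events is a minor refinement of the paper's argument, not a different approach.
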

\begin{proof}
En d\'ecomposant sur les temps de premier passage en $j$, il vient 
\begin{align}
\nonumber
\probin{i}{X_n=j} 
&= \sum_{m=1}^n \probin{i}{j\notin X_{[1,m-1]},X_m=j,X_n=j} \\
&= \sum_{m=1}^n 
\underbrace{\pcondin{i}{X_n=j}{j\notin
X_{[1,m-1]},X_m=j}}_{=\pcondin{i}{X_n=j}{X_m=j}=\probin{j}{X_{n-m}=j}}
\underbrace{\probin{i}{j\notin
X_{[1,m-1]},X_m=j}}_{=\probin{i}{\tau_j=m}}\;,
\label{rt3:1}
\end{align}
o\`u nous avons utilis\'e la propri\'et\'e des incr\'ements ind\'ependants.
\end{proof}

Nous pouvons maintenant prouver un crit\`ere de r\'ecurrence plus
simple \`a v\'erifier que la d\'efinition~\eqref{rt2}.

\begin{theorem}
\label{thm_rt1}
Les deux conditions suivantes sont \'equivalentes:
\begin{enum}
\item	L'\'etat $i$ est r\'ecurrent.
\item	On a 
\begin{equation}
\label{rt4}
\sum_{n=0}^\infty \probin{i}{X_n=i} = +\infty\;.
\end{equation}
\end{enum}
\end{theorem}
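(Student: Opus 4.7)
The plan is to encode the two relevant sequences as generating functions and exploit the renewal equation from Proposition~\ref{prop_rt1}. Set $p_n = \probin{i}{X_n = i}$ with $p_0 = 1$, and $f_n = \probin{i}{\tau_i = n}$ with $f_0 = 0$. Both sequences are bounded by $1$, so the power series
\begin{equation*}
P(s) = \sum_{n=0}^\infty p_n s^n\;, \qquad F(s) = \sum_{n=0}^\infty f_n s^n
\end{equation*}
converge absolutely for every $s \in [0,1)$. Moreover, since $f_0 = 0$ and $\sum_n f_n \leqs 1$, one has $F(s) < 1$ strictly on $[0,1)$.

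The first step is to turn the renewal equation~\eqref{rt3} (with $j = i$) into a functional equation for $P$ and $F$. Multiplying~\eqref{rt3} by $s^n$ and summing over $n \geqs 1$, the right-hand side is a Cauchy product of the series defining $F$ and $P$, which yields
\begin{equation*}
P(s) - 1 = F(s) P(s)\;,
\qquad\text{hence}\qquad
P(s) = \frac{1}{1-F(s)} \qquad \forall s\in[0,1)\;.
\end{equation*}
The absolute convergence mentioned above is exactly what justifies reorganising the double sum into a Cauchy product.

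The second step is to let $s \to 1^-$. Since all coefficients are non-negative, monotone convergence (Abel's theorem for power series with non-negative terms) gives
\begin{equation*}
\lim_{s\to 1^-} P(s) = \sum_{n=0}^\infty p_n \in [0,+\infty]\;,
\qquad
\lim_{s\to 1^-} F(s) = \sum_{n=0}^\infty f_n = \probin{i}{\tau_i<\infty}\;.
\end{equation*}
The conclusion then reduces to a clean dichotomy: if $i$ is r\'ecurrent, then $F(s)\to 1$, so by the functional equation $P(s)\to+\infty$, hence $\sum_n p_n = +\infty$; if $i$ is transient, then $\sum_n f_n < 1$, so $P(s)\to 1/(1-\sum_n f_n) < +\infty$, hence $\sum_n p_n$ is finite.

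The only real obstacle is the convolution identity for $P$ and $F$; once that is established, the equivalence is a straightforward Abel-type limit argument. One should also verify that $F(s) < 1$ on $[0,1)$ so that the expression $1/(1-F(s))$ is well defined, which follows from $f_0 = 0$ and $f_n \geqs 0$ with $\sum_n f_n \leqs 1$.
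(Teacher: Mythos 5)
Your proof is correct and follows essentially the same route as the paper: the renewal equation of the Proposition~\ref{prop_rt1} combined with the generating-function identity $\psi(s)=1/(1-\phi(s))$ and an Abel-type limit as $s\nearrow1$. The only (harmless) difference is that you run both directions of the equivalence through this single functional equation, whereas the paper treats the direct implication by the more elementary manipulation $S=1+S$ on the sum itself and reserves the generating functions for the contrapositive of the converse.
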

\begin{proof}
\hfill
\begin{itemiz}
\item[$\Rightarrow$:]
L'\'equation de renouvellement~\eqref{rt3} permet d'\'ecrire 
\begin{align}
\nonumber
S\defby \sum_{n=0}^\infty \probin{i}{X_n=i} 
&= 1 + \sum_{n=1}^\infty \probin{i}{X_n=i} \\
\nonumber
&= 1 + \sum_{n=1}^\infty \sum_{m=1}^n \probin{i}{\tau_i=m}
\probin{i}{X_{n-m}=i} \\
\nonumber
&= 1 + \sum_{m=1}^\infty \probin{i}{\tau_i=m} \sum_{n=m}^\infty
\probin{i}{X_{n-m}=i} \\
&= 1 + \underbrace{\sum_{m=1}^\infty \probin{i}{\tau_i=m}}_{=1}
\sum_{n=0}^\infty \probin{i}{X_n=i} = 1+S\;.
\label{rt4:1}
\end{align}
Comme $S\in[0,\infty]$, l'\'egalit\'e $S=1+S$ implique n\'ecessairement
$S=+\infty$. 

\item[$\Leftarrow$:]
On ne peut pas directement inverser les implications ci-dessus. Cependant,
on peut montrer la contrapos\'ee en d\'efinissant pour tout $0<s<1$ les
s\'eries enti\`eres
\begin{align}
\nonumber
\psi(s) &= \sum_{n=0}^\infty \probin{i}{X_n=i} s^n\;, \\
\phi(s) &= \sum_{n=1}^\infty \probin{i}{\tau_i=n} s^n\;.
\label{rt4:2}
\end{align}
Ces s\'eries ont un rayon de convergence sup\'erieur ou \'egal \`a $1$ car
leurs coefficients sont inf\'erieurs ou \'egaux \`a $1$. 
Un calcul analogue au calcul~\eqref{rt4:1} ci-dessus donne alors 
\begin{align}
\nonumber
\psi(s)  
&= 1 + \sum_{m=1}^\infty \probin{i}{\tau_i=m} \sum_{n=m}^\infty
\probin{i}{X_{n-m}=i}s^n \\
\nonumber
&= 1 + \sum_{m=1}^\infty \probin{i}{\tau_i=m}s^m
\sum_{n=0}^\infty \probin{i}{X_n=i}s^{n} \\
&= 1 + \psi(s)\phi(s)\;,
\label{rt4:3}
\end{align}
d'o\`u
\begin{equation}
\label{rt4:4}
\psi(s) = \frac{1}{1-\phi(s)}\;.
\end{equation}
Par cons\'equent, si $\probin{i}{\tau_i<\infty}=\phi(1)<1$, alors on
obtient, en prenant la limite $s\nearrow1$, 
\begin{equation}
\label{rt4:5}
\sum_{n=0}^\infty \probin{i}{X_n=i} 
= \lim_{s\nearrow1}\psi(s) = \frac{1}{1-\phi(1)} < \infty\;.
\end{equation}
\qed
\end{itemiz}
\renewcommand{\qed}{}
\end{proof}

Une application de ce crit\`ere de r\'ecurrence est le r\'esultat
important suivant.

\begin{cor}
\label{cor_rt1}
La marche al\'eatoire sym\'etrique sur $\Z^d$ est r\'ecurrente pour $d=1$
et $d=2$ et transiente pour $d\geqs3$. 
\end{cor}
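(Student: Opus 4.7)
Le plan est d'appliquer le crit\`ere de r\'ecurrence du Th\'eor\`eme~\ref{thm_rt1} \`a l'\'etat $i=0$: il s'agit d'\'etudier la convergence ou divergence de la s\'erie $\sum_{n\geqs 0}\probin{0}{X_n=0}$. Comme la marche est p\'eriodique de p\'eriode $2$, on a $\probin{0}{X_{2m+1}=0}=0$ et il suffit d'estimer $u_m\defby\probin{0}{X_{2m}=0}$ pour tout $m$.

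Pour $d=1$, le r\'esultat est imm\'ediat, car on a d\'ej\`a vu \`a l'\'equation~\eqref{mas5} que $u_m\sim 1/\sqrt{\pi m}$, donc la s\'erie $\sum_m u_m$ diverge, ce qui donne la r\'ecurrence.

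Pour $d=2$, j'utiliserais l'astuce de d\'ecouplage suivante. \'Ecrivons $X_n=(X_n^{(1)},X_n^{(2)})$ et posons $U_n=X_n^{(1)}+X_n^{(2)}$, $V_n=X_n^{(1)}-X_n^{(2)}$. Un calcul direct sur les quatre incr\'ements possibles $\pm e_1, \pm e_2$ montre que les incr\'ements $(U_n-U_{n-1},V_n-V_{n-1})$ prennent les valeurs $(\pm 1, \pm 1)$ de mani\`ere \'equiprobable et ind\'ependamment. Ainsi $\set{U_n}$ et $\set{V_n}$ sont deux marches al\'eatoires sym\'etriques unidimensionnelles \emph{ind\'ependantes}. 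Comme $X_n=0$ \'equivaut \`a $U_n=V_n=0$, on obtient
\begin{equation}
u_m = \probin{0}{U_{2m}=0}\probin{0}{V_{2m}=0} \sim \frac{1}{\pi m}\;,
\end{equation}
en utilisant~\eqref{mas5}. La s\'erie harmonique divergeant, la marche est encore r\'ecurrente.

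Pour $d\geqs 3$, le c\oe ur de l'argument, et donc la principale difficult\'e, est de montrer que $u_m\leqs C_d/m^{d/2}$, ce qui rendra la s\'erie convergente. Je partirais de l'expression combinatoire
\begin{equation}
u_m = \frac{1}{(2d)^{2m}}\sum_{k_1+\dots+k_d=m}\frac{(2m)!}{(k_1!)^2\cdots(k_d!)^2}
= \binom{2m}{m}\frac{1}{(2d)^{2m}}\sum_{k_1+\dots+k_d=m}\binom{m}{k_1,\dots,k_d}^2\;,
\end{equation}
obtenue en d\'enombrant les trajectoires ayant exactement $k_j$ pas dans chaque direction $+e_j$ et $k_j$ pas dans chaque direction $-e_j$. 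On majore alors le carr\'e du coefficient multinomial par son maximum multipli\'e par la somme sans carr\'e, ce qui donne
\begin{equation}
\sum_{k_1+\dots+k_d=m}\binom{m}{k_1,\dots,k_d}^2 \leqs d^m\cdot\max_{\vec k}\binom{m}{k_1,\dots,k_d}\;.
\end{equation}
Le maximum est atteint lorsque les $k_j$ sont le plus \'egaux possible et vaut environ $m!/((m/d)!)^d$; par la formule de Stirling, il se comporte comme $d^m d^{d/2}/(2\pi m)^{(d-1)/2}$. En combinant avec l'\'equivalent $\binom{2m}{m}\sim 4^m/\sqrt{\pi m}$, les facteurs $4^m d^{2m}$ se simplifient avec $(2d)^{2m}$ et il reste
\begin{equation}
u_m \leqs \frac{C_d}{m^{d/2}}
\end{equation}
pour une constante $C_d>0$. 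Pour $d\geqs 3$ cette borne est sommable, donc $\sum_m u_m<\infty$ et le Th\'eor\`eme~\ref{thm_rt1} entra\^\i ne la transience de l'origine. La marche \'etant invariante par translation, tous les \'etats sont transients, ce qui ach\`eve la preuve.
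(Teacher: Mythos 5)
Votre preuve est correcte et suit la m\^eme strat\'egie globale que celle du polycopi\'e (crit\`ere du Th\'eor\`eme~\ref{thm_rt1} appliqu\'e \`a l'origine, plus invariance par translation), mais elle s'en \'ecarte sur deux points. Pour $d=2$, vous utilisez le d\'ecouplage par rotation de $45$ degr\'es, $U_n=X_n^{(1)}+X_n^{(2)}$ et $V_n=X_n^{(1)}-X_n^{(2)}$, qui r\'eduit imm\'ediatement le cas bidimensionnel \`a deux copies ind\'ependantes du cas unidimensionnel; le polycopi\'e obtient le m\^eme \'equivalent $1/(\pi m)$ par un d\'enombrement direct reposant sur l'identit\'e $\sum_k\binom mk^2=\binom{2m}{m}$. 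Votre argument est plus conceptuel et \'evite toute combinatoire, au prix de la v\'erification (que vous faites) de l'ind\'ependance des incr\'ements de $U$ et $V$; l'argument du polycopi\'e est plus \'el\'ementaire mais sp\'ecifique \`a $d=2$. Pour $d\geqs3$, votre majoration \og maximum fois somme \fg\ du carr\'e des coefficients multinomiaux est exactement celle du polycopi\'e, mais vous l'appliquez uniform\'ement en $d$ et \`a tous les temps pairs $2m$, ce qui vous dispense de l'\'etape d'interpolation du polycopi\'e (qui ne traite que les temps $6m$ pour $d=3$ et doit ensuite contr\^oler les termes interm\'ediaires via $\probin{0}{X_{6m}=0}\geqs(1/6)^2\probin{0}{X_{6m-4}=0}$). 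Veillez seulement \`a pr\'eciser que lorsque $d$ ne divise pas $m$, le maximum du coefficient multinomial est atteint pour des $k_j$ aussi \'egaux que possible et garde le m\^eme ordre de grandeur $d^m d^{d/2}/(2\pi m)^{(d-1)/2}$, ce qui suffit pour la borne sup\'erieure $C_d/m^{d/2}$.
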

\begin{proof}
Comme la marche al\'eatoire est invariante par translation, il suffit de
v\'erifier que l'origine est r\'ecurrente, respectivement transiente.
\begin{enum}
\item	En dimension $d=1$, nous avons vu dans la Section~\ref{sec_rw} que
$\probin{0}{X_{2m}=0}$ se comportait en $1/\sqrt{\pi m}$ pour $m$ grand.
Il suit que 
\begin{equation}
\label{rt5:1}
\sum_{n=0}^\infty \probin{0}{X_n=0} = +\infty\;,
\end{equation}
donc par le th\'eor\`eme pr\'ec\'edent, l'origine est r\'ecurrente.

\item	En dimension $d=2$, on peut encore calculer explicitement
$\probin{0}{X_{2m}=0}$. Parmi les $4^{2m}$ trajectoires de longueur $2m$,
il faut d\'eterminer combien sont revenues \`a l'origine. Munissons le
r\'eseau $\Z^2$ des quatre directions Nord, Sud, Est et Ouest. Une
trajectoire de longueur $2m$ partant et revenant de l'origine doit avoir
fait autant de pas (disons $k$) vers le Nord que vers le Sud, et autant de
pas (c'est-\`a-dire $m-k$) vers l'Est que vers l'Ouest. Le nombre de telles
trajectoires est donc
donn\'e par 
\begin{equation}
\label{rt5:2}
\sum_{k=0}^m \binom{2m}{2k} \binom{2k}{k} \binom{2(m-k)}{m-k} 
= \sum_{k=0}^m \frac{(2m)!}{\brak{k!(m-k)!}^2}
= \binom{2m}{m} \sum_{k=0}^m \binom{m}{k}^2\;.
\end{equation}
Cette derni\`ere somme peut se simplifier gr\^ace aux identit\'es suivantes:
\begin{align}
\nonumber
\binom{2m}{m} 
&= \binom{2m-1}{m} + \binom{2m-1}{m-1}
= \binom{2m-2}{m} + 2\binom{2m-2}{m-1} + \binom{2m-2}{m-2} \\
&= \dots = \sum_{k=0}^m \binom mk \binom{m}{m-k}
= \sum_{k=0}^m \binom mk^2\;.
\label{rt5:3}
\end{align}
Il suit que 
\begin{equation}
\label{rt5:4}
\probin{0}{X_{2m}=0} = \frac1{4^{2m}} \binom{2m}{m}^2 \sim \frac1{\pi m}\;,
\end{equation}
o\`u la derni\`ere \'equivalence d\'ecoule de la formule de Stirling. Ceci
implique la divergence de la somme des $\probin{0}{X_n=0}$, donc la
r\'ecurrence de l'origine.

\item	En dimension $d=3$, on obtient de mani\`ere analogue que le nombre
de chemins revenus \`a l'origine apr\`es $6m$ pas est donn\'e par 
\begin{equation}
\label{rt5:5}
\binom{6m}{3m} \sum_{k_1+k_2+k_3=3m}
\biggpar{\frac{(3m)!}{k_1!k_2!k_3!}}^2\;.
\end{equation}
Cette fois-ci, nous ne disposons pas d'une expression exacte pour la
somme. Nous pouvons toutefois l'estimer comme suit: 
\begin{align}
\nonumber
\sum_{k_1+k_2+k_3=3m} \biggpar{\frac{(3m)!}{k_1!k_2!k_3!}}^2
&\leqs \biggpar{\max_{k_1+k_2+k_3=3m}\frac{(3m)!}{k_1!k_2!k_3!}}
\sum_{k_1+k_2+k_3=3m}\frac{(3m)!}{k_1!k_2!k_3!} \\
&= \frac{(3m)!}{(m!)^3} \cdot 3^{3m}\;.
\label{rt5:6}
\end{align}
En effet, le maximum est atteint lorsque tous les $k_i$ sont \'egaux,
alors que la somme est \'egale au nombre de mots de longueur $3m$ qu'on
peut \'ecrire avec trois lettres diff\'erentes, avec r\'ep\'etition (les
$k_i$ correspondant au nombre de lettres de chaque type choisies). En
utilisant la formule de Stirling, on trouve alors 
\begin{equation}
\label{rt5:7}
\probin{0}{X_{6m}=0} \leqs \frac{3^{3m}}{6^{6m}} \binom{6m}{3m}
\frac{(3m)!}{(m!)^3}
\sim \frac1{2(\pi m)^{3/2}}\;.
\end{equation}
Comme par ailleurs 
\begin{align}
\probin{0}{X_{6m}=0} 
&\geqs
\underbrace{\pcondin{0}{X_{6m}=0}{X_{6m-2}=0}}_{=\probin{0}{X_2=0}=1/6}
\probin{0}{X_{6m-2}=0} \\
&\geqs \Bigpar{\frac1{6}}^2 \probin{0}{X_{6m-4}=0}\;,
\label{rt5:8}
\end{align}
les termes de la s\'erie des $\probin{0}{X_{2n}=0}$ d\'ecroissent en
$n^{-3/2}$, ce qui implique que la s\'erie est sommable, et donc que
l'origine est un point transient. 

\item	En dimension $d\geqs4$, on montre de mani\`ere analogue que
$\probin{0}{X_{2n}=0}$ d\'ecro\^\i t comme $n^{-d/2}$, ce qui implique
\`a nouveau la transience de l'origine.
\qed 
\end{enum}
\renewcommand{\qed}{}
\end{proof}

Nous avons utilis\'e \`a plusieurs reprises le fait qu'une marche
al\'eatoire ne peut revenir au m\^eme endroit qu'aux temps pairs. On dit
qu'elle a la p\'eriode $2$. Plus g\'en\'eralement, on introduit la
d\'efinition suivante. 

\begin{definition}
\label{def_rt2}
La \defwd{p\'eriode}\/ d'un \'etat $i\in\cX$ est le nombre 
\begin{equation}
\label{rt6}
d_i = \pgcd \bigsetsuch{n\geqs 1}{\probin{i}{X_n=i} > 0}\;.
\end{equation}
Si $d_i=1$, on dit que l'\'etat $i$ est \defwd{ap\'eriodique}\/. Si tout
$i\in\cX$ est ap\'eriodique, on dit que la \chaine\ est ap\'eriodique. 
\end{definition}

\begin{remark}
\label{rem_rt1}
Une \chaine\ r\'eguli\`ere est ap\'eriodique. En effet, pour tout \'etat
$i$, il existe un \'etat $j$ tel que $p_{ij}>0$. Par d\'efinition, il
existe un temps $n$ tel que $\probin{k}{X_n=\ell}>0$ pour tout
$k,\ell\in\cX$. Par cons\'equent, on a $\probin{i}{X_n=i}>0$ et aussi
\begin{equation}
\label{rt7}
\probin{i}{X_{n+1}=i}\geqs \probin{i}{X_1=j,X_{n+1}=i}
= p_{ij}\probin{j}{X_n=i}>0\;.  
\end{equation}
Ceci implique que $d_i=\pgcd\set{n,n+1}=1$. 
\end{remark}

%\begin{remark}
%\label{rem_rt2}
Nous avons introduit la relation d'\'equivalence $i\sim j$ signifiant que
$i$ est accessible depuis $j$ est inversement. On montre assez
facilement que les propri\'et\'es de r\'ecurrence/tran\-sience et la
p\'eriode sont constantes sur les classes d'\'equivalence. On peut alors
parler de classes r\'ecurrentes ou transientes, et de la p\'eriode d'une
classe. Si la \chaine\ est irr\'eductible, alors elle est respectivement
r\'ecurrente, transiente ou ap\'eriodique si et seulement si elle admet un
\'etat r\'ecurrent, transient ou ap\'eriodique.
%\end{remark}

\begin{prop}
\label{prop_rt2}
Si $i$ et $j$ sont dans la m\^eme classe r\'ecurrente, alors 
\begin{equation}
\label{rt8}
\probin{i}{\tau_j<\infty} = \probin{j}{\tau_i<\infty} = 1\;.
\end{equation}
\end{prop}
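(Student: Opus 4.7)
The plan is to prove $\probin{i}{\tau_j<\infty}=1$; the second equality follows by exchanging the roles of $i$ and $j$ (recurrence and the relation $\sim$ being symmetric, so $j$ is also recurrent and $j\sim i$). Since $i$ and $j$ belong to the same class, we have in particular $j\reaches i$, so there exists $n\geqs 1$ with $p^{(n)}_{ji}>0$.

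The key auxiliary fact I would establish first is that a recurrent state is $\fP$-almost surely visited infinitely often. More precisely, if $j$ is recurrent and $N_j=\sum_{n\geqs 0}\indexfct{X_n=j}$, then $\probin{j}{N_j=\infty}=1$. This follows by defining the successive return times $\tau_j^{(1)}=\tau_j$ and $\tau_j^{(k+1)}=\inf\setsuch{n>\tau_j^{(k)}}{X_n=j}$ and applying the strong Markov property at each $\tau_j^{(k)}$: after each finite return, the chain restarts in state $j$, so by induction $\probin{j}{\tau_j^{(k)}<\infty}=\probin{j}{\tau_j<\infty}^k=1^k=1$, whence $N_j=\infty$ almost surely.

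Now set $\alpha=\probin{i}{\tau_j=\infty}$ and suppose for contradiction that $\alpha>0$. Consider the chain starting at $j$ and decompose on the event $\set{X_n=i}$: by the Markov property applied at time $n$,
\begin{equation*}
\probin{j}{X_n=i,\; X_k\neq j \text{ pour tout } k>n}
= p^{(n)}_{ji}\,\probin{i}{\tau_j=\infty} = p^{(n)}_{ji}\alpha > 0\;.
\end{equation*}
On this event the chain visits $j$ only finitely many times (at most during the initial segment $[0,n]$), so $\probin{j}{N_j<\infty}\geqs p^{(n)}_{ji}\alpha>0$, contradicting the auxiliary fact applied to the recurrent state $j$. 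Hence $\alpha=0$, i.e.\ $\probin{i}{\tau_j<\infty}=1$.

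The main obstacle is really the auxiliary lemma: the equivalence in Theorem~\ref{thm_rt1} only gives $\expecin{i}{N_i}=+\infty$, whereas here I need the almost sure statement $N_j=\infty$ $\fP_j$-a.s. This requires a strong Markov argument at successive return times, which, although standard, has not been made explicit earlier in the text and must be carefully justified (in particular the measurability of $\tau_j^{(k)}$ and the fact that on $\set{\tau_j^{(k)}<\infty}$ the post-$\tau_j^{(k)}$ chain is again a Markov chain started at $j$). Once this lemma is in place, the rest of the argument is a short Markov-property decomposition.
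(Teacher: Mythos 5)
Your proof is correct, but it takes a genuinely different route from the one in the text. You first establish the stronger intermediate fact that a recurrent state is visited infinitely often $\fP_j$-almost surely, via the strong Markov property at the successive return times $\tau_j^{(k)}$, and then derive a contradiction from the event $\set{X_n=i,\;X_k\neq j\ \forall k>n}$, which has probability $p^{(n)}_{ji}\,\probin{i}{\tau_j=\infty}$ and forces $N_j<\infty$. The text avoids this lemma entirely by a minimality trick: it takes $n_0$ to be the \emph{smallest} integer with $\probin{j}{X_{n_0}=i}>0$, so that on $\set{X_{n_0}=i}$ the chain cannot have returned to $j$ before time $n_0$ (otherwise $i$ would be accessible from $j$ in fewer than $n_0$ steps), and then decomposes $A_M\cap\set{X_{n_0}=i}$, where $A_M$ is the event of a visit to $j$ before time $M$, over the value of $\tau_j$; letting $M\to\infty$ and using only $\probin{j}{\tau_j<\infty}=1$ and the Markov property at the deterministic time $n_0$ yields $\probin{j}{X_{n_0}=i}\leqs\probin{j}{X_{n_0}=i}\,\probin{i}{\tau_j<\infty}$, hence the result. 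The trade-off is as you identify it yourself: your approach requires justifying the strong Markov argument at the random times $\tau_j^{(k)}$ (which is doable with Proposition~\ref{prop_fdef1} by decomposing over $\set{\tau_j^{(k)}=m}$, but is not made explicit earlier in the text), while in exchange it delivers the reusable and stronger statement $\probin{j}{N_j=\infty}=1$; the text's argument is more self-contained but yields only the proposition as stated.
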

\begin{proof}
Soit $A_M = \bigcup_{m=1}^M \set{X_m=j}$ l'\'ev\'enement \lq\lq la \chaine\
visite le site $j$ lors des $M$ premiers pas\rq\rq. Alors 
\begin{equation}
\label{rt8:1}
\lim_{M\to\infty} \fP_i(A_M) = \sum_{m=1}^\infty \probin{j}{\tau_j=m} =
1\;.
\end{equation}
Soit $n_0$ le plus petit entier tel que $\probin{j}{X_{n_0}=i}>0$. Alors
pour tout $M>n_0$, 
\begin{align}
\nonumber
\fP_j\bigpar{A_M\cap\set{X_{n_0}=i}}
&= \sum_{n=1}^{M-n_0} \probin{j}{X_{n_0}=i, \tau_j=n_0+n} \\
\nonumber
&= \sum_{n=1}^{M-n_0} \probin{j}{X_{n_0}=i, j\notin X_{[1,n_0]}}
\probin{i}{\tau_j=n} \\
&\leqs \probin{j}{X_{n_0}=i} \sum_{n=1}^{M-n_0}\probin{i}{\tau_j=n}\;.
\label{rt8:2}
\end{align}
La premi\`ere \'egalit\'e suit du fait que la \chaine\ ne peut pas
retourner en $j$ avant $n_0$ et visiter $i$ au temps $n_0$, par
d\'efinition de $n_0$. Nous faisons maintenant tendre $M$ vers l'infini
des deux c\^ot\'es de l'in\'egalit\'e. Le membre de gauche tend vers
$\probin{j}{X_{n_0}=i}$ en vertu de~\eqref{rt8:1}. Il vient donc 
\begin{equation}
\label{tr8:3}
\probin{j}{X_{n_0}=i} \leqs \probin{j}{X_{n_0}=i}
\probin{i}{\tau_j<\infty}\;.
\end{equation}
Comme $\probin{j}{X_{n_0}=i}\neq 0$ et $\probin{i}{\tau_j<\infty}\leqs 1$,
on a n\'ecessairement $\probin{i}{\tau_j<\infty}=1$.
\end{proof}

%%%%%%%%%%%%%%%%%%%%%%%%%%%%%%%%%%%%%%%%%%%%%%%%%%%%%%%%%%%%%%%%%%%%%%%%%%%

\section{Distributions stationnaires}
\label{sec_stat}

Nous consid\'erons une \chaine\ de Markov irr\'eductible sur un ensemble
d\'enombrable $\cX$, de matrice de transition $P=(p_{ij})_{i,j\in\cX}$.

\begin{definition}
\label{def_stat1}
Une distribution de probabilit\'e $\pi$ sur $\cX$ est dite
\defwd{stationnaire}\/ si elle satisfait 
\begin{equation}
\label{stat1}
\pi_j = \sum_{i\in\cX} \pi_i p_{ij}
\qquad
\forall j\in\cX\;.
\end{equation}
Plus g\'en\'eralement, une mesure $\mu$ sur $\cX$ (qui n'est pas
n\'ecessairement une mesure de probabilit\'e) satisfaisant $\mu_j =
\sum_{i\in\cX} \mu_i p_{ij}$ pour tout $j\in\cX$ est appel\'ee une
\defwd{mesure invariante}\/ de la \chaine. 
\end{definition}

Dans le cas o\`u $\cX$ est fini, nous avons vu qu'une \chaine\
irr\'eductible admettait toujours une distribution stationnaire. Dans le
cas infini, ce n'est plus n\'ecessairement le cas. Nous verrons par
exemple que les marches al\'eatoires sur $\Z^d$ n'admettent pas de
distribution stationnaire (en revanche, elles admettent beaucoup de mesures
invariantes).

Nous allons maintenant d\'eriver une condition n\'ecessaire et suffisante
pour qu'une \chaine\ de Markov irr\'eductible admette une distribution
stationnaire, qui sera toujours unique dans ce cas. Un r\^ole important
est jou\'e par la quantit\'e 
\begin{equation}
\label{stat2}
\gamma^{(k)}_i = \Bigexpecin{k}{\sum_{n=1}^{\tau_k} \indexfct{X_n=i}}\;,
\end{equation}
c'est-\`a-dire le nombre moyen de passages en $i$ entre deux passages en
$k$. Intuitivement, si $k$ est r\'ecurrent alors la \chaine\ revient
infiniment souvent en $k$, et donc $\smash{\gamma^{(k)}_i}$ devrait mesurer
le temps moyen pass\'e en $i$. On peut s'attendre \`a ce que ce temps
corresponde \`a une mesure invariante, et c'est effectivement le cas~:

\begin{prop}
\label{prop_stat1}
Supposons la \chaine\ irr\'eductible et r\'ecurrente. Alors on a 
$\forall k\in\cX$:
\begin{enum}
\item	$\smash{\gamma^{(k)}_k} = 1$;
\item	$\smash{\gamma^{(k)}}$ est une mesure invariante;
\item	Pour tout $i\in\cX$, on a $0<\smash{\gamma^{(k)}_i}<\infty$; 
\item	$\smash{\gamma^{(k)}}$ est l'unique mesure invariante telle que
$\smash{\gamma^{(k)}_k} = 1$. 
\end{enum}
\end{prop}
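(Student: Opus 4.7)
Le point (1) est imm\'ediat : partant de $k$, le temps $\tau_k$ est le premier temps $n\geqs1$ avec $X_n=k$, donc $\sum_{n=1}^{\tau_k}\indexfct{X_n=k}=1$ presque s\^urement (la \chaine\ est r\'ecurrente donc $\tau_k<\infty$ \Pas). L'id\'ee centrale pour la suite est de r\'ecrire
\begin{equation*}
\gamma^{(k)}_i = \sum_{n\geqs1}\probin{k}{X_n=i,\,\tau_k\geqs n}
\end{equation*}
et d'exploiter le fait que l'\'ev\'enement $\set{\tau_k\geqs n}=\set{X_1\neq k,\dots,X_{n-1}\neq k}$ ne d\'epend que des variables $X_1,\dots,X_{n-1}$.

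Pour (2), je fixe $j\in\cX$ et je d\'ecompose $\probin{k}{X_n=j,\,\tau_k\geqs n}$ selon la valeur de $X_{n-1}$. Pour $n\geqs2$ et $i\neq k$, la propri\'et\'e de Markov donne
\begin{equation*}
\probin{k}{X_n=j,\,X_{n-1}=i,\,\tau_k\geqs n-1}
= \probin{k}{X_{n-1}=i,\,\tau_k\geqs n-1}\,p_{ij}\;,
\end{equation*}
et pour $n=1$ on a simplement $p_{kj}=\gamma^{(k)}_k p_{kj}$. En sommant sur $n\geqs1$ et sur $i\neq k$, on obtient
\begin{equation*}
\gamma^{(k)}_j
= p_{kj} + \sum_{i\neq k}\gamma^{(k)}_i p_{ij}
= \sum_{i\in\cX}\gamma^{(k)}_i p_{ij}\;,
\end{equation*}
en utilisant $\gamma^{(k)}_k=1$ pour $j\neq k$, et la relation $\sum_{n\geqs1}\probin{k}{\tau_k=n}=1$ pour $j=k$. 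Ainsi $\gamma^{(k)}$ est invariante. Le point technique est d'effectuer correctement la d\'ecomposition qui fait appara\^\i tre la condition $\tau_k\geqs n-1$ apr\`es avoir fix\'e $X_{n-1}=i\neq k$, puis de reconna\^\i tre l'expression de $\gamma^{(k)}_i$ dans la somme r\'esultante.

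Le point (3) suit de l'invariance combin\'ee \`a l'irr\'eductibilit\'e. En effet, pour tout $i\in\cX$, il existe $m,n$ tels que $(P^m)_{ki}>0$ et $(P^n)_{ik}>0$. Par invariance it\'er\'ee, $\gamma^{(k)}P^m=\gamma^{(k)}$, donc
\begin{equation*}
\gamma^{(k)}_i \geqs \gamma^{(k)}_k (P^m)_{ki} = (P^m)_{ki} > 0\;,
\end{equation*}
et de m\^eme $1=\gamma^{(k)}_k \geqs \gamma^{(k)}_i (P^n)_{ik}$, donc $\gamma^{(k)}_i\leqs 1/(P^n)_{ik}<\infty$.

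Pour (4), la strat\'egie que j'envisage est d'\'etablir d'abord une in\'egalit\'e. Soit $\mu$ une mesure invariante quelconque avec $\mu_k=1$. En it\'erant la relation $\mu_j=\sum_i\mu_ip_{ij}=p_{kj}+\sum_{i\neq k}\mu_i p_{ij}$ et en r\'einjectant r\'ecursivement $\mu_i$ pour $i\neq k$, on fait appara\^\i tre au rang $N$ une somme de termes positifs plus le reste
\begin{equation*}
\mu_j \geqs \sum_{n=1}^N \probin{k}{X_n=j,\,X_1\neq k,\dots,X_{n-1}\neq k}\;,
\end{equation*}
puis on passe \`a la limite $N\to\infty$ pour obtenir $\mu_j\geqs\gamma^{(k)}_j$ pour tout $j$. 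La mesure $\nu=\mu-\gamma^{(k)}$ est alors positive et invariante, et satisfait $\nu_k=0$. L'\'equation $\nu_k=\sum_i\nu_i(P^n)_{ik}=0$ avec tous les termes positifs donne $\nu_i(P^n)_{ik}=0$ pour tout $i,n$, et l'irr\'eductibilit\'e permet de conclure $\nu_i=0$ pour tout $i$, d'o\`u $\mu=\gamma^{(k)}$. Le point d\'elicat ici est la premi\`ere \'etape (obtention de l'in\'egalit\'e par it\'eration), qui requiert un contr\^ole soigneux du reste positif qu'on jette \`a chaque substitution.
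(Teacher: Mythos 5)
Votre proposition est correcte et suit essentiellement la m\^eme d\'emarche que la preuve du texte~: d\'ecomposition de $\probin{k}{X_n=j,\,\tau_k\geqs n}$ selon la valeur de $X_{n-1}$ avec d\'ecalage d'indice pour le point~(2), it\'eration de l'invariance combin\'ee \`a l'irr\'eductibilit\'e pour le point~(3), et minoration r\'ecursive $\mu_j\geqs\gamma^{(k)}_j$ suivie de l'\'etude de la diff\'erence $\mu-\gamma^{(k)}$ pour le point~(4). La seule diff\'erence est cosm\'etique~: le texte traite le point~(2) par un unique d\'ecalage de la fen\^etre de sommation de $[1,\tau_k]$ vers $[0,\tau_k-1]$ en utilisant $\probin{k}{X_0=j}=\delta_{kj}=\probin{k}{X_{\tau_k}=j}$, l\`a o\`u vous s\'eparez les cas $n=1$ et $i=k$.
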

\begin{proof}
\hfill
\begin{enum}
\item	Evident, puisque $\tau_k$ est fini presque s\^urement, 
$X_{\tau_k}=k$ et $X_n\neq k$ pour $1\leqs
n<\tau_k$.
\item	Nous avons 
\begin{align}
\nonumber
\gamma^{(k)}_i 
&= \Bigexpecin{k}{\sum_{n=1}^\infty \indexfct{X_n=i,n\leqs\tau_k}} 
%\nonumber
= \sum_{n=1}^\infty \probin{k}{X_n=i,n\leqs\tau_k} \\
\nonumber
&= \sum_{j\in\cX} \sum_{n=1}^\infty 
\probin{k}{X_{n-1}=j,n\leqs\tau_k}p_{ji} \\
&= \sum_{j\in\cX} p_{ji} \sum_{m=0}^\infty
\probin{k}{X_m=j,m\leqs\tau_k-1}\;.
\label{stat3:1}
\end{align}
Or la seconde somme dans cette expression peut s'\'ecrire 
\begin{equation}
\label{stat3:2}
\Bigexpecin{k}{\sum_{m=0}^{\tau_k-1} \indexfct{X_m=j}}
= \Bigexpecin{k}{\sum_{m=1}^{\tau_k} \indexfct{X_m=j}}
= \gamma^{(k)}_j\;,
\end{equation}
vu que $\probin{k}{X_0=j}=\delta_{kj}=\probin{k}{X_{\tau_k}=j}$.
Ceci prouve l'invariance de la mesure $\smash{\gamma^{(k)}}$.

\item	L'invariance de la mesure implique que pour tout $n\geqs0$, 
\begin{equation}
\label{stat3:3}
\gamma^{(k)}_i = \sum_{j\in\cX}\gamma^{(k)}_j \probin{j}{X_n=i}\;. 
\end{equation}
En particulier, $1=\gamma^{(k)}_k\geqs \gamma^{(k)}_j \probin{j}{X_n=k}$
pour tout $j$. Comme par irr\'eductibilit\'e, il existe un $n$ tel que
$\probin{j}{X_n=k}>0$, on en d\'eduit que $\smash{\gamma^{(k)}_j}<\infty$
pour tout $j$. D'autre part, on a aussi $\smash{\gamma^{(k)}_i} \geqs
\probin{k}{X_n=i}$, qui est strictement positif pour au moins un $n$.

\item	Soit $\lambda$ une mesure invariante telle que $\lambda_k=1$.
Alors pour tout $j$ on a 
\begin{equation}
\label{stat3:4}
\lambda_j = \sum_{i\neq k} \lambda_i p_{ij} + p_{kj}
\geqs p_{kj}\;.
\end{equation}
Il vient alors, en minorant $\lambda_i$ par $p_{ki}$ dans l'expression
ci-dessus, 
\begin{align}
\nonumber
\lambda_j &\geqs \sum_{i\neq k} p_{ki}p_{ij}
+ p_{kj}\\
&= \probin{k}{X_2=j,\tau_k\geqs 2} + \probin{k}{X_1=j,\tau_k\geqs 1}
\label{stat3:5}
\end{align}
Par r\'ecurrence, on trouve donc pour tout $n\geqs1$ ($a\wedge b$ d\'esigne
le minimum de $a$ et $b$)
\begin{equation}
\lambda_j \geqs \sum_{m=1}^{n+1} \probin{k}{X_m=j,\tau_k\geqs m}
= \biggexpecin{k}{\sum_{m=1}^{(n+1)\wedge\tau_k}\indexfct{X_m=j}}\;.
\label{stat3:6}
\end{equation}
Lorsque $n$ tend vers l'infini, le membre de droite tend vers
$\smash{\gamma^{(k)}_j}$. On a donc $\lambda_j\geqs
\smash{\gamma^{(k)}_j}$ pour tout $j$. Par cons\'equent,
$\mu=\lambda-\smash{\gamma^{(k)}}$ est une mesure invariante, satisfaisant
$\mu_k=0$. Comme $\mu_k=\sum_j\mu_j\probin{j}{X_n=k}$ pour tout $n$,
l'irr\'eductibilit\'e implique $\mu_j=0$ $\forall j$, donc
n\'ecessairement $\lambda=\smash{\gamma^{(k)}}$.
\qed
\end{enum}
\renewcommand{\qed}{}
\end{proof}

Nous pouvons maintenant \'enoncer et d\'emontrer le th\'eor\`eme principal
sur les distributions stationnaires.

\begin{theorem}
\label{thm_stat1}
Pour une \chaine\ de Markov irr\'eductible, les propri\'et\'es suivantes
sont \'equivalentes :
\begin{enum}
\item	Il existe une distribution stationnaire.
\item	Il existe un \'etat $k\in\cX$ tel que 
\begin{equation}
\label{stat3}
\mu_k \defby \expecin{k}{\tau_k} 
%=\sum_{n=0}^\infty n \probin{k}{\tau_k=n} 
< \infty\;.
\end{equation}
\item	La relation~\eqref{stat3} est v\'erifi\'ee pour tout $k\in\cX$. 
\end{enum}
De plus, si ces propri\'et\'es sont v\'erifi\'ees, alors la distribution
stationnaire est unique, et donn\'ee par 
\begin{equation}
\label{stat4}
\pi_i = \frac1{\mu_i}
\qquad\forall i\in\cX\;.
\end{equation}
\end{theorem}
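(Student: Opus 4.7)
The plan is to combine Proposition~\ref{prop_stat1} with a preliminary recurrence argument, using the key observation that the invariant measure $\gamma^{(k)}$ of Proposition~\ref{prop_stat1} has total mass exactly $\mu_k$. Indeed, by Tonelli,
\begin{equation}
\sum_{i\in\cX} \gamma^{(k)}_i
= \Bigexpecin{k}{\sum_{n=1}^{\tau_k}\sum_{i\in\cX}\indexfct{X_n=i}}
= \expecin{k}{\tau_k} = \mu_k\;,
\end{equation}
so a stationary distribution should be obtainable by normalizing $\gamma^{(k)}$ precisely when $\mu_k<\infty$. The proof will chain together the implications $(i)\Rightarrow(iii)\Rightarrow(ii)\Rightarrow(i)$.

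First I would prove $(ii)\Rightarrow(i)$ together with the formula. If $\mu_k=\expecin{k}{\tau_k}<\infty$ for some $k$, then in particular $\probin{k}{\tau_k<\infty}=1$, so $k$ is recurrent, and by irreducibility the whole \chaine\ is recurrent (recurrence being a class property). Proposition~\ref{prop_stat1} then applies: $\gamma^{(k)}$ is an invariant measure with finite total mass $\mu_k$, and $\pi_i=\gamma^{(k)}_i/\mu_k$ is a stationary distribution satisfying $\pi_k=1/\mu_k$.

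Next I would establish $(i)\Rightarrow$ recurrence, which I expect to be the main obstacle since the results in Section~\ref{sec_rt} are stated for return probabilities, not forward transitions. The plan is to use the renewal equation from Proposition~\ref{prop_rt1} to show that if $i$ is transient, then for every $j\in\cX$,
\begin{equation}
\sum_{n=0}^\infty \probin{j}{X_n=i}
= \probin{j}{\tau_i<\infty}\sum_{n=0}^\infty\probin{i}{X_n=i}
\leqs C \defby \sum_{n=0}^\infty\probin{i}{X_n=i} < \infty\;.
\end{equation}
Assuming $\pi$ is stationary and summing the invariance relation $\pi_i=\sum_j\pi_j\probin{j}{X_n=i}$ for $n=1,\dots,N$, Tonelli gives $N\pi_i\leqs\sum_j\pi_j C=C$ for all $N$, forcing $\pi_i=0$. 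Since irreducibility implies all $\pi_i$ share this property (if $\pi_j>0$ and $\probin{j}{X_n=i}>0$ then $\pi_i>0$), we conclude $\pi\equiv0$, contradicting $\sum_i\pi_i=1$. Hence the \chaine\ is recurrent.

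Once recurrence is established from $(i)$, Proposition~\ref{prop_stat1} gives for every $k$ that $\gamma^{(k)}$ is the unique invariant measure with $\gamma^{(k)}_k=1$. Since $\pi_k>0$ by the irreducibility argument above, the measure $\pi/\pi_k$ is invariant with value $1$ at $k$, so by uniqueness $\pi_i=\pi_k\gamma^{(k)}_i$ for every $i$. Summing over $i$ yields $1=\pi_k\mu_k$, whence $\mu_k=1/\pi_k<\infty$; this proves $(i)\Rightarrow(iii)$, the uniqueness of $\pi$ (since $\pi_i=\gamma^{(k)}_i/\mu_k$ is forced), and the formula $\pi_i=1/\mu_i$. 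The implication $(iii)\Rightarrow(ii)$ is trivial, closing the cycle.
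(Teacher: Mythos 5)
Your proof is correct, and its core is the same as the paper's: the implication $2\Rightarrow1$ normalizes $\gamma^{(k)}$ using $\sum_i\gamma^{(k)}_i=\mu_k$, and $1\Rightarrow3$ identifies $\pi/\pi_k$ with $\gamma^{(k)}$ via the uniqueness statement of Proposition~\ref{prop_stat1}. The genuine difference is that you insert a preliminary lemma the paper skips: in its $1\Rightarrow3$ step the paper applies Proposition~\ref{prop_stat1} directly to a chain that is only known to be irreducible and to admit a stationary distribution, whereas that proposition is stated under the hypothesis \emph{irr\'eductible et r\'ecurrente} (and also requires $\pi_k>0$ to even define $\hat\gamma_j=\pi_j/\pi_k$). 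Your renewal-equation argument --- if $i$ were transient then $\sum_{n}\probin{j}{X_n=i}\leqs\sum_n\probin{i}{X_n=i}<\infty$ for every $j$, so summing the invariance relation over $n=1,\dots,N$ forces $N\pi_i\leqs C$ and hence $\pi_i=0$, contradicting $\sum_i\pi_i=1$ --- supplies exactly the missing recurrence, and your observation that positivity of $\pi$ propagates along the irreducibility classes supplies $\pi_k>0$. So your route is slightly longer but self-contained where the paper's is elliptic; the only cosmetic slip is that your displayed identity $\sum_{n\geqs0}\probin{j}{X_n=i}=\probin{j}{\tau_i<\infty}\sum_{n\geqs0}\probin{i}{X_n=i}$ should either start the left-hand sum at $n=1$ or be restricted to $j\neq i$, though the bound by $C$ that you actually use holds in all cases.
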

\goodbreak
\begin{proof}
\hfill
\begin{itemizz}
\item[{$2\Rightarrow 1:$}]	
Si $\mu_k<\infty$ alors $k$ est r\'ecurrent, donc la \chaine, \'etant
irr\'eductible, est r\'ecurrente. Par la proposition pr\'ec\'edente,
$\smash{\gamma^{(k)}}$ est l'unique mesure invariante prenant valeur $1$
en $k$. Or nous avons 
\begin{equation}
\label{stat4:1}
\sum_{j\in\cX}\gamma^{(k)}_j 
= \biggexpecin{k}{\sum_{n=1}^{\tau_k}
\underbrace{\sum_{j\in\cX}\indexfct{X_n=j}}_{=1}}
= \expecin{k}{\tau_k} = \mu_k < \infty\;.
\end{equation}
Par cons\'equent, la mesure $\pi$ d\'efinie par
$\pi_j=\gamma^{(k)}_j/\mu_k$ est une mesure de probabilit\'e invariante,
c'est-\`a-dire une distribution stationnaire.

\item[{$1\Rightarrow 3:$}]
Soit $\pi$ une distribution stationnaire, et $k\in\cX$. Alors $\hat\gamma$
d\'efini par $\hat\gamma_j=\pi_j/\pi_k$ est une mesure invariante telle
que $\hat\gamma_k=1$. Par la proposition pr\'ec\'edente, on a
n\'ecessairement $\hat\gamma=\smash{\gamma^{(k)}}$. Il suit par le m\^eme
calcul que ci-dessus
\begin{equation}
\label{stat4:2}
\expecin{k}{\tau_k} = \sum_{j\in\cX} \hat\gamma_j
= \frac{\sum_j\pi_j}{\pi_k} = \frac1{\pi_k} < \infty\;.
\end{equation}

\item[{$3\Rightarrow 2:$}] Evident.
\end{itemizz}
Dans ce cas, l'unicit\'e de la mesure suit de celle de $\gamma^{(k)}$, et
la relation~\eqref{stat4} suit de~\eqref{stat4:2}.
\end{proof}

Ce r\'esultat motive la d\'efinition suivante. 

\begin{definition}
\label{def_stat2}
Un \'etat $i\in\cX$ tel que 
\begin{equation}
\label{stat5}
\expecin{i}{\tau_i} < \infty
\end{equation}
est appel\'e \defwd{r\'ecurrent positif}. Un \'etat r\'ecurrent $i$ qui
n'est pas r\'ecurrent positif est appel\'e \defwd{r\'ecurrent nul}.
La \chaine\ est dite \defwd{r\'ecurrente positive}\/ si tous ses \'etats
le sont. C'est par exemple le cas s'il existe un tel \'etat, et que la
\chaine\ est irr\'eductible. 
\end{definition}

Une \chaine\ irr\'eductible admet donc
une distribution stationnaire si et seulement si elle est r\'ecurrente
positive.

\begin{example}
\label{ex_stat1}
Les marches al\'eatoires sur $\Z^d$ n'admettent pas de distribution
stationnaire. En effet, si $\pi$ \'etait une distribution stationnaire,
l'invariance par translation impliquerait que tout translat\'e de $\pi$
serait encore une distribution stationnaire. Mais nous savons que si une
telle distribution existait, alors elle serait unique. $\pi$ devrait donc
\^etre uniforme, mais il n'existe pas de mesure de probabilit\'e uniforme
sur $\Z^d$. 

En revanche, toutes les mesures uniformes sont invariantes. Les fonctions
pas n\'ecessai\-rement positives satisfaisant~\eqref{stat1} dans le cas
d'une marche al\'eatoire sym\'etrique sont appel\'ees \defwd{harmoniques}.
Toutes les fonctions affines sont harmoniques, mais en dimension
sup\'erieure ou \'egale \`a 2, il y en a beaucoup d'autres.
\end{example}

Nous avons donc obtenu le r\'esultat suivant :

\begin{theorem}
\label{thm_stat2}
La marche al\'eatoire sym\'etrique est r\'ecurrente nulle en
dimensions $d=1$ et $d=2$. 
\end{theorem}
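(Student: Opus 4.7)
The plan is to combine three results already established in the excerpt: the recurrence of the walk in dimensions $1$ and $2$ (Corollaire~\ref{cor_rt1}), the characterization of positive recurrence via the existence of a stationary distribution (Th\'eor\`eme~\ref{thm_stat1}), and the translation-invariance argument from Exemple~\ref{ex_stat1} showing that symmetric random walks on $\Z^d$ admit no stationary distribution.

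First I would recall that, by Corollaire~\ref{cor_rt1}, the symmetric random walk on $\Z^d$ is recurrent for $d\in\set{1,2}$. In particular the chain is irr\'eductible (any site is accessible from any other with positive probability in a bounded number of nearest-neighbour steps) and r\'ecurrente, so every state satisfies $\probin{i}{\tau_i<\infty}=1$. It remains only to exclude the positive-recurrent case, i.e.\ to show that $\expecin{0}{\tau_0}=+\infty$.

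Next I would invoke Th\'eor\`eme~\ref{thm_stat1}: for an irreducible chain, positive recurrence of one (equivalently, every) state is equivalent to the existence of a stationary probability distribution $\pi$. Hence it suffices to prove that no such $\pi$ exists. This is exactly the content of Exemple~\ref{ex_stat1}: if $\pi$ were a stationary distribution for the symmetric random walk on $\Z^d$, then for every $a\in\Z^d$ the translate $\pi^a$ defined by $\pi^a_i=\pi_{i-a}$ would also be stationary, by translation invariance of the transition kernel $p_{ij}$. The uniqueness part of Th\'eor\`eme~\ref{thm_stat1} (applicable since we have already established recurrence, hence irreducibility together with a candidate stationary distribution) would then force $\pi^a=\pi$ for all $a\in\Z^d$, i.e.\ $\pi$ would be uniform on $\Z^d$. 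Since no such probability measure exists on a countably infinite set, we reach a contradiction.

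The main (and only) obstacle is to check carefully that the uniqueness argument from Th\'eor\`eme~\ref{thm_stat1} applies: one must verify irreducibility of the symmetric walk (immediate from the nearest-neighbour structure) and use the implication $1\Rightarrow 3$ of that theorem to conclude that the existence of a stationary $\pi$ would force $\expecin{0}{\tau_0}=1/\pi_0<\infty$, which combined with the translation argument gives the contradiction. Putting the pieces together: in dimensions $d=1$ and $d=2$ the walk is recurrent but $\expecin{i}{\tau_i}=+\infty$ for every $i$, so by D\'efinition~\ref{def_stat2} every state is r\'ecurrent nul and the chain is r\'ecurrente nulle.
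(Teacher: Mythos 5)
Votre démonstration est correcte et suit exactement le chemin du polycopié : la récurrence en dimensions $1$ et $2$ vient du Corollaire~\ref{cor_rt1}, l'absence de distribution stationnaire vient de l'argument d'invariance par translation de l'Exemple~\ref{ex_stat1}, et le Théorème~\ref{thm_stat1} exclut alors la récurrence positive. C'est précisément ce que le texte sous-entend en énonçant le théorème par \og Nous avons donc obtenu le résultat suivant \fg.
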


%%%%%%%%%%%%%%%%%%%%%%%%%%%%%%%%%%%%%%%%%%%%%%%%%%%%%%%%%%%%%%%%%%%%%%%%%%%

\section{Convergence vers la distribution stationnaire}
\label{sec_conv}

Dans le cas fini, nous avons montr\'e que si la \chaine\ \'etait
r\'eguli\`ere, alors la loi de $X_n$ convergeait vers la distribution
stationnaire. Dans le cas d'un espace infini, une \chaine\ de Markov ne
peut jamais \^etre r\'eguli\`ere : les probabilit\'es de transition
\'etant sommables, elles ne peuvent \^etre minor\'ees par une quantit\'e
strictement positive. Il s'av\`ere toutefois que la r\'ecurrence positive
et l'ap\'eriodicit\'e suffisent \`a garantir la convergence vers la
distribution stationnaire.

\begin{theorem}
\label{thm_conv1}
Soit $\set{X_n}_{n\geqs0}$ une \chaine\ de Markov irr\'eductible,
ap\'eriodique et r\'ecur\-rente positive, et soit $\pi$ son unique
distribution stationnaire. Alors pour toute distribution initiale $\nu$, 
on a 
\begin{equation}
\label{conv1}
\lim_{n\to\infty} \bigprobin{\nu}{X_n=j} = \pi_j
\qquad \forall j\in\cX\;.
\end{equation}
\end{theorem}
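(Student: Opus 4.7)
Le plan consiste à adapter l'argument de couplage de Doeblin utilisé dans la preuve du Théorème~\ref{thm_firred2} (cas fini régulier) au cadre dénombrable, en remplaçant l'usage de la Proposition~\ref{prop_firred1} par la combinaison du Théorème~\ref{thm_stat1} et de la Proposition~\ref{prop_rt2}.

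Première étape: on construit sur $\cX\times\cX$ la \chaine\ produit $\set{(X_n,Y_n)}_{n\geqs0}$ de probabilités de transition $p^\star_{(i,j),(k,l)} = p_{ik}p_{jl}$, de loi initiale $\rho=\nu\otimes\pi$. Comme dans le cas fini, $\set{X_n}_{n\geqs0}$ et $\set{Y_n}_{n\geqs0}$ sont alors deux \chaine s indépendantes de matrice $P$, de lois initiales respectives $\nu$ et $\pi$; la seconde étant stationnaire, on a $\probin{\rho}{Y_n=j}=\pi_j$ pour tout $n$.

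Deuxième étape (c\oe ur de la preuve): montrer que la \chaine\ produit est irréductible et récurrente positive. C'est ici que l'hypothèse d'apériodicité intervient de manière décisive. Un argument classique de théorie des nombres (tout sous-semigroupe additif de $\N$ de pgcd $1$ contient tous les entiers assez grands) appliqué à l'ensemble $\set{n\geqs1 : p_{ii}^{(n)}>0}$ montre que, sous les hypothèses d'apériodicité et d'irréductibilité, pour tout couple $(i,k)\in\cX^2$ il existe $N_{ik}$ tel que $p_{ik}^{(n)}>0$ pour tout $n\geqs N_{ik}$. Par conséquent, pour tous $(i,j),(k,l)\in\cX\times\cX$ et tout $n$ assez grand, on a $p_{ik}^{(n)}p_{jl}^{(n)}>0$, ce qui établit l'irréductibilité de $P^\star$. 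Par ailleurs, un calcul direct vérifie que $\pi\otimes\pi$ est une distribution stationnaire pour $P^\star$; par le Théorème~\ref{thm_stat1}, la \chaine\ produit est donc récurrente positive.

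Troisième étape: on applique la Proposition~\ref{prop_rt2} à la \chaine\ produit (irréductible et récurrente) pour conclure que le temps de premier passage $\tau_A = \inf\set{n>0 : X_n=Y_n}$ dans la diagonale $A=\set{(i,i):i\in\cX}$ est fini $\fP_{(i,j)}$-presque sûrement pour tout état initial $(i,j)$, donc aussi $\fP_\rho$-presque sûrement. Ensuite, on reprend la fin de la preuve du Théorème~\ref{thm_firred2}: en définissant $Z_n=X_n$ pour $n\leqs\tau_A$ et $Z_n=Y_n$ pour $n>\tau_A$, une décomposition selon les valeurs de $\tau_A$ montre que $\set{Z_n}_{n\geqs0}$ est une \chaine\ de Markov de matrice $P$ et loi initiale $\nu$, donc équidistribuée avec $\set{X_n}$. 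Il s'ensuit que $\probin{\rho}{X_n=j,\tau_A\leqs n}=\probin{\rho}{Y_n=j,\tau_A\leqs n}$, et en décomposant selon $\set{\tau_A\leqs n}$ et son complémentaire, on obtient l'inégalité $\abs{\probin{\nu}{X_n=j}-\pi_j} \leqs 2\probin{\rho}{\tau_A>n}$, qui tend vers zéro.

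L'obstacle principal est la deuxième étape, et plus précisément l'irréductibilité de la \chaine\ produit: triviale dans le cas fini régulier, elle requiert ici l'hypothèse d'apériodicité, laquelle est par ailleurs indispensable (sans elle, deux copies indépendantes de la \chaine\ d'Ehrenfest partant d'états de parités différentes ne se rencontreraient jamais). La récurrence positive de $P^\star$ s'en déduit alors presque gratuitement grâce à la distribution stationnaire $\pi\otimes\pi$, qui existe dès que $\pi$ existe.
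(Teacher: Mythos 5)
Votre preuve est correcte et suit essentiellement la même démarche que celle du polycopié : couplage de Doeblin avec la chaîne produit de loi initiale $\nu\otimes\pi$, argument arithmétique (semi-groupe additif de pgcd $1$) pour établir l'irréductibilité de $P^\star$ à partir de l'apériodicité, récurrence positive via la mesure stationnaire $\pi\otimes\pi$ et le Théorème~\ref{thm_stat1}, puis finitude presque sûre de $\tau_A$ par la Proposition~\ref{prop_rt2} et la majoration $\abs{\probin{\nu}{X_n=j}-\pi_j}\leqs 2\probin{\rho}{\tau_A>n}$. Votre remarque sur le caractère indispensable de l'apériodicité (contre-exemple d'Ehrenfest) est juste et bienvenue.
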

\begin{proof}
Nous allons g\'en\'eraliser la preuve de Doeblin, d\'ej\`a vue dans le cas
fini (voir le Th\'eor\`eme~\ref{thm_firred2}). 
\begin{itemiz}
\item	Nous introduisons une
\chaine\ de Markov $(X_n,Y_n)_{n\geqs0}$ sur $\cX\times\cX$, de
probabilit\'es de transition
\begin{equation}
\label{conv1:1}
p^\star_{(i,j),(k,l)} = p_{ik}p_{jl}\;,
\end{equation}
et de distribution initiale $\rho=\nu\otimes\pi$. Dans ce cas, $X_n$ et
$Y_n$ sont deux \chaine s ind\'ependantes de matrice de transition $P$, et
de distributions initiales $\nu$ et $\pi$. 

\item	Le seul point non trivial de la g\'en\'eralisation est de montrer
que $P^\star$ est irr\'eductible et ap\'eriodique. Pour cela, fixons un
\'etat $k\in\cX$. Consid\'erons d'abord l'ensemble 
\begin{equation}
\label{conv1:2}
\Gamma_k = \bigsetsuch{n\in\N}{\probin{k}{X_n=k} > 0}\;.
\end{equation}
La propri\'et\'e de Markov implique que si $n, m\in\Gamma_k$, alors
$n+m\in\Gamma_k$. D'autre part, par d\'efinition de l'ap\'eriodicit\'e,
$\pgcd\Gamma_k=1$. Nous pr\'etendons qu'il existe un $n_0$ tel que
tout $t\geqs n_0$ appartienne \`a $\Gamma_k$. 

Pour cela, supposons d'abord qu'il existe $n, m\in\Gamma_k$ premiers entre
eux. Par le th\'eor\`eme de B\'ezout, il existe des entiers $p,q\geqs1$ tels
que $pn-qm=\pm1$. Quitte \`a intervertir $n$ et $m$, on peut supposer que
$pn-qm=1$. Soit $n_0=qnm$. Alors pour $1\leqs r\leqs n$, on a
$n_0+r=qnm+r(pn-qm)=qm(n-r)+rpn\in\Gamma_k$. Il suffit alors d'\'ecrire
tout $t>n_0$ comme $t=n_0+r+ns$ avec $1\leqs r\leqs n$ pour conclure que
$t\in\Gamma_k$.

Il se pourrait que $\pgcd\Gamma_k=1$ sans que cet ensemble ne contienne
deux entiers premiers entre eux. Mais par le th\'eor\`eme de B\'ezout, il
existe forc\'ement un ensemble d'\'el\'ements de $\Gamma_k$ dont une
combinaison lin\'eaire vaut $1$, et le raisonnement ci-dessus s'adapte
facilement \`a ce cas.

\item	Fixons des \'etats $i, j, k, \ell\in\cX$. $P$ \'etant suppos\'ee
irr\'eductible, il existe $r\in\N$ tel que $\probin{i}{X_r=k}>0$. Comme
pour tout $n\geqs n_0$, 
\begin{equation}
\label{conv1:4}
\probin{i}{X_{r+n}=k} \geqs \probin{i}{X_r=k}\probin{k}{X_n=k} > 0\;,
\end{equation}
il suit que $\probin{i}{X_n=k}>0$ pour tous les $n\geqs n_0+r$. Pour des
raisons similaires, il existe $m_0, s\in\N$ tels que
$\probin{j}{X_m=\ell}>0$ pour tous les $m\geqs m_0+s$. Par cons\'equent,
il existe un temps $M$ tel que
$\fP^\star_{(i,j)}\set{(X_t,Y_t)=(k,\ell)}>0$ pour tous les $t\geqs M$.
Ceci implique que la \chaine\ compos\'ee est irr\'eductible et
ap\'eriodique. 

\item	Comme la \chaine\ compos\'ee admet manifestement la
distribution invariante $\pi\otimes\pi$, le Th\'eor\`eme~\ref{thm_stat1}
implique qu'elle est r\'ecurrente positive. 

\item	Le reste de la preuve est identique au cas fini. On introduit le
temps $\tau_A$ de premier passage sur la diagonale
$A=\setsuch{(i,i)}{i\in\cX}$, et on montre comme dans le cas fini que 
\begin{equation}
\label{conv1:5}
\abs{\probin{\nu}{X_n=j}-\pi_j}\leqs 2\probin{\rho}{\tau_A>n}\;.
\end{equation}
La Proposition~\ref{prop_rt2} implique que $\tau_A$ est fini presque
s\^urement, et donc que la diff\'erence ci-dessus tend vers z\'ero pour
$n\to\infty$.
\qed
\end{itemiz}
\renewcommand{\qed}{}
\end{proof}

Un probl\`eme important, mais en g\'en\'eral difficile, est d'estimer la
vitesse de convergence vers la distribution stationnaire. La preuve
de Doeblin fournit (voir le cas fini) l'estimation
\begin{equation}
\label{conv2}
\sum_{j\in\cX} \bigabs{\probin{\nu}{X_n=j}-\pi_j} \leqs 2
\probin{\nu\otimes\pi}{\tau_A>n}\;,
\end{equation}
qui est utile dans les cas o\`u on arrive \`a contr\^oler le temps de
couplage $\tau_A$. 

Une autre approche est bas\'ee sur la th\'eorie spectrale. Nous avons vu
que la matrice de transition $P$ admet la valeur propre $1$, avec comme
vecteurs propres \`a gauche et \`a droite, respectivement, la distribution
stationnaire $\pi$ et le vecteur $\vone$ dont toutes les composantes sont
\'egales \`a $1$:
\begin{equation}
\label{conv3}
\pi P = \pi 
\qquad\text{et}\qquad 
P\vone = \vone\;.
\end{equation}
Soit $\mu$ un vecteur ligne tel que 
\begin{equation}
\label{conv4}
\mu \vone = \sum_{i\in\cX} \mu_i = 0\;.
\end{equation}
Alors $\mu P\vone = \mu\vone = 0$, ce qui montre que le sous-espace
$\vone_\perp=\setsuch{\mu\in\R^\cX}{\sum_i\mu_i=0}$ est invariant (\`a
droite) par $P$~: $\vone_\perp P\subset\vone_\perp$. On remarquera que les
\'el\'ements de $\vone_\perp$ ne sont pas des mesures, mais des mesures
sign\'ees. Toutefois, pour certains $\mu\in\vone_\perp$, la somme $\pi+\mu$
est une mesure de probabilit\'e~: il suffit pour cel\`a que $\mu_i \geqs
-\pi_i$ pour tout $i\in\cX$.

Si $\mu\in\vone_\perp$ est un vecteur propre \`a gauche de $P$, de valeur
propre $\lambda$, et si $P$ est irr\'eductible,
ap\'eriodique et r\'ecur\-rente positive, alors on aura n\'ecessairement
$\abs{\lambda}<1$. En effet, si ce n'\'etait pas le cas, la loi de
la \chaine\ de condition initiale $\pi+\eps\mu$ ($\eps$ un r\'eel
suffisamment petit) ne convergerait pas vers $\pi$. 

Cette observation donne une caract\'erisation de la vitesse de convergence
en termes de \defwd{trou spectral}\/~: Soit $\lambda_0$ la plus grande
valeur propre de $P$ de module strictement inf\'erieur \`a $1$. On sait
que le vecteur propre correspondant se trouve dans $\vone_\perp$
(\'eventuellement avec des composantes complexes). Alors la loi de $X_n$
converge exponentiellement vite vers la distribution stationnaire $\pi$,
\`a vitesse $\abs{\lambda_0}^n$. Toutefois, la d\'etermination du trou
spectral est en g\'en\'eral un probl\`eme difficile si l'ensemble $\cX$
est grand. 

Les \chaine s de Markov r\'eversibles se pr\^etent mieux \`a une \'etude
spectrale que les \chaine s non r\'eversibles. Pour le voir, supposons la
\chaine\ irr\'eductible et r\'ecurrente positive, de distribution
stationnaire $\pi$, et introduisons le produit scalaire
\begin{equation}
\label{rev6}
\pscal fg_\pi = \sum_{i\in\cX} \pi_i \cc{f}_i g_i\;.
\end{equation}
Alors on a 
\begin{equation}
\label{rev7}
\pscal f{Pg}_\pi = \sum_{i\in\cX} \pi_i \cc{f}_i \sum_{j\in\cX} p_{ij}g_j
= \sum_{j\in\cX} \pi_j \sum_{i\in\cX} p_{ji} \cc{f}_i g_j
= \pscal {Pf}g_\pi\;.
\end{equation}
Autrement dit, l'op\'erateur lin\'eaire $P$ est autoadjoint dans
$\ell^2(\C,\pi)$. 

Un r\'esultat classique de la th\'eorie des espaces de Hilbert dit que
toutes les valeurs propres de $P$ sont r\'eelles, et que les espaces
propres associ\'es sont orthogonaux. En effet, soient $x_1$ et $x_2$ deux
vecteurs propres \`a droite de $P$, de valeurs propres respectives
$\lambda_1$ et $\lambda_2$. Alors 
\begin{equation}
\label{rev8}
(\cc\lambda_1 - \lambda_2) \pscal{x_1}{x_2}_\pi 
= \pscal{\lambda_1x_1}{x_2}_\pi - \pscal{x_1}{\lambda_2x_2}_\pi 
= \pscal{Px_1}{x_2}_\pi - \pscal{x_1}{Px_2}_\pi = 0\;.
\end{equation}
D'une part, prenant $x_1=x_2$, on obtient que $\lambda_1$ est r\'eelle.
D'autre part, si $\lambda_1\neq\lambda_2$, on obtient l'orthogonalit\'e de
$x_1$ et $x_2$. On sait de plus que $P$ est diagonalisable. 

On a alors une repr\'esentation variationnelle du trou spectral~:
\begin{equation}
\label{rev9}
\abs{\lambda_0} = \sup_{x \colon \pscal{x}{\vone}_\pi=0}
\frac{\pscal{x}{Px}_\pi}{\pscal{x}{x}_\pi}\;.
\end{equation}

%%%%%%%%%%%%%%%%%%%%%%%%%%%%%%%%%%%%%%%%%%%%%%%%%%%%%%%%%%%%%%%%%%%%%%%%%%%

%\goodbreak
\section{Exercices}
\label{sec_exo_markov_denombrable}

\begin{exercice}
\label{exo_md01} 
Montrer que la loi du temps de premier passage en $i$ ($i\neq0$) de la
marche al\'eatoire sym\'etrique unidimensionnelle est donn\'ee par 
\[
\bigprob{\tau_i=n} = 
\begin{cases}
\vrule height 18pt depth 16pt width 0pt
\dfrac{\abs{i}}n \prob{X_n=i} & \text{pour $n\in\set{\abs{i}, \abs{i}+2,
\dots}$\;,} \\
0 & \text{sinon\;.} 
\end{cases}
\]
(Th\'eor\`eme~2.1.5).  
En d\'eduire que $\expec{\tau_i}=+\infty$. 

\noindent
{\it Indications:} 
\begin{enum}
\item	Par sym\'etrie, on peut supposer $i>0$. 
\item	Ecrire l'\'ev\'enement $\set{\tau_i=n}$ \`a l'aide des
\'ev\'enements $\set{X_{n-1}=i-1}$ et $\set{\tau_i\leqs n-2}$.
\item	A l'aide du principe de r\'eflexion, montrer que 
\[
\bigprob{\tau_i=n} = \frac12
\bigbrak{\bigprob{X_{n-1}=i-1} - \bigprob{X_{n-1}=i+1}}
\]
et conclure par un calcul direct.
\end{enum}
\end{exercice}

\goodbreak

\begin{exercice}
\label{exo_md02} 

On consid\`ere une \chaine\ de Markov sur $\N$ de probabilit\'es de
transition 
\[
p_{ij} = 
\begin{cases}
p & \text{si $j=i+1$\;,} \\
1-p & \text{si $j=0$\;,} \\
0 & \text{sinon\;.}
\end{cases}
\]

% \bigskip
% 
% \centerline{
%  \includegraphics*[clip=true,height=30mm]{figs/chain_geom}
%  }
 
\begin{center}
\begin{tikzpicture}[->,>=stealth',shorten >=2pt,shorten <=2pt,auto,node
distance=3.0cm, thick,main node/.style={circle,scale=0.7,minimum size=1.1cm,
fill=blue!20,draw,font=\sffamily\Large}]

  \node[main node] (0) {0};
  \node[main node] (1) [right of=0] {1};
  \node[main node] (2) [right of=1] {2};
  \node[main node] (3) [right of=2] {3};
  \node[main node] (4) [right of=3] {4};
  \node[node distance=2.2cm] (5) [right of=4] {$\dots$};

  \path[every node/.style={font=\sffamily\small}]
    (0) edge [bend left,above] node {$p$} (1)
    (0) edge [loop left,left,distance=1cm,out=-150,in=150] node {$1-p$} (0)
    (1) edge [bend left,above] node {$p$} (2)
    (2) edge [bend left,above] node {$p$} (3)
    (3) edge [bend left,above] node {$p$} (4)
    (4) edge [bend left,above] node {$p$} (5)
    (1) edge [bend left,below] node {$\;1-p$} (0);
    
\draw[every node/.style={font=\sffamily\small}] 
    (2) to[out=-120,in=-60] node[below] {$1-p$} (0);

\draw[every node/.style={font=\sffamily\small}] 
    (3) to[out=-110,in=-70] node[below] {$1-p$} (0);

% \draw[every node/.style={font=\sffamily\small}] 
%     (4) to[out=-100,in=-80] node[below] {$1-p$} (0);
\end{tikzpicture}
\end{center}

\begin{enum}
\item	Pour quelles valeurs de $p$ la \chaine\ est-elle irr\'eductible? 

Pour le reste du probl\`eme, on suppose que $p$ est tel que la \chaine\
soit irr\'eductible.

\item	On suppose $X_0=0$. Soit 
\[
\tau_0 = \inf\setsuch{n>0}{X_n=0}
\]
le temps de premier retour en $0$. Montrer que 
\[
\tau_0 = n \quad\Rightarrow\quad X_{m} = m\;,
\qquad m=0,1,\dots,n-1\;.
\]
En d\'eduire la loi de $\tau_0$. 

\item	Montrer que l'\'etat $0$ est r\'ecurrent.

\item	Montrer que l'\'etat $0$ est r\'ecurrent positif.

\item	Montrer que l'\'etat $0$ est ap\'eriodique.

\item	Soit $\pi$ l'unique distribution stationnaire de la \chaine.
Calculer $\pi_0$ \`a l'aide de $\expecin{0}{\tau_0}$. 

\item	Exprimer $\pi_i$ en fonction de $\pi_{i-1}$ et en d\'eduire
$\pi_i$ pour tout $i$. 
\end{enum}

\end{exercice}

\goodbreak

\begin{exercice}
\label{exo_md03} 

Un client arrive dans une banque. Devant l'unique guichet, il y a une queue de
longueur al\'eatoire $L$. La loi de $L$ est donn\'ee par $\prob{L=k}=p_k$,
$k=1,2,\dots$ (on suppose $\sum_{k\geqs1}p_k=1$).

On admet que chaque client est servi pendant un intervalle de temps de longueur
$1$. Une fois le premier client servi, notre client avance d'une unit\'e dans
la file. Une fois servi, on suppose que le client se place \`a nouveau au bout
de la queue. 

On mod\'elise la situation par la \chaine\ de Markov suivante~:

% \bigskip
% 
% \centerline{
%  \includegraphics*[clip=true,height=30mm]{figs/exam_pb2}
%  }
 
\begin{center}
\begin{tikzpicture}[->,>=stealth',shorten >=2pt,shorten <=2pt,auto,node
distance=3.0cm, thick,main node/.style={circle,scale=0.7,minimum size=1.1cm,
fill=blue!20,draw,font=\sffamily\Large}]

  \node[main node] (0) {0};
  \node[main node] (1) [right of=0] {1};
  \node[main node] (2) [right of=1] {2};
  \node[main node] (3) [right of=2] {3};
  \node[main node] (4) [right of=3] {4};
  \node[node distance=2.2cm] (5) [right of=4] {$\dots$};

  \path[every node/.style={font=\sffamily\small}]
    (0) edge [bend left,above] node {$p_1$} (1)
    (1) edge [bend left,below] node {$1$} (0)
    (2) edge [bend left,below] node {$1$} (1)
    (3) edge [bend left,below] node {$1$} (2)
    (4) edge [bend left,below] node {$1$} (3)
    (5) edge [bend left,below] node {$1$} (4);
   
\draw[every node/.style={font=\sffamily\small}] 
    (0) to[out=60,in=120] node[above] {$p_2$} (2);

\draw[every node/.style={font=\sffamily\small}] 
    (0) to[out=70,in=110] node[above] {$p_3$} (3);
\end{tikzpicture}
\end{center}

%\medskip

\begin{enum}
\item	Sous quelle condition sur les $p_k$ la \chaine\ est-elle
irr\'eductible? 

Pour le reste du probl\`eme, on suppose les $p_k$ tels que la \chaine\
soit irr\'eductible.

\item	On suppose $X_0=0$. Soit 
\[
\tau_0 = \inf\setsuch{n>0}{X_n=0}
\]
le temps de premier retour en $0$. D\'eterminer sa loi.

\item	Montrer que l'\'etat $0$ est r\'ecurrent.

\item	Donner une condition n\'ecessaire et suffisante sur les $p_k$ pour que
l'\'etat $0$ soit r\'ecurrent positif.

\item	Donner une condition suffisante sur les $p_k$ pour que
l'\'etat $0$ soit ap\'e\-riodique.

\item	Soit $\pi$ l'unique distribution stationnaire de la \chaine.
Calculer $\pi_0$ \`a l'aide de $\expecin{0}{\tau_0}$. 

\item	Par r\'ecurrence, d\'eterminer $\pi_i$ pour tout $i$. 
\end{enum}

\end{exercice}

\goodbreak

\begin{exercice}
\label{exo_md04} 

On consid\`ere la \chaine\ de Markov suivante sur $\cX=\Z^*$: 
% \bigskip
% 
% \centerline{
%  \includegraphics*[clip=true,height=26mm]{figs/chain2_exam_09}
%  }
 
\begin{center}
\begin{tikzpicture}[->,>=stealth',shorten >=2pt,shorten <=2pt,auto,node
distance=3.0cm, thick,main node/.style={circle,scale=0.7,minimum size=1.1cm,
fill=blue!20,draw,font=\sffamily\Large}]

  \node[main node] (1) {$1$};
  \node[main node] (2) [right of=1] {$2$};
  \node[main node] (3) [right of=2] {$3$};
  \node[main node] (4) [right of=3] {$4$};
  \node[node distance=2.2cm] (5) [right of=4] {$\dots$};
  \node[main node] (-1) [below of=1] {$-1$};
  \node[main node] (-2) [right of=-1] {$-2$};
  \node[main node] (-3) [right of=-2] {$-3$};
  \node[main node] (-4) [right of=-3] {$-4$};
  \node[node distance=2.2cm] (-5) [right of=-4] {$\dots$};

  \path[every node/.style={font=\sffamily\small}]
    (1) edge [above] node {$1/2$} (2)
    (2) edge [above] node {$1/2$} (3)
    (3) edge [above] node {$1/2$} (4)
    (4) edge [above] node {$1/2$} (5)
    (-5) edge [below] node {$1$} (-4)
    (-4) edge [below] node {$1$} (-3)
    (-3) edge [below] node {$1$} (-2)
    (-2) edge [below] node {$1$} (-1)
    (1) edge [bend left, right] node {$1/2$} (-1)
    (2) edge [right] node {$1/2$} (-2)
    (3) edge [right] node {$1/2$} (-3)
    (4) edge [right] node {$1/2$} (-4)
    (-1) edge [bend left, left] node {$1$} (1)
  ;
\end{tikzpicture}
\end{center}

% \medskip
% \noindent

\begin{enum}
\item	La \chaine\ est-elle irr\'eductible?
\item	L'\'etat $1$ est-il r\'ecurrent?
\item	L'\'etat $1$ est-il r\'ecurrent positif? 
\item	L'\'etat $1$ est-il ap\'eriodique?
\item	Soit $\pi$ la distribution stationnaire de la \chaine. Calculer $\pi_1$.
\item	Calculer $\pi_i$ pour tout $i\in\cX$. 
\end{enum}
\end{exercice}

\goodbreak

\begin{exercice}
\label{exo_md05} 

On consid\`ere une marche al\'eatoire unidimensionnelle sym\'etrique sur 
l'en\-semble $\set{0,1,\dots,N}$ avec conditions aux bords absorbantes,
c'est-\`a-dire que l'on suppose les \'etats $0$ et $N$ absorbants. Soit 
\[
\tau = \tau_0 \wedge \tau_N
= \inf\bigsetsuch{n\geqs0}{X_n\in\set{0,N}}
\]
le temps d'absorption, et soit 
\[
p(i) = \probin{i}{X_\tau=N}\;.
\]

\begin{enum}
\item	D\'eterminer $p(0)$ et $p(N)$. 
\item	Montrer que pour tout $i\in\set{1,\dots,N-1}$, on a 
\[
p(i) = \frac12 \bigbrak{p(i-1)+p(i+1)}\;.
\]

Une fonction $f:\Z\supset A\to\R$ telle que 
$f(i) = \frac12 \brak{f(i-1)+f(i+1)}$ pour tout $i\in A$ est appel\'ee
\emph{harmonique}\/ (discr\`ete). 

\item	Montrer (par l'absurde) le \emph{principe du maximum}: Une fonction
harmonique sur $A$ ne peut atteindre son minimum et son maximum qu'au bord
de $A$ (on pourra supposer $A$ de la forme $A=\set{a,a+1,\dots,b-1,b}$,
dans ce cas son bord est $\partial A=\set{a,b}$). 

\item	Montrer que si $f$ et $g$ sont deux fonctions harmoniques sur $A$,
alors toute combinaison lin\'eaire de $f$ et $g$ est encore harmonique.

\item	Montrer que si $f$ et $g$ sont deux fonctions harmoniques sur $A$,
qui co\"\i ncident sur le bord de $A$, alors elles sont \'egales partout
dans $A$ (consid\'erer $f-g$).

\item	Montrer que toute fonction lin\'eaire $f(i)=ci+h$ est harmonique. 

\item	En utilisant les points 1., 2., 5.~et 6., d\'eterminer la fonction
$p$.
\end{enum}
\end{exercice}

\goodbreak

\begin{exercice}
\label{exo_md06} 

On consid\`ere une marche al\'eatoire sym\'etrique sur
$\cX=\set{0,1,\dots,N}$, avec conditions au bord absorbantes,
c'est-\`a-dire que d\`es que la marche atteint l'un des \'etats $0$ ou
$N$, elle y reste ind\'efiniment. Soit 
\[
\tau = \inf\setsuch{n\geqs 0}{X_n\in\set{0,N}}
\]
le temps d'absorption. Par convention, $\tau=0$ si $X_0\in\set{0,N}$. 

\begin{enum}
\item	Montrer que pour tout $i\in\set{1,\dots,N-1}$, 
\[
\probin{i}{\tau=n} = \frac12 
\bigbrak{\probin{i-1}{\tau=n-1} + \probin{i+1}{\tau=n-1}}\;.
\]

\item	Soit $f(i)=\expecin{i}{\tau}$. Que valent $f(0)$ et $f(N)$?

\item	D\'eduire de 1.\ que 
\begin{equation}
\label{eq1}
f(i) = \frac12 \bigbrak{f(i-1) + f(i+1)} + 1\;.
\end{equation}

\item	Montrer que $f(i)=-i^2$ satisfait l'\'equation~\eqref{eq1}. 

\item	Une fonction $g:\cX\to\R$ est dite harmonique si 
\[
g(i) = \frac12 \bigbrak{g(i-1) + g(i+1)} \qquad\forall i\in\cX\;.
\]
Montrer que si $f$ satisfait~\eqref{eq1} et $g$ est harmonique, alors
$f+g$ satisfait~\eqref{eq1}. 

\item	Montrer que si $f_1$ et $f_2$ satisfont~\eqref{eq1}, alors
$f_1-f_2$ est harmonique. D\'eduire du principe du maximum que si $f_1$ et
$f_2$ co\"\i ncident en $0$ et en $N$, alors elles sont \'egales partout. 

\item	Sachant que les fonctions lin\'eaires sont harmoniques,
d\'eterminer $\expecin{i}{\tau}$. 

\end{enum}

\end{exercice}

\goodbreak

\begin{exercice}
\label{exo_md07} 

On consid\`ere une marche al\'eatoire sur $\N$, absorb\'ee en $0$:
% \medskip
% 
% \centerline{
%  \includegraphics*[clip=true,height=12mm]{figs/chain3_exam_09}
%  }
 
\begin{center}
\begin{tikzpicture}[->,>=stealth',shorten >=2pt,shorten <=2pt,auto,node
distance=3.0cm, thick,main node/.style={circle,scale=0.7,minimum size=1.1cm,
fill=blue!20,draw,font=\sffamily\Large}]

  \node[main node] (0) {$0$};
  \node[main node] (1) [right of=0] {$1$};
  \node[main node] (2) [right of=1] {$2$};
  \node[main node] (3) [right of=2] {$3$};
  \node[node distance=2cm] (4) [right of=3] {$\dots$};

  \path[every node/.style={font=\sffamily\small}]
    (0) edge [loop left,left,distance=1.5cm,out=-150,in=150] node {$1$} (0)
    (1) edge [bend left,above] node {$p$} (2)
    (2) edge [bend left,above] node {$p$} (3)
    (3) edge [bend left,above] node {$p$} (4)
    (1) edge [above] node {$1-p$} (0)
    (2) edge [bend left,below] node {$1-p$} (1)
    (3) edge [bend left,below] node {$1-p$} (2)
    (4) edge [bend left,below] node {$1-p$} (3)
    ;
\end{tikzpicture}
\end{center}

% \medskip
\noindent
On note $h(i) = \probin{i}{\tau_0<\infty}$. 

\begin{enum}
\item	D\'eterminer $h(0)$. 
\item	Montrer que 
\begin{equation}
\label{1}
h(i) = (1-p) h(i-1) + p h(i+1)
\end{equation} 
pour tout $i\geqs1$. 
\item	On consid\`ere le cas $p=1/2$. 
En se basant sur les propri\'et\'es des fonctions harmoniques
d\'eriv\'ees dans les exercices pr\'ec\'edents, d\'eterminer $h(i)$ pour tout
$i$.

{\it Indication:} $h(i)$ est une probabilit\'e.  
\item	Nous consid\'erons \`a partir de maintenant le cas g\'en\'eral $0<p<1$.
Montrer que tout $h$ solution de~\eqref{1} satisfait encore le principe
du maximum: pour tout ensemble $A=\set{a,a+1,\dots,b-1,b}\subset\N$, $h$ atteint
son maximum au bord $\partial A=\set{a,b}$ de $A$. 
\item	Montrer que deux solutions de l'\'equation~\eqref{1}, co\"\i ncidant
sur le bord de $A$, sont \'egales partout dans $A$. 
\item	Montrer que $h(i)=\alpha^i$ satisfait l'\'equation~\eqref{1} pour
certaines valeurs de $\alpha$ qu'on d\'eterminera. 
\item	En d\'eduire l'unique solution pour $h$ lorsque $p<1/2$. Trouver deux
solutions possibles lorsque $p>1/2$. 
\end{enum}

\end{exercice}

\goodbreak

\begin{exercice}
\label{exo_md08} 

Soit $p\in[0,1]$. 
On consid\`ere la \chaine\ de Markov suivante sur $\cX=\N$: 
% \bigskip
% 
% \centerline{
%  \includegraphics*[clip=true,height=12mm]{figs/chainI_exam_10}
%  }
 
\begin{center}
\begin{tikzpicture}[->,>=stealth',shorten >=2pt,shorten <=2pt,auto,node
distance=3.0cm, thick,main node/.style={circle,scale=0.7,minimum size=1.1cm,
fill=blue!20,draw,font=\sffamily\Large}]

  \node[main node] (0) {$0$};
  \node[main node] (1) [right of=0] {$1$};
  \node[main node] (2) [right of=1] {$2$};
  \node[main node] (3) [right of=2] {$3$};
  \node[node distance=2cm] (4) [right of=3] {$\dots$};

  \path[every node/.style={font=\sffamily\small}]
    (0) edge [loop left,left,distance=1.5cm,out=-150,in=150] node {$1-p$} (0)
    (0) edge [bend left,above] node {$p$} (1)
    (1) edge [bend left,above] node {$p$} (2)
    (2) edge [bend left,above] node {$p$} (3)
    (3) edge [bend left,above] node {$p$} (4)
    (1) edge [bend left,below] node {$1-p$} (0)
    (2) edge [bend left,below] node {$1-p$} (1)
    (3) edge [bend left,below] node {$1-p$} (2)
    (4) edge [bend left,below] node {$1-p$} (3)
    ;
\end{tikzpicture}
\end{center}
% \medskip
% \noindent

\begin{enum}
\item	Pour quelles valeurs de $p$ la \chaine\ est-elle irr\'eductible?

On suppose dans la suite que $p$ est tel que la \chaine\ soit irr\'eductible.

\item	La \chaine\ est-elle ap\'eriodique?

\item	On suppose que la \chaine\ est r\'eversible, et soit $\alpha$ un
vecteur r\'eversible. Ecrire une relation de r\'ecurrence pour les composantes
de $\alpha$, et en d\'eduire $\alpha_n$ en fonction de $\alpha_0$.

\item	Pour quelles valeurs de $p$ la \chaine\ admet-elle une distribution
stationnaire $\pi$? D\'eter\-miner $\pi$ pour ces valeurs de $p$.

\item	Pour quelles valeurs de $p$ la \chaine\ est-elle r\'ecurrente?
R\'ecurrente positive?

\item	D\'eterminer le temps de r\'ecurrence moyen $\expecin{0}{\tau_0}$. 

\item	Calculer la position moyenne $\expecin{\pi}{X_n}$ pour les valeurs de
$p$ telles que $\pi$ existe. 
\end{enum}

\end{exercice}

\goodbreak

\begin{exercice}
\label{exo_md09} 

On consid\`ere une marche al\'eatoire sym\'etrique sur
$\cX=\set{0,1,\dots,N}$, avec conditions au bord absorbantes,
c'est-\`a-dire que d\`es que la marche atteint l'un des \'etats $0$ ou
$N$, elle y reste ind\'efiniment. Soit 
\[
\tau = \inf\setsuch{n\geqs 0}{X_n\in\set{0,N}}
\]
le temps d'absorption. Par convention, $\tau=0$ si $X_0\in\set{0,N}$. 
Pour $\lambda\in\R$ et $i\in\cX$ on pose 
\[
f(i,\lambda) = \bigexpecin{i}{\e^{-\lambda\tau}\indexfct{X_\tau=N}}
= 
\begin{cases}
\bigexpecin{i}{\e^{-\lambda\tau}} & \text{si $X_\tau=N$\;,} \\
0 & \text{sinon\;.}
\end{cases}
\]

\begin{enum}
\item	Que valent $f(0,\lambda)$ et $f(N,\lambda)$?

\item	Montrer que pour tout $i\in\set{1,\dots,N-1}$, 
\[
\probin{i}{\tau=n} = \frac12 
\bigbrak{\probin{i-1}{\tau=n-1} + \probin{i+1}{\tau=n-1}}\;.
\]

\item	Montrer que pour tout $i\in\set{1,\dots,N-1}$, 
\[
f(i,\lambda) = \frac12\e^{-\lambda}
\bigbrak{f(i-1,\lambda) + f(i+1,\lambda)}\;.
\]

\item	Trouver une relation entre $c$ et $\lambda$ telle que l'\'equation
ci-dessus pour $f$ admette des solutions de la forme $f(i,\lambda)=\e^{ci}$. 
Montrer \`a l'aide d'un d\'eveloppement limit\'e que 
\[
c^2 = 2\lambda + \Order{\lambda^2}\;.
\]

\item	D\'eterminer des constantes $a$ et $b$ telles que 
\[
\bigexpecin{i}{\e^{-\lambda\tau}\indexfct{X_\tau=N}}
= a \e^{ci} + b \e^{-ci}\;.
\]

\item	Effectuer un d\'eveloppement limit\'e  au
premier ordre en $\lambda$ de l'\'egalit\'e ci-dessus.
En d\'eduire 
\[
\probin{i}{X_\tau=N}
\]
et 
\[
\bigexpecin{i}{\tau \indexfct{X_\tau=N}}\;.
\]

\item	Sans faire les calculs, indiquer comment proc\'eder pour d\'eterminer
la variance de $\tau \indexfct{X_\tau=N}$ et l'esp\'erance et la variance de
$\tau$. 
\end{enum}

\medskip{\noindent}
On rappelle les d\'eveloppements limit\'es suivants:
\begin{align*}
\cosh(x) &= \frac{\e^x+\e^{-x}}{2}
= 1 + \frac{1}{2!}x^2 + \Order{x^4}\;, \\
\sinh(x) &= \frac{\e^x-\e^{-x}}{2}
= x + \frac{1}{3!}x^3 + \Order{x^5}\;.
\end{align*}
\end{exercice}

\goodbreak

\begin{exercice}
\label{exo_md10}

Soit $p\in[0,1]$. 
On consid\`ere la marche al\'eatoire sur $\Z$ dont le graphe est le suivant. 

% \bigskip
% 
% \centerline{
%  \includegraphics*[clip=true,width=140mm]{figs/chain_asym_rw}
%  }
 
 \begin{center}
\begin{tikzpicture}[->,>=stealth',shorten >=2pt,shorten <=2pt,auto,node
distance=3.0cm, thick,main node/.style={circle,scale=0.7,minimum size=1.1cm,
fill=blue!20,draw,font=\sffamily\Large}]

  \node[main node] (0) {$0$};
  \node[main node] (1) [right of=0] {$1$};
  \node[main node] (2) [right of=1] {$2$};
  \node[node distance=2cm] (3) [right of=2] {$\dots$};
  \node[main node] (-1) [left of=0] {$-1$};
  \node[main node] (-2) [left of=-1] {$-2$};
  \node[node distance=2cm] (-3) [left of=-2] {$\dots$};

  \path[every node/.style={font=\sffamily\small}]
    (-3) edge [bend left,above] node {$p$} (-2)
    (-2) edge [bend left,above] node {$p$} (-1)
    (-1) edge [bend left,above] node {$p$} (0)
    (0) edge [bend left,above] node {$p$} (1)
    (1) edge [bend left,above] node {$p$} (2)
    (2) edge [bend left,above] node {$p$} (3)
    (-2) edge [bend left,below] node {$1-p$} (-3)
    (-1) edge [bend left,below] node {$1-p$} (-2)
    (0) edge [bend left,below] node {$1-p$} (-1)
    (1) edge [bend left,below] node {$1-p$} (0)
    (2) edge [bend left,below] node {$1-p$} (1)
    (3) edge [bend left,below] node {$1-p$} (2)
    ;
\end{tikzpicture}
\end{center}

%\medskip

\begin{enum}
\item	Pour quelles valeurs de $p$ la \chaine\ est-elle irr\'eductible?

On suppose dans la suite que $p$ est tel que la \chaine\ soit irr\'eductible.

\item	La \chaine\ est-elle ap\'eriodique?

\item 	Calculer explicitement 
\[
\bigprobin{0}{X_{2n}=0}
\]
pour tout $n\in\N$. 

\item 	A l'aide de la formule de Stirling, trouver un \'equivalent 
de $\probin{0}{X_{2n}}=0$ pour $n\to\infty$. 

Pour quelles valeurs de $p$ la cha\^ine est-elle r\'ecurrente? Transiente? 

\item 	Soit maintenant $Y_n$ la cha\^ine de Markov sur $\N$ dont le graphe est
le suivant:
% \bigskip
% 
% \centerline{
%  \includegraphics*[clip=true,width=100mm]{figs/chain_exam_12}
%  }
 
\begin{center}
\begin{tikzpicture}[->,>=stealth',shorten >=2pt,shorten <=2pt,auto,node
distance=3.0cm, thick,main node/.style={circle,scale=0.7,minimum size=1.1cm,
fill=blue!20,draw,font=\sffamily\Large}]

  \node[main node] (0) {$0$};
  \node[main node] (1) [right of=0] {$1$};
  \node[main node] (2) [right of=1] {$2$};
  \node[main node] (3) [right of=2] {$3$};
  \node[node distance=2cm] (4) [right of=3] {$\dots$};

  \path[every node/.style={font=\sffamily\small}]
    (0) edge [bend left,above] node {$1$} (1)
    (1) edge [bend left,above] node {$p$} (2)
    (2) edge [bend left,above] node {$p$} (3)
    (3) edge [bend left,above] node {$p$} (4)
    (1) edge [bend left,below] node {$1-p$} (0)
    (2) edge [bend left,below] node {$1-p$} (1)
    (3) edge [bend left,below] node {$1-p$} (2)
    (4) edge [bend left,below] node {$1-p$} (3)
    ;
\end{tikzpicture}
\end{center}

%\medskip

Montrer que si $X_0 = Y_0 > 0$, alors 
\[
Y_n = X_n 
\quad 
\forall n \leqs \tau_0\;.
\]

\item 	En d\'eduire une condition suffisante sur $p$ pour que la cha\^ine $Y_n$
soit transiente.

\item 	Pour quelles valeurs de $p$ la cha\^ine $Y_n$ admet-elle une
distribution stationnaire $\pi$? En d\'eduire pour quels $p$ la cha\^ine $Y_n$
est r\'ecurrente positive. 

\item 	Montrer que pour $p=1/2$, on a $Y_n=\abs{X_n}$. Que peut-on en
d\'eduire sur les propri\'et\'es de r\'ecurrence/transience de la cha\^ine
$Y_n$? 
 
\end{enum}
\end{exercice}

\goodbreak

\begin{exercice}
\label{exo_md11}

Soit $p\in]0,1[$. 
On consid\`ere la cha\^ine de Markov sur $\set{0,1,\dots N}$ dont le graphe est
le suivant. 

\begin{center}
\begin{tikzpicture}[->,>=stealth',shorten >=2pt,shorten <=2pt,auto,node
distance=3.0cm, thick,main node/.style={circle,scale=0.7,minimum size=1.1cm,
fill=blue!20,draw,font=\sffamily\Large}]

  \node[main node] (0) {$0$};
  \node[main node] (1) [right of=0] {$1$};
  \node[main node] (2) [right of=1] {$2$};
  \node[node distance=2cm] (3) [right of=2] {$\dots$};
  \node[main node, distance=1.8cm] (N1) [right of=3] {{\small $N-1$}};
  \node[main node] (N) [right of=N1] {$N$};

  \path[every node/.style={font=\sffamily\small}]
    (0) edge [loop left,left,distance=1.5cm,out=-150,in=150] node {$1$} (0)
    (N) edge [loop right,right,distance=1.5cm,out=30,in=-30] node {$1$} (N)
    (1) edge [bend left,above] node {$p$} (2)
    (2) edge [bend left,above] node {$p$} (3)
    (3) edge [bend left,above] node {$p$} (N1)
    (N1) edge [bend left,above] node {$p$} (N)
    (1) edge [bend left,below] node {$1-p$} (0)
    (2) edge [bend left,below] node {$1-p$} (1)
    (3) edge [bend left,below] node {$1-p$} (2)
    (N1) edge [bend left,below] node {$1-p$} (3)
    ;
\end{tikzpicture}
\end{center}
%\medskip

\noindent
Soit 
\[
\tau=\inf\setsuch{n\geqs0}{X_n\in\set{0,N}} 
\]
le temps d'absorption et soit 
\[
 f(i) = \probin{i}{X_\tau=N}
\]
la probabilit\'e d'\^etre absorb\'e dans l'\'etat $N$. 

\begin{enum}
\item	Que valent $f(0)$ et $f(N)$? 

\item 	Montrer que pour tout $i\in\set{1,\dots,N-1}$, 
\[
f(i) = p f(i+1) + (1-p) f(i-1)\;. 
\]

\item 	Dans toute la suite, on suppose $p\neq 1/2$. 

Montrer qu'il existe deux r\'eels $z_+$ et $z_-$ tels que l'\'equation pour
$f(i)$ admette des solutions de la forme $f(i)=z_+^i$ et $f(i)=z_-^i$. Exprimer
ces r\'eels en fonction de $\rho=(1-p)/p$.

\item 	Trouver deux constantes $a$ et $b$ telles que 
\[
\probin{i}{X_\tau=N} = a z_+^i + b z_-^i\;.
\]

\item 	Soit 
\[
 g(i) = \expecin{i}{\tau}\;.
\]
Montrer que pour tout $i\in\set{1,\dots,N-1}$, 
\[
 g(i) =  1 + p g(i+1) + (1-p) g(i-1)\;.
\]

\item 	Trouver une solution particuli\`ere de la forme $g(i)= \gamma i$ pour un
$\gamma\in\R$ d\'ependant de~$p$. 

\item 	V\'erifier que la solution g\'en\'erale est de la forme 
\[
 g(i) = \gamma i + \alpha z_+^i + \beta z_-^i
\]
et d\'eterminer $\alpha$ et $\beta$ pour que 
\[
 \expecin{i}{\tau} = \gamma i + \alpha z_+^i + \beta z_-^i\;.
\]

\end{enum}
\end{exercice}

%%%%%%%%%%%%%%%%%%%%%%%%%%%%%%%%%%%%%%%%%%%%%%%%%%%%%%%%%%%%%%%%%%%%%%%%%%%

\chapter{Application aux algorithmes MCMC}
\label{chap_mcmc}

%%%%%%%%%%%%%%%%%%%%%%%%%%%%%%%%%%%%%%%%%%%%%%%%%%%%%%%%%%%%%%%%%%%%%%%%%%%

%\section{M\'ethodes Monte Carlo et MCMC}
%\label{sec_mcmc}

%%%%%%%%%%%%%%%%%%%%%%%%%%%%%%%%%%%%%%%%%%%%%%%%%%%%%%%%%%%%%%%%%%%%%%%%%%%

\section{M\'ethodes Monte Carlo}
\label{sec_mc}

On appelle \defwd{m\'ethodes Monte Carlo}\/ un ensemble d'algorithmes
stochastiques permettant d'estimer num\'eriquement des grandeurs pouvant
\^etre consid\'er\'ees comme des esp\'erances. En voici pour
commencer un exemple tr\`es simple.

\begin{example}[Calcul d'un volume]
On aimerait calculer num\'eriquement le volume $\abs{V}$ d'un
sous-ensemble compact $V$ de $\R^N$. On suppose que $V$ est donn\'e par un
certain nombre $M$ d'in\'egalit\'es: 
\begin{equation}
\label{mcmc1}
V = \bigsetsuch{x\in\R^N}{f_1(x)\geqs0, \dots, f_M(x)\geqs0}\;. 
\end{equation}
Par exemple, si $M=1$ et $f_1(x)=1-\norm{x}^2$, alors $V$ est une boule.
Dans ce cas, bien s\^ur, le volume de $V$ est connu explicitement. On
s'int\'eresse \`a des cas o\`u $V$ a une forme plus compliqu\'ee, par
exemple une intersection d'un grand nombre de boules et de demi-plans.
Dans la suite nous supposerons, sans limiter la g\'en\'eralit\'e, que $V$
est contenu dans le cube unit\'e $[0,1]^N$.

Une premi\`ere m\'ethode de calcul num\'erique du volume consiste \`a
discr\'etiser l'espace. Divisons le cube $[0,1]^N$ en cubes de cot\'e
$\eps$ (avec $\eps$ de la forme $1/K$, $K\in\N$). Le nombre total de
ces cubes est \'egal \`a $1/\eps^N=K^N$. On compte alors le nombre $n$ de
cubes dont le centre est contenu dans $V$, et le volume de $V$ est 
approximativement \'egal \`a $n\eps^N$. Plus pr\'ecis\'ement, on peut
encadrer $\abs{V}$ par $n_-\eps^N$ et $n_+\eps^N$, o\`u $n_-$ est le
nombre de cubes enti\`erement contenus dans $V$, et $n_+$ est le nombre de
cubes dont l'intersection avec $V$ est vide, mais effectuer ces tests n'est
en g\'en\'eral pas ais\'e num\'eriquement.

Quelle est la pr\'ecision de l'algorithme? Si le bord $\partial V$ est
raisonnablement lisse, l'erreur faite pour $\eps$ petit est de l'ordre de
la surface du bord fois $\eps$. Pour calculer $\abs{V}$ avec une
pr\'ecision donn\'ee $\delta$, il faut donc choisir $\eps$ d'ordre
$\delta/\abs{\partial V}$. Cela revient \`a un nombre de cubes d'ordre 
\begin{equation}
\label{mcmc2}
\biggpar{\frac{\abs{\partial V}}{\delta}}^N\;,
\end{equation}
ou encore, comme on effectue $M$ tests pour chaque cube, \`a un nombre de
calculs d'ordre $(M\abs{\partial V}/\delta)^N$. Ce nombre ne pose pas de
probl\`eme pour les petites dimensions ($N=1,2$ ou $3$ par exemple), mais
cro\^\i t vite avec la dimension $N$.

Une alternative int\'eressante pour les $N$ grands est fournie par
l'algorithme de Monte Carlo. Dans ce cas, on g\'en\`ere une suite $X_1,
X_2, \dots, X_n, \dots$ de variables al\'eatoires ind\'epen\-dantes,
identiquement distribu\'ees (i.i.d.), de loi uniforme sur $[0,1]^N$. Ceci
est facile \`a r\'ealiser num\'eriquement, car on dispose de
g\'en\'erateurs de nombres (pseudo-)al\'eatoires distribu\'es
uniform\'ement sur $[0,1]$ (ou plut\^ot sur
$\set{0,1,\dots,n_{\text{max}}}$ o\`u $n_{\text{max}}$ est un entier du
genre $2^{31}-1$, mais en divisant ces nombres par $n_{\text{max}}$, on
obtient de bonnes approximations de variables uniformes sur $[0,1]$). Il
suffit alors de consid\'erer des $N$-uplets de tels nombres. 

Consid\'erons alors les variables al\'eatoires i.i.d. 
\begin{equation}
\label{mcmc3}
Y_i = \indexfct{X_i\in V}\;, 
\qquad i = 1,2,\dots\;.
\end{equation}
On aura 
\begin{equation}
\label{mcmc4}
\expec{Y_i} = \prob{X_i\in V} = \abs{V}\;.
\end{equation}
Les moyennes empiriques
\begin{equation}
\label{mcmc5}
S_n = \frac1n \sum_{i=1}^n Y_i
\end{equation}
ont esp\'erance $\expec{S_n}=\abs{V}$ et variance
$\variance{S_n}=\variance{Y_1}/n$. La loi faible des grands nombres
implique que 
\begin{equation}
\label{mcmc6}
\lim_{n\to\infty} \bigprob{\bigabs{S_n-\expec{S_n}} > \delta} = 0
\end{equation}
pour tout $\delta>0$. Par cons\'equent, $S_n$ devrait donner une bonne
approximation du volume $\abs{V}$ lorsque $n$ est suffisamment grand. (La
loi forte des grands nombres affirme par ailleurs que $S_n$ tend vers
$\abs{V}$ presque s\^urement, c'est-\`a-dire que $S_n$ n'est plus vraiment
al\'eatoire \`a la limite.)

Pour savoir comment choisir $n$ en fonction de la pr\'ecision d\'esir\'ee,
il nous faut estimer la probabilit\'e que $\abs{S_n-\abs{V}} > \delta$,
pour $n$ grand mais fini. Une premi\`ere estimation est fournie par
l'in\'egalit\'e de Bienaym\'e--Chebychev, qui affirme que 
\begin{equation}
\label{mcmc7}
\bigprob{\bigabs{S_n-\expec{S_n}} > \delta} \leqs 
\frac{\variance(S_n)}{\delta^2}
= \frac{\variance(Y_1)}{\delta^2 n}
< \frac{1}{\delta^2 n}\;,
\end{equation}
o\`u nous avons utilis\'e le fait que
$\variance(Y_1)\leqs\expec{Y_1^2}\leqs 1$. On peut donc affirmer que pour
que la probabilit\'e de faire une erreur sup\'erieure \`a $\delta$ soit
inf\'erieure \`a $\eps$, il suffit de choisir 
\begin{equation}
\label{mcmc8}
n > \frac1{\delta^2\eps}\;.
\end{equation}
Comme pour chaque $i$, il faut g\'en\'erer $N$ variables al\'eatoires, et
effectuer $M$ tests, le nombre de calculs n\'ecessaires est d'ordre
$MN/(\delta^2\eps)$. L'avantage de cette m\'ethode est que ce nombre ne
cro\^\i t que lin\'eairement avec $N$, par opposition \`a la croissance
exponentielle dans le cas de la discr\'etisation. On notera toutefois que
contrairement \`a la discr\'etisation, qui donne un r\'esultat certain aux
erreurs pr\`es, la m\'ethode de Monte Carlo ne fournit que des r\'esultats
vrais avec tr\`es grande probabilit\'e (d'o\`u son nom).

Remarquons que l'estimation~\eqref{mcmc7} est assez pessimiste, et peut
\^etre consid\'erablement am\'elior\'ee. Par exemple, le th\'eor\`eme de
la limite centrale montre que 
\begin{equation}
\label{mcmc9}
\lim_{n\to\infty}
\biggprob{\frac{(S_n-\expec{S_n})^2}{\variance(S_n)} > \eta^2}
= \int_{\abs{x}>\eta} \frac{\e^{-x^2/2}}{\sqrt{2\pi}} \6x\;,
\end{equation}
dont le membre de droite d\'ecro\^\i t comme $\e^{-\eta^2/2}$ pour $\eta$
grand. Cela indique que pour $n$ grand, 
\begin{equation}
\label{mcmc10}
\bigprob{\abs{S_n-\abs{V}} > \delta} \simeq
\e^{-n\delta^2/2\variance(Y_1)}\;.
\end{equation}
Ceci permet d'am\'eliorer le crit\`ere~\eqref{mcmc8} en 
\begin{equation}
\label{mcmc11}
n > \const \frac{\log(1/\eps)}{\delta^2}\;,
\end{equation}
d'o\`u un nombre de calculs d'ordre $NM\log(1/\eps)/\delta^2$. Cette
estimation n'est pas une borne rigoureuse, contrairement \`a~\eqref{mcmc8},
parce qu'on n'a pas tenu compte de la vitesse de convergence
dans~\eqref{mcmc9}, qui par ailleurs ne s'applique que pour $\eta$
ind\'ependant de $\eps$. On devrait plut\^ot utiliser des estimations
provenant de la th\'eorie des grandes d\'eviations, que nous n'aborderons
pas ici. Les r\'esultats sont toutefois qualitativement corrects.

A titre d'illustration, supposons qu'on veuille d\'eterminer le volume
d'un domaine de dimension $N=1000$, d\'efini par $M=10$ in\'egalit\'es,
avec une pr\'ecision de $\delta=10^{-4}$. La m\'ethode de discr\'etisation
n\'ecessite un nombre de calculs d'ordre $10^{5000}$, ce qui est
irr\'ealisable avec les ordinateurs actuels. La m\'ethode de Monte Carlo,
en revanche, fournit un r\'esultat de la m\^eme pr\'ecision, s\^ur avec
probabilit\'e $1-10^{-6}$, avec un nombre de calculs d'ordre
$\log(10^6)\cdot 10^{12} \simeq 10^{13}$, ce qui ne prend que quelques
minutes sur un PC. 
\end{example}

La m\'ethode de Monte Carlo se g\'en\'eralise facilement \`a d'autres
probl\`emes que des calculs de volume. Supposons par exemple donn\'e un
espace probabilis\'e $(\Omega,\cF,\mu)$, et une variable al\'eatoire
$Y:\Omega\to\R$. On voudrait estimer l'esp\'erance de $Y$. Pour cela,
l'algorithme de Monte Carlo consiste \`a g\'en\'erer des variables
al\'eatoires ind\'ependantes $Y_1, Y_2, \dots, Y_n, \dots$, toutes de loi
$\mu Y^{-1}$, et de calculer leur moyenne. Cette moyenne doit converger
vers l'esp\'erance cherch\'ee (pourvu que $Y$ soit int\'egrable). 

Cet algorithme n'est toutefois efficace que si l'on arrive \`a g\'en\'erer
les variables al\'eatoires $Y_i$ de mani\`ere efficace. Une fois de plus,
ceci est relativement facile en dimension faible, mais devient rapidement
difficile lorsque la dimension cro\^\i t. 

\begin{example}[Cas unidimensionnel]
\label{ex_mcmc2}
Une variable al\'eatoire uni\-dimensionnelle $Y$ s'ob\-tient facilement \`a
partir d'une variable de loi uniforme. Soit en effet $U$ une variable
uniforme sur $[0,1]$. Sa fonction de r\'epartition est donn\'ee par 
\begin{equation}
\label{mcmc12}
F_U(u) = \prob{U\leqs u} = u 
\qquad
\text{pour $0\leqs u\leqs 1$.} 
\end{equation}
On cherche une fonction $\varphi$ telle que la variable $Y=\varphi(u)$
admette la fonction de r\'eparti\-tion prescrite $F_Y(y)$. Or on a 
\begin{equation}
\label{mcmc13}
F_Y(y) = \prob{Y\leqs y} = \prob{\varphi(U)\leqs y} 
= \prob{U \leqs \varphi^{-1}(y)} = \varphi^{-1}(y)\;.
\end{equation}
Il suffit donc de prendre $Y = F_Y^{-1}(U)$. 

Par exemple, pour g\'en\'erer une variable de loi exponentielle, dont la
fonction de r\'eparti\-tion est donn\'ee par $F_Y(y)=1-\e^{-\lambda y}$, il
suffit de prendre 
\begin{equation}
\label{mcmc14}
Y = -\frac1\lambda \log(1-U)\;.
\end{equation}

Pour la loi normale, cette m\'ethode n\'ecessiterait le calcul approch\'e
de sa fonction de r\'epartition, ce qui est num\'eriquement peu efficace. 
Il existe toutefois une alternative permettant d'\'eviter ce calcul.
Soient en effet $U$ et $V$ deux variables al\'eatoires ind\'ependantes, de
loi uniforme sur $[0,1]$. On introduit successivement les variables 
\begin{align}
\nonumber
R &= \sqrt{-2\log(1-U)}\;,
& 
Y_1 &= R\cos\Phi\;, \\
\Phi &= 2\pi V\;,
&
Y_2 &= R\sin\Phi\;.
\label{mcmc15}
\end{align}
Alors $Y_1$ et $Y_2$ sont des variables al\'eatoires ind\'ependantes, de
loi normale centr\'ee r\'eduite. Pour le voir, on v\'erifie d'abord que
$R$ a la fonction de r\'epartition $1-\e^{-r^2/2}$, donc la densit\'e
$r\e^{-r^2/2}$. Le couple $(R,\Phi)$ a donc la densit\'e jointe
$r\e^{-r^2/2}/(2\pi)$, et la formule de changement de variable montre que
le couple $(Y_1,Y_2)$ a la densit\'e jointe
$\e^{-(y_1^2+y_2^2)/2}/(2\pi)$, qui est bien celle d'un couple de
variables normales centr\'ees r\'eduites ind\'ependantes.
\end{example}

Bien entendu, les esp\'erances de variables al\'eatoires de loi
unidimensionnelle sont soit connues explicitement, soit calculables
num\'eriquement par la simple estimation d'une int\'egrale. Nous nous
int\'eressons ici \`a des situations o\`u la loi de $Y$ n'est pas aussi
simple \`a repr\'esenter. 

%%%%%%%%%%%%%%%%%%%%%%%%%%%%%%%%%%%%%%%%%%%%%%%%%%%%%%%%%%%%%%%%%%%%%%%%%%%

\section{Algorithmes MCMC}
\label{sec_mcmc}

Consid\'erons le cas d'un espace probabilis\'e discret
$(\cX,\cP(\cX),\mu)$, o\`u $\cX$ est un ensemble d\'enombrable, mais
tr\`es grand. Par exemple, dans le cas du mod\`ele d'Ising (cf.
Exemple~\ref{ex_Ising}), $\cX=\set{-1,1}^N$ est de cardinal $2^N$, et
on s'int\'eresse \`a des $N$ d'ordre $1000$ au moins. La mesure de
probabilit\'e $\mu$ est dans ce cas une application de $\cX$ vers $[0,1]$
telle que la somme des $\mu(i)$ vaut $1$. 

On voudrait estimer l'esp\'erance d'une variable al\'eatoire $Y:\cX\to\R$,
comme par exemple l'aimantation dans le cas du mod\`ele d'Ising~: 
\begin{equation}
\label{mcmc16}
\expec{Y} = \sum_{i\in\cX} Y(i)\mu(i)\;.
\end{equation}
La m\'ethode de Monte Carlo usuelle consiste alors \`a g\'en\'erer une
suite de variables al\'eatoires $X_0, X_1, \dots$ sur $\cX$,
ind\'ependantes et de distribution $\mu$, puis de calculer la moyenne des
$Y(X_j)$. 

Pour g\'en\'erer ces $X_j$, on peut envisager de proc\'eder comme suit~: on
d\'efinit un ordre total sur $\cX$, et on d\'etermine la fonction de
r\'epartition 
\begin{equation}
\label{mcmc17}
F_\mu(j) = \sum_{k=1}^j \mu(k)\;.
\end{equation}
Si $U$ est une variable de loi uniforme sur $[0,1]$, alors $F_\mu^{-1}(U)$
suit la loi $\mu$. Toutefois, en proc\'edant de cette mani\`ere, on ne
gagne rien, car le calcul des sommes~\eqref{mcmc17} est aussi long que le
calcul de la somme~\eqref{mcmc16}, que l'on voulait pr\'ecis\'ement
\'eviter!

Les m\'ethodes MCMC (pour \emph{Monte Carlo Markov Chain}\/) \'evitent cet
inconv\'enient. L'id\'ee est de simuler \emph{en m\^eme temps}\/ la
loi $\mu$ et la variable al\'eatoire $Y$, \`a l'aide d'une cha\^\i ne de
Markov sur $\cX$, de distribution invariante $\mu$. 

Soit donc $\set{X_n}_{n\in\N}$ une telle \chaine, suppos\'ee
irr\'eductible, et de distribution initiale $\nu$ arbitraire. On lui
associe une suite $Y_n=Y(X_n)$ de variables al\'eatoires.
Celles-ci peuvent se d\'ecomposer comme suit~: 
\begin{equation}
\label{mcmc18}
Y_n = \sum_{i\in\cX} Y(i) \indexfct{X_n=i}\;.
\end{equation}
Consid\'erons les moyennes 
\begin{equation}
\label{mcmc19}
S_n = \frac1n \sum_{m=0}^{n-1} Y_m\;. 
\end{equation}
Le Th\'eor\`eme~\ref{thm_firred3} permet d'\'ecrire 
\begin{align}
\nonumber
\lim_{n\to\infty} \bigexpecin{\nu}{S_n} 
&= \lim_{n\to\infty} \frac1n \biggexpecin{\nu}{\sum_{m=0}^{n-1}Y_m} \\
\nonumber
&= \sum_{i\in\cX} Y(i) \lim_{n\to\infty} \frac1n 
\biggexpecin{\nu}{\sum_{m=0}^{n-1}\indexfct{X_m=i}} \\
\nonumber
&= \sum_{i\in\cX} Y(i) \mu(i) \\
&= \expec{Y}\;.
\label{mcmc20}
\end{align}
L'esp\'erance de $S_n$ converge bien vers l'esp\'erance cherch\'ee. Pour
pouvoir appliquer l'id\'ee de la m\'ethode Monte Carlo, il nous faut plus,
\`a savoir la convergence (au moins) en probabilit\'e de $S_n$ vers
$\expec{Y}$. On ne peut pas invoquer directement la loi des grands nombres,
ni le th\'eor\`eme central limite, car les $Y_n$ ne sont plus
ind\'ependants. Mais il s'av\`ere que des r\'esultats analogues restent
vrais dans le cas de \chaine s de Markov. 

\begin{theorem}
Supposons la \chaine\ r\'eversible, et de distribution initiale \'egale
\`a sa distribution stationnaire. Soit $\lambda_0$ la plus grande (en
module) valeur propre diff\'erente de $1$ de la matrice de transition de la
\chaine. Alors 
\begin{equation}
\label{mcmc21}
\variance{S_n} \leqs \frac1n
\biggpar{\frac{1+\abs{\lambda_0}}{1-\abs{\lambda_0}}} \variance{Y}\;.
%\biggpar{1+\frac{2\abs{\lambda_0}}{1-\abs{\lambda_0}}} \variance{Y}\;.
\end{equation}
\end{theorem}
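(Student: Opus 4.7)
Mon plan comporte trois \'etapes. Tout d'abord, la stationnarit\'e donne $\expecin{\pi}{Y_m}=\expec{Y}$ pour tout $m$, donc
\[
\variance S_n = \frac{1}{n^2}\sum_{m,k=0}^{n-1}\cov(Y_m,Y_k),
\]
et chaque covariance ne d\'epend que de $|m-k|$. En posant $\tilde Y = Y - \expec{Y}\vone$ et en utilisant le produit scalaire $\pscal{\cdot}{\cdot}_\pi$ introduit en~\eqref{rev6}, j'\'ecrirais
\[
\cov(Y_m,Y_{m+\ell}) = \pscal{\tilde Y}{P^\ell\tilde Y}_\pi
\qquad\forall \ell\geqs 0,
\]
en identifiant $Y$ \`a un vecteur colonne index\'e par $\cX$ et en reconnaissant que $P^\ell\tilde Y$ est pr\'ecis\'ement la fonction $i\mapsto\expecin{i}{\tilde Y(X_\ell)}$.

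Ensuite, la r\'eversibilit\'e implique par~\eqref{rev7} que $P$ est autoadjoint dans $\ell^2(\pi)$, donc diagonalisable avec valeurs propres r\'eelles et une base orthonormale $(v_j)_{j\geqs 1}$ de vecteurs propres. On peut prendre $v_1=\vone$, associ\'e \`a $\lambda_1=1$; par d\'efinition de $\lambda_0$, on a $\abs{\lambda_j}\leqs\abs{\lambda_0}$ pour tout $j\geqs 2$. En d\'ecomposant $\tilde Y=\sum_{j\geqs 2}c_j v_j$ — le coefficient selon $\vone$ est nul puisque $\pscal{\vone}{\tilde Y}_\pi=\expec{\tilde Y}=0$ — l'orthonormalit\'e donne $\variance Y=\sum_{j\geqs 2}c_j^2$, et l'action de $P^\ell$ fournit
\[
\cov(Y_m,Y_{m+\ell})=\sum_{j\geqs 2}c_j^2\,\lambda_j^{\ell}.
\]

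Pour conclure, je majorerais $\lambda_j^{|m-k|}\leqs\abs{\lambda_j}^{|m-k|}$ et sommerais une s\'erie g\'eom\'etrique : pour chaque $m$,
\[
\sum_{k=0}^{n-1}\abs{\lambda_j}^{|m-k|}\leqs 1+2\sum_{\ell=1}^{\infty}\abs{\lambda_j}^\ell = \frac{1+\abs{\lambda_j}}{1-\abs{\lambda_j}}.
\]
L'application $x\mapsto(1+x)/(1-x)$ \'etant croissante sur $[0,1)$ et $\abs{\lambda_j}\leqs\abs{\lambda_0}$ pour $j\geqs 2$, la double somme sur $(m,k)$ est major\'ee par $n(1+\abs{\lambda_0})/(1-\abs{\lambda_0})$ uniform\'ement en $j$. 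En combinant avec $\sum_{j\geqs 2}c_j^2=\variance Y$, la borne annonc\'ee suit imm\'ediatement apr\`es \'echange des sommations.

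La seule \'etape potentiellement d\'elicate est l'existence de la base orthonormale $(v_j)$~: dans le cas $\cX$ fini elle r\'esulte du th\'eor\`eme spectral en dimension finie, mais dans un cadre d\'enombrable il faudrait invoquer la th\'eorie spectrale d'un op\'erateur autoadjoint born\'e sur $\ell^2(\pi)$ et remplacer la somme discr\`ete par une int\'egrale de mesure spectrale. Le reste — in\'egalit\'e $\lambda_j^k\leqs\abs{\lambda_j}^k$, s\'erie g\'eom\'etrique, monotonie — est purement calculatoire.
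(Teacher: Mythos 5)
Votre proposition est correcte et suit essentiellement la m\^eme route que le texte~: dans les deux cas on ram\`ene $\variance{S_n}$ aux autocovariances $\cov(Y_0,Y_m)=\pscal{Y-\expec{Y}\vone}{P^m(Y-\expec{Y}\vone)}_\pi$, on exploite l'autoadjonction de $P$ dans $\ell^2(\pi)$ pour les majorer par $\abs{\lambda_0}^m\variance{Y}$, puis on somme une s\'erie g\'eom\'etrique. La seule diff\'erence est que vous explicitez la d\'ecomposition compl\`ete sur une base propre l\`a o\`u le texte applique directement la borne en norme d'op\'erateur de $P$ sur l'orthogonal de $\vone$ --- une distinction de pr\'esentation, non de fond.
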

\begin{proof}
Comme la \chaine\ d\'emarre dans la distribution stationnaire $\mu$, tous
les $Y_i$ ont m\^eme loi $\mu Y^{-1}$, m\^eme s'ils ne sont pas
ind\'ependants. Il suit que 
\begin{align}
\nonumber
\variance{S_n} &= \frac1{n^2} 
\biggbrak{\sum_{m=0}^{n-1}\variance{Y_m} + 2\sum_{0\leqs p<q<n}
\cov(Y_p,Y_q)} \\
&= \frac1n \variance{Y} + \frac2{n^2} \sum_{m=1}^{n-1} (n-m)
\cov(Y_0,Y_m)\;,
\label{mcmc22}
\end{align}
en vertu de la propri\'et\'e des incr\'ements ind\'ependants. Or si 
$\vone=\smash{\transpose{(1,1,\dots,1)}}$ on a 
\begin{align}
\nonumber
\cov(Y_0,Y_m) 
&= \Bigexpecin{\mu}{(Y_0-\expecin{\mu}{Y_0}) (Y_m-\expecin{\mu}{Y_m})} \\
\nonumber
&= \sum_{i\in\cX} \sum_{j\in\cX} (Y(i)-\expec{Y})
(Y(j)-\expec{Y}) 
\underbrace{\probin{\mu}{X_0=i,X_m=j}}_{=\mu(i)(P^m)_{ij}}\\
\nonumber
&= \sum_{i\in\cX} \mu(i)(Y(i)-\expec{Y})
\brak{P^m(Y-\expec{Y}\vone)}_i\\
\nonumber
&= \pscal{Y-\expec{Y}\vone}{P^m(Y-\expec{Y}\vone)}_\mu\\
&\leqs \abs{\lambda_0}^m
\pscal{Y-\expec{Y}\vone}{Y-\expec{Y}\vone}_\mu
= \abs{\lambda_0}^m \variance{Y}\;.
\label{mcmc23}
\end{align}
Dans l'in\'egalit\'e, nous avons utilis\'e le fait que $Y-\expec{Y}\vone
\in \vone_\perp$ puisque la somme des $\mu_i(Y(i)-\expec{Y})$ est nulle, et
que par cons\'equent ce vecteur se trouve dans le sous-espace
compl\'ementaire au vecteur propre $\vone$. Le r\'esultat suit alors en
rempla\c cant dans~\eqref{mcmc22}, en majorant $n-m$ par $n$ et en sommant
une s\'erie g\'eom\'etrique. 
\end{proof}

Il suit de cette estimation et de l'in\'egalit\'e de Bienaym\'e--Chebychev
que pour calculer $\expec{Y}$ avec une pr\'ecision $\delta$ et avec
probabilit\'e $1-\eps$, il faut choisir 
\begin{equation}
\label{mcmc24}
n \geqs 
%\frac{\variance Y}{\delta^2\eps}
%\biggpar{1+\frac{2\abs{\lambda_0}}{1-\abs{\lambda_0}}} = 
\frac{\variance Y}{\delta^2\eps}
\biggpar{\frac{1+\abs{\lambda_0}}{1-\abs{\lambda_0}}}\;.
\end{equation}
En pratique, on ne peut pas faire d\'emarrer la \chaine\ exactement avec
la distribution invariante. Ceci conduit \`a une convergence un peu plus
lente, mais du m\^eme ordre de grandeur puisque la loi des $Y_n$ converge
exponentiellement vite vers $\mu Y^{-1}$. Les r\'esultats sont bien s\^ur
meilleurs si on choisit bien la condition initiale, c'est-\`a-dire de
mani\`ere \`a ce que la loi des $Y_n$ converge rapidement. 

%%%%%%%%%%%%%%%%%%%%%%%%%%%%%%%%%%%%%%%%%%%%%%%%%%%%%%%%%%%%%%%%%%%%%%%%%%%

\section{L'algorithme de Metropolis}
\label{sec_metro}

Nous avons vu comment estimer l'esp\'erance d'une variable al\'eatoire
$Y$ \`a l'aide d'une \chaine\ de Markov de distribution invariante
donn\'ee par la loi de $Y$. Pour que cet algorithme soit efficace, il faut
encore que l'on puisse trouver facilement, en fonction de cette loi, une 
matrice de transition donnant la distribution invariante souhait\'ee. 

Une mani\`ere simple de r\'esoudre ce probl\`eme est de chercher une
\chaine\ de Markov r\'eversible. 
%Dans ce contexte, on parle souvent
%d'\defwd{algorithme de M\'etropolis}. 
Nous allons illustrer cette m\'ethode dans le cas du mod\`ele d'Ising, mais
on voit facilement comment la g\'en\'eraliser \`a d'autres syst\`emes. 

Dans le cas du mod\`ele d'Ising (Exemple~\ref{ex_Ising}), l'univers est
donn\'e par $\cX=\set{-1,1}^\Lambda$, o\`u $\Lambda$ est un sous-ensemble
(suppos\'e ici fini) de $\Z^d$. La mesure de probabilit\'e sur $\cX$ est
d\'efinie par
\begin{equation}
\label{metro1}
\mu(\sigma) = \frac{\e^{-\beta H(\sigma)}}{Z_\beta}\;,
\qquad
\text{o\`u }
Z_\beta = \sum_{\sigma\in\cX}\e^{-\beta H(\sigma)}\;.
\end{equation}
Le param\`etre $\beta$ d\'esigne la temp\'erature inverse, et la fonction
$H: \cX\to\R$, associant \`a toute configuration son \'energie, est
donn\'ee par 
\begin{equation}
\label{metro2}
H(\sigma) = -\sum_{i,j\in\Lambda\colon\norm{i-j}=1}\sigma_i\sigma_j 
- h \sum_{i\in\Lambda} \sigma_i\;,
\end{equation}
o\`u $h$ est le champ magn\'etique. L'objectif est de calculer
l'esp\'erance de la variable aimantation, donn\'ee par
\begin{equation}
\label{metro3}
m(\sigma) = \frac1{\abs{\Lambda}}\sum_{i\in\Lambda} \sigma_i
\end{equation}
(nous avons introduit un facteur $1/\abs{\Lambda}$ pour assurer que $m$
prenne des valeurs entre $-1$ et~$1$). Afin de satisfaire la condition de
r\'eversibilit\'e, on cherche une matrice de transition $P$ sur $\cX$ dont
les \'el\'ements satisfont 
\begin{equation}
\label{metro4}
\mu(\sigma)p_{\sigma\sigma'} = \mu(\sigma')p_{\sigma'\sigma}
\end{equation}
pour toute paire $(\sigma,\sigma')\in\cX\times\cX$. Cela revient \`a
imposer que 
\begin{equation}
\label{metro5}
\frac{p_{\sigma\sigma'}}{p_{\sigma'\sigma}}
= \e^{-\beta\Delta H(\sigma,\sigma')}\;,
\end{equation}
o\`u $\Delta H(\sigma,\sigma') = H(\sigma') - H(\sigma)$. On notera que
cette condition ne fait pas intervenir la constante de normalisation
$Z_\beta$, ce qui est souhaitable, car le calcul de cette constante est
aussi co\^uteux que celui de $\expec{m}$. 

L'\defwd{algorithme de M\'etropolis}\/ consiste dans un premier temps \`a
d\'efinir un ensemble de transitions permises, c'est-\`a-dire une relation
sym\'etrique $\sim$ sur $\cX$ (on supposera toujours que
$\sigma\not\sim\sigma$). Les plus courantes sont 
\begin{itemiz}
\item	la \defwd{dynamique de Glauber}\/, qui consiste \`a choisir
$\sigma\sim\sigma'$ si et seulement si les deux configurations $\sigma$ et
$\sigma'$ diff\`erent en exactement une composante; on parle de dynamique
de renversement de spin;

\item	la \defwd{dynamique de Kawasaki}\/, qui consiste \`a choisir
$\sigma\sim\sigma'$ si et seulement si $\sigma'$ est obtenue en
intervertissant deux composantes de $\sigma$; on parle de dynamique
d'\'echange de spin. Dans ce cas, la \chaine\ n'est pas irr\'eductible sur
$\cX$, car elle conserve le nombre total de spins $+1$ et $-1$ : elle
est en fait irr\'eductible sur chaque sous-ensemble de configurations \`a
nombre fix\'e de spins de chaque signe.
\end{itemiz}

Une fois la relation $\sim$ fix\'ee, on choisit des probabilit\'es de
transition telles que 
\begin{equation}
\label{metro6}
p_{\sigma\sigma'} = 
\begin{cases}
\myvrule{10pt}{14pt}{0pt} 
p_{\sigma'\sigma} \e^{-\beta\Delta H(\sigma,\sigma')} 
&\text{si $\sigma\sim\sigma'$\;,}\\
\displaystyle
1 - \sum_{\sigma''\sim\sigma} p_{\sigma\sigma''}
&\text{si $\sigma=\sigma'$\;,}\\
0
&\text{autrement\;.}
\end{cases}
\end{equation}
Pour satisfaire la condition lorsque $\sigma\sim\sigma'$, une possibilit\'e
est de prendre
\begin{equation}
\label{metro7}
p_{\sigma\sigma'} = 
\begin{cases}
%\myvrule{10pt}{14pt}{0pt} 
q_{\sigma\sigma'} 
&\text{si $H(\sigma')\leqs H(\sigma)$\;,}\\
q_{\sigma\sigma'} 
\e^{-\beta\Delta H(\sigma,\sigma')}
&\text{si $H(\sigma')> H(\sigma)$\;,}
\end{cases}
\end{equation}
o\`u $q_{\sigma\sigma'}=q_{\sigma'\sigma}$ et
$\sum_{\sigma'\sim\sigma}q_{\sigma\sigma'}=1$. On peut par exemple choisir
les $q_{\sigma\sigma'}$ constants, \'egaux \`a l'inverse du nombre de
transitions permises. Cela revient \`a toujours effectuer la transition si
elle d\'ecro\^\i t l'\'energie, et de ne l'effectuer qu'avec probabilit\'e
$\e^{-\beta\Delta H(\sigma,\sigma')}$ si elle fait cro\^\i tre
l'\'energie. 
Une autre possibilit\'e est de choisir 
\begin{equation}
\label{metro8}
p_{\sigma\sigma'} = \frac{q_{\sigma\sigma'}}{1+\e^{\beta\Delta
H(\sigma,\sigma')}}\;.
\end{equation}
Remarquons que le calcul de la diff\'erence d'\'energie $\Delta H$ est
particuli\`erement simple dans le cas de la dynamique de Glauber, car
seuls le spin que l'on renverse et ses voisins entrent en compte. Ainsi,
si $\sigma^{(k)}$ d\'enote la configuration obtenue en renversant le spin
num\'ero $k$ de $\sigma$, on aura 
\begin{equation}
\label{metro9}
\Delta H(\sigma,\sigma^{(k)}) = 2\sigma_k 
\Biggbrak{\sum_{\;j\colon\norm{j-k}=1}\sigma_j + h}\;,
\end{equation}
qui est une somme de $2d+1$ termes pour un r\'eseau $\Lambda\subset\Z^d$.

Concr\`etement, l'algorithme de Metropolis avec dynamique de Glauber
s'impl\'emente comme suit (avec $N=\abs{\Lambda}$)~: 
\begin{enum}
\item	{\bf Etape d'initialisation~:}
\begin{itemiz}
\item	choisir une configuration initiale $\sigma(0)$ (si possible 
telle que $\delta_{\sigma(0)}$ soit proche de $\mu$);
\item	calculer $m_0=m(\sigma(0))$ (n\'ecessite $N$ calculs);
\item	calculer $H(\sigma(0))$ (n\'ecessite de l'ordre de $dN$ calculs);
\item	poser $S = m_0$.
\end{itemiz}

\item	{\bf Etape d'it\'eration~:} Pour $n=1, 2, \dots, n_{\max}$, 
\begin{itemiz}
\item	choisir un spin $k$ au hasard uniform\'ement dans $\Lambda$;
\item	calculer $\Delta H(\sigma(n-1),\sigma')$, o\`u
$\sigma'=\sigma(n-1)^{(k)}$; 
%est obtenu en renversant le spin choisi;
\item	si $\Delta H(\sigma(n-1),\sigma') \leqs 0$, poser $\sigma(n) =
\sigma'$; 
\item	si $\Delta H(\sigma(n-1),\sigma') > 0$, poser
$\sigma(n)=\sigma'$ avec probabilit\'e
$q_{\sigma\sigma'}\e^{-\beta\Delta H(\sigma(n-1),\sigma')}$,
sinon prendre $\sigma(n+1)=\sigma(n)$;
\item	si on a renvers\'e le spin $k$,
$m_n=m_{n-1}+2\sigma_k(n-1)/N$, sinon $m_n=m_{n-1}$;
\item	ajouter $m_n$ \`a $S$.
\end{itemiz}
\end{enum}

Le quotient $S/(n+1)$ converge alors vers $\expec{m}$, avec une vitesse
d\'etermin\'ee par~\eqref{mcmc21}. La seule quantit\'e difficile \`a
estimer est le trou spectral $1-\abs{\lambda_0}$. Il s'av\`ere que ce trou
est grand sauf pour les temp\'eratures tr\`es basses ou proches de la
temp\'erature critique de la transition de phase. Dans ce dernier cas, il
existe des algorithmes alternatifs, tels que l'algorithme dit de
Swendsen--Wang, qui convergent beaucoup mieux. 

\begin{figure}
\centerline{
\includegraphics*[clip=true,width=70mm]{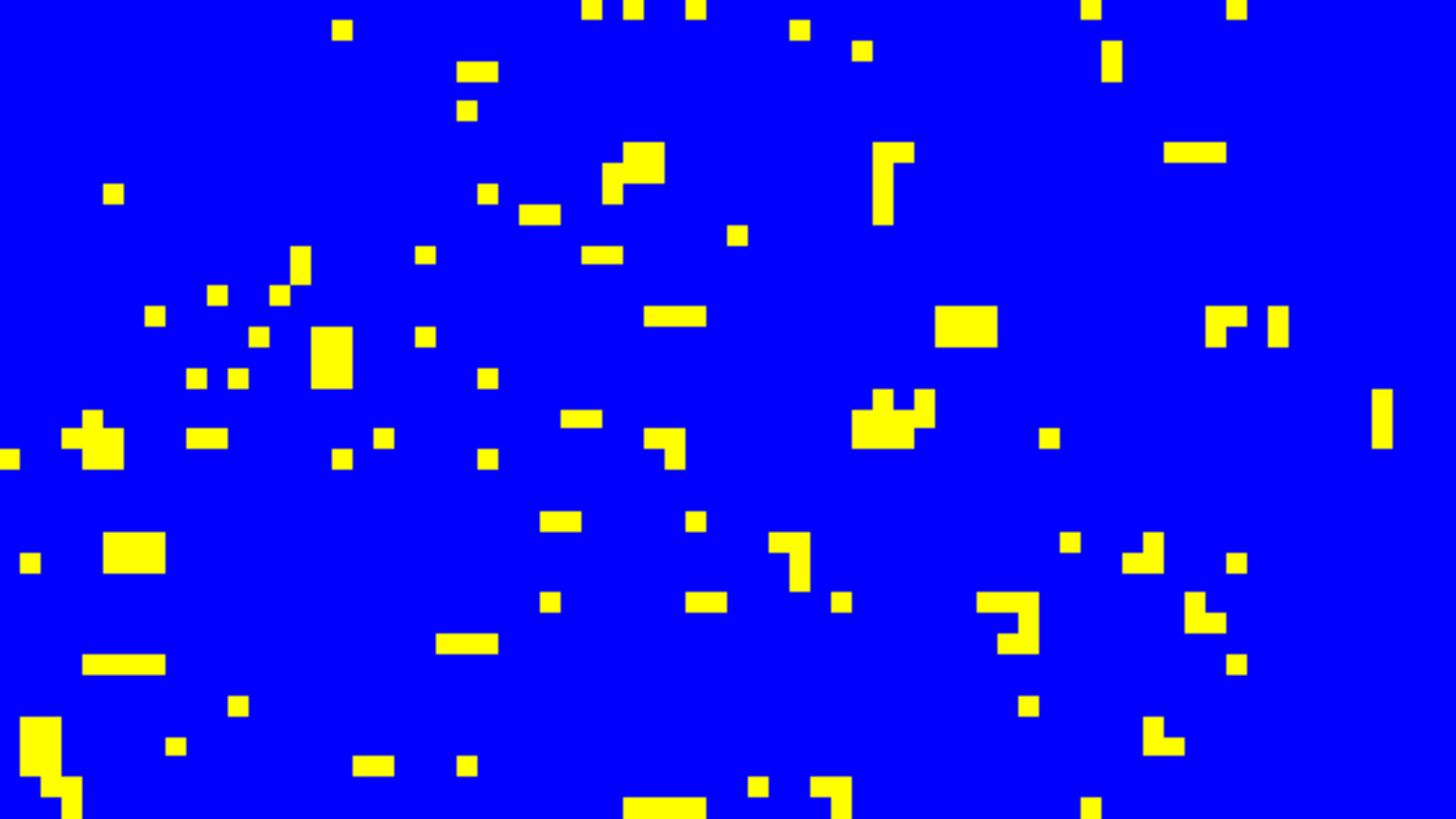}
\hspace{0.1mm}
\includegraphics*[clip=true,width=70mm]{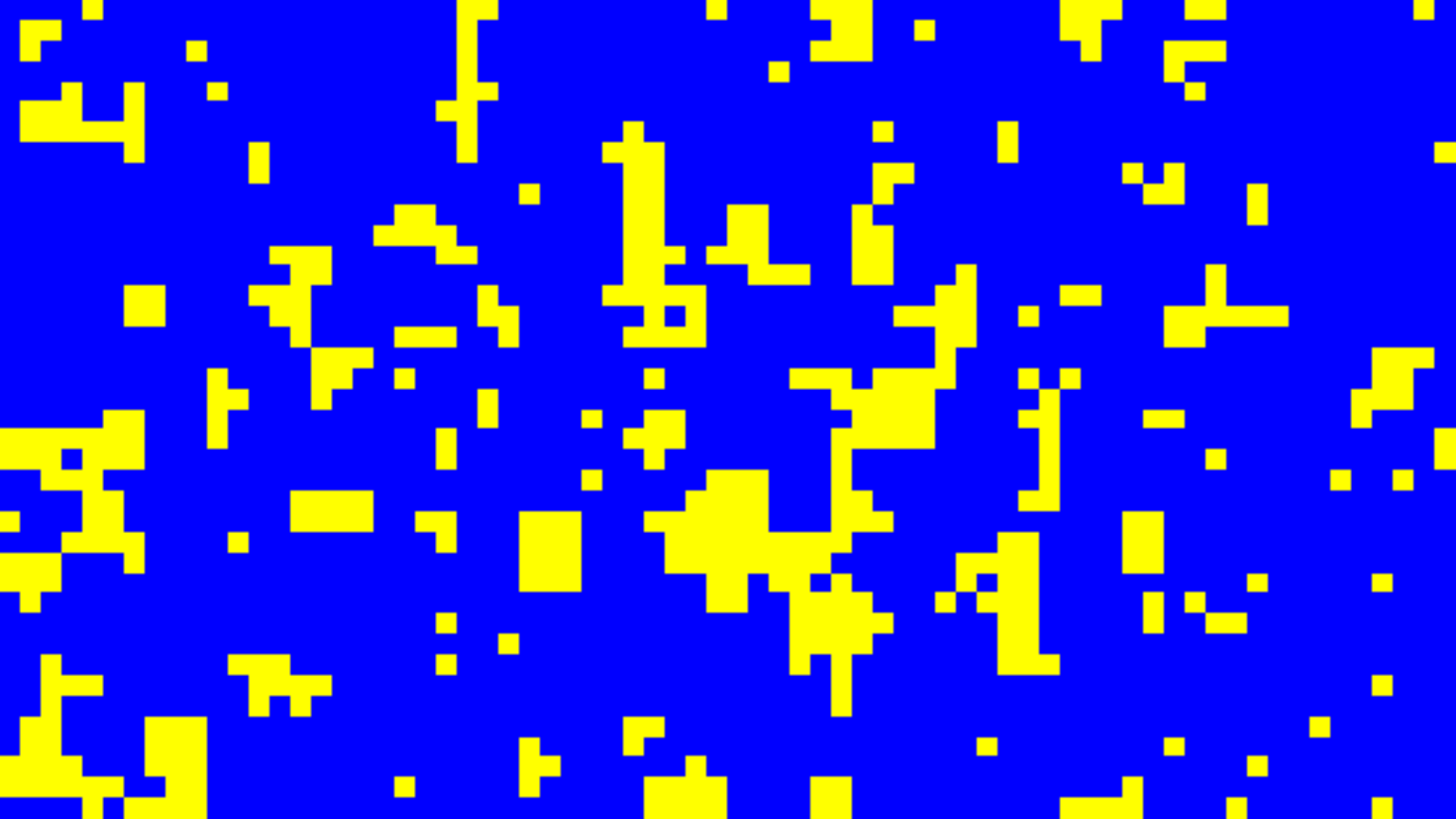}
}
\vspace{2mm}
\centerline{
\includegraphics*[clip=true,width=70mm]{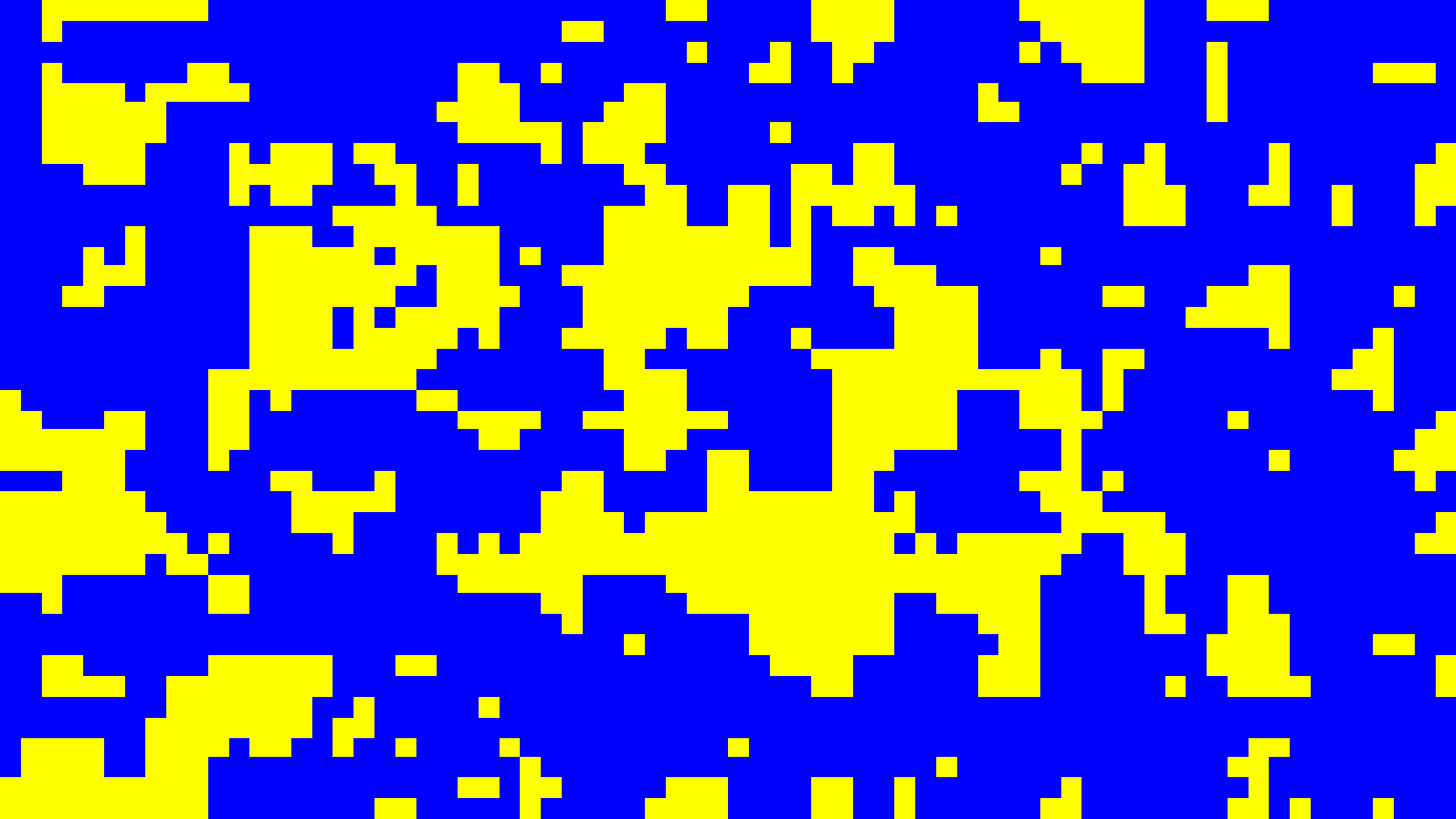}
\hspace{0.1mm}
\includegraphics*[clip=true,width=70mm]{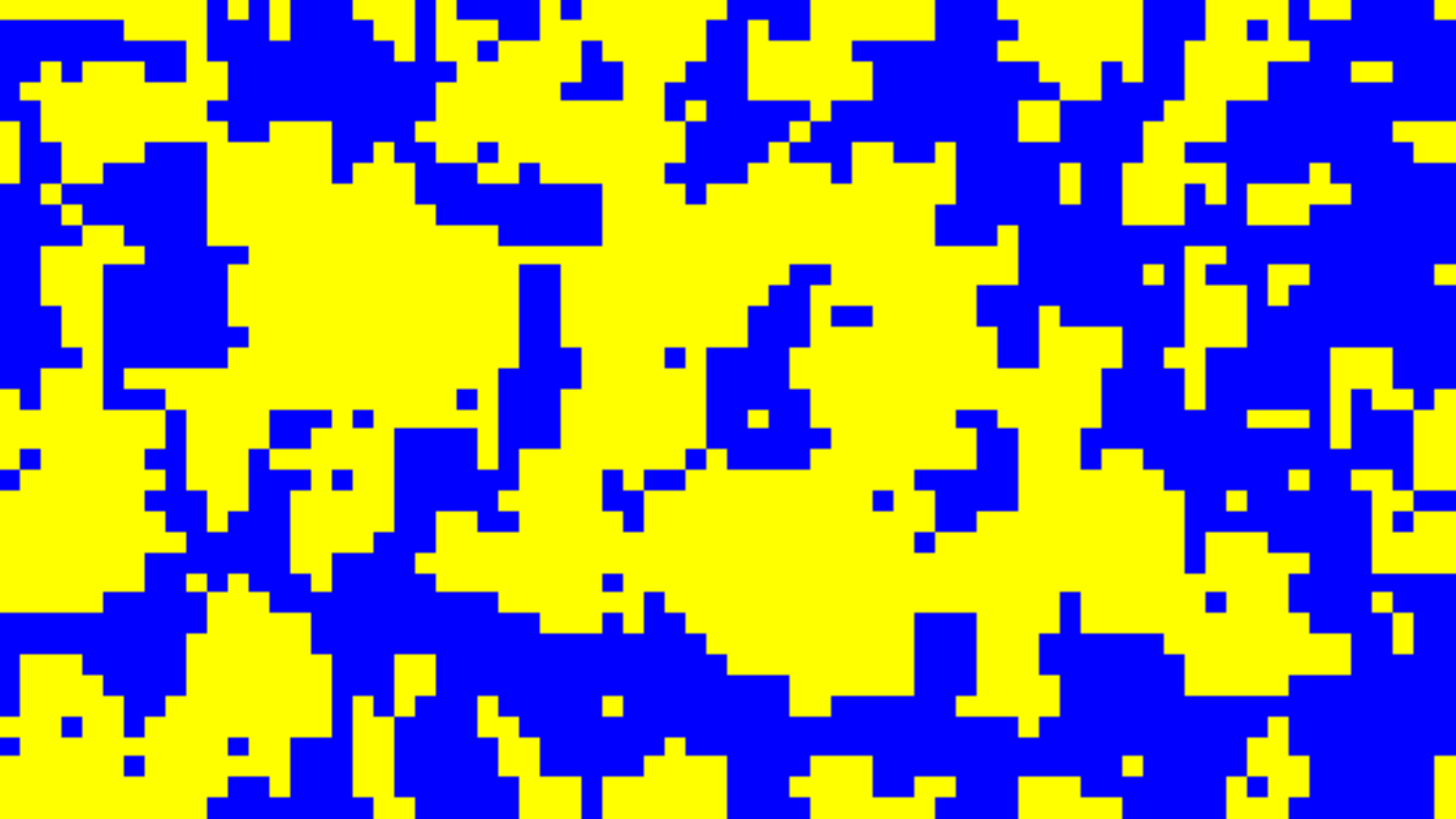}
}
\caption[]{Exemple de simulation d'une dynamique de Glauber. Evolution 
au cours du temps pour $h=1$ et $\beta=0.6$, avec tous les spins initialement
\'egaux \`a $-1$ (bleu). Le champ $h$ positif favorise les spins \'egaux \`a
$+1$ (jaunes).}
 \label{fig_glauber}
\end{figure}

%%%%%%%%%%%%%%%%%%%%%%%%%%%%%%%%%%%%%%%%%%%%%%%%%%%%%%%%%%%%%%%%%%%%%%%%%%%

\section{Le recuit simul\'e}
\label{sec_recuit}

Pour terminer, nous allons illustrer l'id\'ee de l'algorithme du recuit
simul\'e dans le cas du probl\`eme du voyageur de commerce
(Exemple~\ref{ex_voyageur}). Rappelons que pour $N$ villes, on se donne
une fonction $(i,j)\mapsto d(i,j)$ donnant la distance (ou le temps de
voyage) entre les villes $i$ et $j$. Le but est de trouver une permutation
$\sigma$ de $\set{1,2,\dots,N}$ qui minimise la longueur totale d'un
circuit ferm\'e, 
\begin{equation}
\label{sima1}
H(\sigma) = \sum_{i=1}^{N-1} d(\sigma(i),\sigma(i+1)) +
d(\sigma(N),\sigma(1))\;.
\end{equation}
La difficult\'e est que l'ensemble $\cS_N$ des permutations possibles est
de cardinal $N!$ (en fait $(N-1)!/2$ si l'on fixe la ville de d\'epart et
le sens de parcours), et cro\^\i t donc encore plus vite
qu'exponentiellement en $N$. Le tableau suivant donne quelques valeurs de
temps de calcul, en supposant que la longueur d'un chemin se calcule en
une microseconde.

\begin{center}
\begin{tabular}{|r|c|c|}
\hline
$N$ & Nombre de chemins & Temps de calcul \\
\hline
5 & 12 & 12 $\mu$s \\
10 & 181440 & $0.18$ s \\
15 & $43.6 \cdot 10^9$ & 12 heures \\
20 & $60 \cdot 10^{15}$ & 1928 ans \\
25 & $310 \cdot 10^{21}$ & 9.8 milliards d'ann\'ees \\
\hline
\end{tabular}
\end{center}

D'o\`u l'id\'ee d'explorer $\cS_N$ \`a l'aide d'une \chaine\ de Markov. 
On commence par se donner \`a nouveau une relation sym\'etrique $\sim$ sur
$\cS_N$, par exemple $\sigma\sim\sigma'$ si les deux permutations
diff\`erent d'une transposition. Un simple algorithme de descente du
gradient, consistant \`a choisir \`a chaque \'etape l'une des transitions
permises au hasard, et \`a les accepter si elles d\'ecroissent $H$, ne
donne pas de bons r\'esultats, car la \chaine\ peut rester bloqu\'ee dans
des minima locaux. 

Si l'on choisit les probabilit\'es de transition comme
dans~\eqref{metro7}, on \'evite cet inconv\'enient. Toutefois, la \chaine\
ne converge alors pas vers le minimum de $H$, mais vers la mesure de Gibbs
$\e^{-\beta H(\sigma)}/Z_\beta$. Lorsque $\beta$ tend vers $+\infty$,
cette mesure converge effectivement vers une mesure concentr\'ee sur le ou
les minima globaux de $H$. Mais si l'on choisit d'entr\'ee un $\beta$
trop \'elev\'e (c'est-\`a-dire une temp\'erature trop faible), la \chaine\
de Markov risque d'\^etre pi\'eg\'ee tr\`es longtemps au voisinage de
minima locaux. 

L'algorithme du recuit simul\'e consiste \`a choisir une \chaine\ de
Markov inhomog\`ene dans le temps. Cela veut dire qu'au lieu de choisir un
$\beta$ fixe dans~\eqref{metro7}, on prend un $\beta_n$ d\'ependant du
temps, typiquement $\beta_n=\beta_0 K^n$ avec $K>1$. Si $K$ est
suffisamment proche de $1$, c'est-\`a-dire si l'on refroidit le syst\`eme
suffisamment lentement, la \chaine\ converge vers une bonne approximation
du minimum global de $H$ --- c'est du moins ce que l'on observe
num\'eriquement dans beaucoup de cas. 

%%%%%%%%%%%%%%%%%%%%%%%%%%%%%%%%%%%%%%%%%%%%%%%%%%%%%%%%%%%%%%%%%%%%%%%%%%%

% \part{Processus de Poisson}
\part{Processus de sauts et files d'attente}
\label{part_poisson}

%%%%%%%%%%%%%%%%%%%%%%%%%%%%%%%%%%%%%%%%%%%%%%%%%%%%%%%%%%%%%%%%%%%%%%%%%%%

\chapter{Rappels de probabilit\'es}
\label{chap_rappel}

%%%%%%%%%%%%%%%%%%%%%%%%%%%%%%%%%%%%%%%%%%%%%%%%%%%%%%%%%%%%%%%%%%%%%%%%%%%

\section{Loi binomiale et loi de Poisson}
\label{sec_disc}

Nous commen\c cons par rappeler quelques lois de probabilit\'e usuelles qui
joueront un r\^ole important dans la suite. Une \defwd{exp\'erience de
Bernoulli}\/ de longueur $n$ et probabilit\'e de succ\`es $q\in[0,1]$ consiste
\`a r\'ep\'eter $n$ fois, de mani\`ere ind\'ependante, une exp\'erience
\'el\'ementaire qui n'admet que deux issues possibles~: Le succ\`es, qui se
produit avec probabilit\'e $q$, et l'\'echec, qui se produit avec probabilit\'e
$1-q$. Si par exemple l'exp\'erience consiste \`a jeter un d\'e \'equilibr\'e,
et que l'on consid\`ere comme succ\`es uniquement l'obtention de $6$ points, on
aura $q=\frac16$. 

Soit $X$ la variable al\'eatoire donnant le nombre de succ\`es de
l'exp\'erience de longueur $n$. Elle pourra prendre les valeurs $0, 1, 2, \dots,
n$, la valeur $k$ \'etant obtenue avec probabilit\'e 
\begin{equation}
 \label{disc1}
\prob{X=k} = \binom nk q^k(1-q)^{n-k} \bydef b_{n,q}(k)\;,
\end{equation} 
o\`u nous avons not\'e les coefficients binomiaux 
\begin{equation}
 \label{disc2}
\binom nk \equiv C^k_n = \frac{n!}{k!(n-k)!}\;.
\end{equation} 
En effet, il y a $\binom nk$ mani\`eres d'arranger les $k$ succ\`es et $n-k$
\'echecs parmi les $n$ exp\'eriences, et chacun de ces arrangements se produit
avec probabilit\'e $q^k(1-q)^{n-k}$. 

\begin{definition}[Loi binomiale]
\label{def_binom}
Soit $X$ une variable al\'eatoire prenant ses valeurs dans
$\set{0,1,\dots,n}$ satisfaisant~\eqref{disc1}. Nous dirons que $X$ suit une
\defwd{loi binomiale de param\`etres $(n,q)$} et noterons 
$X \sim b_{n,q}$. 
\end{definition}

On peut repr\'esenter $X$ comme somme de $n$ variables al\'eatoires $Y_i$ 
ind\'ependantes et identiquement distribu\'ees (i.i.d.), de loi de Bernoulli de
param\`etre $q$, c'est-\`a-dire telles que $\prob{Y_i=1}=q=1-\prob{Y_i=0}$.
Alors comme l'esp\'erance 
de chaque $Y_i$ vaut $\expec{Y_i}=0\cdot\prob{Y_i=0}+1\cdot\prob{Y_i=1}=q$, on
obtient pour l'esp\'erance de $X$
\begin{equation}
 \label{disc4}
\expec{X} \defby \sum_{k=0}^n k \, \prob{X=k} 
= \sum_{i=1}^n \expec{Y_i}
= nq\;.
\end{equation} 
De plus, comme la variance de chaque $Y_i$ vaut $\variance(Y_i) \defby
\expec{Y_i^2} - \expec{Y_i}^2 = q(1-q)$, on voit que la variance de $X$ est
donn\'ee par  
\begin{equation}
 \label{disc5}
\variance(X) \defby \expec{X^2}-\expec{X}^2 = nq(1-q)\;. 
\end{equation}

\goodbreak
Une deuxi\`eme loi importante dans ce cours est la loi de Poisson. 

\begin{definition}[Loi de Poisson]
\label{def_poisson}
Nous dirons que la variable al\'eatoire $X$ suit une \defwd{loi de Poisson de
param\`etre $\lambda>0$}, et noterons $X\sim \pi_\lambda$, si elle prend des
valeurs enti\`eres non-n\'egatives, avec probabilit\'e 
\begin{equation}
 \label{disc6}
\prob{X=k} = \e^{-\lambda} \frac{\lambda^k}{k!} \bydef \pi_\lambda(k)\;. 
\end{equation} 
\end{definition}

Avant de discuter sa signification, mentionnons quelques propri\'et\'es de base
de cette loi.

\begin{prop} \hfill
\label{prop_disc1}
\begin{enum}
\item	Si $X$ suit une loi de Poisson de param\`etre $\lambda$, alors 
\begin{equation}
 \label{disc7}
\expec{X} = \variance(X) = \lambda\;. 
\end{equation} 
\item	Si $X$ et $Y$ sont ind\'ependantes, et suivent des lois de Poisson de
param\`etres $\lambda$ et $\mu$ respectivement, alors $X+Y$ suit une loi de
Poisson de param\`etre $\lambda+\mu$. 
\end{enum}
\end{prop}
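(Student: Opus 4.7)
Pour le point~1, je commencerais par un calcul direct en exploitant la s\'erie exponentielle. L'id\'ee est d'\'ecrire
\begin{equation*}
\expec{X} = \sum_{k=0}^\infty k \e^{-\lambda} \frac{\lambda^k}{k!}
= \lambda \e^{-\lambda} \sum_{k=1}^\infty \frac{\lambda^{k-1}}{(k-1)!} = \lambda\;,
\end{equation*}
en utilisant le fait que $k/k!=1/(k-1)!$ pour $k\geqs 1$ et que $\sum_{j\geqs0}\lambda^j/j!=\e^\lambda$. Pour la variance, plut\^ot que de calculer directement $\expec{X^2}$, il est plus commode de passer par le moment factoriel $\expec{X(X-1)}$, qui se calcule par le m\^eme type de manipulation et donne $\lambda^2$. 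On en d\'eduit $\expec{X^2}=\lambda^2+\lambda$, puis $\variance(X)=\expec{X^2}-\expec{X}^2=\lambda$.

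Pour le point~2, j'utiliserais la formule de convolution discr\`ete. Comme $X$ et $Y$ sont ind\'ependantes et \`a valeurs dans $\N$, pour tout $n\in\N$ on a
\begin{equation*}
\prob{X+Y=n} = \sum_{k=0}^n \prob{X=k}\prob{Y=n-k}
= \sum_{k=0}^n \e^{-\lambda}\frac{\lambda^k}{k!}\e^{-\mu}\frac{\mu^{n-k}}{(n-k)!}\;.
\end{equation*}
Il suffit alors de factoriser $\e^{-(\lambda+\mu)}/n!$ puis d'appliquer la formule du bin\^ome de Newton pour reconstituer $(\lambda+\mu)^n$, ce qui livre bien $\prob{X+Y=n}=\e^{-(\lambda+\mu)}(\lambda+\mu)^n/n!$, c'est-\`a-dire que $X+Y\sim\pi_{\lambda+\mu}$.

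Il n'y a pas v\'eritablement d'obstacle~: tout le travail se r\'eduit \`a des manipulations soigneuses de s\'eries enti\`eres et \`a l'invocation de l'ind\'ependance pour justifier la convolution. Le seul point de vigilance est de bien d\'ecaler l'indice de sommation dans les calculs de moments (passage de $k\geqs 0$ \`a $k-1\geqs 0$, puis de $k\geqs 0$ \`a $k-2\geqs 0$ pour le moment factoriel) afin de reconna\^\i tre chaque fois la s\'erie exponentielle.
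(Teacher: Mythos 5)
Votre d\'emonstration est correcte, mais elle suit une route diff\'erente de celle du polycopi\'e. Le texte renvoie aux exercices~\ref{exo_generatrice2} et~\ref{exo_generatrice3} et passe donc par les fonctions g\'en\'eratrices~: une fois \'etabli que $G_X(z)=\expec{z^X}=\e^{\lambda(z-1)}$, le point~1 d\'ecoule des identit\'es $G_X'(1)=\expec{X}$ et $G_X''(1)=\expec{X^2}-\expec{X}$ (qui redonnent exactement votre moment factoriel $\expec{X(X-1)}=\lambda^2$), et le point~2 de la relation $G_{X+Y}=G_X G_Y$ pour des variables ind\'ependantes, puisque $\e^{\lambda(z-1)}\e^{\mu(z-1)}=\e^{(\lambda+\mu)(z-1)}$ est la fonction g\'en\'eratrice de $\pi_{\lambda+\mu}$. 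Vous faites ces m\^emes calculs \lq\lq \`a la main\rq\rq~: d\'ecalage d'indice dans la s\'erie exponentielle pour les moments, et convolution discr\`ete plus formule du bin\^ome pour la somme. Les deux arguments sont essentiellement \'equivalents (la formule du bin\^ome que vous invoquez est pr\'ecis\'ement ce que cache le produit des deux s\'eries enti\`eres), mais votre version est plus \'el\'ementaire et autonome, tandis que celle du cours est plus syst\'ematique et r\'eutilisable~: le m\^eme outil donne d'un coup les esp\'erances, les variances et la stabilit\'e par somme pour toutes les lois discr\`etes usuelles (Bernoulli, binomiale, g\'eom\'etrique).
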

\begin{proof}
Voir exercices~\ref{exo_generatrice2} et~\ref{exo_generatrice3}.
\end{proof}

L'importance de la loi de Poisson $\pi_\lambda$ vient du fait qu'elle donne une
bonne approximation de la loi binomiale $b_{n,q}$ lorsque la longueur $n$ de
l'exp\'erience est grande et que la probabilit\'e de succ\`es $q$ est faible,
avec $nq=\lambda$. En effet nous avons le r\'esultat de convergence suivant~: 

\begin{prop}
\label{prop_disc2}
Soit $\set{q_n}_{n\geqs0}$ une suite telle que
$\lim_{n\to\infty}nq_n=\lambda>0$. Alors, pour tout $k\in\N$, 
\begin{equation}
\label{disc8}
\lim_{n\to\infty} b_{n,q_n}(k) = \pi_\lambda(k)\;.
\end{equation}
\end{prop}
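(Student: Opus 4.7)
Le plan est de partir de la d\'efinition explicite de $b_{n,q_n}(k)$ et de la r\'ecrire sous une forme qui fasse appara\^itre clairement la quantit\'e $nq_n$, qui tend vers $\lambda$ par hypoth\`ese. Je commencerais par \'ecrire
\[
b_{n,q_n}(k) = \binom{n}{k} q_n^k (1-q_n)^{n-k}
= \frac{1}{k!} \cdot \frac{n!}{(n-k)!\,n^k} \cdot (nq_n)^k \cdot (1-q_n)^{n-k}\;,
\]
puis \'etudier s\'epar\'ement la limite de chacun des trois facteurs non triviaux lorsque $n\to\infty$ (\`a $k$ fix\'e).

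Ensuite je traiterais chaque facteur. Premi\`erement, le produit $\frac{n(n-1)\cdots(n-k+1)}{n^k} = \prod_{j=0}^{k-1}(1-j/n)$ tend vers $1$ quand $n\to\infty$, car c'est un produit fini d'un nombre fix\'e $k$ de facteurs dont chacun tend vers $1$. Deuxi\`emement, $(nq_n)^k \to \lambda^k$ par hypoth\`ese et par continuit\'e de $x\mapsto x^k$. Enfin, comme $q_n = (nq_n)/n \to 0$, on a $(1-q_n)^{-k}\to 1$, et il ne reste plus qu'\`a contr\^oler $(1-q_n)^n$.

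C'est le point central, et il m\'erite un peu de soin. L'id\'ee est d'\'ecrire $(1-q_n)^n = \exp\bigpar{n\log(1-q_n)}$ puis d'utiliser le d\'eveloppement limit\'e $\log(1-x) = -x + \Order{x^2}$ au voisinage de $0$. Comme $q_n\to 0$, on a $n\log(1-q_n) = -nq_n + n \cdot \Order{q_n^2} = -nq_n + \Order{(nq_n)\cdot q_n}$. Le premier terme tend vers $-\lambda$ par hypoth\`ese, et le second vers $0$ puisque $nq_n$ est born\'e et $q_n\to 0$. Par continuit\'e de l'exponentielle, $(1-q_n)^n \to \e^{-\lambda}$.

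En rassemblant les quatre limites, on conclut
\[
\lim_{n\to\infty} b_{n,q_n}(k) = \frac{1}{k!}\cdot 1 \cdot \lambda^k \cdot \e^{-\lambda} \cdot 1 = \e^{-\lambda}\frac{\lambda^k}{k!} = \pi_\lambda(k)\;,
\]
ce qui ach\`eve la preuve. L'unique obstacle un peu d\'elicat est le contr\^ole de $(1-q_n)^n$~: il faut bien v\'erifier que le reste dans le d\'eveloppement de $\log(1-q_n)$ est n\'egligeable devant $1$ apr\`es multiplication par $n$, ce qui repose sur la borne $q_n = \Order{1/n}$ fournie par l'hypoth\`ese $nq_n\to\lambda$.
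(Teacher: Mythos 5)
Votre preuve est correcte et suit essentiellement la m\^eme d\'emarche que celle du polycopi\'e~: factorisation de $b_{n,q_n}(k)$ pour isoler $(nq_n)^k$, le produit fini $\prod_{j=0}^{k-1}(1-j/n)\to1$, le facteur $(1-q_n)^{-k}\to1$, et le terme $(1-q_n)^n\to\e^{-\lambda}$ via le d\'eveloppement limit\'e de $\log(1-q_n)$. Vous justifiez m\^eme ce dernier point avec un peu plus de soin que le texte, qui se contente de l'indiquer.
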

\begin{proof}
Soit $\lambda_n=nq_n$. Alors 
\begin{align}
\nonumber
b_{n,q_n}(k) 
&= b_{n,\lambda_n/n}(k) \\
\nonumber
&= \frac{n(n-1)\dots(n-k+1)}{k!} \frac{\lambda_n^k}{n^k}
\biggpar{1-\frac{\lambda_n}n}^{n-k} \\
&= \frac{\lambda_n^k}{k!} \frac1{(1-\lambda_n/n)^k}
\biggpar{1-\frac1n}\dots\biggpar{1-\frac{k-1}n}\biggpar{1-\frac{\lambda_n}n}
^n\;.
\label{disc8:1}
\end{align}
Lorsque $n\to\infty$, on a $\lambda_n\to\lambda$, $\lambda_n/n\to0$ et
$j/n\to0$ pour $j=0,\dots,k-1$, donc  
\begin{equation}
\label{disc8:2}
\lim_{n\to\infty} b(k; n, q_n) = \frac{\lambda^k}{k!} 
\lim_{n\to\infty} \biggpar{1-\frac{\lambda_n}n}^n 
= \frac{\lambda^k}{k!}\e^{-\lambda}\;,
\end{equation}
la derni\`ere \'egalit\'e pouvant se montrer par un d\'eveloppement limit\'e de
$\log\brak{(1-\lambda_n/n)^n}$. 
\end{proof}

\begin{example}
\label{ex_lpn}
La probabilit\'e de gagner au tierc\'e est de $1/1000$. Quelle est la
probabilit\'e de gagner $k$ fois en jouant $2000$ fois? 

Nous mod\'elisons la situation par une exp\'erience de Bernoulli de longueur
$n=2000$ et param\`etre $q=0.001$. La probabilit\'e de gagner $k$ fois sera
donn\'ee par 
\begin{equation}
\label{disc8:3}
b_{2000,0.001}(k) \simeq \pi_2(k) = \e^{-2}\frac{2^k}{k!}\;.
\end{equation}
Le tableau suivant compare quelques valeurs des deux lois. 

\begin{center}
\begin{tabular}{|l||l|l|l|l|l|l|}
\hline 
\myvrule{12pt}{6pt}{0pt}
$k$ & $0$ & $1$ & $2$ & $3$ & $4$ & $5$ \\
\hline 
\myvrule{12pt}{7pt}{0pt}
$b_{2000,0.001}(k)$ 
& $0.13520$ & $0.27067$ & $0.27081$ & $0.18053$ & $0.09022$ & $0.03605$
\\
\myvrule{7pt}{7pt}{0pt} 
$\pi_2(k)$ 
& $0.13534$ & $0.27067$ & $0.27067$ & $0.18045$ & $0.09022$ & $0.03609$
\\
\hline
\end{tabular}
\end{center}

L'avantage de la loi de Poisson est qu'elle est bien plus rapide \`a
calculer, puisqu'elle ne fait pas intervenir de coefficients binomiaux. 
\end{example}

Nous donnons maintenant un r\'esultat bien plus fort, \`a savoir la convergence
dans $L^1$ de la loi de Bernoulli vers la loi de Poisson. Ce r\'esultat est
entre autres int\'eressant pour sa preuve, qui peut \^etre faite exclusivement
avec des arguments de probabilit\'es, et un minimum d'analyse. 

\begin{theorem}
\label{thm_lpn}
On a 
\begin{equation}
\label{disc9}
\sum_{k=0}^\infty \bigabs{b_{n,q}(k) - \pi_{nq}(k)} \leqs 2nq^2\;.
\end{equation}
\end{theorem}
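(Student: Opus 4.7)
Le plan est d'utiliser un argument de couplage pour exploiter la structure additive des deux lois. Je commencerais par \'ecrire $X \sim b_{n,q}$ comme somme $X = Y_1 + \dots + Y_n$ de variables i.i.d.\ de Bernoulli $Y_i \sim b_{1,q}$, et chercherais \`a construire sur le m\^eme espace probabilis\'e des variables $Z_1, \dots, Z_n$ ind\'ependantes de loi $\pi_q$ telles que chaque $\prob{Y_i \neq Z_i}$ soit petit. La Proposition~\ref{prop_disc1} assurera alors que $W = Z_1 + \dots + Z_n \sim \pi_{nq}$.

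L'\'etape centrale est la construction d'un couplage \'el\'ementaire $(Y, Z)$ d'une Bernoulli $b_{1,q}$ et d'une Poisson $\pi_q$ v\'erifiant $\prob{Y \neq Z} \leqs q^2$. Pour cela, j'utiliserais une variable auxiliaire $U$ uniforme sur $[0,1]$: je poserais $Y = \indexfct{U > 1 - q}$ d'une part, et d'autre part $Z = k$ lorsque $U$ appartient \`a l'intervalle de longueur $\e^{-q}q^k/k!$ correspondant \`a la fonction de r\'epartition de $\pi_q$. L'observation clef est l'in\'egalit\'e \'el\'ementaire $1 - q \leqs \e^{-q}$, qui entra\^\i ne l'inclusion $\set{U \leqs 1-q} \subset \set{U \leqs \e^{-q}}$, si bien que $\set{Y = Z = 0} = \set{U \leqs 1-q}$; le m\^eme type de v\'erification sur l'\'ev\'enement $\set{Y = Z = 1}$ (rendu possible par $\e^{-q}(1+q) \leqs 1$, cons\'equence de $\e^q \geqs 1 + q$) conduit \`a
\begin{equation*}
\prob{Y \neq Z} = q(1 - \e^{-q}) \leqs q^2\;,
\end{equation*}
la derni\`ere majoration utilisant encore $1 - \e^{-q} \leqs q$.

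Une fois cette brique \'el\'ementaire en place, il suffit de r\'ep\'eter ind\'ependamment la construction $n$ fois et de combiner deux in\'egalit\'es classiques. D'une part, la relation entre la norme $\ell^1$ et la distance en variation totale donne, pour tout couplage $(X, W)$ de $b_{n,q}$ et $\pi_{nq}$,
\begin{equation*}
\sum_{k=0}^\infty \bigabs{b_{n,q}(k) - \pi_{nq}(k)}
\leqs 2 \prob{X \neq W}\;.
\end{equation*}
D'autre part, l'inclusion $\set{X \neq W} \subset \bigcup_{i=1}^n \set{Y_i \neq Z_i}$ et la sous-additivit\'e entra\^\i nent $\prob{X \neq W} \leqs \sum_{i=1}^n \prob{Y_i \neq Z_i} \leqs n q^2$, d'o\`u la borne annonc\'ee $2nq^2$.

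La difficult\'e principale est la construction du couplage \'el\'ementaire Bernoulli/Poisson avec le contr\^ole quadratique en $q$; tout le reste est une manipulation standard. Le gain d'un facteur $q$ par rapport \`a un contr\^ole naif lin\'eaire refl\`ete exactement le fait que $b_{1,q}$ et $\pi_q$ ne diff\`erent qu'au second ordre, puisque $\e^{-q} = 1 - q + q^2/2 + \Order{q^3}$.
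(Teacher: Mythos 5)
Votre preuve est correcte et suit essentiellement la m\^eme strat\'egie que celle du polycopi\'e~: un couplage coordonn\'ee par coordonn\'ee de Bernoulli et de Poisson donnant $\prob{Y_i\neq Z_i}=q(1-\e^{-q})\leqs q^2$, suivi de la sous-additivit\'e et de la majoration de la distance $\ell^1$ par $2\prob{X\neq W}$ (que le polycopi\'e red\'emontre explicitement l\`a o\`u vous la citez comme classique). La seule diff\'erence est cosm\'etique~: vous r\'ealisez le couplage \'el\'ementaire par la m\'ethode des quantiles d'une uniforme $U$, alors que le texte construit un espace auxiliaire explicite sur $\set{-1,0,1,2,\dots}$; les deux donnent la m\^eme loi jointe et la m\^eme borne.
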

\begin{proof}
Nous commen\c cons par introduire des espaces probabilis\'es
$(\Omega_i,p_i)$, pour $i=1,\dots,n$, donn\'es par 
$\Omega_i = \set{-1,0,1,2,\dots}$ et 
\begin{equation}
\label{disc9:1}
p_i(k) = 
\begin{cases}
\myvrule{10pt}{12pt}{0pt}
\e^{-q} - (1-q) 
& \text{si $k=-1$\;,} \\
\myvrule{14pt}{10pt}{0pt} 
1-q 
& \text{si $k=0$\;,} \\
\myvrule{10pt}{10pt}{0pt} 
\e^{-q} \dfrac{q^k}{k!}
& \text{si $k\geqs 1$\;.}
\end{cases}
\end{equation}
On v\'erifiera que les $p_i$ d\'efinissent bien une distribution de
probabilit\'e. Sur chaque $\Omega_i$, nous introduisons les deux variables
al\'eatoires
\begin{equation}
\label{disc9:2}
X_i(\omega_i) = 
\begin{cases}
0 & \text{si $\omega_i=0$\;,} \\
1 & \text{sinon\;,}
\end{cases}
\qquad\qquad
Y_i(\omega_i) = 
\begin{cases}
\omega_i & \text{si $\omega_i\geqs 1$\;,} \\
0 & \text{sinon\;.}
\end{cases}
\end{equation}
De cette mani\`ere, on a $\prob{X_i=0}=1-q$, $\prob{X_i=1}=q$, 
et $\prob{Y_i=k}= \pi_q(k)$ pour tout $k\geqs0$. De plus, 
\begin{align}
\nonumber
\prob{X_i=Y_i}
&= \prob{X_i=0, Y_i=0} + \prob{X_i=1, Y_i=1} \\
&= p_i(0) + p_i(1) = 1 - q + q \e^{-q}\;,
\label{disc9:3}
\end{align}
donc 
\begin{equation}
\label{disc9:4}
\prob{X_i \neq Y_i} = q (1-\e^{-q}) \leqs q^2\;. 
\end{equation}
Soit $(\Omega,p)$ l'espace produit des $(\Omega_i,p_i)$. Alors 
\begin{itemiz}
\item	$X=X_1+\dots+X_n$ suit la loi binomiale $\prob{X=k}=b_{n,q}(k)$; 
\item	$Y=Y_1+\dots+Y_n$ suit la loi de Poisson $\prob{Y=k}=\pi_{nq}(k)$,
en vertu de la Proposition~\ref{prop_disc1}. 
\end{itemiz}
Comme $X\neq Y$ implique que $X_i\neq Y_i$ pour un $i$ au moins, il suit
de~\eqref{disc9:4} que 
\begin{equation}
\label{disc9:5}
\prob{X\neq Y} \leqs \sum_{i=1}^n \prob{X_i\neq Y_i} \leqs nq^2\;.
\end{equation} 
Il reste donc \`a montrer que le membre de gauche de~\eqref{disc9} est
major\'e par $2\prob{X\neq Y}$. Un tel proc\'ed\'e s'appelle un argument de
couplage. Nous posons, pour abr\'eger l'\'ecriture, $f(k)=\prob{X=k}$,
$g(k)=\prob{Y=k}$ et  $A = \setsuch{k}{f(k)>g(k)}$. Alors 
\begin{align}
\nonumber
\sum_{k=0}^\infty \bigabs{b_{n,q}(k) - \pi_{nq}(k)} 
&= \sum_{k=0}^\infty \bigabs{f(k)-g(k)} \\
\nonumber
&= \sum_{k\in A} \bigpar{f(k)-g(k)} - \sum_{k\notin A} (f(k)-g(k)) \\
\label{disc9:6}
&= 2 \sum_{k\in A} \bigpar{f(k)-g(k)} - 
\underbrace{\sum_{k\in\N} \bigpar{f(k)-g(k)}}_{=1-1=0}\;.
\end{align}
Or nous pouvons \'ecrire 
\begin{align}
\nonumber
\sum_{k\in A} \bigpar{f(k)-g(k)}
&= \prob{X\in A} - \prob{Y\in A} \\
\nonumber
&\leqs \prob{X\in A, Y\in A} + \prob{X\in A, Y\neq X} - \prob{Y\in A} \\
\nonumber
&\leqs \prob{X\in A, Y\in A} + \prob{X\neq Y} - \prob{Y\in A} \\
&\leqs \prob{X \neq Y}\;,
\label{disc9:7}
\end{align}
ce qui conclut la d\'emonstration.
\end{proof}

Si nous prenons par exemple $q=\lambda/n$, la borne~\eqref{disc9} nous fournit 
\begin{equation}
\label{disc10}
\sum_{k=0}^\infty \bigabs{b_{n,\lambda/n}(k) - \pi_{\lambda}(k)} \leqs
2\frac{\lambda^2}n\;.
\end{equation}
Dans l'exemple~\ref{ex_lpn}, $\lambda^2/n$ vaut $4/2000=0.002$. Ainsi la somme
de toutes les valeurs absolues de diff\'erences $b_{2000,0.001}(k) -
\pi_{2}(k)$ est born\'ee par $0.004$, et comme tous ces termes sont positifs, la
plupart seront bien plus petits encore. 

%%%%%%%%%%%%%%%%%%%%%%%%%%%%%%%%%%%%%%%%%%%%%%%%%%%%%%%%%%%%%%%%%%%%%%%%%%%

\section{Loi normale et loi exponentielle}
\label{sec_cont}

Nous aurons \'egalement affaire \`a des variables al\'eatoires r\'eelles
continues. Pour les d\'efinir, le plus simple est de passer par la notion de
\defwd{fonction de r\'epartition}\/.

\begin{definition}[Fonction de r\'epartition]
\label{def_vard2}
Une fonction $F: \R\to[0,1]$ est une\/ \defwd{fonction de r\'epartition} si 
\begin{itemiz}
\item	$F$ est croissante: $x\leqs y \Rightarrow F(x)\leqs F(y)$. 
\item	$F$ est continue \`a droite: $\lim_{y\to x+} F(y)=F(x)$ $\forall x$. 
\item	$\lim_{x\to-\infty}F(x)=0$ et $\lim_{x\to+\infty}F(x)=1$. 
\end{itemiz}
Une fonction de r\'epartition 
$F$ est dite\/ \defwd{absolument continue de densit\'e $f$}\/ si 
\begin{equation}
\label{vard9}
F(x) = \int_{-\infty}^x f(y)\,\6y\;.
\end{equation}
\end{definition}

Le lien entre la notion de fonction de r\'epartition et les variables
al\'eatoires vient du fait que pour toute variable al\'eatoire r\'eelle,
$\prob{X\leqs t}$ est une fonction de r\'epartition. 

\goodbreak
En effet, 
\begin{itemiz}
\item	si $s\leqs t$, alors $\set{X\leqs s} \subset \set{X\leqs t}$, et
donc $\prob{X\leqs s} \leqs \prob{X\leqs t}$;
\item	$\lim_{s\to t+}\prob{X\leqs s} - \prob{X\leqs t}
=\lim_{s\to t+}\prob{t<X\leqs s}=0$;
\item	$\lim_{t\to-\infty}\prob{X\leqs t}=0$ et
$\lim_{t\to+\infty}\prob{X\leqs t}=1$. 
\end{itemiz}
Ceci motive la d\'efinition suivante. 

\begin{definition}[Variable al\'eatoire \`a densit\'e]
\label{def_vard3}
Si $X$ est une variable al\'eatoire, 
\begin{equation}
\label{vard10}
F_X(t) = \prob{X\leqs t}
\end{equation}
est appel\'ee\/ \defwd{fonction de r\'epartition de $X$}\/. Si $F_X$ est
absolument continue de densit\'e $f$, on dit que\/ \defwd{$X$ admet la
densit\'e $f$} et on a les relations
\begin{align}
\label{vard11}
\prob{X\leqs t} &= \int_{-\infty}^t f(s)\,\6s\;, \\
\prob{a < X \leqs b} &= \prob{X\leqs b} - \prob{X\leqs a} 
= \int_a^b f(s)\,\6s\;.
\label{vard12}
\end{align}
Dans ce cas, on peut remplacer $<$ par $\leqs$ et inversement. 
\end{definition}

L'esp\'erance d'une variable al\'eatoire r\'eelle $X$ de densit\'e $f$ est
d\'efinie par l'int\'egrale 
\begin{equation}
 \label{vard14}
\expec{X} = \int_{-\infty}^\infty x f(x) \,\6x\;,
\end{equation}
(pourvu que la fonction $x f(x)$ soit absolument int\'egrable, sinon on dit
que $X$ n'admet pas d'esp\'erance).

Toutes les variables al\'eatoires r\'eelles ne sont par des variables \`a
densit\'e : Par exemple, les fonctions de r\'epartition de variables \`a
valeurs discr\`etes, comme celles vues dans la section pr\'ec\'edente, sont
constantes par morceaux et admettent des discontinuit\'es, et ne peuvent donc
par \^etre \'ecrites comme l'int\'egrale d'une fonction continue.

Un premier exemple important de variable al\'eatoire r\'eelle \`a densit\'e est
celui des variables gaussiennes ou normales. 

\begin{definition}[Loi normale]
\label{def_normal} 
On dit que la variable al\'eatoire $X$ suit une \defwd{loi normale de moyenne
$\mu$ et \'ecart-type $\sigma$}\/, et on notera $X\sim\cN(\mu,\sigma^2)$ si elle
admet la densit\'e 
\begin{equation}
 \label{vard21}
f(x) = \frac{1}{\sqrt{2\pi\sigma^2}} \e^{-(x-\mu)^2/2\sigma^2}\;. 
\end{equation}  
Si $X\sim\cN(0,1)$, on dit qu'elle suit une loi normale centr\'ee r\'eduite, ou
standard.
\end{definition}

On v\'erifie que si $X\sim\cN(\mu,\sigma^2)$, alors son esp\'erance vaut
$\expec{X}=\mu$ et sa variance vaut $\variance(X)=\sigma^2$. 
L'importance de la loi normale vient avant tout du th\'eor\`eme de la limite
centrale :

\begin{theorem}[Th\'eor\`eme central limite]
\label{thm_tlc}
Soit $X_1, X_2, \dots$ une suite de variables al\'ea\-toires ind\'ependantes,
identiquement distribu\'ees (abr\'eg\'e\/ \emph{i.i.d.}\/), d'esp\'erance finie
$\mu$ et de variance finie $\sigma^2$. Alors la variable al\'eatoire $S_n =
\sum_{i=1}^n X_i$ satisfait
\begin{equation}
\label{tlc11}
\lim_{n\to\infty} \biggprob{a\leqs \frac{S_n-n\mu}{\sqrt{n\sigma^2}}\leqs b} 
= \int_a^b \frac{\e^{-x^2/2}}{\sqrt{2\pi}}\,\6x\;,
\end{equation} 
c'est-\`a-dire que $(S_n-n\mu)/\sqrt{n\sigma^2}$ converge en loi vers une
variable normale standard.
\end{theorem}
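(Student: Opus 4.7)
Le plan consiste à passer par les fonctions caractéristiques, ce qui est la voie la plus efficace dans ce contexte. D'abord, quitte à remplacer $X_i$ par $(X_i-\mu)/\sigma$, on peut se ramener au cas $\mu=0$ et $\sigma^2=1$, auquel cas il s'agit de montrer que $S_n/\sqrt{n}$ converge en loi vers $\cN(0,1)$. On introduit alors la fonction caractéristique commune $\varphi(t)=\expec{\e^{itX_1}}$, et on observe que par indépendance des $X_i$,
\begin{equation*}
\varphi_n(t) \defby \expecin{}{\e^{itS_n/\sqrt{n}}} = \varphi\biggpar{\frac{t}{\sqrt{n}}}^n\;.
\end{equation*}

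Ensuite, puisque $X_1$ admet un moment d'ordre $2$, la fonction $\varphi$ est deux fois dérivable en $0$, avec $\varphi(0)=1$, $\varphi'(0)=i\expec{X_1}=0$ et $\varphi''(0)=-\expec{X_1^2}=-1$. Un développement de Taylor à l'ordre $2$ donne donc
\begin{equation*}
\varphi(s) = 1 - \frac{s^2}{2} + \order{s^2}
\qquad \text{lorsque $s\to0$\;.}
\end{equation*}
En substituant $s=t/\sqrt{n}$ et en élevant à la puissance $n$, on obtient, pour tout $t\in\R$ fixé,
\begin{equation*}
\varphi_n(t) = \biggbrak{1-\frac{t^2}{2n}+\order{\frac1n}}^n
\xrightarrow[n\to\infty]{} \e^{-t^2/2}\;,
\end{equation*}
cette dernière étant précisément la fonction caractéristique d'une loi $\cN(0,1)$. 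L'étape un peu technique ici est le passage à la limite dans la puissance $n$-ième, qu'on justifie soit par un argument élémentaire de logarithme complexe (en se restreignant à $n$ assez grand pour que le terme entre crochets soit dans un voisinage de $1$), soit par l'inégalité $\abs{z^n-w^n}\leqs n\abs{z-w}$ valide pour $\abs z,\abs w\leqs 1$.

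Enfin, il faut en déduire la convergence des fonctions de répartition, c'est-à-dire $\probin{}{a\leqs S_n/\sqrt{n}\leqs b}\to \int_a^b\e^{-x^2/2}/\sqrt{2\pi}\6x$. C'est le théorème de continuité de Lévy~: si $\varphi_n(t)\to\varphi_\infty(t)$ ponctuellement et $\varphi_\infty$ est continue en $0$, alors les lois correspondantes convergent étroitement, et donc en particulier la fonction de répartition converge en tout point de continuité de la loi limite; la loi $\cN(0,1)$ ayant une fonction de répartition continue partout, la convergence a lieu pour tout couple $(a,b)$. C'est l'obstacle principal, car le théorème de continuité de Lévy requiert un argument de tension (inégalité de Markov appliquée à $1-\re\varphi_n(t)$ intégrée sur un petit intervalle autour de $0$), que je prendrai comme résultat admis d'un cours de probabilités, ou que je traiterais en annexe si nécessaire.
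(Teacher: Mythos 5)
Le polycopié énonce ce théorème sans le démontrer~: il figure dans le chapitre de rappels de probabilités et y est admis, il n'y a donc pas de preuve \lq\lq officielle\rq\rq\ à laquelle comparer la vôtre. Votre démonstration par fonctions caractéristiques est l'argument classique, et elle est correcte dans ses grandes lignes~: la réduction au cas centré réduit, la factorisation $\varphi_n(t)=\varphi(t/\sqrt n)^n$ par indépendance, le développement de Taylor à l'ordre $2$ (licite puisque l'existence du moment d'ordre $2$ entraîne que $\varphi$ est de classe $C^2$ avec $\varphi'(0)=0$ et $\varphi''(0)=-1$), et le passage à la limite $\varphi_n(t)\to\e^{-t^2/2}$, que l'inégalité $\abs{z^n-w^n}\leqs n\abs{z-w}$ pour $\abs z,\abs w\leqs1$ justifie proprement. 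Le seul point laissé en suspens est le théorème de continuité de Lévy, que vous signalez explicitement comme admis~; c'est un choix raisonnable au niveau de ce cours, mais il faut être conscient que c'est là que réside l'essentiel du travail analytique (l'argument de tension reliant $\int_{-\delta}^{\delta}(1-\re\varphi_n(t))\,\6t$ aux queues des lois, puis le théorème de Helly). La conclusion finale, qui passe de la convergence en loi à la convergence de $\prob{a\leqs S_n/\sqrt n\leqs b}$ en utilisant la continuité partout de la fonction de répartition gaussienne, est correcte.
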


Un second exemple de loi \`a densit\'e, particuli\`erement important dans ce
cours, est celui des variables exponentielles. 

\begin{definition}[Variable exponentielle]
\label{def_exp}
On dit que la variable al\'eatoire $X$ suit une \defwd{loi exponentielle de
para\-m\`etre $\lambda>0$}, et on note $X\sim\cE\!xp(\lambda)$, si elle
satisfait 
\begin{equation}
 \label{vard22}
\prob{X>t} = \e^{-\lambda t} 
\end{equation}  
pour tout $t\geqs0$. Sa fonction de r\'epartition est donc
$F_X(t)=1-\e^{-\lambda t}$ pour $t>0$, et sa densit\'e est $\lambda\e^{-\lambda
t}$, toujours pour $t>0$. 
\end{definition}

On v\'erifie qu'une variable de loi exponentielle a esp\'erance $1/\lambda$ et
variance $1/\lambda^2$. Une propri\'et\'e remarquable de la loi exponentielle
est la \defwd{propri\'et\'e de Markov}\/ : Pour $t>s\geqs0$, 
\begin{equation}
 \label{vard23}
\pcond{X>t}{X>s} = \e^{-\lambda(t-s)} = \prob{X>t-s}\;. 
\end{equation} 

Nous aurons parfois affaire \`a des couples, ou des $n$-uplets de variables
al\'eatoires \`a densit\'e, aussi appel\'es \defwd{vecteurs al\'eatoires}\/.
Leur \defwd{densit\'e conjointe}\/ est d\'efinie comme la fonction de $n$
variables $f$ telle que 
\begin{equation}
 \label{vard24}
\prob{X_1\leqs t_1, X_2\leqs t_2, \dots, X_n\leqs t_n} 
= \int_{-\infty}^{t_1}\int_{-\infty}^{t_2}\dots\int_{-\infty}^{t_n}
f(x_1,x_2,\dots,x_n) \,\6x_n\dots\6x_2\6x_1 
\end{equation}  
pour tout choix de $(t_1,t_2,\dots,t_n)$. Autrement dit, on a 
\begin{equation}
 \label{vard25}
f(t_1,t_2,\dots,t_n) = 
\dpar{^n}{t_1\partial t_2\dots\partial t_n} 
\prob{X_1\leqs t_1, X_2\leqs t_2, \dots, X_n\leqs t_n}\;. 
\end{equation} 
Les variables al\'eatoires $X_1, X_2, \dots, X_n$ sont dites
\defwd{ind\'ependantes} si on a 
\begin{equation}
 \label{vard26} 
\prob{X_1\leqs t_1, X_2\leqs t_2, \dots, X_n\leqs t_n} 
= \prob{X_1\leqs t_1} \prob{X_2\leqs t_2} \dots \prob{X_n\leqs t_n}
\end{equation} 
pour tout choix de $t_1, t_2, \dots, t_n$. On montre que c'est \'equivalent \`a
ce que la densit\'e conjointe s'\'ecrive sous la forme 
\begin{equation}
 \label{vard27}
f(x_1,x_2,\dots,x_n) = f_1(x_1) f_2(x_2) \dots f_n(x_n) 
\end{equation} 
pour des densit\'es $f_1, f_2, \dots, f_n$ (appel\'ees \defwd{densit\'es
marginales}). 

Un autre r\'esultat important est le suivant~:

\begin{prop}[Convolution]
\label{dor_verd1}
Si $X_1$ et $X_2$ sont des variables al\'eatoires ind\'epen\-dan\-tes, de
densit\'es respectives $f_1$ et $f_2$, alors $X_1+X_2$ admet une densit\'e 
donn\'ee par la convolution 
\begin{equation}
\label{verd16}
(f_1*f_2)(z) = \int_{-\infty}^\infty f_1(z-x_2)f_2(x_2)\,\6x_2\;.
\end{equation}
\end{prop}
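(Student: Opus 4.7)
Le plan est de calculer directement la fonction de répartition de $Z=X_1+X_2$ sous la forme d'une intégrale, puis d'identifier la densité en lisant l'intégrande obtenue.

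Première étape~: l'indépendance de $X_1$ et $X_2$ associée à la caractérisation~\eqref{vard27} donne que le couple $(X_1,X_2)$ admet la densité conjointe $(x_1,x_2)\mapsto f_1(x_1)f_2(x_2)$ sur $\R^2$. Par conséquent, pour tout $z\in\R$,
\begin{equation*}
\bigprob{X_1+X_2\leqs z} = \iint_{\set{x_1+x_2\leqs z}} f_1(x_1)f_2(x_2)\,\6x_1\6x_2\;.
\end{equation*}

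Deuxième étape~: comme l'intégrande est positive, le théorème de Fubini--Tonelli permet d'intégrer d'abord en $x_1$ à $x_2$ fixé, ce qui fournit
\begin{equation*}
\bigprob{X_1+X_2\leqs z} = \int_{-\infty}^{\infty} f_2(x_2) \biggpar{\int_{-\infty}^{z-x_2} f_1(x_1)\,\6x_1}\,\6x_2\;.
\end{equation*}
J'effectuerais ensuite, dans l'intégrale intérieure, le changement de variable affine $u=x_1+x_2$ (à $x_2$ fixé), ce qui transforme $\int_{-\infty}^{z-x_2} f_1(x_1)\,\6x_1$ en $\int_{-\infty}^{z} f_1(u-x_2)\,\6u$.

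Troisième étape~: on applique une nouvelle fois Fubini--Tonelli pour échanger l'ordre d'intégration entre $u$ et $x_2$, ce qui donne
\begin{equation*}
\bigprob{X_1+X_2\leqs z} = \int_{-\infty}^{z} \biggpar{\int_{-\infty}^{\infty} f_1(u-x_2)f_2(x_2)\,\6x_2}\,\6u\;.
\end{equation*}
Par la Définition~\ref{def_vard3}, ceci montre que $Z=X_1+X_2$ admet pour densité la fonction $u\mapsto\int_{-\infty}^\infty f_1(u-x_2)f_2(x_2)\,\6x_2$, c'est-à-dire exactement la convolution $f_1*f_2$ définie par~\eqref{verd16}.

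Il n'y a pas de véritable obstacle~: la seule subtilité est la justification des échanges d'ordre d'intégration, immédiate ici par positivité des densités. On pourrait aussi présenter la preuve par un argument de changement de variables $\Phi(x_1,x_2)=(x_1+x_2,x_2)$ de jacobien $1$ sur $\R^2$, puis marginalisation en $x_2$~; cette variante est plus élégante mais demande de manipuler explicitement la densité conjointe sous $\Phi$, ce qui me semble moins direct que le calcul ci-dessus.
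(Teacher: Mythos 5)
Votre preuve est correcte et compl\`ete~: c'est l'argument standard (densit\'e conjointe produit par ind\'ependance, Fubini--Tonelli, changement de variable affine $u=x_1+x_2$, puis identification de la densit\'e via la D\'efinition~\ref{def_vard3}), et les \'echanges d'int\'egration sont bien justifi\'es par la positivit\'e de l'int\'egrande. Le polycopi\'e \'enonce d'ailleurs cette proposition sans d\'emonstration, donc il n'y a pas de preuve de r\'ef\'erence \`a comparer~; votre r\'edaction comble cette lacune de mani\`ere satisfaisante.
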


\begin{example}[Loi Gamma]
Soient $X_1, \dots, X_n$ des variables i.i.d.\ de loi $\cE\!xp(\lambda)$. 
En calculant la convolution de leurs densit\'es, on montre par r\'ecurrence
sur $n$ que la somme $S_n=X_1+\dots+X_n$ admet la densit\'e 
\begin{equation}
 \label{gamma}
\gamma_{\lambda,n}(x) = 
\frac{\lambda^n}{(n-1)!} x^{n-1}\e^{-\lambda x}
\end{equation} 
pour $x\geqs0$. On dit que $S_n$ suit une \defwd{loi Gamma}\/ de param\`etres
$(\lambda,n)$. 
\end{example}

%%%%%%%%%%%%%%%%%%%%%%%%%%%%%%%%%%%%%%%%%%%%%%%%%%%%%%%%%%%%%%%%%%%%%%%%%%%

\section{Exercices}
\label{sec_exo_rappel}

Soit $X$ une variable al\'eatoire \`a valeurs dans $\N$.
On appelle \defwd{fonction g\'en\'eratrice}\/ de $X$ la fonction $G_X:\C\to\C$
d\'efinie par
\[
G_X(z) = \bigexpec{z^X} = \sum_{k=0}^\infty z^k \, \prob{X=k}\;. 
\]

\begin{exercice}
\label{exo_generatrice1} 
Calculer les fonctions g\'en\'eratrices des distributions suivantes:
\begin{enum}
\item	Loi de Bernoulli: $\prob{X=0}=1-q$, $\prob{X=1}=q$, o\`u $q\in[0,1]$.
\item	Loi binomiale: $\prob{X=k} = b_{n,q}(k) = \binom nk q^k(1-q)^{n-k}$,
pour $k=0,1,\dots,n$.
\item	Loi de Poisson: $\prob{X=k} = \pi_\lambda(k)
= \e^{-\lambda}\lambda^k/k!$, o\`u $\lambda>0$ et $k\in\N$. 
\item	Loi g\'eom\'etrique: $\prob{X=k} = q(1-q)^{k-1}$, o\`u $q\in[0,1]$ et
$k\in\N^*$.
\end{enum}
\end{exercice}

\begin{exercice}
\label{exo_generatrice2} 
On suppose que $G_X$ existe pour tous les $z$ dans un disque de rayon
strictement sup\'erieur \`a $1$. Montrer que 
\[
G_X(1)=1\;,
\qquad
G_X'(1) = \expec{X}\;,
\qquad
G_X''(1) = \expec{X^2} - \expec{X}\;,
\]
et en d\'eduire une expression de la variance de $X$ en termes de sa fonction
g\'en\'eratrice. En d\'eduire les esp\'erances et variances de variables
al\'eatoires de Bernoulli, binomiale, de Poisson et g\'eom\'etrique.
\end{exercice}

\begin{exercice}
\label{exo_generatrice3} 
Soient $X$ et $Y$ deux variables al\'eatoires ind\'ependantes \`a valeurs dans
$\N$, et de fonctions g\'en\'eratrices $G_X$ et $G_Y$ respectivement. Montrer
que $G_{X+Y}=G_X G_Y$. 

\medskip
\noindent
Application: V\'erifier les assertions suivantes.
\begin{enum}
\item	La somme de $n$ variables al\'eatoires de Bernoulli
ind\'ependantes suit une loi binomiale;
\item	La somme de deux variables al\'eatoires binomiales 
ind\'ependantes suit une loi binomiale;
\item	La somme de deux variables al\'eatoires ind\'ependantes de loi de
Poisson suit une loi de Poisson.
\end{enum}
\end{exercice}

\begin{exercice}
\label{exo_generatrice4} 

Soit $N$ une variable al\'eatoire \`a valeurs dans $\N$, et soit
$G_N(z)=\expec{z^N}$ sa fonction g\'en\'eratrice. 
Soient $X_1, X_2, \dots$ des variables al\'eatoires \`a valeurs dans $\N$,
ind\'epen\-dantes et identiquement distribu\'ees, et ind\'ependantes de $N$.
Soit $G_X(z)=\expec{z^X}$ leur fonction g\'en\'eratrice. 

\begin{enum}
\item	Soit $n\in\N^*$ et 
\[
S_n = X_1 + \dots + X_n\;.
\]
Ecrire la fonction g\'en\'eratrice $\expec{z^{S_n}}$ de $S_n$ en fonction de
$G_X(z)$. 

\item	Soit 
\[
 S_N = X_1 + \dots + X_N\;.
\]
Montrer que sa fonction g\'en\'eratrice $ G_S(z)=\expec{z^{S_N}}$ est donn\'ee
par 
\[
 G_S(z) = G_N(G_X(z))\;.
\]
{\it Indication:} Ecrire $\prob{S_N=k}$ en fonction des $\pcond{S_N=k}{N=n}$,
$n\in\N^*$.

\item	On suppose que $N$ suit une loi de Poisson de param\`etre $\lambda>0$,
et que les $X_i$ suivent des lois de Bernoulli de param\`etre $q\in[0,1]$.
D\'eterminer la loi de $S_N$. 
\end{enum}
\end{exercice}

\begin{exercice}
\label{exo_rappel05} 

Soit $U$ une variable al\'eatoire uniforme sur $[0,1]$,
c'est-\`a-dire de densit\'e donn\'ee par la fonction
indicatrice $\indicator{[0,1]}$.
\begin{enum}
\item	Donner la fonction de r\'epartition de $U$.
\item	Soit $\varphi:[0,1]\to\R$ une fonction strictement croissante
et admettant une r\'eciproque
$\varphi^{-1}$. D\'eterminer la fonction de r\'epartition de la
variable al\'eatoire $Y=\varphi(U)$. 
\item	D\'eterminer $\varphi$ de telle mani\`ere que $Y$ soit exponentielle de
param\`etre $\lambda$.
\end{enum}
\end{exercice}

%%%%%%%%%%%%%%%%%%%%%%%%%%%%%%%%%%%%%%%%%%%%%%%%%%%%%%%%%%%%%%%%%%%%%%%%%%%

\chapter{Le processus ponctuel de Poisson}
\label{chap_ppp}

Le processus ponctuel de Poisson est un processus stochastique qui associe une
distribution de probabilit\'e aux configurations de points sur $\R_+$. Ces
points mod\'elisent, par exemple, les temps de passage d'un bus \`a un arr\^et,
les instants d'arriv\'ee d'appels t\'el\'ephoniques dans une centrale, et ainsi
de suite. 

Dans certains cas, par exemple lorsque les bus suivent un horaire r\'egulier et
qu'il n'y a pas de perturbation du trafic, les instants d'arriv\'ee sont assez
r\'eguli\`erement espac\'es. Dans d'autres situations, par exemple lorsque des
travaux perturbent le trafic, les instants d'arriv\'ee deviennent beaucoup plus
irr\'eguliers, et on observe \`a la fois des longs temps d'attente et des bus
qui se suivent presque imm\'ediatement. Le processus ponctuel de Poisson
mod\'elise la situation la plus al\'eatoire possible, dans un sens qui reste \`a
d\'efinir. 

\begin{figure}[h]
%  \vspace{10mm}
%  \centerline{
%  \includegraphics*[clip=true,width=120mm]{figs/Poisson} 
%  }
%  \figtext{
%  	\writefig	1.6	0.9	{$X_0=0$}
%  	\writefig	3.1	0.9	{$X_1(\omega)$}
%  	\writefig	5.4	0.9	{$X_2(\omega)$}
%  	\writefig	6.4	0.9	{$X_3(\omega)$}
%  	\writefig	8.5	0.9	{$X_4(\omega)$}
%  	\writefig	10.1	0.9	{$X_5(\omega)$}
%  }
\begin{center}
\begin{tikzpicture}[-,scale=0.9,
auto,node distance=1.0cm, thick,main node/.style={draw,circle,fill=white,minimum
size=5pt,inner sep=0pt}]

  \path[->,>=stealth'] 
     (-1,0) edge (14,0)
  ;

%   \node at (12.0,0.5) {$n$};
%   \node at (-1.0,2.5) {$X_n$};

  \draw (0,0) node[main node] {} node[above] {$X_0=0$} 
  -- (2,0) node[main node] {} node[above] {$X_1(\omega)$} 
  -- (5,0) node[main node] {} node[above] {$X_2(\omega)$} 
  -- (6.3,0) node[main node] {} node[above] {$X_3(\omega)$} 
  -- (10,0) node[main node] {} node[above] {$X_4(\omega)$} 
  -- (12.5,0) node[main node] {} node[above] {$X_5(\omega)$} 
   ;
\end{tikzpicture}
\end{center}
\vspace{-2mm}
 \caption[]{Une r\'ealisation d'un processus de Poisson.}
 \label{fig_Poisson}
\end{figure}
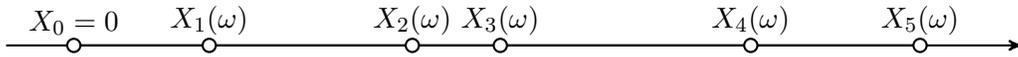

Le processus peut \^etre caract\'eris\'e de plusieurs mani\`eres
diff\'erentes. Une r\'ealisation peut \^etre sp\'ecifi\'ee par une suite
croissante de nombres r\'eels positifs
\begin{equation}
\label{ppp01}
X_0 = 0 < X_1(\omega) < X_2(\omega) < X_3(\omega) < \dots \;, 
\end{equation} 
d\'esignant les positions des points dans $\R_+$. Alternativement, on peut
d\'ecrire une r\'ealisa\-tion en donnant le nombre de points $N_I(\omega)$
contenus dans chaque intervalle $I$ de la forme $I=]t,t+s]$. Si nous
abr\'egeons $N_{]0,t]}$ par $N_t$ (commun\'ement appel\'e \defwd{fonction de
comptage}\/), nous aurons $N_{]t,t+s]}=N_{t+s}-N_t$,
et les $N_t$ sont donn\'es en fonction des $X_n$ par
\begin{equation}
\label{ppp02}
N_t(\omega) = \sup\setsuch{n\geqs0}{X_n(\omega)\leqs t}\;.
\end{equation}
Inversement, les $X_n$ se d\'eduisent des $N_t$ par la relation 
\begin{equation}
\label{ppp3}
X_n(\omega) = \inf\setsuch{t\geqs0}{N_t(\omega)\geqs n}\;.
\end{equation}
Nous allons voir deux constructions \'equivalentes du processus de Poisson.
La premi\`ere construction part de la distribution des $N_t$.

\begin{figure}
% \vspace{10mm}
%  \centerline{
%  \includegraphics*[clip=true,width=120mm]{figs/poisson_Nt} 
%  }
%  \figtext{
%  	\writefig	4.55	0.75	{$X_1$}
%  	\writefig	5.55	0.75	{$X_2$}
%  	\writefig	7.9	0.75	{$X_3$}
%  	\writefig	10.2	0.75	{$X_4$}
%  	\writefig	10.85	0.75	{$X_5$}
%  	\writefig	12.5	0.75	{$t$}
%  	\writefig	2.3	2.05	{$1$}
%  	\writefig	2.3	3.05	{$2$}
%  	\writefig	2.3	4.05	{$3$}
%  	\writefig	2.3	5.05	{$4$}
%  	\writefig	2.3	6.05	{$5$}
%  	\writefig	2.1	7.0	{$N_t$}
%  }
 \begin{center}
\begin{tikzpicture}[-,auto,node distance=1.0cm, thick,
main node/.style={draw,circle,fill=white,minimum size=5pt,inner sep=0pt},
full node/.style={draw,red,circle,fill=red,minimum size=5pt,inner sep=0pt}]

  \path[->,>=stealth'] 
     (-1,0) edge (10,0)
     (0,-1) edge (0,6)
  ;

  \node at (9.5,-0.3) {$t$};
  \node at (-0.4,5.7) {$N_t$};

  \path[red,very thick,-(] 
    (2,1) edge (3,1)
    (3,2) edge (5,2)
    (5,3) edge (7,3)
    (7,4) edge (7.8,4)
  ;
  
  \path[red,very thick,-]
    (-1,0) edge (2,0) 
    (7.8,5) edge (9.5,5)
  ;
  
  \draw (2,0) node[main node] {} node[below=0.1cm] {$X_1$} 
  -- (3,0) node[main node] {} node[below=0.1cm] {$X_2$} 
  -- (5,0) node[main node] {} node[below=0.1cm] {$X_3$} 
  -- (7,0) node[main node] {} node[below=0.1cm] {$X_4$} 
  -- (7.8,0) node[main node] {} node[below=0.1cm] {$X_5$} 
   ;
   
  \draw (0,0) node[main node] {} 
  -- (0,1) node[main node] {} node[left=0.1cm] {$1$} 
  -- (0,2) node[main node] {} node[left=0.1cm] {$2$} 
  -- (0,3) node[main node] {} node[left=0.1cm] {$3$} 
  -- (0,4) node[main node] {} node[left=0.1cm] {$4$} 
  -- (0,5) node[main node] {} node[left=0.1cm] {$5$} 
   ;
   
  \node[full node] at (2,1) {};
  \node[full node] at (3,2) {};
  \node[full node] at (5,3) {};
  \node[full node] at (7,4) {};
  \node[full node] at (7.8,5) {};

\end{tikzpicture}
\end{center}
\vspace{-4mm}
 \caption[]{Fonction de comptage d'une r\'ealisation d'un processus de Poisson.}
 \label{fig_Poisson_Nt}
\end{figure}
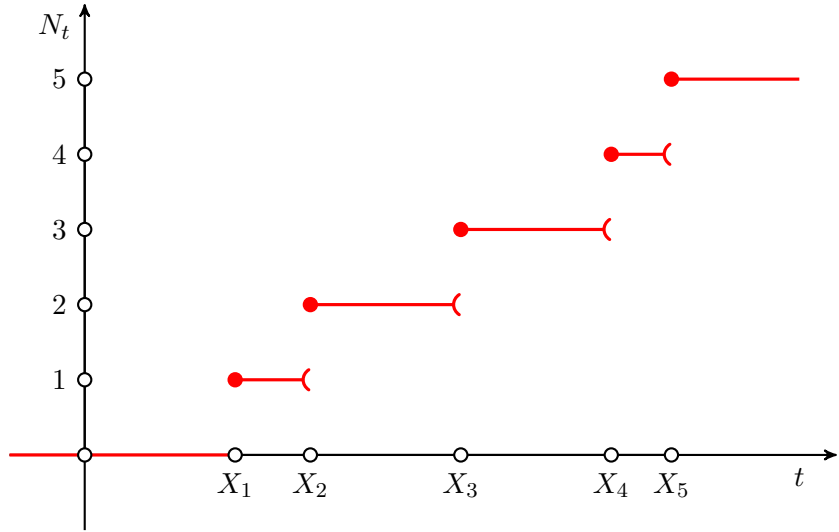

%%%%%%%%%%%%%%%%%%%%%%%%%%%%%%%%%%%%%%%%%%%%%%%%%%%%%%%%%%%%%%%%%%%%%%%%%%%

\section{Construction par la fonction de comptage}
\label{sec_ppp}

%%%%%%%%%%%%%%%%%%%%%%%%%%%%%%%%%%%%%%%%%%%%%%%%%%%%%%%%%%%%%%%%%%%%%%%%%%%

\begin{definition}[Processus de Poisson]
\label{def_ppp1}
Le processus ponctuel de Poisson satisfait les conditions suivantes:
\begin{enum}
\item	$N_I$ ne d\'epend que de la longueur de $I$, i.e. $N_{]t,t+s]}$ a la
m\^eme loi que $N_s$.
\item	Si $I_1,\dots,I_k$ sont deux \`a deux disjoints,
$N_{I_1},\dots,N_{I_k}$ sont ind\'ependants. 
\item	$\expec{N_I}$ existe pour tout $I$ (de longueur finie).
\item	Il existe un intervalle $I$ tel que $\prob{N_I>0}>0$.
\item	Absence de points doubles:
$\lim_{\eps\to0}\frac1\eps\prob{N_\eps\geqs2}=0$.  
\end{enum}
\end{definition}  

Supposant qu'un tel processus existe bel et bien, nous pouvons d\'eriver
quelques-unes de ses propri\'et\'es.

\begin{prop}\hfill
\label{prop_ppp1}
\begin{enum}
\item	Soit $\alpha(t)=\expec{N_t}$. Alors il existe $\lambda>0$ tel que 
$\alpha(t)=\lambda t$.
\item	Pour tout intervalle born\'e $I\subset\R_+$, on a $\prob{N_I\geqs
1}\leqs \expec{N_I}$. 
\end{enum}
\end{prop}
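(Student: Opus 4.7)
The proposition splits into two independent pieces: a functional-equation argument for the first claim, and a one-line Markov-style bound for the second. My plan is to establish Cauchy additivity of $\alpha$ from the stationary-increments property, upgrade to linearity via monotonicity, and then use property~4 to pin down the sign of the slope.

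First I would decompose $N_{t+s} = N_{]0,t]} + N_{]t,t+s]}$ pathwise, take expectations, and apply property~1 (stationarity) together with linearity of expectation to obtain
\begin{equation*}
\alpha(t+s) = \expec{N_t} + \bigexpec{N_{]t,t+s]}} = \alpha(t) + \alpha(s)\;,
\end{equation*}
all terms being finite by property~3. Since each realization $t \mapsto N_t(\omega)$ is non-decreasing, so is $\alpha$. It is then a classical observation that any non-decreasing solution of the Cauchy equation on $\R_+$ is linear: additivity forces $\alpha(q) = q\,\alpha(1)$ for all non-negative rationals $q$, and monotonicity extends this to every real $t \geqs 0$ by squeezing between rational approximations. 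Setting $\lambda = \alpha(1)$ yields $\alpha(t) = \lambda t$.

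To show $\lambda > 0$, let $I$ be an interval supplied by property~4, with $\prob{N_I \geqs 1} > 0$. Since $N_I$ is a non-negative integer-valued random variable,
\begin{equation*}
\expec{N_I} = \sum_{k \geqs 1} k\,\prob{N_I = k} \geqs \sum_{k \geqs 1} \prob{N_I = k} = \prob{N_I \geqs 1} > 0\;;
\end{equation*}
by stationarity and the first part of the proposition, $\expec{N_I}$ equals $\lambda$ times the length of $I$, forcing $\lambda > 0$. The same chain of inequalities, applied to an arbitrary bounded interval, immediately delivers part~2. Note that properties~2 (independence) and~5 (no double points) play no role in this proposition; the only mildly delicate step is the passage from additivity to linearity, which monotonicity handles without needing measurability or continuity.
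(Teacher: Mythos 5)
Your proof is correct and follows essentially the same route as the paper: the decomposition $N_{t+s}=N_t+N_{]t,t+s]}$ plus stationarity gives the Cauchy equation for $\alpha$, and the term-by-term comparison $\sum_k k\,\prob{N_I=k}\geqs\sum_{k\geqs1}\prob{N_I=k}$ gives part~2. The only difference is that you spell out the step the paper dispatches as \lq\lq un r\'esultat d'analyse\rq\rq\ (monotonicity of $\alpha$ upgrading additivity to linearity) and make explicit how property~4 combined with the part-2 inequality forces $\lambda>0$; both are faithful completions of what the paper leaves implicit.
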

\begin{proof}\hfill
\begin{enum}
\item	Comme $N_0=0$, on a $\alpha(0)=0$. De plus, comme 
$N_{t+s}=N_t + N_{]t,t+s]}$ on a 
\begin{equation}
\label{ppp1:1}
\alpha(t+s)=\alpha(t)+\expec{N_{]t,t+s]}}=\alpha(t)+\alpha(s)\;,
\end{equation}
en vertu de la condition 1. Par un r\'esultat d'analyse, ceci implique
n\'ecessairement que $\alpha(t)=\lambda t$ pour un $\lambda\geqs 0$, et la
propri\'et\'e 4. implique $\lambda>0$. 
\item	Comme $\expec{N_I}$ est suppos\'e fini, on a 
\begin{equation}
 \label{ppp1:2}
\expec{N_I} = \sum_{k=0}^\infty k \prob{N_I=k} 
\geqs \sum_{k=1}^\infty \prob{N_I=k} = \prob{N_I\geqs 1}\;,
\end{equation}
ce qui montre \`a la fois que $\prob{N_I\geqs 1}$ est finie et
qu'elle est born\'e par
$\expec{N_I}$.
\qed
\end{enum}
\renewcommand{\qed}{}
\end{proof}

La propri\'et\'e remarquable du processus de Poisson est alors que les
variables al\'eatoires $N_{]t,t+s]}$ suivent n\'ecessairement une loi de
Poisson.
% de param\`etre $\lambda s$. 

\begin{theorem}
\label{thm_ppp1}
Si le processus satisfait les 5 conditions de la
D\'efinition~\ref{def_ppp1}, alors les variables al\'eatoires $N_{]t,t+s]}$
suivent des lois de Poisson de param\`etre $\lambda s$: 
\begin{equation}
\label{ppp2}
\prob{N_{]t,t+s]}=k} = \pi_{\lambda s}(k)
= \e^{-\lambda s}\frac{(\lambda s)^k}{k!}\;.
\end{equation}
\end{theorem}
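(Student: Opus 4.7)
The plan is to reduce the statement to finding the law of $N_s$ (since condition~1 immediately gives $N_{]t,t+s]}\sim N_s$), and then to approximate $N_s$ by a binomial variable whose Poisson limit is known, via Proposition~\ref{prop_disc2} (or the quantitative version Theorem~\ref{thm_lpn}).

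First, for each integer $n\geqs 1$, I partition $]0,s]$ into the $n$ disjoint subintervals $I^{(i)}_n = \bigl](i-1)s/n,\, is/n\bigr]$, $i=1,\dots,n$. Setting $N^{(i)}_n = N_{I^{(i)}_n}$, conditions~1 and~2 ensure that $N^{(1)}_n,\dots,N^{(n)}_n$ are i.i.d.\ copies of $N_{s/n}$ with $N_s = \sum_{i=1}^n N^{(i)}_n$. I then introduce the Bernoulli approximations $B^{(i)}_n = \indexfct{N^{(i)}_n\geqs 1}$ and the binomial variable
\begin{equation*}
\widetilde N_n = \sum_{i=1}^n B^{(i)}_n \sim b_{n,p_n}\;, \qquad p_n = \bigprob{N_{s/n}\geqs 1}\;.
\end{equation*}

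Next I control the coupling error. Since $\widetilde N_n \neq N_s$ forces some $N^{(i)}_n \geqs 2$, a union bound yields
\begin{equation*}
\bigprob{\widetilde N_n \neq N_s} \leqs n\,\bigprob{N_{s/n}\geqs 2}
= s\cdot\frac{\prob{N_{s/n}\geqs 2}}{s/n} \xrightarrow[n\to\infty]{} 0
\end{equation*}
by condition~5. In particular $\widetilde N_n \to N_s$ in probability, hence in distribution. Combined with Theorem~\ref{thm_lpn}, which gives $\sum_k|b_{n,p_n}(k) - \pi_{np_n}(k)| \leqs 2np_n^2 \leqs 2\lambda s\, p_n \to 0$ (using $p_n\leqs\expec{N_{s/n}}=\lambda s/n$ from Proposition~\ref{prop_ppp1}), this will force $N_s\sim\pi_\mu$ as soon as I can identify $\mu = \lim_{n\to\infty} np_n$.

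The remaining step---and the real technical obstacle---is showing $np_n \to \lambda s$. From $\expec{N_{s/n}} = \lambda s/n$ one gets
\begin{equation*}
\frac{\lambda s}{n} - p_n = \expec{N_{s/n}} - \bigprob{N_{s/n}\geqs 1} = \sum_{k\geqs 2}(k-1)\bigprob{N_{s/n}=k}
= \bigexpec{(N_{s/n}-1)\indexfct{N_{s/n}\geqs 2}}\;,
\end{equation*}
so I must prove that $n\,\bigexpec{(N_{s/n}-1)\indexfct{N_{s/n}\geqs 2}} \to 0$. The main idea for this is to refine the partition: fixing $n$, subdivide each $I^{(i)}_n$ into $m$ equal pieces, and use the decomposition of $N_{s/n}$ as a sum of $m$ i.i.d.\ copies of $N_{s/(nm)}$. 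The event $\{N_{s/n}\geqs 2\}$ is then included in the union of $\{$at least two pieces contain a point$\}$ and $\{$at least one piece contains a double point$\}$. Bounding the first probability by $\binom{m}{2}(\lambda s/(nm))^2$ and using condition~5 to send $m\to\infty$ in the second yields $\prob{N_{s/n}\geqs 2}=O(1/n^2)$, and a similar two-level argument for the factorial moment gives the required $o(1/n)$ bound. Once $np_n\to\lambda s$ is established, the coupling and the Poisson approximation combine to identify the law of $N_s$ as $\pi_{\lambda s}$, which completes the proof.
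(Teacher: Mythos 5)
Your overall architecture is the same as the paper's: reduce to $N_s$ via condition~1, partition $]0,s]$ into $n$ equal subintervals, replace the i.i.d.\ counts $N^{(i)}_n$ by their Bernoulli indicators, control the coupling error $\bigprob{\widetilde N_n\neq N_s}\leqs n\bigprob{N_{s/n}\geqs 2}\to0$ via condition~5, and pass from $b_{n,p_n}$ to a Poisson law. All of that is correct, and you rightly isolate the identification $np_n\to\lambda s$ as the real obstacle. But that is exactly where your proposal has a gap. Your refinement argument does give $\bigprob{N_{s/n}\geqs 2}\leqs\frac12(\lambda s/n)^2$ cleanly; the problem is the quantity you actually need, $n\,\bigexpec{(N_{s/n}-1)\indexfct{N_{s/n}\geqs2}}$. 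The \lq\lq similar two-level argument for the factorial moment\rq\rq\ does not close as stated: decomposing $N_{s/n}$ into $m$ sub-counts $M_i$ and using $(\sum_i M_i-1)_+\leqs\sum_{i<j}M_iM_j+\sum_i(M_i-1)_+$ leaves the error term $m\,\bigexpec{(N_{s/(nm)}-1)_+}=\lambda s/n-m\bigprob{N_{s/(nm)}\geqs1}$, and showing that this vanishes as $m\to\infty$ is precisely the statement $kp_k\to\lambda s$ you are trying to prove, at the finer scale. Nor can you shortcut through a second moment, since condition~3 only guarantees $\expec{N_I}<\infty$. The estimate can be salvaged by iterating your refinement to bound every tail $\prob{N_\eps\geqs j}$, $j\geqs2$, by a summable quantity of order $\eps^2$, but that is genuinely more work than what you wrote.

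Two cheaper repairs are available. The paper's: since $\widetilde N_n\leqs N_s$ pointwise, $\prob{\widetilde N_n\geqs\ell}\leqs\prob{N_s\geqs\ell}$ for every $\ell$ and $\prob{\widetilde N_n\geqs\ell}\to\prob{N_s\geqs\ell}$ by your coupling bound, so dominated convergence applied to $np_n=\expec{\widetilde N_n}=\sum_{\ell\geqs1}\prob{\widetilde N_n\geqs\ell}$ yields $np_n\to\sum_{\ell\geqs1}\prob{N_s\geqs\ell}=\expec{N_s}=\lambda s$. Alternatively, entirely within your setup: $np_n\in[0,\lambda s]$, so every subsequence has a further subsequence with $np_{n_j}\to\mu$; along it your coupling together with Theorem~\ref{thm_lpn} forces $N_s\sim\pi_\mu$, hence $\mu=\expec{N_s}=\lambda s$, and the whole sequence converges. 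Either patch completes your proof; without one of them, the argument as written is circular at its central step.
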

\begin{proof}
Par la propri\'et\'e 1., il suffit de montrer le r\'esultat pour $t=0$,
c'est-\`a-dire pour $N_s$. Partageons $]0,s]$ en $k$ intervalles de longueur
\'egale, de la forme 
\begin{equation}
\label{ppp2:1}
]s_{j-1},s_j] 
\qquad
\text{o\`u $s_j = \dfrac{js}k$ pour $0\leqs j\leqs k$\;.}
\end{equation}
L'id\'ee de la preuve est que pour $k$ suffisamment grand, il est peu
probable d'avoir plus d'un point par intervalle, donc la loi de  $Y^{(k)}_j
= N_{]s_{j-1},s_j]}$ est \`a peu pr\`es une loi de Bernoulli. La loi de
$N_s$ est donc proche d'une loi binomiale, que l'on peut approximer par la
loi de Poisson pour $k$ grand. 

Il suit des conditions 1. et 2. que les $Y^{(k)}_j$ sont i.i.d., 
de m\^eme loi que $N_{s_1}=N_{s/k}$, et on a
\begin{equation}
\label{ppp2:2}
N_s = \sum_{j=1}^k Y^{(k)}_j\;.
\end{equation}
Introduisons alors des variables al\'eatoires 
\begin{equation}
\label{ppp2:3}
\overline Y^{(k)}_j = 
\begin{cases}
0 & \text{si $Y^{(k)}_j=0$\;,}\\
1 & \text{si $Y^{(k)}_j\geqs1$\;.}
\end{cases}
\end{equation}  
Les $\overline Y^{(k)}_j$ sont \'egalement i.i.d., et suivent une loi de
Bernoulli. La variable al\'eatoire 
\begin{equation}
\label{ppp2:4}
\overbar N^{(k)}_s = \sum_{j=1}^k \overline Y^{(k)}_j\;, 
\end{equation} 
satisfaisant $\overbar N^{(k)}_s \leqs N_s$ pour tout $k$, on a  
\begin{equation}
\label{ppp2:5}
\prob{\overbar N^{(k)}_s \geqs m} \leqs \prob{N_s \geqs m}
\end{equation}
pour tout $k$ et tout $m$. De plus, $\overbar N^{(k)}_s$ suit une loi
binomiale de param\`etre 
\begin{equation}
\label{ppp2:6}
p_k = \prob{\overline Y^{(k)}_j=1} = \prob{Y^{(k)}_j\geqs1} 
= \prob{N_{s/k}\geqs1}\;.
\end{equation}
Estimons maintenant la diff\'erence entre les lois de $\overbar N^{(k)}_s$
et $N_s$. Nous avons 
\begin{align}
\nonumber
\prob{\overbar N^{(k)}_s\neq N_s}
&= \prob{\exists j\in\set{1,\dots,k}:\,Y^{(k)}_j\geqs2} \\
\nonumber
&\leqs \sum_{j=1}^k \prob{Y^{(k)}_j\geqs2}  \\
%\nonumber
&= k \prob{Y^{(k)}_1\geqs2} 
= k \prob{N_{s/k}\geqs2}\;.
\label{ppp2:7}
\end{align}
La condition 5. avec $\eps=s/k$ implique alors 
\begin{equation}
\label{ppp2:8}
\lim_{k\to\infty} \prob{\overbar N^{(k)}_s\neq N_s} = 0\;.
\end{equation}
Comme on a d'une part la minoration 
\begin{equation}
\label{ppp2:9}
\prob{N_s = m} \geqs \prob{N_s = \overbar N^{(k)}_s = m} 
\geqs \prob{\overbar N^{(k)}_s = m} - \prob{\overbar N^{(k)}_s \neq N_s}\;,
\end{equation}
et d'autre part la majoration 
\begin{align}
\nonumber
\prob{N_s = m} 
&= \prob{\overbar N^{(k)}_s = N_s = m} + 
\prob{\overbar N^{(k)}_s \neq N_s = m} \\
&\leqs \prob{\overbar N^{(k)}_s = m} + \prob{N_s \neq \overbar N^{(k)}_s}\;,
\label{ppp2:10}
\end{align}
il suit que
\begin{equation}
\label{ppp2:11}
\lim_{k\to\infty} \prob{\overbar N^{(k)}_s = m} = \prob{N_s = m}\;.
\end{equation}
Il reste \`a montrer que $kp_k$ tend vers $\lambda s$ pour $k\to\infty$. Si
c'est le cas, alors la Proposition~\ref{prop_disc2} montre que $N_s$ suit une
loi de Poisson de param\`etre $\lambda s$. 
Or nous avons 
\begin{align}
\nonumber
kp_k 
&= \expec{\overbar N^{(k)}_s}
= \sum_{j=1}^\infty j\prob{\overbar N^{(k)}_s=j} 
= \sum_{j=1}^\infty \sum_{\ell=1}^j\prob{\overbar N^{(k)}_s=j} \\
&= \sum_{\ell=1}^\infty \sum_{j=\ell}^\infty\prob{\overbar N^{(k)}_s=j}
= \sum_{\ell=1}^\infty \prob{\overbar N^{(k)}_s\geqs\ell}\;. 
\label{ppp2:12}
\end{align}
Un calcul analogue montre que  
\begin{equation}
\label{ppp2:13}
\lambda s = \expec{N_s} = \sum_{\ell=1}^\infty \prob{N_s\geqs\ell}\;.
\end{equation}
Il suit de~\eqref{ppp2:5} que $kp_k \leqs \lambda s$ pour tout $k$. 
Par un th\'eor\`eme d'analyse, on peut alors intervertir limite et somme, et
\'ecrire 
\begin{equation}
\label{ppp2:14}
\lim_{k\to\infty} \sum_{\ell=1}^\infty \prob{\overbar N^{(k)}_s\geqs\ell} 
= \sum_{\ell=1}^\infty \lim_{k\to\infty} \prob{\overbar N^{(k)}_s\geqs\ell}
= \sum_{\ell=1}^\infty \prob{N_s\geqs\ell} = \lambda s\;,  
\end{equation}
en vertu de~\eqref{ppp2:11}. Ceci montre que $kp_k$ converge bien vers
$\lambda s$, et par cons\'equent que la loi de $N_s$, \'etant la limite
d'une loi binomiale de param\`etre $kp_k$, est une loi de Poisson de
param\`etre $\lambda s$. 
\end{proof}

%%%%%%%%%%%%%%%%%%%%%%%%%%%%%%%%%%%%%%%%%%%%%%%%%%%%%%%%%%%%%%%%%%%%%%%%%%%

\section{Construction par les temps d'attente}
\label{sec_ppt}

%%%%%%%%%%%%%%%%%%%%%%%%%%%%%%%%%%%%%%%%%%%%%%%%%%%%%%%%%%%%%%%%%%%%%%%%%%%

La seconde construction du processus ponctuel de Poisson se base sur la
distribution des  diff\'erences de position $Z_n=X_n-X_{n-1}$. Celles-ci
caract\'erisent \'egalement de mani\`ere univoque le processus, via la
relation
\begin{equation}
\label{ppp4}
X_n(\omega) = \sum_{j=1}^n Z_j(\omega)\;.
\end{equation}
Le r\'esultat remarquable est alors que les $Z_j$ sont i.i.d.\ et suivent une
loi bien particuli\`ere, \`a savoir une loi exponentielle de param\`etre
$\lambda$. 

%\goodbreak
\begin{theorem}
\label{thm_ppp2}
Pour tout $n$, les variables al\'eatoires $Z_1,\dots,Z_n$ sont
ind\'ependantes, et suivent la m\^eme loi exponentielle $\cE\!xp(\lambda)$. 
\end{theorem}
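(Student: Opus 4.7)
Le plan est de calculer explicitement la densit\'e conjointe des variables $(X_1,\dots,X_n)$ en approchant les \'ev\'enements $\set{X_j\in[t_j,t_j+h)}$ par une disjonction d'\'ev\'enements portant sur le nombre de points du processus dans des intervalles deux \`a deux disjoints, puis de passer \`a $(Z_1,\dots,Z_n)$ par changement de variables.

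Premi\`erement, je commencerais par traiter $n=1$ pour me r\'echauffer : on a
\begin{equation*}
\bigprob{Z_1>t} = \bigprob{X_1>t} = \bigprob{N_t=0} = \e^{-\lambda t}
\end{equation*}
en vertu du Th\'eor\`eme~\ref{thm_ppp1}, ce qui montre d\'ej\`a $Z_1\sim\cE\!xp(\lambda)$. Pour $n$ g\'en\'eral, je fixerais $0<t_1<t_2<\dots<t_n$ et $h>0$ assez petit pour que les intervalles $]t_j,t_j+h]$ et $]t_j+h,t_{j+1}]$ soient deux \`a deux disjoints. L'id\'ee cl\'e est que, modulo un \'ev\'enement de probabilit\'e n\'egligeable en $h$, l'\'ev\'enement $\bigcap_j\set{X_j\in\,]t_j,t_j+h]}$ co\"incide avec
\begin{equation*}
\set{N_{t_1}=0,\, N_{]t_1,t_1+h]}=1,\, N_{]t_1+h,t_2]}=0,\, N_{]t_2,t_2+h]}=1,\dots,N_{]t_n,t_n+h]}=1}\;.
\end{equation*}

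Deuxi\`emement, par la condition d'ind\'ependance sur intervalles disjoints (propri\'et\'e 2 de la D\'efinition~\ref{def_ppp1}) et la loi de Poisson donn\'ee par le Th\'eor\`eme~\ref{thm_ppp1}, cette probabilit\'e se factorise explicitement. En regroupant les contributions $\e^{-\lambda\cdot\mathrm{long}}$ sur tous les intervalles, on obtient un facteur $\e^{-\lambda(t_n+h)}$, tandis que les $n$ intervalles de longueur $h$ fournissent chacun un facteur $\lambda h+\Order{h^2}$. En divisant par $h^n$ et en faisant tendre $h\to0^+$, j'obtiendrais la densit\'e conjointe
\begin{equation*}
f_{X_1,\dots,X_n}(t_1,\dots,t_n) = \lambda^n \e^{-\lambda t_n}\,\indicator{0<t_1<\dots<t_n}\;.
\end{equation*}

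Troisi\`emement, le changement de variable $(t_1,\dots,t_n)\mapsto(z_1,\dots,z_n)$ avec $z_j=t_j-t_{j-1}$ (et $t_0=0$) est affine de jacobien $1$, et v\'erifie $t_n=z_1+\dots+z_n$ et $\set{0<t_1<\dots<t_n}=\set{z_1,\dots,z_n>0}$. La densit\'e de $(Z_1,\dots,Z_n)$ est donc
\begin{equation*}
f_{Z_1,\dots,Z_n}(z_1,\dots,z_n) = \lambda^n \e^{-\lambda(z_1+\dots+z_n)}\,\indicator{z_1,\dots,z_n>0} = \prod_{j=1}^n \lambda\e^{-\lambda z_j}\,\indicator{z_j>0}\;,
\end{equation*}
qui se factorise en un produit de densit\'es exponentielles, ce qui prouve simultan\'ement l'ind\'ependance des $Z_j$ et leur loi $\cE\!xp(\lambda)$.

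La principale difficult\'e technique sera de justifier rigoureusement l'approximation de l'\'ev\'enement $\bigcap_j\set{X_j\in\,]t_j,t_j+h]}$ par l'\'ev\'enement ci-dessus portant sur les $N$-variables~: il faut contr\^oler la probabilit\'e de l'\'ev\'enement compl\'ementaire, o\`u l'un des intervalles $]t_j,t_j+h]$ contient au moins deux points, ce qui s'obtient en invoquant la condition 5 d'absence de points doubles ($\prob{N_h\geqs 2}=\order{h}$), donc globalement $\Order{h^{n+1}}$, ce qui ne contribue pas \`a la limite apr\`es division par $h^n$.
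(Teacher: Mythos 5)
Votre d\'emonstration est correcte et suit essentiellement la m\^eme d\'emarche que celle du polycopi\'e~: exprimer l'\'ev\'enement $\set{X_j\in I_j\ \forall j}$ \`a l'aide des variables de comptage sur des intervalles deux \`a deux disjoints, utiliser l'ind\'ependance et la loi de Poisson pour en d\'eduire la densit\'e conjointe $\lambda^n\e^{-\lambda t_n}$ sur le simplexe ordonn\'e, puis passer aux accroissements $Z_j$. La seule diff\'erence est technique~: le polycopi\'e calcule exactement la probabilit\'e sur des bo\^ites finies $]s_1,t_1]\times\dots\times]s_n,t_n]$ (en ne demandant que $N_{]s_n,t_n]}\geqs1$ pour le dernier intervalle, ce qui rend l'identit\'e d'\'ev\'enements exacte et \'evite tout terme d'erreur), puis identifie le r\'esultat comme l'int\'egrale de la densit\'e et obtient la loi des $Z_j$ en d\'erivant leur fonction de r\'epartition, l\`a o\`u vous passez par des bo\^ites infinit\'esimales avec un contr\^ole du reste en $\Order{h^{n+1}}$ --- contr\^ole que vous justifiez d'ailleurs correctement --- et par un changement de variables de jacobien $1$.
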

\begin{proof}
Fixons des instants 
\begin{equation}
\label{ppp5:1}
t_0=0 < s_1 < t_1 < s_2 < t_2 < \dots < s_n < t_n\;.
\end{equation}
Nous pouvons alors calculer 
\begin{align}
\nonumber
&\bigprob{X_1\in]s_1,t_1], X_2\in]s_2,t_2], \dots, X_n\in]s_n,t_n]} \\
\nonumber
&{\qquad}= \bigprob{N_{]0,s_1]}=0, N_{]s_1,t_1]}=1, N_{]t_1,s_2]}=0, \dots, 
N_{]t_{n-1},s_n]}=0, N_{]s_n,t_n]}\geqs1} \\
\nonumber
&{\qquad}= \prod_{k=1}^n \bigprob{N_{]t_{k-1},s_k]}=0}
\prod_{k=1}^{n-1} \bigprob{N_{]s_k,t_k]}=1} 
\,\bigprob{N_{]s_n,t_n]}\geqs 1} \\
\nonumber
&{\qquad}= \prod_{k=1}^n \e^{-\lambda(s_k-t_{k-1})}
\prod_{k=1}^{n-1} \lambda(t_k-s_k)\e^{-\lambda(t_k-s_k)} 
\,\bigbrak{1 - \e^{-\lambda(t_n-s_n)}} \\
\nonumber
&{\qquad}= \lambda^{n-1} \prod_{k=1}^{n-1} (t_k-s_k)
\bigbrak{\e^{-\lambda s_n} - \e^{-\lambda t_n}} \\ 
&{\qquad}= \int_{s_1}^{t_1} \int_{s_2}^{t_2} \dots \int_{s_n}^{t_n} 
\lambda^n \e^{-\lambda x_n} \,\6x_n \6x_{n-1} \dots \6x_2 \6x_1\;. 
\label{ppp5:2}
\end{align}
La loi conjointe de $(X_1,\dots,X_n)$ admet donc la densit\'e 
\begin{equation}
\label{ppp5:3}
f(x_1,\dots,x_n) =
\begin{cases}
\lambda^n \e^{-\lambda x_n} & \text{si $0<x_1<\dots<x_n$\;,}\\
0 & \text{sinon\;.}
\end{cases}
\end{equation}
Nous pouvons alors calculer la fonction de r\'epartition des $Z_k$: 
\begin{align}
\nonumber
\bigprob{Z_1\leqs z_1,\dots,Z_n\leqs z_n}
&= \bigprob{X_1\leqs z_1,X_2-X_1\leqs z_2,\dots,X_n-X_{n-1}\leqs z_n} \\
\nonumber
&= \bigprob{X_1\leqs z_1,X_2\leqs z_2+X_1,\dots,X_n\leqs z_n+X_{n-1}} \\
&= \int_0^{z_1} \int_{x_1}^{z_2+x_1} \dots \int_{x_{n-1}}^{z_n+x_{n-1}}
f(x_1,\dots,x_n) \6x_n \dots \6x_1 \;.
\label{ppp5:4}
\end{align}
La densit\'e conjointe de $(Z_1,\dots,Z_n)$ s'obtient alors en calculant la
d\'eriv\'ee 
\begin{align}
\nonumber
\dpar{^n}{z_1\dots\partial z_n}\bigprob{Z_1\leqs z_1,\dots,Z_n\leqs z_n}
&= f(z_1,z_1+z_2,\dots,z_1+\dots+z_n) \\
&= \lambda^n \e^{-\lambda (z_1+\dots+z_n)}\;,
\label{ppp5:5}
\end{align}
Or cette densit\'e est bien la densit\'e conjointe de $n$ variables
exponentielles ind\'ependantes de param\`etre $\lambda$. 
\end{proof}

Ce r\'esultat fournit une m\'ethode permettant de construire le processus de
Poisson: chaque $X_n$ est obtenu \`a partir de $X_{n-1}$ en lui ajoutant une
variable al\'eatoire de loi exponentielle, ind\'ependante des variables
pr\'ec\'edentes. On notera que $X_n$ suit une loi Gamma de param\`etres
$(\lambda, n)$. 

% Ceci pr\'esuppose l'existence d'un espace probabilis\'e
% pouvant contenir une infinit\'e de variables al\'eatoires exponentielles
% ind\'ependantes: C'est l'espace produit que nous avons \'evoqu\'e (sans
% prouver son existence) dans l'Exemple~\ref{ex_mpep1}. 

Notons que la propri\'et\'e de Markov~\eqref{vard23} peut s'\'ecrire
\begin{equation}
 \label{ppp6}
\bigpcond{Z_n>t+\eps}{Z_n>t} = \e^{-\lambda\eps}
\qquad\forall\eps>0\;,
\end{equation} 
et donc 
\begin{equation}
 \label{ppp7}
\lim_{\eps\to0}\frac{\bigpcond{Z_n\leqs t+\eps}{Z_n>t}}{\eps} 
= \lim_{\eps\to0}\frac{1-\e^{-\lambda\eps}}{\eps} 
= \lambda\;. 
\end{equation} 
Le param\`etre $\lambda$ repr\'esente donc le taux d'apparition d'un nouveau
point, qui est ind\'epen\-dant du pass\'e. Ce r\'esultat peut bien s\^ur
\'egalement \^etre obtenu directement \`a l'aide de la fonction de comptage. 

%%%%%%%%%%%%%%%%%%%%%%%%%%%%%%%%%%%%%%%%%%%%%%%%%%%%%%%%%%%%%%%%%%%%%%%%%%%

\goodbreak
\section{G\'en\'eralisations}
\label{sec_ppgen}

%%%%%%%%%%%%%%%%%%%%%%%%%%%%%%%%%%%%%%%%%%%%%%%%%%%%%%%%%%%%%%%%%%%%%%%%%%%

Il existe plusieurs g\'en\'eralisations du processus ponctuel de Poisson
discut\'e ici:
\begin{enum}

\item	Le \defwd{processus de Poisson inhomog\`ene}~: Dans ce cas le
nombre de points $N_{]t,t+s]}$ suit une loi de Poisson de param\`etre 
\begin{equation}
 \label{ppp8}
\int_t^{t+s} \lambda(u)\,\6u\;, 
\end{equation}  
o\`u $\lambda(u)$ est une fonction positive, donnant le taux au temps $u$. Ce
processus permet de d\'ecrire des situations o\`u les points apparaissent avec
une intensit\'e variable, par exemple si l'on veut tenir compte des variations
journali\`eres du trafic influen\c cant les horaires de passage de bus. On
retrouve le processus de Poisson homog\`ene si $\lambda(u)$ est constant.  

\item	Le \defwd{processus de Poisson de dimension $n\geqs2$}~: Ce processus
peut \^etre d\'efini par sa fonction de comptage, en rempla\c cant les
intervalles $I$ par des sous-ensembles (mesurables) de $\R^n$. Les nombres de
points dans deux ensembles disjoints sont \`a nouveau ind\'epen\-dants, et le
nombre de points dans un ensemble est proportionnel \`a son volume. Ce processus
peut par exemple mod\'eliser la distribution des \'etoiles dans une r\'egion de
l'espace ou du ciel.

\item	Le \defwd{processus de naissance et de mort}~: Le processus ponctuel de
Poisson peut \^etre consid\'er\'e comme un processus de naissance pur~: Si
$N_t$ est interpr\'et\'e comme le nombre d'individus dans une population au
temps $t$, ce nombre augmente avec un taux constant $\lambda$. Plus
g\'en\'eralement, dans un processus de naissance et de mort, de nouveaux
individus naissent avec un taux $\lambda$ et meurent avec un taux $\mu$;
\'eventuellement, $\lambda=\lambda(N_t)$ et $\mu=\mu(N_t)$ peuvent d\'ependre de
la taille actuelle de la population.

\item	Le \defwd{processus de Poisson compos\'e}~: Soit $N_t$ la fonction de
comptage d'un processus de Poisson simple, et soient $Y_1, Y_2, \dots$, des
variables al\'eatoires ind\'ependantes et identiquement distribu\'ees (i.i.d.)
et ind\'ependantes de $N_t$. Alors le processus 
\begin{equation}
 \label{ppp9}
S_t = \sum_{i=1}^{N_t} Y_i 
\end{equation}  
est appel\'e un processus de Poisson compos\'e. A chaque instant $X_n$ o\`u le
processus de Poisson sous-jacent augmente d'une unit\'e, le processus compos\'e
$S_t$ augmente d'une quantit\'e al\'eatoire $Y_n$. Si $N_t$ d\'ecrit les
instants d'arriv\'ee de clients dans une station service, et $Y_n$ d\'ecrit la
quantit\'e d'essence achet\'ee par le client $n$, alors $S_t$ est la quantit\'e
d'essence totale vendue au temps $t$. Si tous les $Y_i$ valent $1$ presque
s\^urement, on retrouve le processus de Poisson de d\'epart. 

\item	Le \defwd{processus de renouvellement}~: Un tel processus est d\'efini
comme le processus de Poisson \`a partir de ses temps d'attente $Z_n$, sauf que
les $Z_n$ ne suivent pas forc\'ement une loi exponentielle. Il mod\'elise par
exemple les instants de remplacement de machines de dur\'ee de vie al\'eatoire
(d'o\`u le terme de \lq\lq renouvellement\rq\rq~: chaque nouvelle machine
\'etant suppos\'ee ind\'ependante des pr\'ec\'edentes, le processus oublie son
pass\'e \`a chaque instant de renouvellement). Ce processus n'a en
g\'en\'eral plus les propri\'et\'es d'ind\'ependance de la fonction de
comptage, mais sous certaines hypoth\`eses sur la loi des $Z_n$
(existence des deux premiers moments), il existe des versions asymptotiques de
ces propri\'et\'es. 
\end{enum}

%%%%%%%%%%%%%%%%%%%%%%%%%%%%%%%%%%%%%%%%%%%%%%%%%%%%%%%%%%%%%%%%%%%%%%%%%%%

\goodbreak
\section{Exercices}
\label{sec_exo_poisson}

\begin{exercice}
\label{exo_poisson01} 
Des clients arrivent dans une banque suivant un processus de Poisson
d'intensit\'e $\lambda$. Sachant que deux clients sont arriv\'es dans la
premi\`ere heure, quelle est la probabilit\'e que 
\begin{enum}
\item 	les deux soient arriv\'es dans les 20 premi\`eres minutes?
\item	l'un au moins soit arriv\'e dans les 20 premi\`eres minutes?
\end{enum}
\end{exercice}

\goodbreak

\begin{exercice}
\label{exo_poisson02} 
Dans un serveur informatique, les requ\^etes arrivent selon un processus
ponctuel de Poisson, avec un taux de $60$ requ\^etes par heure. D\'eterminer
les probabilit\'es suivantes~:
\begin{enum}
\item	L'intervalle entre les deux premi\`eres requ\^etes est compris entre 2
et 4 minutes.
\item	Aucune requ\^ete n'arrive entre 14h et 14h05.
\item	Sachant que deux requ\^etes sont arriv\'ees entre 14h et 14h10, les
deux sont arriv\'ees dans les 5 premi\`eres minutes.
\item	Sachant que deux requ\^etes sont arriv\'ees entre 14h et 14h10, au
moins une est arriv\'ee dans les 5 premi\`eres minutes.
\end{enum}
\end{exercice}

\goodbreak

\begin{exercice}
\label{exo_poisson03} 
Un serveur informatique envoie des messages selon un processus ponctuel de
Poisson. En moyenne, il envoie un message toutes les 30 secondes. 

\begin{enum}
\item	Quelle est la probabilit\'e que le serveur n'envoie aucun message au
cours des 2 premi\`eres minutes de sa mise en service?
\item	A quel moment esp\'erez-vous le second message (quel est le temps moyen
de l'envoi du second message)?
\item	Quelle est la probabilit\'e que le serveur n'ait pas envoy\'e de
message durant la premi\`ere minute, sachant qu'il a envoy\'e  3
messages au cours des 3 premi\`eres minutes?
\item	Quelle est la probabilit\'e qu'il y ait moins de 3
messages au cours des 2 premi\`eres minutes, sachant qu'il y en a eu au moins 1
au cours de la premi\`ere minute?
\end{enum}
\end{exercice}

\goodbreak

\begin{exercice}
\label{exo_poisson031} 

Dans un centre d'appel t\'el\'ephonique, les appels arrivent selon un processus
de Poisson de taux 10 appels par heure. 

\begin{enum}
\item	Si une t\'el\'ephoniste fait une pause de 10 heures \`a 10h30,
combien d'appels va-t-elle rater en moyenne durant sa pause? 
\item 	Quelle est la probabilit\'e qu'elle a rat\'e au plus 2 appels? 
\item 	Sachant que 4 appels arrivent entre 10 heures et 11 heures, quelle est
la probabilit\'e qu'elle n'a rat\'e aucun appel durant sa pause? Qu'elle n'a
rat\'e qu'un appel? 
\item 	Sachant qu'il y aura 2 appels entre 10h30 et 11 heures, quelle est la
probabilit\'e qu'ils arrivent tous entre 10h30 et 10h45?
\end{enum}
\end{exercice}

\goodbreak

\begin{exercice}
\label{exo_poisson04} 
L'\'ecoulement des voitures le long d'une route est mod\'elis\'e par un
processus de Poisson d'intensit\'e $\lambda=2$ voitures par minute. A cause
d'un chantier, le trafic est arr\^et\'e alternativement dans chaque direction. 
On admet qu'\`a l'arr\^et, chaque v\'ehicule occupe une longueur de $8$
m\`etres en moyenne. 
\begin{enum}
\item	Quelle est la loi du temps d'arriv\'ee $X_n$ de la $n$i\`eme voiture? 
\item	A l'aide du th\'eor\`eme central limite, donner une
approximation gaussienne de la loi de $X_n$.
\item	Pendant combien de temps peut-on arr\^eter le trafic si l'on d\'esire
que la queue ainsi form\'ee ne d\'epasse la longueur de $250$m qu'avec une
probabilit\'e de $0.2$? (La valeur de $x$ pour laquelle $\prob{\cN(0,1)<x}=0.2$
est $x\simeq-0.85$).
\end{enum}
\end{exercice}

\goodbreak

\begin{exercice}[Le paradoxe de l'autobus]
\label{exo_poisson05} 
Les temps d'arriv\'ee d'autobus \`a un arr\^et sont d\'ecrits par un
processus de Poisson $(X_n)_n$ d'intensit\'e $\lambda$. Un client arrive \`a
l'instant $t$ apr\`es le d\'ebut du service.
\begin{enum}
\item	Calculer la probabilit\'e que le client rate le $n$i\`eme bus
mais attrape le $(n+1)$i\`eme bus.
\item	Calculer la probabilit\'e qu'il rate le $n$i\`eme bus et doive
attendre le $(n+1)$i\`eme bus pendant un temps $s$ au moins.
\item	Calculer la probabilit\'e qu'il doive attendre le bus suivant pendant un
temps $s$ au moins.
\item	En d\'eduire le temps d'attente moyen, et comparer ce temps avec le
temps de passage moyen entre bus. Qu'en pensez-vous? 
\end{enum}
\end{exercice}

\goodbreak

\begin{exercice}
\label{exo_poisson06} 
Soient $(X_n)_n$ et $(Y_n)_n$ deux processus de Poisson ind\'ependants,
d'intensit\'es respectives $\lambda$ et $\mu$. Soit $(Z_n)_n$ le processus
obtenu
en superposant $(X_n)_n$ et $(Y_n)_n$. Montrer qu'il s'agit encore d'un
processus de Poisson et donner son intensit\'e. 
\end{exercice}

\goodbreak

\begin{exercice}
\label{exo_poisson07} 
Soit $X_n$ un processus ponctuel de Poisson d'intensit\'e $\lambda$. 

\noindent
Soit $Y_n$ le processus obtenu en effa\c cant chaque $X_n$ ($n\geqs1$) avec
probabilit\'e $1/2$, ind\'epen\-dam\-ment de tous les autres, puis en
renum\'erotant les points restants par les entiers positifs. On note $N_t$,
respectivement $M_t$, le nombre de $X_n$, respectivement $Y_n$, dans
l'intervalle $]0,t]$. 

\begin{enum}
\item	Donner la loi de $N_t$.
\item	Montrer que pour tout $k$, $l\mapsto \pcond{M_t=l}{N_t=k}$ suit une loi
binomiale, et d\'eterminer ses param\`etres.
\item	En d\'eduire la loi de $M_t$.
\item	Montrer que $Y_n$ est un processus ponctuel de Poisson, et d\'eterminer
son intensit\'e. 
\item	Que se passe-t-il si on efface chaque $X_n$ avec probabilit\'e $1-q$,
$0<q<1$?
\end{enum}
\end{exercice}

%%%%%%%%%%%%%%%%%%%%%%%%%%%%%%%%%%%%%%%%%%%%%%%%%%%%%%%%%%%%%%%%%%%%%%%%%%%

\chapter{Processus markoviens de sauts}
\label{chap_psm}

Les processus markoviens de sauts sont un analogue en temps continu des
cha\^ines de Markov. Rappelons que la propri\'et\'e essentielle permettant de
d\'efinir ces cha\^ines est la propri\'et\'e de Markov~:
\begin{align}
\nonumber
\bigpcond{X_n=j}{X_{n-1}=i,X_{n-2}=i_{n-2},\dots,X_0=i_0} 
&= \bigpcond{X_n=j}{X_{n-1}=i} \\
&= p_{ij}
\label{psm1}
\end{align}
pour tout temps $n$. 
Un processus markovien de sauts est d\'efini par la condition 
\begin{align}
\nonumber
\bigpcond{X_{s+t}=j}{X_s=i,X_{s_{n-2}}=i_{n-2},\dots,X_{s_0}=i_0} 
&= \bigpcond{X_{s+t}=j}{X_s=i} \\
&= P_t(i,j)
\label{psm2}
\end{align}
pour tout choix de temps $0\leqs s_0 < s_1 < \dots < s_{n-2} < s < s+t \in
\R_+$. Pour chaque $t\geqs0$, l'ensemble des $P_t(i,j)$ doit former une matrice
stochastique, appel\'ee \defwd{noyau de transition}. Ses \'el\'ements sont
appel\'es \defwd{probabilit\'es de transition}.

\begin{example}[Cha\^ine de Markov en temps continu]
\label{ex_cmtc} 
Soit $Y_n$ une cha\^ine de Markov de matrice de transition $P$, et soit $N_t$
la fonction de comptage d'un processus ponctuel de Poisson de taux $\lambda$.
Alors 
\begin{equation}
 \label{psm3}
X_t = Y_{N_t}
\end{equation}
est un processus markovien de sauts, qu'on appelle une \defwd{cha\^ine de
Markov en temps continu}. Ce processus effectue des transitions \`a des temps
al\'eatoires, donn\'es par le processus de Poisson, au lieu de temps
r\'eguli\`erement espac\'es comme dans le cas de la cha\^ine de Markov de
d\'epart. La propri\'et\'e~\eqref{psm2} est une cons\'equence de deux 
propri\'et\'es de Markov~: celle de la cha\^ine $Y_n$, et celle
de la loi exponentielle, c.f.~\eqref{vard23}. 

Comme le nombre de sauts $N_t$ jusqu'au temps $t$ suit une loi de 
Poisson, les probabilit\'es de transition sont donn\'ees par 
\begin{equation}
 \label{psm4}
P_t(i,j)
= \sum_{n=0}^\infty \bigprob{N_t=n} \bigprobin{i}{X_n=j}
= \sum_{n=0}^\infty \e^{-\lambda t} \frac{(\lambda t)^n}{n!}
p^{(n)}_{ij}\;, 
\end{equation} 
o\`u les $p^{(n)}_{ij}$ sont les \'el\'ements de la matrice $P^n$, d\'ecrivant
les probabilit\'es de transition en $n$ pas de la cha\^ine de Markov $Y_n$.  
\end{example}

%%%%%%%%%%%%%%%%%%%%%%%%%%%%%%%%%%%%%%%%%%%%%%%%%%%%%%%%%%%%%%%%%%%%%%%%%%%

\section{Taux de transition}
\label{sec_tt}

Les noyaux de transition $P_t$ pour diff\'erents $t$ ne sont pas arbitraires,
mais ob\'eissent \`a une \'equation dite de Chapman--Kolmogorov (qui est
l'analogue de la relation~\eqref{fdef11} pour les cha\^ines de Markov). 

\begin{prop}[Equation de Chapman--Kolmogorov]
 \label{prop_Chapman_Komogorov}
La famille $\set{P_t}_{t\geqs0}$ des noyaux de transition d'un processus
markovien de sauts sur un ensemble d\'enombrable $\cX$ satisfait 
\begin{equation}
 \label{tt01}
P_{s+t}(i,j) = \sum_{k\in\cX} P_s(i,k)P_t(k,j) 
\end{equation} 
pour tout choix de $s,t\geqs0$ et de $i,j\in\cX$. 
\end{prop}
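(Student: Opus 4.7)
The plan is to derive the identity by conditioning on the state of the process at the intermediate time $s$, and then invoking the Markov property~\eqref{psm2} to split the resulting conditional probability into two factors each given by a transition kernel.

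First I would fix $i, j \in \cX$ and $s,t \geqs 0$, and use the definition
\begin{equation*}
P_{s+t}(i,j) = \bigpcond{X_{s+t}=j}{X_0=i}\;.
\end{equation*}
Since $\cX$ is countable, the events $\set{X_s = k}$ for $k\in\cX$ form a countable partition of $\Omega$ (up to a null set), so by the law of total probability applied to the conditional measure $\fP_{\set{X_0=i}}$, I can write
\begin{equation*}
\bigpcond{X_{s+t}=j}{X_0=i}
= \sum_{k\in\cX} \bigpcond{X_{s+t}=j,\,X_s=k}{X_0=i}\;.
\end{equation*}
The swap between summation and conditional expectation is justified because all terms are non-negative (monotone convergence on the partial sums).

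Next, I would apply the chain rule for conditional probabilities to each term, writing
\begin{equation*}
\bigpcond{X_{s+t}=j,\,X_s=k}{X_0=i}
= \bigpcond{X_{s+t}=j}{X_s=k,\,X_0=i} \,\bigpcond{X_s=k}{X_0=i}\;,
\end{equation*}
whenever $\pcond{X_s=k}{X_0=i} > 0$ (terms where this is zero contribute zero and can be ignored). By the Markov property~\eqref{psm2}, applied with the two times $s_0=0 < s$ and the subsequent time $s+t$, the first factor equals $\pcond{X_{s+t}=j}{X_s=k} = P_t(k,j)$. The second factor is by definition $P_s(i,k)$. Summing over $k$ yields~\eqref{tt01}.

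The only subtle point, and thus the main obstacle, is that the definition~\eqref{psm2} is stated with a list of past times $s_0 < \dots < s_{n-2} < s$, whereas here I need the Markov property when conditioning only on $X_s = k$ together with $X_0 = i$; this is the case $n=2$ of~\eqref{psm2} (with $s_0 = 0$), so no additional work is needed. A related but minor point is the implicit time-homogeneity: the notation $P_t(k,j)$ presupposes that $\pcond{X_{s+t}=j}{X_s=k}$ depends only on $t$ and not on $s$, which is part of the very definition~\eqref{psm2} of a (homogeneous) Markov jump process adopted in the text. Once these identifications are made, the calculation is a one-line manipulation and the proof is complete.
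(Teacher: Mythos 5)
Your proof is correct and follows essentially the same route as the paper's: decompose over the intermediate state $X_s=k$, factor each term by the chain rule for conditional probabilities, and identify the two factors as $P_t(k,j)$ and $P_s(i,k)$ via the Markov property~\eqref{psm2}. Your extra remarks on null terms, the interchange of sum and conditioning, and time-homogeneity are careful points the paper passes over silently, but they do not change the argument.
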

\begin{proof}
On a 
\begin{align}
\nonumber
P_{s+t}(i,j)
&= \bigpcond{X_{s+t}=j}{X_0=i} \\
\nonumber
&= \sum_{k\in\cX} \frac{\prob{X_{s+t}=j,X_s=k,X_0=i}}{\prob{X_0=i}} \\
\nonumber
&= \sum_{k\in\cX} \frac{\prob{X_{s+t}=j,X_s=k,X_0=i}}{\prob{X_s=k,X_0=i}} 
\frac{\prob{X_s=k,X_0=i}}{\prob{X_0=i}}\\
&= \sum_{k\in\cX} \underbrace{\bigpcond{X_{s+t}=j}{X_s=k,X_0=i}}_{=P_t(k,j)}
\underbrace{\bigpcond{X_s=k}{X_0=i}}_{=P_s(i,k)}\;,
\label{tt02}
\end{align}
en vertu de la propri\'et\'e de Markov~\eqref{psm2}.
\end{proof}

L'\'equation de Chapman--Kolmogorov montre que si l'on conna\^it les noyaux
$P_t$ pour tous les temps $t$ dans un intervalle $]0,t_0]$, aussi petit soit-il,
alors on conna\^it les noyaux pour tous les temps $t\in\R_+$. Ceci sugg\`ere de
consid\'erer la limite 
\begin{equation}
 \label{tt03}
q(i,j) = \lim_{h\to0} \frac{P_h(i,j)}{h} \;,
\qquad
i\neq j\in \cX\;.
\end{equation} 
Si la limite existe, alors c'est par d\'efinition la d\'eriv\'ee \`a droite de
$P_t(i,j)$ en $t=0$. Dans ce cas, $q(i,j)$ est appel\'e le \defwd{taux de
transition de l'\'etat $i$ vers l'\'etat $j$}.

\begin{example}[Cha\^ine de Markov en temps continu]
 \label{ex_cmct2} 
En effectuant un d\'eveloppement limit\'e de l'expression~\eqref{psm4} en
$h=0$, on obtient (comme $\smash{p^{(0)}_{ij}}=0$ pour $i\neq j$)
\begin{equation}
 \label{tt04}
P_h(i,j) = (1-\lambda h + \dots)(\lambda h p_{ij} + \dots)
= \lambda h p_{ij} + \order{h}\;, 
\end{equation} 
ce qui donne le taux de transition
\begin{equation}
 \label{tt05}
q(i,j) = \lambda p_{ij}\;. 
\end{equation} 
\end{example}

\begin{example}[Processus de Poisson]
\label{ex_poisson_markov} 
Soit $N_t$ la fonction de comptage d'un processus ponctuel de Poisson de
param\`etre $\lambda$. On peut la consid\'erer comme un processus markovien de
sauts, dans lequel seules les transitions entre $n$ et $n+1$ sont permises. Les
probabilit\'es de transition sont alors donn\'ees par 
\begin{equation}
 \label{tt05B}
P_s(n,n+1) = \bigpcond{N_{t+s}=n+1}{N_t=n}
= \bigprob{N_{]t,t+s]}=1} = \lambda s \e^{-\lambda s}\;. 
\end{equation} 
Les taux de transition correspondants sont donc 

\begin{equation}
 \label{tt05C}
q(n,n+1) = \lim_{h\to0} \frac{\lambda h\e^{-\lambda h}}{h} = \lambda\;. 
\end{equation} 
La cha\^ine peut \^etre repr\'esent\'ee graphiquement comme dans la
\figref{fig_markov_Poisson}.
\end{example}

\begin{figure}[ht]
%  \centerline{
%  \includegraphics*[clip=true,height=10mm]{figs/markov_poisson}
%  }
%  \figtext{
%  \writefig       2.0     0.9     $0$
%  \writefig       4.9     0.9     $1$
%  \writefig       7.8     0.9     $2$
%  \writefig       10.7    0.9     $3$
%  \writefig       3.3     1.1     $\lambda$
%  \writefig       6.17    1.1     $\lambda$
%  \writefig       9.07    1.1     $\lambda$
%  \writefig       11.9    1.1     $\lambda$
%  }
\begin{center}
\begin{tikzpicture}[->,>=stealth',shorten >=2pt,shorten <=2pt,auto,node
distance=3.0cm, thick,main node/.style={circle,scale=0.7,minimum size=1.1cm,
fill=violet!20,draw,font=\sffamily\Large}]

  \node[main node] (0) {$0$};
  \node[main node] (1) [right of=0] {$1$};
  \node[main node] (2) [right of=1] {$2$};
  \node[main node] (3) [right of=2] {$3$};
  \node[node distance=2cm] (4) [right of=3] {$\dots$};

  \path[every node/.style={font=\sffamily\small}]
    (0) edge [above] node {$\lambda$} (1)
    (1) edge [above] node {$\lambda$} (2)
    (2) edge [above] node {$\lambda$} (3)
    (3) edge [above] node {$\lambda$} (4)
    ;
\end{tikzpicture}
\end{center}
\vspace{-3mm}
 \caption[]{Graphe repr\'esentant le processus de Poisson, ou processus de
naissance pur.}
 \label{fig_markov_Poisson}
\end{figure}
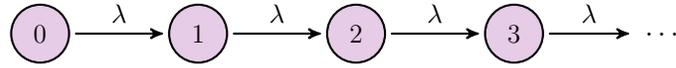

En r\`egle g\'en\'erale, on construit le processus de sauts directement \`a
partir des taux de transition $q(i,j)$, au lieu de partir des noyaux de
transition $P_t(i,j)$. Pour cela, on commence par d\'efinir 
\begin{equation}
 \label{tt06}
\lambda(i) = \sum_{j\neq i} q(i,j)\;, 
\end{equation}  
qui donne le taux avec lequel le processus quitte l'\'etat $i$. Supposons que
$0<\lambda(i)<\infty$. Dans ce cas 
\begin{equation}
 \label{tt07}
r(i,j) = \frac{q(i,j)}{\lambda(i)} 
\end{equation}  
donne la probabilit\'e que le processus choisisse l'\'etat $j$, sachant qu'il
quitte l'\'etat $i$. 

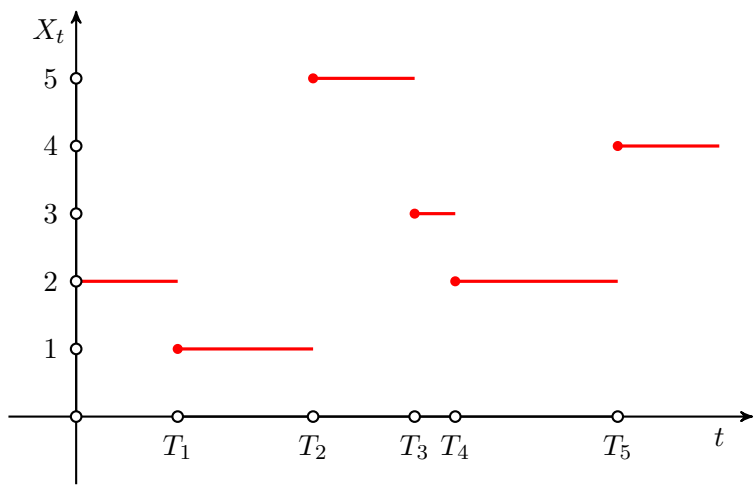
\begin{figure}[b]
\begin{center}
\begin{tikzpicture}[-,auto,scale=0.9,node distance=1.0cm, thick,
main node/.style={draw,circle,fill=white,minimum size=4pt,inner sep=0pt},
full node/.style={draw,red,circle,fill=red,minimum size=3pt,inner sep=0pt}]

  \path[->,>=stealth'] 
     (-1,0) edge (10,0)
     (0,-1) edge (0,6)
  ;

  \node at (9.5,-0.3) {$t$};
  \node at (-0.4,5.7) {$X_t$};

  \path[red,very thick,-] 
    (1.5,1) edge (3.5,1)
    (3.5,5) edge (5,5)
    (5,3) edge (5.6,3)
    (5.6,2) edge (8,2)
  ;
  
  \path[red,very thick,-]
    (0,2) edge (1.5,2) 
    (8,4) edge (9.5,4)
  ;
  
  \draw (1.5,0) node[main node] {} node[below=0.1cm] {$T_1$} 
  -- (3.5,0) node[main node] {} node[below=0.1cm] {$T_2$} 
  -- (5,0) node[main node] {} node[below=0.1cm] {$T_3$} 
  -- (5.6,0) node[main node] {} node[below=0.1cm] {$T_4$} 
  -- (8,0) node[main node] {} node[below=0.1cm] {$T_5$} 
   ;
   
  \draw (0,0) node[main node] {} 
  -- (0,1) node[main node] {} node[left=0.1cm] {$1$} 
  -- (0,2) node[main node] {} node[left=0.1cm] {$2$} 
  -- (0,3) node[main node] {} node[left=0.1cm] {$3$} 
  -- (0,4) node[main node] {} node[left=0.1cm] {$4$} 
  -- (0,5) node[main node] {} node[left=0.1cm] {$5$} 
   ;
   
  \node[full node] at (1.5,1) {};
  \node[full node] at (3.5,5) {};
  \node[full node] at (5,3) {};
  \node[full node] at (5.6,2) {};
  \node[full node] at (8,4) {};

\end{tikzpicture}
\end{center}
\vspace{-4mm}
 \caption[]{Une r\'ealisation d'un processus de sauts markovien, de cha\^ine de
Markov sous-jacente $(Y_n)_n = (2,1,5,3,2,4,\dots)$.}
 \label{fig_processus_sauts}
\end{figure}

Soit $\set{Y_n}_{n\geqs0}$ une cha\^ine de Markov de matrice de transition
$R=\set{r(i,j)}$. A partir de cette cha\^ine, on peut construire (et simuler) le
processus de saut de taux $\set{q(i,j)}$ de la mani\`ere suivante~: 
\begin{itemiz}
\item 	On se donne des variables i.i.d.\ $\tau_0, \tau_1, \dots$ de loi
exponentielle $\cE\!xp(1)$, ind\'ependantes des probabilit\'es r\'egissant la
cha\^ine $Y_n$. 
\item	Si le
processus part de l'\'etat $X_0=Y_0$, il doit quitter cet \'etat avec un taux
$\lambda(Y_0)$, c'est-\`a-dire apr\`es un temps al\'eatoire $t_1$ de loi
$\cE\!xp(\lambda(Y_0))$. Il suffit pour cela de choisir
$t_1=\tau_0/\lambda(Y_0)$. 
\item	Au temps $t_1$, le processus saute dans l'\'etat
$Y_1$, dans lequel il doit rester pendant un temps al\'eatoire $t_2$ de loi
$\cE\!xp(\lambda(Y_1))$. 
\end{itemiz}

En continuant de cette mani\`ere, on est donc amen\'es
\`a d\'efinir des temps de s\'ejour 
\begin{equation}
 \label{tt08}
t_n = \frac{\tau_{n-1}}{\lambda(Y_{n-1})}\;, 
\end{equation} 
et de poser 
\begin{equation}
 \label{tt09}
X_t = Y_n 
\qquad
\text{pour $T_n \leqs t < T_{n+1}$}\;, 
\end{equation} 
o\`u les instants des sauts sont donn\'es par 
\begin{equation}
 \label{tt10}
T_n = \sum_{i=1}^n t_i\;. 
\end{equation} 
La seule diff\'erence entre le cas g\'en\'eral et le cas particulier de la
cha\^ine de Markov en temps continu (Exemple~\ref{ex_cmtc}) est que les
diff\'erents temps de s\'ejour $t_n$ n'ont pas n\'ecessairement le m\^eme
param\`etre. 

%%%%%%%%%%%%%%%%%%%%%%%%%%%%%%%%%%%%%%%%%%%%%%%%%%%%%%%%%%%%%%%%%%%%%%%%%%%

\section{G\'en\'erateur et \'equations de Kolmogorov}
\label{sec_ek}

Nous montrons maintenant comment d\'eterminer le noyau de transition $P_t$ \`a
partir des taux de transition $q(i,j)$. Un r\^ole important est jou\'e par le
g\'en\'erateur du processus.

\begin{definition}[G\'en\'erateur]
\label{def_generateur}
Soit $X_t$ un processus markovien de sauts de taux de transition
$\set{q(i,j)}$. On appelle \defwd{g\'en\'erateur (infinit\'esimal)} du
processus la matrice $L$ d'\'el\'ements 
\begin{equation}
 \label{ek01}
L(i,j) = 
\begin{cases}
q(i,j) & \text{si $i\neq j$\;,} \\
-\lambda(i) & \text{si $i=j$\;.}
\end{cases} 
\end{equation}  
\end{definition}

On remarquera que la d\'efinition~\eqref{tt06} des $\lambda(i)$ implique 
\begin{equation}
 \label{ik02}
\sum_{j\in\cX} L(i,j) = 0 
\qquad \forall i\in\cX\;. 
\end{equation} 

\begin{theorem}[Equations de Kolmogorov]
 \label{thm_Kolmogorov}
Le noyau de transition satisfait l'\/\defwd{\'equation de Kolmogorov
r\'etrograde}
\begin{equation}
 \label{ik03}
\dtot{}{t} P_t = L P_t\;, 
\end{equation}  
ainsi que l'\/\defwd{\'equation de Kolmogorov progressive}
\begin{equation}
 \label{ik04}
\dtot{}{t} P_t = P_t L\;. 
\end{equation} 
\end{theorem}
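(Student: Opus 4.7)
Mon plan est de partir de l'\'equation de Chapman--Kolmogorov (Proposition~\ref{prop_Chapman_Komogorov}), de la voir comme une \'equation semi-groupe $P_{s+t}=P_s P_t$ (au sens matriciel), puis de la diff\'erencier par rapport \`a l'un des deux arguments en $0$. L'id\'ee est que la limite~\eqref{tt03} qui d\'efinit les taux $q(i,j)$ donne pr\'ecis\'ement la d\'eriv\'ee en $t=0$ du semi-groupe, donc la d\'eriv\'ee en $t$ quelconque s'obtient en \lq\lq multipliant \`a gauche (ou \`a droite) par $P_t$\rq\rq.

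Plus concr\`etement, pour l'\'equation r\'etrograde~\eqref{ik03}, je commencerais par \'ecrire
\[
P_{t+h}(i,j) - P_t(i,j) = \sum_{k\in\cX} P_h(i,k) P_t(k,j) - P_t(i,j)
= \bigbrak{P_h(i,i)-1} P_t(i,j) + \sum_{k\neq i} P_h(i,k) P_t(k,j)\;.
\]
En divisant par $h$ et en faisant tendre $h$ vers $0^+$, chaque facteur $P_h(i,k)/h$ pour $k\neq i$ tend vers $q(i,k)=L(i,k)$ par d\'efinition~\eqref{tt03}; pour le terme diagonal, je montrerais d'abord \`a l'aide de $\sum_j P_h(i,j)=1$ que
\[
\frac{P_h(i,i)-1}{h} = -\sum_{k\neq i} \frac{P_h(i,k)}{h} \xrightarrow[h\to0]{} -\lambda(i) = L(i,i)\;.
\]
En interchangeant limite et sommation, on obtient alors la d\'eriv\'ee \`a droite $\sum_{k}L(i,k)P_t(k,j)=(LP_t)(i,j)$. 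L'\'equation progressive~\eqref{ik04} s'obtient sym\'etriquement en partant de la d\'ecomposition $P_{t+h}(i,j)=\sum_k P_t(i,k)P_h(k,j)$, et en faisant tendre $h\to0^+$ en isolant cette fois le terme $k=j$.

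Le point le plus d\'elicat, qui constituera le principal obstacle, sera de justifier rigoureusement les deux \'echanges limite/somme infinie lorsque l'espace d'\'etats $\cX$ est d\'enombrable infini --- dans le cas fini cette \'etape est \'evidente. Pour l'\'equation r\'etrograde, cela peut se faire par un argument de convergence domin\'ee en utilisant que $P_h(i,k)/h$ est major\'e uniform\'ement (par exemple par $\lambda(i)+1$ pour $h$ petit, en exploitant la monotonie de la suite construite via la cha\^ine de saut sous-jacente) et que $\sum_k P_t(k,j)\leqs 1$. Pour l'\'equation progressive, l'interversion est plus subtile car elle porte sur l'indice d'arriv\'ee, et requiert typiquement soit l'hypoth\`ese de taux born\'es $\sup_i \lambda(i)<\infty$, soit un argument de positivit\'e bas\'e sur le lemme de Fatou pour obtenir une in\'egalit\'e, combin\'e avec la majoration de $\sum_j (LP_t)(i,j)=0$ pour conclure \`a l'\'egalit\'e. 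Enfin, il faudra aussi v\'erifier que $P_t$ est continu en $t$ (ce qui d\'ecoule du m\^eme type d'argument \`a partir de Chapman--Kolmogorov) afin d'\'etendre la d\'eriv\'ee \`a droite en une vraie d\'eriv\'ee pour tout $t\geqs0$.
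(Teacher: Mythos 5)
Votre d\'emonstration suit exactement la m\^eme d\'emarche que celle du texte~: d\'ecomposition de $P_{t+h}(i,j)-P_t(i,j)$ via Chapman--Kolmogorov en isolant le terme diagonal ($k=i$ pour l'\'equation r\'etrograde, $k=j$ pour la progressive), puis passage \`a la limite $h\to0$ en utilisant $\sum_k P_h(i,k)=1$ pour identifier le terme $-\lambda(i)$. Votre discussion des \'echanges limite/somme dans le cas d'un espace d'\'etats infini est un compl\'ement pertinent que le texte passe sous silence.
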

\begin{proof}
Pour l'\'equation r\'etrograde, commen\c cons par \'ecrire, en utilisant
l'\'equa\-tion de Chapman--Kolmogorov, 
\begin{align}
\nonumber
P_{t+h}(i,j) - P_t(i,j)
&= \sum_{k\in\cX} P_h(i,k)P_t(k,j) - P_t(i,j) \\
&= \sum_{k\neq i} P_h(i,k)P_t(k,j) 
+ \bigbrak{P_h(i,i) - 1} P_t(i,j)\;.
\label{ik05:01} 
\end{align}
La d\'eriv\'ee s'obtient en divisant par $h$ des deux c\^ot\'es et en prenant la
limite $h\to0$. Pour le premier terme de droite, on a 
\begin{equation}
 \label{ik05:02}
\lim_{h\to0} \frac{1}{h}  \sum_{k\neq i} P_h(i,k)P_t(k,j)
= \sum_{k\neq i} q(i,k)P_t(k,j)\;.
\end{equation} 
Pour traiter le second terme, on observe que la condition 
$\sum_k P_h(i,k)=1$ implique 
\begin{equation}
 \label{ik05:03}
\lim_{h\to0} \frac{1}{h} \Bigpar{P_h(i,i) - 1} 
= \lim_{h\to0} \frac{1}{h} 
\biggpar{-\sum_{k\neq i}P_h(i,k)}
= -\sum_{k\neq i} q(i,k)
= -\lambda(i)\;.
\end{equation} 
En ins\'erant~\eqref{ik05:02} et~\eqref{ik05:03} dans \eqref{ik05:01}, on
obtient 
\begin{equation}
 \label{ik05:04}
\dtot{}{t} P_t(i,j) 
%= \lim_{h\to0} \frac1h  \bigbrak{P_{t+h}(i,j) - P_t(i,j)}
= \sum_{k\neq i} q(i,k) P_t(k,j) - \lambda(i) P_t(i,j)
= \sum_{k\in\cX} L(i,k) P_t(k,j)\;,
\end{equation} 
ce qui n'est autre que l'\'equation~\eqref{ik03} \'ecrite en composantes. 
La preuve de l'\'equation progressive est analogue, en utilisant la
d\'ecomposition 
\begin{align}
\nonumber
P_{t+h}(i,j) - P_t(i,j)
&= \sum_{k\in\cX} P_t(i,k)P_h(k,j) - P_t(i,j) \\
&= \sum_{k\neq j} P_t(i,k)P_h(k,j) 
+ \bigbrak{P_h(j,j) - 1} P_t(i,j)
\label{ik05:05} 
\end{align}
au lieu de~\eqref{ik05:01}.
\end{proof}

Les \'equations de Kolmogorov sont des syst\`emes d'\'equations lin\'eaires
coupl\'ees pour les \'el\'ements de matrice $P_t(i,j)$. Comme $P_0 = I$ est la
matrice identit\'e, la solution peut s'\'ecrire sous la forme de la matrice
exponentielle 
\begin{equation}
 \label{ik06}
P_t = \e^{L t} = \sum_{n=0}^\infty \frac{(Lt)^n}{n!}\;. 
\end{equation} 
Pour le v\'erifier, il suffit de d\'eriver la s\'erie terme \`a terme (ce qui
est justifi\'e si la s\'erie converge absolument). Il est difficile de
trouver une forme plus explicite de $\e^{Lt}$ en toute g\'en\'eralit\'e, mais le
calcul peut \^etre fait dans certains cas particuliers.

\begin{example}[Cha\^ine de Markov en temps continu]
En appliquant la d\'efinition du g\'en\'erateur \`a~\eqref{tt05}, et en
utilisant le fait que $\sum_{j\neq i} q(i,j)=\lambda(1-p_{ii})$ puisque $P$ est
une matrice stochastique, on obtient 
\begin{equation}
 \label{ik10A}
L = \lambda(P-I)\;. 
\end{equation} 
En g\'en\'eral,
$\e^{AB}$ n'est pas \'egal \`a $\e^A\e^B$. Mais si $AB=BA$, on a effectivement 
$\e^{AB}=\e^A\e^B$. Comme $PI=IP$ et $\e^{-\lambda I t}=\e^{-\lambda t}I$, on
obtient 
\begin{equation}
 \label{ik10B}
P_t = \e^{-\lambda t}\e^{\lambda t P}\;, 
\end{equation} 
ce qui est \'equivalent \`a~\eqref{psm4}. 
\end{example}

\begin{example}[Processus de Poisson]
 \label{ex_poisson_markov2}
Pour le processus de Poisson, le g\'en\'erateur est donn\'e par 
\begin{equation}
 \label{ik07}
L(i,j) = 
\begin{cases}
-\lambda & \text{si $j=i$\;,} \\
\lambda & \text{si $j=i+1$\;,} \\
0 &\text{sinon\;.}
\end{cases} 
\end{equation}  
L'\'equation de Kolmogorov r\'etrograde s'\'ecrit donc 
\begin{equation}
 \label{ik08}
\dtot{}{t} P_t(i,j) = -\lambda P_t(i,j) + \lambda P_t(i+1,j)\;. 
\end{equation} 
Ecrivons le g\'en\'erateur sous la forme $L=\lambda(R-I)$ o\`u $I$ est la
matrice identit\'e, et $(R)_{ij}=1$ si $j=i+1$, $0$ sinon. Comme $IR=RI$, on
a
\begin{equation}
 \label{ik09}
\e^{Lt} = \e^{-\lambda t I}\e^{\lambda t R} 
= \e^{-\lambda t} \sum_{n\geqs0} \frac{(\lambda t)^n}{n!} R^n\;,
\end{equation}  
puisque $\e^{-\lambda t I}=\e^{-\lambda t}I$. Les \'el\'ements de la matrice
$R^n$ valent $1$ si $j-i=n$ et $0$ sinon. Par cons\'equent, en prenant
l'\'el\'ement de matrice $(i,j)$ de~\eqref{ik09}, on trouve 
\begin{equation}
 \label{ik10}
P_t(i,j) = 
\begin{cases}
\e^{-\lambda t} \dfrac{(\lambda t)^{j-i}}{(j-i)!}
& \text{si $j\geqs i$\;,} \\
0 & \text{sinon\;.}
\end{cases} 
\end{equation} 
On v\'erifie que c'est bien une solution du syst\`eme~\eqref{ik08}. On aurait
\'egalement pu obtenir ce r\'esultat directement, en observant que $P_t(i,j)$
est la probabilit\'e que le processus de Poisson admette $j-i$ points dans
l'intervalle $[0,t[$, qui est donn\'ee par la loi de Poisson.  
\end{example}

\begin{figure}
% \vspace{2mm}
%  \centerline{
%  \includegraphics*[clip=true,height=10mm]{figs/markov_twostate}
%  }
%  \figtext{
%  \writefig       6.0     0.9     $1$
%  \writefig       8.6     0.9     $2$
%  \writefig       7.3     1.6     $\lambda$
%  \writefig       7.3     0.2     $\mu$
%  }
\begin{center}
\begin{tikzpicture}[->,>=stealth',shorten >=2pt,shorten <=2pt,auto,node
distance=3.0cm, thick,main node/.style={circle,scale=0.7,minimum size=1.1cm,
fill=violet!20,draw,font=\sffamily\Large}]

  \node[main node] (1) {$1$};
  \node[main node] (2) [right of=0] {$2$};

  \path[every node/.style={font=\sffamily\small}]
    (1) edge [bend left,above] node {$\lambda$} (2)
    (2) edge [bend left,below] node {$\mu$} (1)
    ;
\end{tikzpicture}
\end{center}
\vspace{-5mm}
 \caption[]{Graphe du processus de sauts \`a deux \'etats.}
 \label{fig_markov_twostate}
\end{figure}
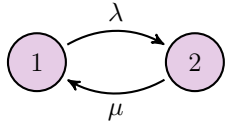

\begin{example}[Processus de sauts \`a deux \'etats]
Soit $\cX=\set{1,2}$, avec des taux de transition $q(1,2)=\lambda$ et
$q(2,1)=\mu$ (\figref{fig_markov_twostate}). Le g\'en\'erateur a alors la forme 
\begin{equation}
 \label{ik11}
L = 
\begin{pmatrix}
-\lambda & \lambda \\ \mu & -\mu
\end{pmatrix} \;.
\end{equation} 
Un calcul montre que $L^2=-(\lambda+\mu)L$, donc 
$L^n=(-(\lambda+\mu))^{n-1}L$ pour tout $n\geqs1$. Il suit que 
\begin{align}
\nonumber
P_t = 
\e^{Lt} &= I + \sum_{n\geqs1}\frac{t^n}{n!}(-(\lambda+\mu))^{n-1}L  \\
\nonumber
&= I - \frac{1}{\lambda+\mu} \bigpar{\e^{-(\lambda+\mu)t}-1}L
\\
&= \frac{1}{\lambda+\mu}
\begin{pmatrix}
\mu + \lambda\e^{-(\lambda+\mu)t} & \lambda - \lambda\e^{-(\lambda+\mu)t} \\
\mu - \mu\e^{-(\lambda+\mu)t} & \lambda + \mu\e^{-(\lambda+\mu)t}
\end{pmatrix}\;.
 \label{ik12}
\end{align} 
On remarquera en particulier que si $\lambda+\mu>0$, alors 
\begin{equation}
 \label{ik13}
\lim_{t\to\infty}  P_t =
\begin{pmatrix}
\myvrule{10pt}{15pt}{0pt}
\dfrac{\mu}{\lambda+\mu} & \dfrac{\lambda}{\lambda+\mu} \\
\dfrac{\mu}{\lambda+\mu} & \dfrac{\lambda}{\lambda+\mu}
\end{pmatrix}\;.
\end{equation} 
Cela signifie qu'asymptotiquement, le syst\`eme sera dans l'\'etat $1$ avec
probabilit\'e $\mu/(\lambda+\mu)$, et dans l'\'etat $2$ avec
probabilit\'e $\lambda/(\lambda+\mu)$, quel que soit l'\'etat initial.
\end{example}

%%%%%%%%%%%%%%%%%%%%%%%%%%%%%%%%%%%%%%%%%%%%%%%%%%%%%%%%%%%%%%%%%%%%%%%%%%%

\section{Distributions stationnaires}
\label{sec_mi}

\begin{definition}[Distribution stationnaire]
\label{def_mi01}
Une distribution de probabilit\'e $\pi$ sur $\cX$ est dite \defwd{stationnaire}
pour le processus de noyau de transition $P_t$ si 
\begin{equation}
 \label{mi01}
\pi P_t = \pi 
\qquad \forall t>0\;, 
\end{equation} 
c'est-\`a-dire, en composantes, 
\begin{equation}
 \label{mi02}
\sum_{i\in\cX} \pi(i) P_t(i,j) = \pi(j)
\qquad \forall j\in\cX,\; \forall t>0\;. 
\end{equation} 
\end{definition}

Comme en g\'en\'eral, on n'a pas acc\`es aux noyaux de transition, il est plus
utile de disposer d'un crit\`ere faisant intervenir le g\'en\'erateur. 

\begin{theorem}
\label{thm_mi1}
$\pi$ est une distribution stationnaire si et seulement si $\pi L=0$,
c'est-\`a-dire
 \begin{equation}
 \label{mi03}
\sum_{i\in\cX} \pi(i) L(i,j) = 0
\qquad \forall j\in\cX\;. 
\end{equation}
\end{theorem}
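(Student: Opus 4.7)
The plan is to exploit the two descriptions of the semigroup $P_t$ already established: its characterization through the Kolmogorov equations \eqref{ik03}--\eqref{ik04}, and its exponential representation $P_t = \e^{Lt}$ given in \eqref{ik06}. Each direction of the equivalence uses one of these two facts.

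For the direct implication ($\pi$ stationnaire $\Rightarrow \pi L = 0$), I would start from the identity $\pi P_t = \pi$ valid for all $t>0$ and read it as $\pi(P_t - I) = 0$. Dividing by $t>0$ and letting $t \to 0^+$, I need to identify the limit of $(P_t - I)/t$. Element by element this is exactly the content of the definition \eqref{tt03} of the taux de transition for the off-diagonal terms $i\neq j$, while for the diagonal terms the computation done in \eqref{ik05:03} shows that $(P_t(i,i)-1)/t \to -\lambda(i) = L(i,i)$. Together these give $\lim_{t\to0^+}(P_t-I)/t = L$ componentwise, hence $\pi L = 0$ componentwise. Alternatively, one can differentiate $\pi P_t = \pi$ with respect to $t$ using the progressive equation \eqref{ik04}: this yields $\pi P_t L = 0$ for every $t>0$, and letting $t\to 0$ with $P_0 = I$ gives the same conclusion.

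For the reverse implication ($\pi L = 0 \Rightarrow \pi$ stationnaire), I would use the exponential series \eqref{ik06}. A trivial induction on $n$ shows that if $\pi L = 0$ then $\pi L^n = (\pi L)L^{n-1} = 0$ for every $n\geqs 1$. Inserting this into the series representation,
\begin{equation*}
\pi P_t = \pi \sum_{n=0}^\infty \frac{(Lt)^n}{n!}
= \pi + \sum_{n=1}^\infty \frac{t^n}{n!}\,\pi L^n = \pi\;,
\end{equation*}
which is precisely the stationarity relation \eqref{mi01}.

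The main technical obstacle concerns the case where $\cX$ is infinite and the rates $\lambda(i)$ are unbounded: then $L$ is an unbounded operator, the exponential series \eqref{ik06} need not converge in an obvious Banach-space sense, and the interchanges of sums and limits above must be justified (typically under a non-explosion hypothesis, which ensures that $P_t$ is a genuine stochastic matrix and that the Kolmogorov equations determine it uniquely). In the bounded case (in particular when $\cX$ is finite, which is the main setting of the preceding examples), both the termwise differentiation and the use of the matrix exponential are immediate, so no further argument is required.
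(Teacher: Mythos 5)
Your proof is correct, and the forward direction is essentially the paper's: the author also multiplies the progressive Kolmogorov equation \eqref{ik04} by $\pi(i)$ and sums over $i$, using stationarity to turn the left side into $\frac{\dd}{\dd t}\pi(j)=0$ and the right side into $\sum_k\pi(k)L(k,j)$ — the same computation as your ``alternative'' version, except that stationarity collapses $\pi P_t L$ to $\pi L$ directly without needing to send $t\to0$. Where you genuinely diverge is in the reverse direction. The paper does \emph{not} use the exponential representation $P_t=\e^{Lt}$; instead it applies the retrograde equation \eqref{ik03} to compute $\frac{\dd}{\dd t}\sum_i\pi(i)P_t(i,j)=\sum_k\bigpar{\sum_i\pi(i)L(i,k)}P_t(k,j)=0$, concluding that $\pi P_t$ is constant in $t$ and hence equal to $\pi P_0=\pi$. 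Your route via $\pi L^n=0$ and termwise summation of the series \eqref{ik06} is cleaner and more algebraic, but it inherits the convergence caveat you correctly flag: it really requires $L$ to be bounded (finite $\cX$ or bounded rates) for \eqref{ik06} to make sense, whereas the paper's argument only needs the backward equation and the initial condition $P_0=I$, and so degrades more gracefully to the countable-state, non-explosive setting (where the remaining issue is just the interchange of $\frac{\dd}{\dd t}$ with the sum over $\cX$, a difficulty both proofs share). Either way, your acknowledgement of the unbounded case is exactly the right caveat, and in the finite setting both arguments are complete.
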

\begin{proof} \hfill
\begin{itemiz}
\item[$\Rightarrow:$]
Soit $\pi$ une distribution stationnaire. 
L'\'equation de Kolmogorov progressive s'\'ecrit en composantes 
\begin{equation}
 \label{mi04:1}
\dtot{}{t} P_t(i,j) = \sum_{k\in\cX} P_t(i,k)L(k,j)\;. 
\end{equation} 
En multipliant par $\pi(i)$ et en sommant sur $i$, on obtient 
\begin{equation}
 \label{mi04:2}
\sum_{i\in\cX} \pi(i) \dtot{}{t} P_t(i,j) 
= \sum_{i,k\in\cX} \pi(i)P_t(i,k)L(k,j)\;. 
\end{equation} 
Le membre de gauche peut s'\'ecrire 
\begin{equation}
 \label{mi04:3}
\dtot{}{t}  \sum_{i\in\cX} \pi(i) P_t(i,j)
= \dtot{}{t} \pi(j) = 0\;.
\end{equation} 
En effectuant la somme sur $i$ dans le membre de droite, on obtient 
\begin{equation}
 \label{mi04:4}
 \sum_{i,k\in\cX} \pi(i)P_t(i,k)L(k,j)
= \sum_{k\in\cX} \pi(k) L(k,j)\;.
\end{equation} 
L'\'egalit\'e de~\eqref{mi04:3} et~\eqref{mi04:4} est bien \'equivalente
\`a~\eqref{mi02}. 

\item[$\Leftarrow:$]
Supposons que $\pi L=0$. Alors on a par l'\'equation de Kolmogorov r\'etrograde 
\begin{align}
\nonumber
\dtot{}{t} \sum_{i\in\cX} \pi(i)P_t(i,j) 
&= \sum_{i\in\cX} \pi(i) \sum_{k\in\cX} L(i,k)P_t(k,j) \\
&= \sum_{k\in\cX} 
\underbrace{\biggpar{\sum_{i\in\cX}\pi(i)L(i,k)}}_{=0} P_t(k,j) = 0\;.
 \label{mi04:5}
\end{align}
Par cons\'equent, $\sum_i \pi(i)P_t(i,j)$ est constante, et donc \'egale \`a sa
valeur en $t=0$. Or cette valeur est $\pi(j)$ puisque $P_0(i,j)=\delta_{ij}$. 
Cela prouve que $\pi$ est stationnaire. 
\qed
\end{itemiz}
\renewcommand{\qed}{}
\end{proof}

\begin{example}[Cha\^ine de Markov en temps continu]
\label{ex_cmct4} 
Comme $L=\lambda(P-I)$, l'\'equation $\pi L=0$ est \'equivalente \`a $\pi
P=\pi$. La distribution $\pi$ est donc stationnaire pour la cha\^ine en temps
continu $X_t=Y_{N_t}$ si et seulement si elle est stationnaire pour la cha\^ine
en temps discret $Y_n$.
\end{example}

\begin{example}[Processus de Poisson]
\label{ex_markov_poisson3}
Le processus de Poisson n'admet pas de distribution stationnaire. En effet,
nous avons vu que le g\'en\'erateur \'etait donn\'e par $L=\lambda(R-I)$, o\`u
$(R)_{ij}=1$ si $j=i+1$ et $0$ sinon. La condition $\pi L=0$ est \'equivalente
\`a $R \pi=\pi$, donc \`a $\pi_{i+1}=\pi_i$ pour tout $i$. Or il n'existe pas
de distribution de probabilit\'e sur $\N$ dont tous les \'el\'ements seraient
\'egaux. L'expression~\eqref{ik08} des probabilit\'es de transition $P_t(i,j)$
montre d'ailleurs que celles-ci tendent vers $0$ lorsque $t\to\infty$. Cela
est d\^u au fait que la distribution \lq\lq s'en va vers l'infini\rq\rq.
\end{example}

\begin{example}[Processus \`a deux \'etats]
 \label{ex_twostate2} 
Dans le cas du processus \`a deux \'etats, l'\'equation $\pi L=0$ s'\'ecrit 
\begin{equation}
 \label{mi05}
\pi(1) (-\lambda) + \pi(2) \mu = 0\;.
\end{equation} 
Comme $\pi(1)+\pi(2)=1$, on a n\'ecessairement 
\begin{equation}
 \label{mi06}
\pi = \biggpar{\frac{\mu}{\lambda+\mu}, \frac{\lambda}{\lambda+\mu}}\;. 
\end{equation} 
Cela correspond bien \`a la valeur limite de $P_t$ trouv\'ee dans~\eqref{ik13}.
\end{example}

Dans le cas des cha\^ines de Markov en temps discret, nous avons vu que toute
distribution initiale converge vers une unique distribution stationnaire, \`a
condition que la cha\^ine soit irr\'eductible, ap\'eriodique et r\'ecurrente
positive. Dans le cas en temps continu, la condition d'ap\'eriodicit\'e n'est
plus n\'ecessaire. Cela est d\^u au caract\`ere al\'eatoire des intervalles de
temps entre transitions. 

On dira qu'un processus markovien de sauts est \defwd{irr\'eductible}\/ s'il est
possible de trouver, pour toute paire d'\'etats $(i,j)$, un chemin
$(i_0=i,i_1,\dots,i_n=j)$ tel que $q(i_k,i_{k+1})>0$ pour tout $k=0,\dots,n-1$. 

\begin{theorem}
 \label{thm_saut_stat}
Si le processus de sauts est irr\'eductible et admet une distribution
stationnaire $\pi$, alors 
\begin{equation}
 \label{mi07}
\lim_{t\to\infty} P_t(i,j) = \pi(j)
\qquad
\forall i,j\in\cX\;. 
\end{equation} 
De plus, si $r~:\cX\to\R$ est une fonction telle que
$\sum_i\pi(i)\abs{r(i)}<\infty$, alors 
\begin{equation}
 \label{mi07B}
\lim_{t\to\infty} \frac{1}{t} \int_0^t r(X_s)\6s
= \sum_{i} \pi(i)r(i) =: \E_\pi(r) \;. 
\end{equation} 
\end{theorem}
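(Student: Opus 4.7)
The natural plan is to imitate the coupling proof of Doeblin already used for discrete-time regular chains (Théorèmes~\ref{thm_firred2} and~\ref{thm_conv1}), adapting it to continuous time, and then deduce the ergodic statement via a renewal argument based on returns to a reference state.

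First I would establish a key positivity property that replaces aperiodicity: for every $t>0$ and every $i,j\in\cX$, $P_t(i,j)>0$. By irreducibility there is a sequence $i=i_0,i_1,\dots,i_n=j$ with $q(i_{k-1},i_k)>0$. The probability that the embedded chain follows exactly this sequence of jumps and that the total time $T_n=\sum_{k=1}^n t_k$ lies in a neighbourhood of $t$ is strictly positive, because the holding times $t_k$ are exponentially distributed, hence their joint density is positive on the set $\{T_n=t\}$ (after fixing $n-1$ of them). This is the continuous-time replacement for the aperiodicity hypothesis; crucially it holds for \emph{every} positive $t$, not only large enough ones.

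Next I would build the coupling. On $\cX\times\cX$ consider the product process $(X_t,\widetilde X_t)$ with independent components, one started at $i$ and the other at $\pi$. Its generator is $L^\star((i,j),(k,l))=L(i,k)\delta_{jl}+\delta_{ik}L(j,l)$, and the previous step shows that $P^\star_t((i,j),(k,l))=P_t(i,k)P_t(j,l)>0$ for all $t>0$, so $P^\star$ is irreducible in the continuous-time sense. Moreover $\pi\otimes\pi$ satisfies $(\pi\otimes\pi)L^\star=0$, hence is stationary for the product process. Let $\tau_A$ be the first hitting time of the diagonal $A=\{(i,i):i\in\cX\}$. As in the proof of Proposition~\ref{prop_firred1}, the stationarity together with irreducibility forces $\tau_A<\infty$ almost surely. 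Defining the spliced process $Z_t=X_t$ for $t\le\tau_A$ and $Z_t=\widetilde X_t$ for $t>\tau_A$ and checking by conditioning on $\tau_A$ that $Z$ has the same law as $X$, one obtains
\begin{equation*}
\bigabs{P_t(i,j)-\pi(j)} \leqs 2\,\bigprobin{\delta_i\otimes\pi}{\tau_A>t}\;,
\end{equation*}
and the right-hand side tends to $0$ as $t\to\infty$, proving~\eqref{mi07}.

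For~\eqref{mi07B}, the plan is to decompose the trajectory into excursions between successive visits to a recurrent reference state $k\in\cX$ (such a $k$ exists because the existence of $\pi$, the coupling argument above and irreducibility together imply that all states are positive recurrent in the sense that $\expecin{k}{\tau_k}<\infty$ for the return time to $k$). Writing $\tau_k^{(0)}=0<\tau_k^{(1)}<\tau_k^{(2)}<\dots$ for these return times, the random variables
\begin{equation*}
W_n = \int_{\tau_k^{(n-1)}}^{\tau_k^{(n)}} r(X_s)\,\6s
\qquad\text{and}\qquad
D_n = \tau_k^{(n)}-\tau_k^{(n-1)}
\end{equation*}
are i.i.d.\ by the strong Markov property at the stopping times $\tau_k^{(n)}$. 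The classical strong law of large numbers gives $\frac1N\sum_{n=1}^N W_n\to \expecin{k}{W_1}$ and $\frac1N\sum_{n=1}^N D_n\to \expecin{k}{\tau_k}$ almost surely, and sandwiching $t$ between two successive $\tau_k^{(N(t))}$ yields $\frac1t\int_0^t r(X_s)\,\6s\to \expecin{k}{W_1}/\expecin{k}{\tau_k}$. It remains to identify the limit: by the analogue of Proposition~\ref{prop_stat1} in continuous time, the measure $\gamma^{(k)}(i)=\expecin{k}{\int_0^{\tau_k}\indexfct{X_s=i}\,\6s}$ is invariant and, once normalized by $\expecin{k}{\tau_k}=\sum_i\gamma^{(k)}(i)$, coincides with $\pi$; hence $\expecin{k}{W_1}=\sum_i\gamma^{(k)}(i)r(i)=\expecin{k}{\tau_k}\E_\pi(r)$. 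The main technical difficulty I anticipate is justifying the interchange of sum and integral in this last step when $\cX$ is infinite and $r$ merely $\pi$-integrable; this is handled by a truncation argument, replacing $r$ by $r\indexfct{|r|\le M}$, applying the above to the bounded case, and letting $M\to\infty$ using the hypothesis $\sum_i\pi(i)\abs{r(i)}<\infty$.
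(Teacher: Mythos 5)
The paper does not actually prove this theorem: immediately after the statement it says \og Nous admettrons ce r\'esultat \fg, so there is no proof of record to compare yours against. Judged on its own merits, your plan is the natural and correct one, and it is consistent with the machinery the paper builds elsewhere: part~1 is the continuous-time transcription of the Doeblin coupling of the Th\'eor\`emes~\ref{thm_firred2} et~\ref{thm_conv1}, with the positivity of $P_t(i,j)$ for \emph{every} $t>0$ playing the role of aperiodicity, and part~2 is the classical excursion/renewal proof of the ergodic theorem, resting on the continuous-time analogue of the Proposition~\ref{prop_stat1} (the occupation measure $\gamma^{(k)}$ of an excursion is invariant and normalises to $\pi$).

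Three points deserve more care than your sketch gives them. First, in the positivity argument the right event is not \og $T_n$ lies in a neighbourhood of $t$ \fg\ but rather \og the $n$ prescribed jumps are all completed before $t$ and the process then holds in $j$ beyond $t$ \fg; this event has positive probability because the holding times are exponential, and it is what gives $P_t(i,j)>0$ exactly at time $t$. Second, your chain of implications \og existence of $\pi$ plus irr\'eductibilit\'e $\Rightarrow$ r\'ecurrence positive de tous les \'etats et de la cha\^ine produit \fg\ is true but is precisely the continuous-time version of the Th\'eor\`eme~\ref{thm_stat1}, which the paper only proves in discrete time; in a complete write-up you would either prove it (e.g.\ via a discrete skeleton $X_{nh}$ or the embedded jump chain) or at least flag it as a lemma, and you should also note that a stationary distribution rules out explosion, an issue the paper never raises. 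Third, the identification $\expecin{k}{W_1}=\expecin{k}{\tau_k}\,\E_\pi(r)$ together with the final truncation step is exactly where the hypothesis $\sum_i\pi(i)\abs{r(i)}<\infty$ is consumed; your plan handles this correctly. None of these is a gap in the strategy, only places where the details must be written out.
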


Nous admettrons ce r\'esultat. La relation~\eqref{mi07B} est un
analogue du Th\'eor\`eme \ref{thm_firred3}. En particulier, si
$r(i)=\delta_{ij}$, elle montre que le temps moyen pass\'e dans l'\'etat $j$
est \'egal \`a~$\pi(j)$. 

Nous avons vu que la distribution stationnaire d'une cha\^ine de Markov
r\'eversible \'etait plus facile \`a d\'eterminer, car il suffit de
r\'esoudre les conditions d'\'equilibre d\'etaill\'e. De mani\`ere tout \`a
fait analogue, on dira que le processus de sauts de taux de transition
$\set{q(i,j)}$ est \defwd{r\'eversible}\/ s'il existe une distribution $\pi$
telle que 
\begin{equation}
 \label{mi08}
\pi(i)q(i,j) = \pi(j)q(j,i)
\qquad
\forall i,j\in\cX\;. 
\end{equation} 

\begin{theorem}
 \label{thm_saut_reversible}
Si la condition  d'\'equilibre d\'etaill\'e~\eqref{mi08} est satisfaite, alors
$\pi$ est une distribution stationnaire. 
\end{theorem}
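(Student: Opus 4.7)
Mon plan est d'appliquer le Théorème~\ref{thm_mi1}, qui caractérise les distributions stationnaires par la condition $\pi L = 0$. Il suffira donc de vérifier que, sous l'hypothèse d'équilibre détaillé, on a $\sum_{i\in\cX}\pi(i)L(i,j) = 0$ pour tout $j\in\cX$, et la conclusion suivra immédiatement.

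Concrètement, je fixerais $j\in\cX$ et séparerais la somme selon que $i=j$ ou $i\neq j$, en utilisant la définition du générateur~\eqref{ek01}. On obtient
\begin{equation*}
\sum_{i\in\cX} \pi(i) L(i,j)
= \sum_{i\neq j} \pi(i) q(i,j) + \pi(j) L(j,j)
= \sum_{i\neq j} \pi(i) q(i,j) - \pi(j) \sum_{k\neq j} q(j,k)\;,
\end{equation*}
en rappelant que $L(j,j) = -\lambda(j) = -\sum_{k\neq j} q(j,k)$ par~\eqref{tt06}. L'hypothèse~\eqref{mi08} permet alors de remplacer $\pi(i)q(i,j)$ par $\pi(j)q(j,i)$ dans la première somme, ce qui donne
\begin{equation*}
\sum_{i\in\cX} \pi(i) L(i,j)
= \pi(j) \sum_{i\neq j} q(j,i) - \pi(j) \sum_{k\neq j} q(j,k) = 0\;.
\end{equation*}

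Comme ceci vaut pour tout $j\in\cX$, on a bien $\pi L = 0$, et le Théorème~\ref{thm_mi1} conclut que $\pi$ est stationnaire. Il n'y a pas véritablement d'obstacle dans cette preuve~: l'équilibre détaillé est précisément une version renforcée, terme à terme, de la condition de stationnarité globale $\pi L = 0$, et la vérification se réduit à une réorganisation algébrique directe. Le seul point de vigilance est de s'assurer que $\sum_{k\neq j}q(j,k) = \lambda(j)$ est bien finie, ce qui est implicite dans la définition même du générateur utilisée ici.
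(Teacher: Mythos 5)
Votre preuve est correcte et suit exactement la même démarche que celle du polycopié~: sommer la condition d'équilibre détaillé sur $i\neq j$ pour obtenir $\sum_{i\neq j}\pi(i)q(i,j)=\pi(j)\lambda(j)=-\pi(j)L(j,j)$, ce qui équivaut à $\pi L=0$, puis conclure par le Théorème~\ref{thm_mi1}. Rien à redire.
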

\begin{proof}
Sommant la condition sur tous les $i\neq j$, on obtient 
\begin{equation}
 \label{mi09}
\sum_{i\neq j}  \pi(i)q(i,j) = \pi(j) \sum_{i\neq j} q(j,i) =
\pi(j)\lambda(j) = -\pi(j)L(j,j)\;.
\end{equation} 
Mais ceci est \'equivalent \`a $\sum_i \pi(i)L(i,j)=0$. 
\end{proof}

\begin{figure}
%  \vspace{2mm}
%  \centerline{
%  \includegraphics*[clip=true,height=10mm]{figs/birth_death}
%  }
%  \figtext{
%  \writefig       2.4     0.9     $0$
%  \writefig       5.05    0.9     $1$
%  \writefig       7.65    0.9     $2$
%  \writefig       10.3    0.9     $3$
%  \writefig       3.7     1.6     $\lambda_0$
%  \writefig       6.3     1.6     $\lambda_1$
%  \writefig       8.9     1.6     $\lambda_2$
%  \writefig       11.5    1.6     $\lambda_3$
%  \writefig       3.7     0.2     $\mu_1$
%  \writefig       6.3     0.2     $\mu_2$
%  \writefig       8.9     0.2     $\mu_3$
%  \writefig       11.5    0.2     $\mu_4$
%  }
\begin{center}
\begin{tikzpicture}[->,>=stealth',shorten >=2pt,shorten <=2pt,auto,node
distance=3.0cm, thick,main node/.style={circle,scale=0.7,minimum size=1.1cm,
fill=violet!20,draw,font=\sffamily\Large}]

  \node[main node] (0) {$0$};
  \node[main node] (1) [right of=0] {$1$};
  \node[main node] (2) [right of=1] {$2$};
  \node[main node] (3) [right of=2] {$3$};
  \node[node distance=2cm] (4) [right of=3] {$\dots$};

  \path[every node/.style={font=\sffamily\small}]
    (0) edge [bend left, above] node {$\lambda_0$} (1)
    (1) edge [bend left, above] node {$\lambda_1$} (2)
    (2) edge [bend left, above] node {$\lambda_2$} (3)
    (3) edge [bend left, above] node {$\lambda_3$} (4)
    (4) edge [bend left, below] node {$\mu_4$} (3)
    (3) edge [bend left, below] node {$\mu_3$} (2)
    (2) edge [bend left, below] node {$\mu_2$} (1)
    (1) edge [bend left, below] node {$\mu_1$} (0)
   ;
\end{tikzpicture}
\end{center}
 \vspace{-5mm}
 \caption[]{Graphe repr\'esentant le processus de naissance et de mort.}
 \label{fig_birthdeath}
\end{figure}
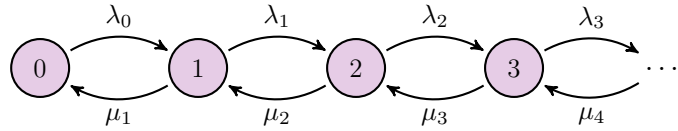

\begin{example}[Processus de naissance et de mort]
Soit $\cX=\set{0,\dots,N}$, et supposons que les seuls taux de transition non
nuls sont 
\begin{align}
\nonumber
q(n,n+1) &= \lambda_n 
&&
\text{pour $0\leqs n<N$\;,} \\
q(n,n-1) &= \mu_n 
&&
\text{pour $0< n\leqs N$\;.} 
\label{mi10} 
\end{align}
Ici $X_t$ repr\'esente par exemple le nombre d'individus d'une population, qui
naissent avec taux $\lambda_n$ et meurent avec taux $\mu_n$, pouvant d\'ependre
du nombre $n$ d'individus. Le graphe associ\'e est repr\'esent\'e dans
la~\figref{fig_birthdeath}. 

Supposons tous les taux strictement positifs. Dans ce cas, on peut satisfaire
la condition d'\'equilibre d\'etaill\'e en posant 
\begin{equation}
 \label{mi11}
\pi(n) = \frac{\lambda_{n-1}}{\mu_n} \pi(n-1)\;. 
\end{equation} 
Par r\'ecurrence, on obtient 
\begin{equation}
 \label{mi12}
\pi(n) = \frac{\lambda_{n-1}\lambda_{n-2}\dots\lambda_0}
{\mu_n\mu_{n-1}\dots\mu_1} \pi(0)\;,
\end{equation} 
et $\pi(0)$ est d\'etermin\'e par la condition de normalisation
$\sum_i\pi(i)=1$.
\end{example}

%%%%%%%%%%%%%%%%%%%%%%%%%%%%%%%%%%%%%%%%%%%%%%%%%%%%%%%%%%%%%%%%%%%%%%%%%%%

\section{Exercices}
\label{sec_exo_saut}

\begin{exercice}
\label{exo_saut01} 
On consid\`ere un processus de sauts markovien $X_t$ sur $\cX=\set{1,2,3,4}$,
de g\'en\'erateur infinit\'esimal 
\[
L = 
\begin{pmatrix}
-2 & 1 & 1 & 0 \\
2 & -5 & 1 & 2 \\
2 & 0 & -3 & 1 \\
0 & 0 & 1 & -1
\end{pmatrix}
\]

\begin{enum}
\item	Repr\'esenter le processus de sauts sous forme de graphe. 
\item	D\'eterminer la distribution stationnaire du processus.
\item	Le processus $X_t$ est-il irr\'eductible? 
\item	Le processus $X_t$ est-il r\'eversible? 
\end{enum}
\end{exercice}

\goodbreak

\begin{exercice}
\label{exo_saut02} 
Un homme d'affaires voyage entre Paris, Bordeaux et Marseille. Il passe dans
chaque ville un temps de loi exponentielle, de moyenne $1/4$ de mois pour Paris
et Bordeaux, et de $1/5$ de mois pour Marseille. S'il est \`a Paris, il va \`a
Bordeaux ou Marseille avec probabilit\'e $1/2$. S'il est \`a Bordeaux, il va
\`a Paris avec probabilit\'e $3/4$ et \`a Marseille avec probabilit\'e $1/4$.
Apr\`es avoir visit\'e Marseille, il retourne toujours \`a Paris.
\begin{enum}
\item	Donner le g\'en\'erateur du processus markovien de sauts d\'ecrivant
l'itin\'eraire de l'homme d'affaires.
\item	D\'eterminer la fraction de temps qu'il passe dans chaque ville.
\item	Combien de voyages fait-il en moyenne de Paris \`a Bordeaux par ann\'ee?
\end{enum}
\end{exercice}

\goodbreak

\begin{exercice}
\label{exo_saut03} 
Un petit magasin d'informatique peut avoir au plus trois ordinateurs en stock.
Des clients arrivent avec un taux de $2$ clients par semaine. Si au moins un
ordinateur est en stock, le client l'ach\`ete. S'il reste au plus un
ordinateur, le tenancier du magasin commande deux nouveaux ordinateurs, qui
sont livr\'es apr\`es un temps de loi exponentielle de moyenne $1$ semaine. 
\begin{enum}
\item	Donner le g\'en\'erateur du processus d\'ecrivant le nombre
d'ordinateurs en stock.
\item	D\'eterminer la distribution stationnaire.
\item	Quel est le taux de vente d'ordinateurs? 
\end{enum}
\end{exercice}

\goodbreak

\begin{exercice}
\label{exo_saut04} 
Un cycliste peut \^etre victime de deux types de pannes, qui se produisent
selon un processus de Poisson~:
\begin{itemiz}
\item	Il d\'eraille en moyenne une fois toutes les $10$ heures. 
Dans ce cas, il lui faut un temps exponentiel de moyenne $0.1$ heure pour
remettre la cha\^ine en place. 
\item	Il cr\`eve une chambre \`a air en moyenne une fois toutes les $25$
heures. Il lui faut alors un temps exponentiel de moyenne $1$ heure pour
r\'eparer la panne. 
\end{itemiz}

\begin{enum}
\item	D\'eterminer la distribution invariante. 
\item	Pendant quelle proportion du temps le cycliste peut-il rouler?
\item	G\'en\'eraliser la solution au cas de $N$ diff\'erentes pannes, chaque
panne se produisant avec un taux $\mu_i$ et n\'ecessitant un temps de
r\'eparation exponentiel de moyenne $1/\lambda_i$.
\end{enum}
\end{exercice}

\goodbreak

\begin{exercice}
\label{exo_saut05} 
Une mol\'ecule d'h\'emoglobine peut fixer soit une mol\'ecule d'oxyg\`ene,
soit une mol\'ecule de monoxyde de carbone. On suppose que ces mol\'ecules
arrivent selon des processus de Poisson de taux $\lambda_{\text{O}_2}$ et
$\lambda_{\text{CO}}$ et restent attach\'ees pendant des temps exponentiels de
taux $\mu_{\text{O}_2}$ et $\mu_{\text{CO}}$ respectivement. D\'eterminer la
fraction de temps pass\'ee dans chacun des trois \'etats~: h\'emoglobine seule,
h\'emoglobine et oxyg\`ene, h\'emoglobine et monoxyde de carbone.
\end{exercice}

\goodbreak

\begin{exercice}
\label{exo_saut06} 
Des voitures arrivent dans une station service avec un taux de $20$ voitures
par heure. La station comporte une seule pompe. Si un conducteur trouve la
pompe libre, il s'arr\^ete pour faire le plein. L'op\'eration lui prend un
temps exponentiel de moyenne $6$ minutes. Si la pompe est occup\'ee mais
qu'aucune voiture n'attend, le conducteur attend que la pompe se lib\`ere. Si
par contre il y a d\'ej\`a deux voitures sur place, l'une dont on fait le plein
et l'autre qui attend, la voiture qui arrive repart aussit\^ot. 
\begin{enum}
\item	Formuler le probl\`eme comme un processus markovien de sauts et trouver
sa distribution stationnaire. 
\item	D\'eterminer le nombre moyen de clients servis par heure. 
\end{enum}
\end{exercice}

\goodbreak

\begin{exercice}
\label{exo_saut08}
On mod\'elise la d\'esint\'egration radioactive de $N$ atomes par un processus
de sauts markovien $\set{X_t}_{t\geqs0}$ sur $\cX=\set{0,1,\dots,N}$ de taux
$q(n,n-1)=\mu$, $X_t$ d\'esignant le nombre d'atomes non d\'esint\'egr\'es au
temps $t$. 

\begin{enum}
\item	On consid\`ere le cas $N=1$~:

%\bigskip

% \centerline{
%  \includegraphics*[clip=true,height=8mm]{figs/exam_IIa}
%  \figtext{
%  \writefig       -3.05     0.3     $0$
%  \writefig       -0.75     0.3     $1$
%  \writefig       -1.85     0.55    $\mu$
%  }
%  }
 
\begin{center}
\begin{tikzpicture}[->,>=stealth',shorten >=2pt,shorten <=2pt,auto,node
distance=3.0cm, thick,main node/.style={circle,scale=0.7,minimum size=1.1cm,
fill=violet!20,draw,font=\sffamily\Large}]

  \node[main node] (0) {$0$};
  \node[main node] (1) [right of=0] {$1$};

  \path[every node/.style={font=\sffamily\small}]
    (1) edge [left, above] node {$\mu$} (0)
   ;
\end{tikzpicture}
\end{center}

D\'eterminer le g\'en\'erateur $L$. Calculer $L^2$, puis $L^n$ pour tout $n$.
En d\'eduire le noyau de transition $P_t$. 

\item	On consid\`ere maintenant le cas $N=2$~:

% \bigskip
% 
% \centerline{
%  \includegraphics*[clip=true,height=8mm]{figs/exam_IIb}
%  \figtext{
%  \writefig       -5.35     0.3     $0$
%  \writefig       -3.05     0.3     $1$
%  \writefig       -0.75     0.3     $2$
%  \writefig       -1.85     0.55    $\mu$
%  \writefig       -4.15     0.55    $\mu$
%  }
%  }
 
\begin{center}
\begin{tikzpicture}[->,>=stealth',shorten >=2pt,shorten <=2pt,auto,node
distance=3.0cm, thick,main node/.style={circle,scale=0.7,minimum size=1.1cm,
fill=violet!20,draw,font=\sffamily\Large}]

  \node[main node] (0) {$0$};
  \node[main node] (1) [right of=0] {$1$};
  \node[main node] (2) [right of=1] {$2$};

  \path[every node/.style={font=\sffamily\small}]
    (2) edge [left, above] node {$\mu$} (1)
    (1) edge [left, above] node {$\mu$} (0)
   ;
\end{tikzpicture}
\end{center}

D\'eterminer le g\'en\'erateur $L$ et \'ecrire les \'equations de Kolmogorov
progressives. R\'esou\-dre ces \'equations pour la condition initiale $X_0=2$,
c'est-\`a-dire trouver $P_t(2,j)$ pour $j=2, 1, 0$. 

{\it Indication~:} La solution de l'\'equation diff\'erentielle 
$\dtot xt = -\mu x + f(t)$ s'\'ecrit 
\[
x(t) = x(0)\e^{-\mu t} + \int_0^t \e^{-\mu(t-s)} f(s) \6s \;.
\]

\item	Par le m\^eme proc\'ed\'e, calculer $P_t(N,j)$ pour $j=N,N-1,\dots,0$
pour $N$ quelconque. 

\item	Calculer 
\[
\lim_{N\to\infty} \expec{Y_t} 
\]
o\`u $Y_t=N-X_t$ est le nombre d'atomes d\'esint\'egr\'es au temps $t$, s'il
y a $N$ atomes au temps~$0$. 

\end{enum}
\end{exercice}

\goodbreak

\begin{exercice}
\label{exo_saut07} 
On consid\`ere une cha\^ine de mort pure, de taux de mort $q(n,n-1)=\mu$ pour
tout $n\geqs1$. D\'eterminer les noyaux de transition $P_t(i,j)$. 
\end{exercice}

\goodbreak

\begin{exercice}
\label{exo_saut09}
On consid\`ere le processus de sauts markovien sur $\cX=\set{1,2,3}$ dont le
graphe est le suivant:

% \bigskip
% 
% \centerline{
%  \includegraphics*[clip=true,height=40mm]{figs/jump_dec12a}
%  }

\begin{center}
\begin{tikzpicture}[->,>=stealth',shorten >=2pt,shorten <=2pt,auto,node
distance=3.0cm, thick,main node/.style={circle,scale=0.7,minimum size=1.1cm,
fill=violet!20,draw,font=\sffamily\Large}]

  \node[main node] (1) at (0,0) {$1$};
  \node[main node] (2) at (1.5,-2.25) {$2$};
  \node[main node] (3) at (-1.5,-2.25){$3$};

  \path[every node/.style={font=\sffamily\small}]
    (1) edge [above right] node {$\lambda$} (2)
    (2) edge [below] node {$\lambda$} (3)
    (3) edge [above left] node {$\lambda$} (1)
    ;
\end{tikzpicture}
\end{center}

%\medskip

\begin{enum}
\item 	Donner le g\'en\'erateur $L$ de ce processus.
\item 	D\'eterminer la distribution stationnaire $\pi$ du processus.
\item 	Soit $R$ la matrice telle que $L = -\lambda I + \lambda R$. 
Calculer $R^2$, $R^3$, puis $R^n$ pour tout $n\in\N$. 
\item 	En d\'eduire $\e^{\lambda t R}$, puis le noyau de transition $P_t$ pour
tout $t$. 
\goodbreak
\item 	On consid\`ere maintenant le processus de sauts sur
$\cX=\set{1,\dots,N}$ dont le graphe est le suivant:
% \bigskip
% 
% \centerline{
%  \includegraphics*[clip=true,height=50mm]{figs/jump_dec12b}
%  }
 
\begin{center}
\begin{tikzpicture}[->,>=stealth',shorten >=2pt,shorten <=2pt,auto,node
distance=4.0cm, thick,main node/.style={circle,scale=0.7,minimum size=1.2cm,
fill=violet!20,draw,font=\sffamily\Large}]

  \node (0) {};
  \node[main node] (1) [above of=0] {1};
  \node[main node] (2) [above right of=0] {2};
  \node[main node] (3) [right of=0] {3};
  \node[main node] (N) [above left of=0] {$N$};
  \node[main node] (N-1) [left of=0] {{\small $N-1$}};
  \node[node distance=2.6cm] (d1) [below left of=0] {$\dots$};
  \node[node distance=2.6cm] (d2) [below right of=0] {$\dots$};

  \path[every node/.style={font=\sffamily\small}]
    (1) edge [above right] node {$\lambda$} (2)
    (2) edge [right] node {$\lambda$} (3)
    (3) edge [right] node {$\lambda$} (d2)
    (d1) edge [left] node {$\lambda$} (N-1)
    (N-1) edge [left] node {$\lambda$} (N)
    (N) edge [above left] node {$\lambda$} (1)
    ;

\end{tikzpicture}
\end{center}

% \bigskip
Donner le g\'en\'erateur $L$ de ce processus.
\item 	D\'eterminer la distribution stationnaire $\pi$ du processus.
\item 	Indiquer la forme du noyau de transition $P_t$ pour
tout $t$. 
\end{enum}

\noindent
{\it Indication:} On pourra admettre les d\'eveloppements limit\'es suivants.

\noindent
Soit $\omega=\e^{2\icx\pi/3}=-\frac12+\frac{\sqrt3}{2}\icx$ et 
\[
f(x) = \frac13 \Bigpar{\e^x + \e^{\omega x} + \e^{\bar\omega x}}
= \frac13 \Bigpar{\e^x + 2 \e^{-x/2}\cos\bigpar{\sqrt{3}x/2}}\;.
\]
Alors
\begin{align*}
f(x) &= 1 + \frac{1}{3!}x^3 + \frac{1}{6!}x^6 + \dots + \frac{1}{(3n)!}x^{3n} +
\dots \\ 
f'(x) &= \frac{1}{2!}x^2 + \frac{1}{5!}x^5 + \dots + \frac{1}{(3n-1)!}x^{3n-1} +
\dots \\ 
f''(x) &= x + \frac{1}{4!}x^4 + \dots + \frac{1}{(3n-2)!}x^{3n-2}
+ \dots \;. 
\end{align*}
\end{exercice}

\goodbreak

\begin{exercice}
\label{exo_saut10}

Soient $\lambda, \mu>0$. 
On consid\`ere le processus de sauts markovien sur $\cX=\set{0,1,\dots,N}$ dont
le graphe est le suivant:

%\bigskip

\begin{center}
\begin{tikzpicture}[->,>=stealth',shorten >=2pt,shorten <=2pt,auto,node
distance=5.0cm, thick,main node/.style={circle,scale=0.7,minimum size=1.1cm,
fill=violet!20,draw,font=\sffamily\Large}]

  \node[main node] (0) {0};
  \node[main node] (1) [above of=0] {1};
  \node[main node] (2) [above right of=0] {2};
  \node[main node] (3) [right of=0] {3};
  \node[main node] (N) [above left of=0] {$N$};
  \node[node distance=4.0cm] (d1) [left of=0] {$\dots$};
  \node[node distance=4.0cm] (d2) [below right of=0] {$\dots$};

  \draw[every node/.style={font=\sffamily\small}]
    (0) to[out=100,in=-100] node[left] {$\lambda$} (1);
  \draw[every node/.style={font=\sffamily\small}]
    (1) to[out=-80,in=80] node[right] {$\mu$} (0);
    
  \draw[every node/.style={font=\sffamily\small}]
    (0) to[out=55,in=-145] node[above left] {$\lambda$} (2);
  \draw[every node/.style={font=\sffamily\small}]
    (2) to[out=-125,in=35] node[below right] {$\mu$} (0);
    
  \draw[every node/.style={font=\sffamily\small}]
    (0) to[out=10,in=170] node[above] {$\lambda$} (3);
  \draw[every node/.style={font=\sffamily\small}]
    (3) to[out=-170,in=-10] node[below] {$\mu$} (0);
    
  \draw[every node/.style={font=\sffamily\small}]
    (0) to[out=145,in=-55] node[below left] {$\lambda$} (N);
  \draw[every node/.style={font=\sffamily\small}]
    (N) to[out=-35,in=125] node[above right] {$\mu$} (0);
    
\end{tikzpicture}
\end{center}
%\medskip

\begin{enum}
\item 	Donner le g\'en\'erateur $L$ de ce processus.
\item 	D\'eterminer la distribution stationnaire $\pi$ du processus.
\item 	Ce processus est-il r\'eversible?
\item 	On consid\`ere le cas $N=1$. 
Calculer $L^n$ pour tout $n\in\N$, et en d\'eduire le noyau de
transition $P_t$ pour tout $t\geqs0$. 

\item 	On consid\`ere le cas $N>1$, avec $\lambda=\mu$. Pour tout $j\in\cX$  on
pose 

\[
 Q_t(j) = \frac{1}{N} \sum_{k=1}^N P_t(k,j)\;.
\]
A l'aide des \'equations de Kolmogorov, exprimer  
\[
\dtot{}{t} P_t(0,j) 
\qquad\text{et}\qquad
\dtot{}{t} Q_t(j)
\]
en fonction de $P_t(0,j)$ et $Q_t(j)$. A l'aide du r\'esultat du point 4., 
en d\'eduire $P_t(0,j)$ et $Q_t(j)$ pour tout $j\in\cX$.
On distinguera les cas $j=0$ et $j>0$. 

\item 	Soit $i\in\set{1,\dots,N}$. Calculer 
\[
\dtot{}{t} P_t(i,j)
\]
et en d\'eduire $P_t(i,j)$ pour tout $j\in\cX$. 
On distinguera les cas $j=0$ et $j>0$. 

\item 	Que vaut 
\[
 \lim_{t\to\infty} P_t\;?
\]

\end{enum}
\end{exercice}

%%%%%%%%%%%%%%%%%%%%%%%%%%%%%%%%%%%%%%%%%%%%%%%%%%%%%%%%%%%%%%%%%%%%%%%%%%%

\chapter{Files d'attente}
\label{chap_tfa}

%%%%%%%%%%%%%%%%%%%%%%%%%%%%%%%%%%%%%%%%%%%%%%%%%%%%%%%%%%%%%%%%%%%%%%%%%%%

\section{Classification et notation de Kendall}
\label{sec_faK}

La th\'eorie des files d'attente (\lq\lq queueing theory\rq\rq\ en anglais)
permet de mod\'eliser des situations o\`u des clients arrivent \`a des temps
al\'eatoires \`a un serveur. Le serveur peut \^etre le guichet d'une banque,
d'un bureau de poste, la caisse d'un supermarch\'e, une station service par
exemple, mais \'egalement un serveur informatique dont les clients sont des
t\^aches que le serveur doit traiter. Le temps n\'ecessaire \`a servir chaque
client est \'egalement suppos\'e al\'eatoire. 

Les questions auxquelles l'on voudrait r\'epondre sont par exemple~:
\begin{itemiz}
\item	Quelle est la dur\'ee d'attente moyenne d'un client arrivant dans la
file?
\item	Pendant quelle fraction du temps le serveur est-il occup\'e?
\item	Quelle est la distribution de probabilit\'e de la longueur de la file? 
\item	Selon quel processus les clients quittent-ils le syst\`eme, une fois
servis?
\end{itemiz}

Il existe toute une s\'erie de mod\`eles de files d'attente, qui se distinguent
par la loi des temps d'arriv\'ee, la loi des temps de service, le nombre de
serveurs, l'\'eventuelle longueur maximale de la file, l'ordre de passage des
clients. 

La notation de Kendall permet de sp\'ecifier le mod\`ele de mani\`ere
compacte. Le plus souvent, cette notation se compose de trois symboles~:
\begin{equation}
 \label{faK1}
\text{A/B/}s\;, 
\end{equation} 
o\`u A d\'esigne la loi des intervalles de temps entre arriv\'ees des clients, B
d\'esigne la loi des temps de service, et $s$ est le nombre de serveurs. Les
valeurs les plus usuelles de A et B sont les suivantes~:
\begin{itemiz}
\item	{\bf M (Markov)~:} Loi exponentielle. En effet, nous avons vu que cette
loi implique la propri\'et\'e de Markov. Il s'agit du cas le plus simple \`a
analyser.
\item	{\bf D (D\'eterministe)~:} Temps constant, c'est-\`a-dire que les
arriv\'ees des clients sont r\'eguli\`erement espac\'ees, respectivement le
temps de service est le m\^eme pour tous les clients.
\item	{\bf E (Erlang)~:} Loi dite d'Erlang, qui est en fait une loi Gamma
(loi d'une somme de variables exponentielles).
\item	{\bf G (G\'en\'erale)~:} Loi arbitraire. Ce symbole s'utilise quand on
d\'erive des propri\'et\'es ne d\'ependant pas de la loi particuli\`ere
consid\'er\'ee. 
\end{itemiz}

Si la longueur de la file est limit\'ee \`a une valeur finie $N$, on utilise la
notation 
\begin{equation}
 \label{faK02}
\text{A/B/}s\text{/}N\;. 
\end{equation} 
Par d\'efaut, on suppose donc que $N=\infty$. 

Finalement, on peut aussi sp\'ecifier en dernier l'ordre dans lequel les clients
sont servis, la valeur par d\'efaut \'etant FIFO (first in, first out), ce qui
signifie que le premier client arriv\'e est aussi le premier servi. Une
alternative est LIFO (last in, first out). Dans ce cas, les nouveaux arrivants
se placent en t\^ete de la file, et seront donc servis d\`es que le serveur se
lib\`ere, \`a moins que d'autres clients n'arrivent entretemps.

%%%%%%%%%%%%%%%%%%%%%%%%%%%%%%%%%%%%%%%%%%%%%%%%%%%%%%%%%%%%%%%%%%%%%%%%%%%

\section{Cas markoviens~: Files d'attente M/M/$s$}
\label{sec_faMM}

Les files M/M/$s$ sont les plus simples \`a analyser, puisque le caract\`ere
markovien des temps d'arriv\'ee et des temps de service implique que la
longueur de la file est un processus markovien de sauts. On peut donc appliquer
les outils d\'evelopp\'es au chapitre pr\'ec\'edent. 

\begin{example}[File M/M/1]
\label{ex_MM1} 
Le cas le plus simple se pr\'esente pour un serveur unique, dont les clients
arrivent selon un processus de Poisson de param\`etre $\lambda$, et dont le
temps de service suit une loi exponentielle de param\`etre $\mu$. L'hypoth\`ese
de temps d'arriv\'ee poissoniens est relativement r\'ealiste, d\`es lors qu'on
suppose que les clients proviennent d'un grand r\'eservoir d'individus
ind\'ependants. Celle des temps de service exponentiels est beaucoup plus
discutable. On la fait surtout parce qu'elle permet des calculs plus
explicites. 

Soit $N_t$ la longueur de la file au temps $t$. Elle \'evolue selon un
processus de sauts de taux de transition 
\begin{align}
\nonumber
q(n,n+1) &= \lambda
\qquad \text{si $n\geqs 0$\;,}\\
q(n,n-1) &= \mu
\qquad \text{si $n\geqs 1$\;.}
\label{MM01} 
\end{align}
En d'autres termes, $N_t$ suit un processus de naissance et de mort, chaque
arriv\'ee d'un nouveau client \'etant assimil\'e \`a une naissance, et chaque
d\'epart d'un client servi \'etant assimil\'e \`a une mort (\figref{fig_MM1}).

La relation~\eqref{mi12} montre que si le processus admet une distribution
invariante $\pi$, alors celle-ci satisfait 
\begin{equation}
 \label{MM02}
\pi(n) = \biggpar{\frac{\lambda}{\mu}}^n \pi(0)\;. 
\end{equation} 
La constante $\pi(0)$ se d\'etermine en exigeant que la somme des $\pi(n)$ soit
\'egale \`a $1$. Cette somme est une s\'erie g\'eom\'etrique, et on obtient 
\begin{equation}
 \label{MM03}
\pi(n) = \biggpar{1-\frac{\lambda}{\mu}} \biggpar{\frac{\lambda}{\mu}}^n
\qquad
\text{si $\lambda < \mu$\;.} 
\end{equation} 
La distribution invariante suit donc une loi g\'eom\'etrique (d\'ecal\'ee en
$0$). Si $\lambda\geqs\mu$, la s\'erie diverge, et il n'existe pas de
distribution stationnaire. Dans ce cas, le taux d'arriv\'ee de nouveaux clients
d\'epasse la capacit\'e de traitement du serveur, et la longueur de la file
cro\^it ind\'efiniment. 

Supposons $\lambda<\mu$, et que la file d'attente a atteint l'\'equilibre. On
peut alors calculer diff\'erentes quantit\'es d'int\'er\^et.

\begin{itemiz}
\item	La probabilit\'e que le serveur soit occup\'e est donn\'ee par 
\begin{equation}
 \label{MM04}
\bigprobin{\pi}{N_t>0} = 
1 - \pi(0) = \frac{\lambda}{\mu}\;. 
\end{equation} 
C'est \'egalement la fraction du temps pendant laquelle le serveur est
occup\'e. 

\item	La longueur moyenne de la file est donn\'ee par 
\begin{equation}
 \label{MM05}
\expecin{\pi}{N_t} = \sum_{n=0}^\infty n\pi(n) 
= \frac{\lambda/\mu}{1-\lambda/\mu}\;, 
\end{equation} 
o\`u l'on a utilis\'e la valeur de l'esp\'erance d'une loi g\'eom\'etrique. On
notera que ce nombre diverge lorsque $\lambda/\mu$ tend vers $1$. 

\item	Soit $W$ le temps d'attente d'un client avant d'\^etre servi. Sa loi se
calcule en distinguant deux cas. Si la file est vide \`a son arriv\'ee, alors
le temps d'attente est nul, et on a 
\begin{equation}
 \label{MM06}
\bigprobin{\pi}{W=0} = \bigprobin{\pi}{N_t=0} = 1 - \frac{\lambda}{\mu}\;. 
\end{equation} 
En revanche, si la file a une longueur $n>0$ \`a l'arriv\'ee du client, celui-ci
devra attendre que les $n$ clients le pr\'ec\'edant dans la file soient servis.
Le temps d'attente est donc la somme de $n$ variables exponentielles
ind\'ependantes de param\`etre $\mu$, qui suit une loi Gamma de param\`etres
$(n,\mu)$. La densit\'e de $W$ est donn\'ee par 
\begin{align}
\nonumber
f(t) 
&= \sum_{n=1}^\infty \pi(n) \e^{-\mu t} \frac{\mu^n t^{n-1}}{(n-1)!} \\
\nonumber
&= \biggpar{1-\frac{\lambda}{\mu}} \e^{-\mu t}
\lambda \sum_{n=1}^\infty \frac{\lambda^{n-1}t^{n-1}}{(n-1)!} \\
&= \frac{\lambda}{\mu} (\mu-\lambda) \e^{-(\mu-\lambda)t}\;.
 \label{MM07}
\end{align} 
En d'autres termes, conditionnellement \`a $W>0$, $W$ suit une loi
exponentielle de param\`etre $\mu-\lambda$. 

\begin{figure}
%  \vspace{2mm}
%  \centerline{
%  \includegraphics*[clip=true,height=10mm]{figs/birth_death}
%  }
%  \figtext{
%  \writefig       2.4      0.9     $0$
%  \writefig       5.05     0.9     $1$
%  \writefig       7.65     0.9     $2$
%  \writefig       10.3     0.9     $3$
%  \writefig       3.75     1.6     $\lambda$
%  \writefig       6.35     1.6     $\lambda$
%  \writefig       8.95     1.6     $\lambda$
%  \writefig       11.65    1.6     $\lambda$
%  \writefig       3.75     0.2     $\mu$
%  \writefig       6.35     0.2     $\mu$
%  \writefig       8.95     0.2     $\mu$
%  \writefig       11.65    0.2     $\mu$
%  \writefig       13.0     0.9     $\dots$
%  }
\begin{center}
\begin{tikzpicture}[->,>=stealth',shorten >=2pt,shorten <=2pt,auto,node
distance=3.0cm, thick,main node/.style={circle,scale=0.7,minimum size=1.1cm,
fill=violet!20,draw,font=\sffamily\Large}]

  \node[main node] (0) {$0$};
  \node[main node] (1) [right of=0] {$1$};
  \node[main node] (2) [right of=1] {$2$};
  \node[main node] (3) [right of=2] {$3$};
  \node[node distance=2cm] (4) [right of=3] {$\dots$};

  \path[every node/.style={font=\sffamily\small}]
    (0) edge [bend left, above] node {$\lambda$} (1)
    (1) edge [bend left, above] node {$\lambda$} (2)
    (2) edge [bend left, above] node {$\lambda$} (3)
    (3) edge [bend left, above] node {$\lambda$} (4)
    (4) edge [bend left, below] node {$\mu$} (3)
    (3) edge [bend left, below] node {$\mu$} (2)
    (2) edge [bend left, below] node {$\mu$} (1)
    (1) edge [bend left, below] node {$\mu$} (0)
   ;
\end{tikzpicture}
\end{center}
 \vspace{-5mm}
 \caption[]{Graphe associ\'e \`a la file d'attente M/M/1.}
 \label{fig_MM1}
\end{figure}

\item	Le temps d'attente moyen avant d'\^etre servi est donn\'e par 
\begin{equation}
 \label{MM08}
\expecin{\pi}{W} = \int_0^\infty tf(t)\6t = \frac{\lambda}{\mu(\mu-\lambda)}\;. 
\end{equation} 
Le temps d'attente total moyen, en comptant le temps de service, vaut donc 
\begin{equation}
 \label{MM09}
\expecin{\pi}{W} + \frac{1}{\mu} = \frac{1}{\mu-\lambda}\;. 
\end{equation} 
\end{itemiz}
\end{example}

\begin{example}[File M/M/$1$/$N$]
\label{ex_MM1N} 
Consid\'erons maintenant le cas o\`u la longueur de la file est limit\'ee \`a
$N$, c'est-\`a-dire que si un client arrive en trouvant une file de longueur
$N$, alors il repart sans rejoindre la file. Dans ce cas, le syst\`eme est
d\'ecrit par un processus markovien de saut de taux de transition 
\begin{align}
\nonumber
q(n,n+1) &= \lambda
\qquad \text{si $0\leqs n<N$\;,}\\
q(n,n-1) &= \mu
\qquad \text{si $0<n\leqs N$\;.}
\label{MM10} 
\end{align}
C'est encore un processus de naissance et de mort (\figref{fig_MM1N}), et la
relation~\eqref{MM02} reste valable pour $1\leqs n\leqs N$. La seule
diff\'erence est la normalisation, et en effectuant la somme on obtient comme
distribution stationnaire 
\begin{equation}
 \label{MM11}
\pi(n) = 
\begin{cases}
\myvrule{10pt}{15pt}{0pt}
\dfrac{1-\lambda/\mu}{1-(\lambda/\mu)^{N+1}} \biggpar{\dfrac{\lambda}{\mu}}^n
& \text{si $\lambda\neq\mu$\;,} \\
\dfrac{1}{N+1}
&\text{si $\lambda=\mu$\;.}
\end{cases} 
\end{equation} 
Contrairement au cas d'une file de longueur arbitraire, la distribution
stationnaire existe toujours. Si $\lambda<\mu$ et qu'on fait tendre $N$ vers
l'infini, on retrouve la distribution stationnaire de la file M/M/1.
\end{example}

\begin{figure}
%  \vspace{2mm}
%  \centerline{
%  \includegraphics*[clip=true,height=10mm]{figs/MM1N}
%  }
%  \figtext{
%  \writefig       1.95     0.9     $0$
%  \writefig       4.55     0.9     $1$
%  \writefig       7.2      0.9     $2$
%  \writefig       12.55    0.9     $N$
%  \writefig       3.25     1.6     $\lambda$
%  \writefig       5.85     1.6     $\lambda$
%  \writefig       8.45     1.6     $\lambda$
%  \writefig       11.15    1.6     $\lambda$
%  \writefig       3.25     0.2     $\mu$
%  \writefig       5.85     0.2     $\mu$
%  \writefig       8.45     0.2     $\mu$
%  \writefig       11.15    0.2     $\mu$
%  \writefig       9.75     0.9     $\dots$
%  }
\begin{center}
\begin{tikzpicture}[->,>=stealth',shorten >=2pt,shorten <=2pt,auto,node
distance=3.0cm, thick,main node/.style={circle,scale=0.7,minimum size=1.1cm,
fill=violet!20,draw,font=\sffamily\Large}]

  \node[main node] (0) {$0$};
  \node[main node] (1) [right of=0] {$1$};
  \node[main node] (2) [right of=1] {$2$};
  \node[main node] (3) [right of=2] {$3$};
  \node[node distance=2cm] (4) [right of=3] {$\dots$};
  \node[main node] (N) [right of=4] {$N$};

  \path[every node/.style={font=\sffamily\small}]
    (0) edge [bend left, above] node {$\lambda$} (1)
    (1) edge [bend left, above] node {$\lambda$} (2)
    (2) edge [bend left, above] node {$\lambda$} (3)
    (3) edge [bend left, above] node {$\lambda$} (4)
    (4) edge [bend left, above] node {$\lambda$} (N)
    (N) edge [bend left, below] node {$\mu$} (4)
    (4) edge [bend left, below] node {$\mu$} (3)
    (3) edge [bend left, below] node {$\mu$} (2)
    (2) edge [bend left, below] node {$\mu$} (1)
    (1) edge [bend left, below] node {$\mu$} (0)
   ;
\end{tikzpicture}
\end{center}
 \vspace{-5mm}
 \caption[]{Graphe associ\'e \`a la file d'attente M/M/1/$N$.}
 \label{fig_MM1N}
\end{figure}
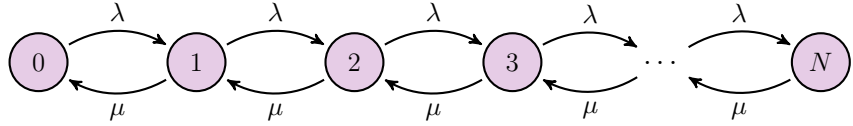

\begin{example}[File M/M/$s$]
\label{ex_MMs} 
Supposons que les clients forment une seule file d'attente, mais qu'il existe
un nombre $s$ de serveurs en parall\`ele. D\`es qu'un serveur se lib\`ere, le
client en t\^ete de la file le rejoint. Dans ce cas, on a toujours
$q(n,n+1)=\lambda$, mais pour les taux $q(n,n-1)$ il faut distinguer deux cas. 
Si la longueur $n$ de la file est inf\'erieure \`a $s$, alors seuls $n$
serveurs sont actifs, et les d\'eparts ont lieu au taux $n\mu$. En revanche, si
$n$ est sup\'erieur ou \'egal \`a $s$, tous les serveurs sont occup\'es, et le
taux des d\'eparts est $s\mu$. On a donc 
\begin{equation}
q(n,n-1) = 
\begin{cases}
n\mu
&\text{si $n< s$\;,}\\
s\mu
&\text{si $n\geqs s$\;.}
\end{cases}
\label{MM12} 
\end{equation}
On a encore affaire \`a un processus de naissance et de mort (\figref{fig_MMs}),
et la distribution stationnaire, si elle existe, satisfait 
\begin{equation}
 \label{MM13}
\pi(n) = 
\begin{cases}
\displaystyle
\prod_{k=1}^n \biggpar{\frac{\lambda}{k\mu}} \pi(0)
= \frac{1}{n!} \biggpar{\frac{\lambda}{\mu}}^n \pi(0) 
& \text{si $n\leqs s\;,$} \\
\displaystyle
\prod_{k=1}^s \biggpar{\frac{\lambda}{k\mu}} 
\prod_{k=s+1}^n \biggpar{\frac{\lambda}{s\mu}}
\pi(0)
= \frac{1}{s!s^{n-s}} \biggpar{\frac{\lambda}{\mu}}^n \pi(0) 
& \text{si $n>s\;.$}
\end{cases} 
\end{equation}
Si $\lambda < \mu s$, on peut trouver $\pi(0)$ tel que $\pi$ soit une
distribution de probabilit\'e, et alors il existe un \'etat stationnaire.
Sinon, la file ne poss\`ede pas de distribution stationnaire.  

On peut calculer les m\^emes quantit\'es que pour la file M/M/1, qui
ont toutefois des expressions plus compliqu\'ees. Une exception est le
nombre moyen de serveurs occup\'es, qui est donn\'e par
\begin{equation}
 \label{MM13A}
\sum_{n=1}^s n\pi(n) + \sum_{n=s+1}^\infty s\pi(n)\;. 
\end{equation}  
En observant sur l'expression~\eqref{MM13} de la distribution stationnaire que 
\begin{equation}
 \label{MM13B}
\frac{\lambda}{\mu} \pi(n-1) = 
\begin{cases}
n\pi(n) & \text{si $n\leqs s$\;,}\\
s\pi(n) & \text{si $n>s$\;,}
\end{cases}
\end{equation} 
on peut r\'ecrire~\eqref{MM13A} sous la forme 
\begin{equation}
 \label{M13C}
\sum_{n=1}^\infty \frac{\lambda}{\mu} \pi(n-1)
= \frac{\lambda}{\mu} \sum_{n=0}^\infty \pi(n) = \frac{\lambda}{\mu}\;. 
\end{equation} 
Puisqu'il n'existe une distribution stationnaire que sous la
condition $\lambda/\mu<s$, on obtient bien un nombre moyen de serveurs
occup\'es inf\'erieur \`a $s$. La proportion de serveurs occup\'es est \'egale
\`a $\lambda/\mu s$.  
\end{example}

\begin{figure}
%  \vspace{2mm}
%  \centerline{
%  \includegraphics*[clip=true,height=10mm]{figs/MMs}
%  }
%  \figtext{
%  \writefig       1.05     0.9     $0$
%  \writefig       3.65     0.9     $1$
%  \writefig       6.3      0.9     $2$
%  \writefig       11.75    0.9     $s$
%  \writefig       2.35     1.6     $\lambda$
%  \writefig       4.95     1.6     $\lambda$
%  \writefig       7.55     1.6     $\lambda$
%  \writefig       10.35    1.6     $\lambda$
%  \writefig       12.95    1.6     $\lambda$
%  \writefig       2.35     0.2     $\mu$
%  \writefig       4.85     0.2     $2\mu$
%  \writefig       7.45     0.2     $3\mu$
%  \writefig       10.25    0.2     $s\mu$
%  \writefig       12.85    0.2     $s\mu$
%  \writefig       8.9      0.9     $\dots$
%  \writefig       14.2     0.9     $\dots$
%  }
\begin{center}
\begin{tikzpicture}[->,>=stealth',shorten >=2pt,shorten <=2pt,auto,node
distance=3.0cm, thick,main node/.style={circle,scale=0.7,minimum size=1.1cm,
fill=violet!20,draw,font=\sffamily\Large}]

  \node[main node] (0) {$0$};
  \node[main node] (1) [right of=0] {$1$};
  \node[main node] (2) [right of=1] {$2$};
  \node[node distance=2cm] (4) [right of=2] {$\dots$};
  \node[main node] (s) [right of=4] {$s$};
  \node[node distance=2cm] (s+1) [right of=s] {$\dots$};

  \path[every node/.style={font=\sffamily\small}]
    (0) edge [bend left, above] node {$\lambda$} (1)
    (1) edge [bend left, above] node {$\lambda$} (2)
    (2) edge [bend left, above] node {$\lambda$} (3)
    (3) edge [bend left, above] node {$\lambda$} (s)
    (s) edge [bend left, above] node {$\lambda$} (s+1)
    (s+1) edge [bend left, below] node {$s\mu$} (s)
    (s) edge [bend left, below] node {$s\mu$} (3)
    (3) edge [bend left, below] node {$3\mu$} (2)
    (2) edge [bend left, below] node {$2\mu$} (1)
    (1) edge [bend left, below] node {$\mu$} (0)
   ;
\end{tikzpicture}
\end{center}
 \vspace{-5mm} 
 \caption[]{Graphe associ\'e \`a la file d'attente M/M/$s$.}
 \label{fig_MMs}
\end{figure}
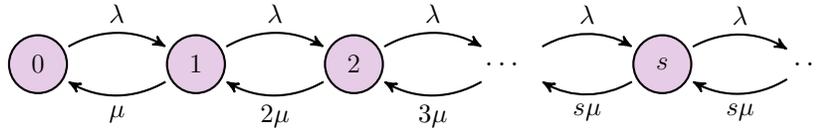

\begin{example}[File M/M/$\infty$]
\label{ex_MM8} 
L'existence d'un nombre infini de serveurs peut sembler fantaisiste, mais en
fait il s'agit d'une bonne approximation de situations dans lesquelles le
nombre de serveurs est grand par rapport au nombre de clients, comme par
exemple pour certaines centrales t\'el\'epho\-niques. 

Dans ce cas, les taux de transition sont donn\'es par 
\begin{align}
\nonumber
q(n,n+1) &= \lambda \phantom{n}
\qquad\text{si $n\geqs 0$\;,}\\
q(n,n-1) &= n\mu 
\qquad\text{si $n\geqs 1$\;.}
\label{MM14} 
\end{align}
La distribution stationnaire peut \^etre calcul\'ee comme dans les cas
pr\'ec\'edents, avec le r\'esultat explicite
\begin{equation}
 \label{MM15}
\pi(n) = \e^{-\lambda/\mu} \frac{(\lambda/\mu)^n}{n!}\;. 
\end{equation} 
La longueur de la file, qui est dans ce cas \'egale au nombre de serveurs
occup\'es, suit donc une loi de Poisson d'esp\'erance
$\lambda/\mu$. 
\end{example}

\begin{figure}[h]
%  \vspace{2mm}
%  \centerline{
%  \includegraphics*[clip=true,height=10mm]{figs/birth_death}
%  }
%  \figtext{
%  \writefig       2.4      0.9     $0$
%  \writefig       5.05     0.9     $1$
%  \writefig       7.65     0.9     $2$
%  \writefig       10.3     0.9     $3$
%  \writefig       3.75     1.6     $\lambda$
%  \writefig       6.35     1.6     $\lambda$
%  \writefig       8.95     1.6     $\lambda$
%  \writefig       11.65    1.6     $\lambda$
%  \writefig       3.75     0.2     $\mu$
%  \writefig       6.25     0.2     $2\mu$
%  \writefig       8.85     0.2     $3\mu$
%  \writefig       11.45    0.2     $4\mu$
%  \writefig       13.0     0.9     $\dots$
%  }
\begin{center}
\begin{tikzpicture}[->,>=stealth',shorten >=2pt,shorten <=2pt,auto,node
distance=3.0cm, thick,main node/.style={circle,scale=0.7,minimum size=1.1cm,
fill=violet!20,draw,font=\sffamily\Large}]

  \node[main node] (0) {$0$};
  \node[main node] (1) [right of=0] {$1$};
  \node[main node] (2) [right of=1] {$2$};
  \node[main node] (3) [right of=2] {$3$};
  \node[node distance=2cm] (4) [right of=3] {$\dots$};

  \path[every node/.style={font=\sffamily\small}]
    (0) edge [bend left, above] node {$\lambda$} (1)
    (1) edge [bend left, above] node {$\lambda$} (2)
    (2) edge [bend left, above] node {$\lambda$} (3)
    (3) edge [bend left, above] node {$\lambda$} (4)
    (4) edge [bend left, below] node {$4\mu$} (3)
    (3) edge [bend left, below] node {$3\mu$} (2)
    (2) edge [bend left, below] node {$2\mu$} (1)
    (1) edge [bend left, below] node {$\mu$} (0)
   ;
\end{tikzpicture}
\end{center}
 \vspace{-5mm} 
 \caption[]{Graphe associ\'e \`a la file d'attente M/M/$\infty$.}
 \label{fig_MM8}
\end{figure}

Nous mentionnons un r\'esultat remarquable s'appliquant \`a toutes les files
d'attente M/M/$s$. 

\begin{theorem}
\label{thm_MMs}
Si $\lambda < s\mu$, alors les clients servis quittent la file M/M/$s$ selon un
processus de Poisson d'intensit\'e $\lambda$.  
\end{theorem}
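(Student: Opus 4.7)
L'id\'ee centrale (th\'eor\`eme de Burke) est d'exploiter la r\'eversibilit\'e du processus en r\'egime stationnaire. Comme $N_t$ est un processus de naissance et de mort, il satisfait la condition d'\'equilibre d\'etaill\'e~\eqref{mi08} par rapport \`a sa distribution stationnaire $\pi$ donn\'ee par~\eqref{MM13} (qui existe justement sous l'hypoth\`ese $\lambda < s\mu$). Je supposerais donc $N_0$ distribu\'ee selon $\pi$, de sorte que $N_t$ soit stationnaire sur $\R_+$.

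La premi\`ere \'etape serait de formuler l'analogue en temps continu du Th\'eor\`eme~\ref{thm_rev1}~(3) : pour tout $T>0$, le processus renvers\'e $\widetilde N_t = N_{T-t}$, $t\in[0,T]$, a la m\^eme loi (comme processus de sauts) que $N_t$ sur $[0,T]$. En particulier $\widetilde N$ est encore un processus markovien de sauts de g\'en\'erateur $L$, puisque les taux $\widetilde q(i,j) = \pi(j)q(j,i)/\pi(i)$ du processus renvers\'e co\"incident ici avec $q(i,j)$ gr\^ace \`a l'\'equilibre d\'etaill\'e.

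La deuxi\`eme \'etape est l'identification cl\'e : les instants de d\'epart du processus original sont exactement les sauts descendants de $N_t$, qui par renversement du temps correspondent aux sauts ascendants de $\widetilde N_t$. Or les sauts ascendants de $\widetilde N$ sont les \og arriv\'ees \fg d'une file M/M/$s$ de m\^emes param\`etres $(\lambda,\mu,s)$, et donc, par construction de la file (hypoth\`ese markovienne des arriv\'ees), forment un processus de Poisson d'intensit\'e $\lambda$ sur $[0,T]$. Par cons\'equent, les d\'eparts du processus original sur $[0,T]$ forment \'egalement un processus de Poisson d'intensit\'e $\lambda$ ; $T$ \'etant arbitraire, on obtient le r\'esultat sur $\R_+$ tout entier.

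L'obstacle principal sera de justifier proprement que les \og arriv\'ees \fg du processus renvers\'e sont bien poissoniennes et ind\'ependantes de ce qui pr\'ec\`ede. Dans la file originale, les arriv\'ees sont par d\'efinition un processus de Poisson ind\'ependant de $N_0$ et du pass\'e~; pour le processus renvers\'e, cette propri\'et\'e demande \`a \^etre red\'emontr\'ee comme cons\'equence de la Markovianit\'e et du fait que les taux $q(n,n+1)=\lambda$ ne d\'ependent pas de $n$. Plus pr\'ecis\'ement, j'\'etablirais, \`a l'aide de la propri\'et\'e de Markov et de la loi exponentielle des temps de s\'ejour, que conditionnellement \`a $\widetilde N_{t}=n$, l'instant du prochain saut ascendant est exponentiel de param\`etre $\lambda$ ind\'ependamment de $n$ et du pass\'e, ce qui correspond exactement aux cinq conditions de la D\'efinition~\ref{def_ppp1} caract\'erisant un processus ponctuel de Poisson d'intensit\'e $\lambda$.
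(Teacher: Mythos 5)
Votre argument de r\'eversibilit\'e (renversement du temps en r\'egime stationnaire, \'echange des sauts ascendants et descendants) est correct et c'est pr\'ecis\'ement celui du polycopi\'e pour le cas g\'en\'eral. La seule diff\'erence est que le polycopi\'e ajoute, pour $s=1$, une v\'erification directe par calcul explicite (m\'elange des deux cas \og file vide \fg\ et \og file non vide \fg\ donnant la densit\'e $\lambda\e^{-\lambda t}$), ce qui n'est pas n\'ecessaire \`a la preuve g\'en\'erale.
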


\begin{proof}
Dans le cas d'un seul serveur, $s=1$, l'assertion peut \^etre v\'erifi\'ee par
un calcul explicite. Il y a deux cas \`a consid\'erer. 
\begin{enum}
\item	Il y a $n\geqs1$ clients dans la file. Dans ce cas, le prochain d\'epart
d'un client servi aura lieu apr\`es un temps exponentiel de taux $\mu$. 
\item	La file est vide. Alors il faut d'abord attendre un temps $T_1$ de loi
$\cE\!xp(\lambda)$ avant l'arriv\'ee d'un client, puis un temps $T_2$
ind\'ependant, de loi $\cE\!xp(\mu)$, jusqu'\`a ce que ce client ait \'et\'e
servi. La densit\'e de $T_1+T_2$ s'obtient par convolution,
\begin{equation}
 \label{MM20}
\int_0^t \lambda\e^{-\lambda s} \mu\e^{-\mu(t-s)} \6s
= \frac{\lambda\mu}{\lambda-\mu} \bigpar{\e^{-\mu t} - \e^{-\lambda t}}\;. 
\end{equation} 
\end{enum}
Le premier cas se produit avec probabilit\'e $\lambda/\mu$, et le second avec
probabilit\'e $1-\lambda/\mu$. 
La densit\'e du temps jusqu'au prochain d\'epart s'obtient en combinant ces
deux cas, et vaut 
\begin{equation}
 \label{MM21}
\frac{\lambda}{\mu} \mu\e^{-\mu t} 
+ \biggpar{1-\frac{\lambda}{\mu}} 
\frac{\lambda\mu}{\lambda-\mu} \bigpar{\e^{-\mu t} - \e^{-\lambda t}}
= \lambda \e^{-\lambda t}\;.
\end{equation} 
C'est bien la densit\'e d'une variable exponentielle de param\`etre $\lambda$.

De mani\`ere g\'en\'erale, le r\'esultat se d\'emontre en utilisant la
r\'eversibilit\'e. De mani\`ere analogue \`a~\eqref{rev2}, en partant avec la
distribution stationnaire, le processus $N_t$ a la m\^eme loi que le processus
renvers\'e dans le temps $N_{T-t}$. Le renversement du temps intervertit les
clients arrivant dans la file et les clients quittant la file. Les deux ont
donc la m\^eme loi. 
\end{proof}

Ce r\'esultat est important dans l'\'etude des r\'eseaux de files d'attente,
car il caract\'erise la distribution des clients rejoignant une seconde file
apr\`es avoir \'et\'e servis dans une premi\`ere file. 

%%%%%%%%%%%%%%%%%%%%%%%%%%%%%%%%%%%%%%%%%%%%%%%%%%%%%%%%%%%%%%%%%%%%%%%%%%%

\section{Cas g\'en\'eral~: Files d'attente G/G/1}
\label{sec_faGG}

Le cas d'une file d'attente g\'en\'erale est sensiblement plus difficile \`a
\'etudier, car si les temps entre arriv\'ees de clients et les temps de service
ne sont pas exponentiels, on perd la propri\'et\'e de Markov. En effet, 
l'\'etat actuel du syst\`eme ne suffit pas \`a d\'eterminer son \'evolution
future, qui d\'epend entre autres du temps que le client actuellement servi a
d\'ej\`a pass\'e au serveur. 

Dans certains cas particuliers, on peut se ramener \`a un syst\`eme markovien
en introduisant des \'etats suppl\'ementaires.

\begin{example}[File d'attente M/E$_r$/1]
\label{ex_ME1} 
Supposons que les clients arrivent selon un processus de Poisson d'intensit\'e
$\lambda$, mais que le temps de service suit une loi Gamma de param\`etres
$(r,\mu)$ avec $r\geqs2$. Une interpr\'etation possible de cette loi est que le
service du client requiert $r$ actions successives, prenant chacune un temps
exponentiel de param\`etre $\mu$. Si $N_t$ d\'esigne la somme du nombre de
clients dans la file, et du nombre d'actions encore n\'ecessaires pour finir de
servir le client actuel, son \'evolution suit un processus markovien de sauts
de taux de transition 
\begin{align}
\nonumber
q(n,n-1) &= \mu 
\qquad\text{si $n\geqs 1$\;,} \\
q(n,n+r) &= \lambda
\qquad\text{si $n\geqs 0$\;.}
\label{GG01} 
\end{align}
En effet, le nombre d'actions \`a accomplir diminue de $1$ avec un taux $\mu$,
et augmente de $r$ \`a l'arriv\'ee de chaque nouveau client.  Si $\lambda<\mu$,
on peut montrer que ce syst\`eme admet une distribution stationnaire, qui est
une combinaison lin\'eaire de lois g\'eom\'etriques. 
\end{example}
 
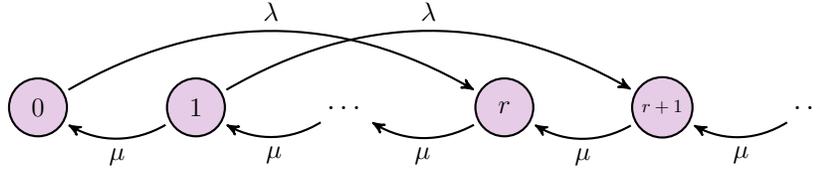
\begin{figure}
%  \vspace{2mm}
%  \centerline{
%  \includegraphics*[clip=true,height=15mm]{figs/MEr1}
%  }
%  \figtext{
%  \writefig       1.5      0.9     $0$
%  \writefig       3.9      0.9     $1$
%  \writefig       6.3      0.9     $\dots$
%  \writefig       8.9      0.9     $r$
%  \writefig       11.15    0.9     {\footnotesize $r+1$}
%  \writefig       5.05     2.1     $\lambda$
%  \writefig       7.45     2.1     $\lambda$
%  \writefig       2.65     0.2     $\mu$
%  \writefig       5.15     0.2     $\mu$
%  \writefig       7.55     0.2     $\mu$
%  \writefig       10.15    0.2     $\mu$
%  \writefig       12.65    0.2     $\mu$
%  \writefig       14.0     0.9     $\dots$
%  }
\begin{center}
\begin{tikzpicture}[->,>=stealth',shorten >=2pt,shorten <=2pt,auto,node
distance=3.0cm, thick,main node/.style={circle,scale=0.7,minimum size=1.1cm,
fill=violet!20,draw,font=\sffamily\Large}]

  \node[main node] (0) {$0$};
  \node[main node] (1) [right of=0] {$1$};
  \node[node distance=2cm] (2) [right of=1] {$\dots$};
  \node[main node] (r) [right of=2] {$r$};
  \node[main node] (r+1) [right of=r] {{\small $r+1$}};
  \node[node distance=2cm] (r+2) [right of=r+1] {$\dots$};

  \path[every node/.style={font=\sffamily\small}]
    (0) edge [bend left, above] node {$\lambda$} (r)
    (1) edge [bend left, above] node {$\lambda$} (r+1)
    (r+2) edge [bend left, below] node {$\mu$} (r+1)
    (r+1) edge [bend left, below] node {$\mu$} (r)
    (r) edge [bend left, below] node {$\mu$} (2)
    (2) edge [bend left, below] node {$\mu$} (1)
    (1) edge [bend left, below] node {$\mu$} (0)
   ;
\end{tikzpicture}
\end{center}
 \vspace{-5mm}  
\caption[]{Graphe associ\'e \`a la repr\'esentation markovienne
de la file d'attente M/E$_r$/1.}
 \label{fig_MEr1}
\end{figure}

En r\`egle g\'en\'erale, toutefois, une telle repr\'esentation markovienne
n'est pas possible, et on fait appel \`a la th\'eorie des \defwd{processus de
renouvellement}. Un tel processus est caract\'eris\'e par une suite de temps
al\'eatoires $0<T_1<T_2<\dots$, appel\'es temps de renouvellement, tels que le
comportement sur chaque intervalle de temps $[T_n,T_{n+1}[$ soit ind\'ependant
et de m\^eme loi que sur les autres intervalles. 

Dans la suite, nous nous servirons du r\'esultat suivant, qui suit de la loi
forte des grands nombres. 

\begin{theorem}[Loi des grands nombres pour processus de renouvellement]
Soit $1/\mu$ l'esp\'erance des intervalles $T_{n+1}-T_n$, et soit
$N_t=\sup\setsuch{n}{T_n\leqs t}$ la fonction de comptage des temps de
renouvellement. Alors 
\begin{equation}
 \label{GG02}
\biggprob{
\lim_{t\to\infty} \frac{1}{t} N_t = \mu } = 1\;.
\end{equation} 
De plus, 
\begin{equation}
 \label{GG03}
 \lim_{t\to\infty} \frac{1}{t} \expec{N_t} = \mu\;.
\end{equation} 
\end{theorem}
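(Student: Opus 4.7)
Le plan consiste \`a se ramener \`a la loi forte des grands nombres classique pour la suite i.i.d.\ $Y_i = T_i - T_{i-1}$ ($i\geqs 1$), d'esp\'erance $1/\mu$, et \`a exploiter syst\'ematiquement l'encadrement fondamental
\[
T_{N_t} \leqs t < T_{N_t+1}
\]
qui d\'ecoule directement de la d\'efinition de $N_t$.

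Pour la convergence presque s\^ure, je commencerais par observer que la loi forte des grands nombres donne $T_n/n \to 1/\mu$ presque s\^urement. Par ailleurs, comme chaque $T_n$ est presque s\^urement fini et que $t\mapsto N_t$ est croissante, on a $N_t\to\infty$ presque s\^urement lorsque $t\to\infty$. L'encadrement ci-dessus se r\'ecrit
\[
\frac{T_{N_t}}{N_t} \leqs \frac{t}{N_t} < \frac{T_{N_t+1}}{N_t+1}\cdot\frac{N_t+1}{N_t}\;,
\]
et les deux bornes convergent presque s\^urement vers $1/\mu$ (en utilisant que $N_t\to\infty$ presque s\^urement). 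Par un argument de sandwich on en d\'eduit $t/N_t \to 1/\mu$, soit $N_t/t\to\mu$ presque s\^urement, ce qui prouve~\eqref{GG02}.

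La convergence en moyenne~\eqref{GG03} est la partie d\'elicate, car la convergence presque s\^ure n'entra\^\i ne pas automatiquement celle des esp\'erances faute d'uniforme int\'egrabilit\'e \'evidente. Je proc\'ederais en deux temps. Pour la minoration $\liminf_t \expec{N_t}/t \geqs \mu$, le lemme de Fatou appliqu\'e \`a~\eqref{GG02} suffit imm\'ediatement. Pour la majoration $\limsup_t \expec{N_t}/t \leqs \mu$, l'astuce est de tronquer les variables en posant $\bar Y_i = Y_i \wedge M$ pour $M>0$ fix\'e, et de noter $\bar T_n = \sum_{i=1}^n \bar Y_i$ et $\bar N_t$ la fonction de comptage associ\'ee. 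Comme $\bar T_n \leqs T_n$ pour tout $n$, on a $N_t \leqs \bar N_t$, et de plus $\bar T_{\bar N_t + 1} \leqs t + M$ puisque le dernier incr\'ement est born\'e par $M$. Comme $\bar N_t + 1$ est un temps d'arr\^et relativement \`a la filtration engendr\'ee par les $\bar Y_i$ (l'\'ev\'enement $\set{\bar N_t + 1 = n} = \set{\bar T_{n-1}\leqs t < \bar T_n}$ ne d\'ependant que de $\bar Y_1,\dots,\bar Y_n$), l'identit\'e de Wald s'applique et donne
\[
\bigpar{\expec{\bar N_t} + 1}\expec{\bar Y_1} = \expec{\bar T_{\bar N_t + 1}} \leqs t + M\;.
\]
En divisant par $t$ et en passant \`a la limite sup\'erieure, on obtient $\limsup_t \expec{N_t}/t \leqs 1/\expec{\bar Y_1}$. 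Il reste enfin \`a faire tendre $M\to\infty$ et \`a invoquer la convergence monotone~: $\expec{\bar Y_1} \uparrow 1/\mu$, donc $1/\expec{\bar Y_1} \downarrow \mu$, ce qui conclut. L'obstacle principal est donc bien cette majoration en moyenne~: l'in\'egalit\'e presque s\^ure ne se transmet pas directement aux esp\'erances, et c'est pr\'ecis\'ement la combinaison \emph{troncature plus identit\'e de Wald} qui permet de contourner ce probl\`eme.
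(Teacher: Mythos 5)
Votre preuve est correcte et compl\`ete. Notez que le polycopi\'e ne d\'emontre pas ce th\'eor\`eme~: il se contente de l'\'enoncer en indiquant qu'il \lq\lq suit de la loi forte des grands nombres\rq\rq, sans d\'etailler l'argument. Vous avez donc fourni une d\'emonstration l\`a o\`u le texte n'en donne aucune, et c'est bien la d\'emonstration classique du th\'eor\`eme de renouvellement \'el\'ementaire~: encadrement $T_{N_t}\leqs t<T_{N_t+1}$ et argument de sandwich pour la convergence presque s\^ure~\eqref{GG02}, lemme de Fatou pour la minoration de $\expec{N_t}/t$, puis troncature et identit\'e de Wald pour la majoration, avec passage \`a la limite $M\to\infty$ par convergence monotone. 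Vous identifiez correctement le point d\'elicat, \`a savoir que la convergence presque s\^ure ne se transmet pas automatiquement aux esp\'erances. Le seul d\'etail \`a compl\'eter pour \^etre tout \`a fait rigoureux est la v\'erification que $\expec{\bar N_t}<\infty$, hypoth\`ese n\'ecessaire \`a l'application de l'identit\'e de Wald~; elle d\'ecoule du fait que les $Y_i=T_i-T_{i-1}$ sont strictement positifs (les $T_n$ \'etant strictement croissants), donc $\bar Y_i>0$, de sorte qu'il existe $\delta>0$ avec $p=\prob{\bar Y_1\geqs\delta}>0$ et que $\prob{\bar N_t\geqs n}\leqs\prob{\bar T_n\leqs t}$ d\'ecro\^it g\'eom\'etriquement en $n$ \`a $t$ fix\'e.
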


La relation~\eqref{GG03} affirme que le nombre moyen de
temps de renouvellement converge vers $\mu$ en moyenne ergodique. $\mu$ peut
donc \^etre consid\'er\'e comme le taux du processus. 

Consid\'erons alors une file d'attente dans laquelle les clients arrivent selon
un processus de renouvellement de taux $\lambda$, et dans laquelle ils sont
servis pendant une dur\'ee al\'eatoire de moyenne $1/\mu$. 

\begin{theorem}
\label{thm_GG1}
Si $\lambda<\mu$, et si la longueur initiale de la file est finie, alors la
longueur de la file atteindra $0$ en un temps fini presque s\^urement. De plus,
le serveur sera occup\'e pendant une fraction de temps $\lambda/\mu$.  
\end{theorem}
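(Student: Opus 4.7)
My plan is to compare the arrival counting process with the (potential) service counting process, using the renewal law of large numbers stated just before the theorem, and to reach a contradiction with $N_t \geq 0$ if the queue were to never return to zero.

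First I would fix notation. Let $A_t$ denote the number of arrivals in $[0,t]$; by hypothesis the interarrival times form a renewal process of rate $\lambda$, so $A_t/t \to \lambda$ a.s. Let $D_t$ be the number of clients whose service has been completed by time $t$, and let $B_t = \int_0^t \indexfct{N_s>0}\,\6s$ be the cumulative busy time of the server. The fundamental identity is
\begin{equation}
\label{GG04}
N_t = N_0 + A_t - D_t \geqs 0\;.
\end{equation}
The key observation is that if we reparametrise time by the busy clock, the successive completed services form a renewal process of rate $\mu$: during the intervals when $N_s > 0$, the server is working on i.i.d.\ service times of mean $1/\mu$. Hence, on the event $\set{B_t \to \infty}$, the renewal law of large numbers gives $D_t / B_t \to \mu$ a.s.

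Next I would prove the first assertion by contradiction. Suppose with positive probability there exists a (random) time $t_0$ after which $N_s > 0$ for all $s \geqs t_0$. Then $B_t - B_{t_0} = t - t_0$, so $B_t \to \infty$ and moreover $B_t/t \to 1$. Combining with $D_t/B_t \to \mu$ yields $D_t/t \to \mu$ a.s.\ on this event. Dividing~\eqref{GG04} by $t$ and letting $t \to \infty$ would then force
\begin{equation}
\label{GG05}
\lim_{t\to\infty} \frac{N_t}{t} = \lambda - \mu < 0\;,
\end{equation}
which contradicts $N_t \geqs 0$. Hence a.s.\ the set $\setsuch{t}{N_t = 0}$ is unbounded, and in particular the first hitting time of $0$ is finite a.s.

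For the second assertion (fraction of busy time), I would apply the same reasoning without the contradiction hypothesis. Because $N_t/t \to 0$ a.s.\ (the queue is recurrent at $0$ and $A_t, D_t$ each grow linearly, while $N_t$ is bounded by $A_t$ which forces $N_t/t$ to have sublinear growth on the returns), dividing~\eqref{GG04} by $t$ gives $D_t/t \to \lambda$ a.s. On the other hand, $D_t = \mu B_t + o(B_t)$, so $B_t/t \to \lambda/\mu$ a.s., which is exactly the claimed fraction of time the server is busy. The main obstacle is the last step: one must justify carefully that $N_t/t \to 0$, which requires knowing that the queue returns to $0$ infinitely often (not merely once), and that successive excursions from $0$ have finite mean length when $\lambda < \mu$. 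I would handle this by applying the first part iteratively: each time $N_t$ hits $0$, the process restarts in distribution (by the renewal structure of arrivals and services), so the argument above can be reapplied to show a further return, and Borel--Cantelli together with the strong law applied to the i.i.d.\ excursion lengths yields $N_t/t \to 0$.
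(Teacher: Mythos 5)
Your argument is essentially the paper's proof transposed from the customer-indexed parametrisation (arrival times $T_n$ and cumulative service times $S_n$, with the contradiction that an always-busy server would make the $n$-th client depart before it arrives) to time-indexed counting processes $A_t$, $D_t$, $B_t$: the key step --- a renewal law-of-large-numbers comparison of the rates $\lambda$ and $\mu$, plus a contradiction with positivity --- is identical. The soft spot you yourself flag in the second assertion (justifying $N_t/t\to0$, i.e.\ that the residual workload is sublinear) is exactly the point the paper also leaves informal (\lq\lq si la file atteint l'\'equilibre, $\expec{Z_n}$ doit rester born\'e\rq\rq), so your proposal matches the paper's proof in both substance and level of rigour.
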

\begin{proof}
Le temps $T_n$ de l'arriv\'ee du $n$i\`eme client est la somme de $n$ variables
al\'eatoires ind\'ependantes d'esp\'erance $1/\lambda$. La loi forte des grands
nombres affirme qu'avec probabilit\'e $1$, on a 
\begin{equation}
 \label{GG04:1}
\lim_{n\to\infty} \frac{T_n}{n} = \frac{1}{\lambda}\;. 
\end{equation} 
Soit $Z_0$ le temps n\'ecessaire \`a servir les clients initialement dans la
file, et soit $s_i$ le temps n\'ecessaire \`a servir le $i$\`eme client
arrivant apr\`es le temps $0$. On sait que $\expec{s_i}=1/\mu$. Supposons par
l'absurde que le serveur reste toujours occup\'e. Alors le $n$i\`eme client
partira au temps $Z_0+S_n$, o\`u $S_n=s_1+\dots+s_n$. La loi forte des grands
nombres implique que 
\begin{equation}
 \label{GG04:2}
\lim_{n\to\infty} \frac{Z_0+S_n}{n} = \frac{1}{\mu}\;.  
\end{equation} 
Comme $1/\mu<1/\lambda$, cela implique que pour $n$ assez grand, le $n$i\`eme
client quitte la file avant son arriv\'ee, ce qui est absurde. Le serveur ne
peut donc pas \^etre toujours occup\'e. 

Soit maintenant $A_n$ la dur\'ee pendant laquelle le serveur a \'et\'e occup\'e
au temps $T_n$. Alors on a 
\begin{equation}
 \label{GG04:3}
A_n = S_n - Z_n\;, 
\end{equation} 
o\`u $Z_n$ est le temps n\'ecessaire \`a vider la file des clients pr\'esents
au temps $T_n$. Or on a 
\begin{equation}
 \label{GG04:4}
\lim_{n\to\infty} \frac{S_n}{T_n}
=  \lim_{n\to\infty} \frac{S_n/n}{T_n/n} 
= \frac{\lambda}{\mu}\;.
\end{equation} 
D'autre part, si la file atteint l'\'equilibre, $\expec{Z_n}$ doit rester
born\'e, donc $Z_n/n$ doit tendre vers~$0$. 
\end{proof}

Dans le cas des files M/M/$s$, nous avons montr\'e par un calcul direct que le
serveur est occup\'e pendant une fraction du temps $\lambda/\mu$. Le r\'esultat
ci-dessus montre que c'est vrai pour toutes les files d'attente G/G/$1$. 

Soit $X_t$ la longueur de la file au temps $t$, et soit $W_n$ le temps
d'attente du $n$i\`eme client. Deux quantit\'es importantes
sont la longueur moyenne de la file
\begin{equation}
 \label{GG05}
L = \lim_{t\to\infty} \frac{1}{t} \int_0^t X_s\6s\;, 
\end{equation}  
et la moyenne des temps d'attente 
\begin{equation}
 \label{GG06}
W = \lim_{n\to\infty} \frac{1}{n} \sum_{i=1}^n W_n\;. 
\end{equation} 

\begin{theorem}[Loi de Little]
\label{thm_Little} 
Soit $\lambda^*$ le taux des clients qui arrivent et joignent la file. Alors
\begin{equation}
 \label{GG07}
L = \lambda^* W\;. 
\end{equation} 
\end{theorem}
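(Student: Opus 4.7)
La strat\'egie principale consiste en un argument de double comptage du temps total cumul\'e pass\'e par tous les clients dans le syst\`eme sur l'intervalle $[0,t]$. Soit $A(t)$ le nombre de clients ayant rejoint la file jusqu'au temps $t$, soient $\tau_1 < \tau_2 < \dots$ leurs instants d'arriv\'ee, et $W_i$ le temps d'attente total du client $i$ (depuis $\tau_i$ jusqu'\`a son d\'epart apr\`es service). Puisque le client $i$ contribue \`a $X_s$ pr\'ecis\'ement sur l'intervalle $[\tau_i, \tau_i + W_i]$, on obtient l'identit\'e fondamentale
\begin{equation*}
\int_0^t X_s\,\6s = \sum_{i=1}^{A(t)} \bigbrak{(\tau_i + W_i) \wedge t - \tau_i} = \sum_{i=1}^{A(t)} W_i - R(t)\;,
\end{equation*}
o\`u $R(t) = \sum_{i : \tau_i + W_i > t} (\tau_i + W_i - t)$ est un reste correspondant aux temps de s\'ejour futurs des clients encore pr\'esents au temps $t$. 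L'id\'ee g\'eom\'etrique est que l'int\'egrale de $X_s$ calcule l'aire sous la trajectoire de la file en sommant verticalement (sur les clients pr\'esents \`a chaque instant), tandis que $\sum_i W_i$ la calcule en sommant horizontalement (sur les dur\'ees de s\'ejour de chaque client).

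Le plan est ensuite de diviser par $t$ et de passer \`a la limite $t \to \infty$. D'une part, par un argument analogue \`a la loi des grands nombres pour processus de renouvellement (cf.~\eqref{GG02}), le nombre de clients joignant la file satisfait $A(t)/t \to \lambda^*$ presque s\^urement. D'autre part, la d\'efinition~\eqref{GG06} de $W$ donne $\frac1{A(t)}\sum_{i=1}^{A(t)} W_i \to W$ lorsque $A(t)\to\infty$. La combinaison de ces deux limites fournit
\begin{equation*}
\frac{1}{t} \sum_{i=1}^{A(t)} W_i = \frac{A(t)}{t} \cdot \frac{1}{A(t)} \sum_{i=1}^{A(t)} W_i \longrightarrow \lambda^* W \qquad \text{lorsque $t \to \infty$}\;.
\end{equation*}
Comme le membre de gauche de l'identit\'e fondamentale, divis\'e par $t$, tend vers $L$ par la d\'efinition~\eqref{GG05}, il reste seulement \`a \'etablir que $R(t)/t \to 0$ pour conclure.

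Le point d\'elicat, et la principale obstruction technique, est pr\'ecis\'ement le contr\^ole de ce reste $R(t)$. Ce dernier est major\'e par la somme des temps de s\'ejour (futurs) des $X_t$ clients pr\'esents au temps $t$, donc en particulier par $X_t \cdot \max_{1\leqs i\leqs A(t)} W_i$. Pour conclure que $R(t)/t \to 0$ presque s\^urement, il faut invoquer la stabilit\'e de la file, garantie lorsque $\lambda < \mu$ en vertu du Th\'eor\`eme~\ref{thm_GG1}. Dans ce r\'egime, $X_t$ reste born\'e en moyenne ergodique (en fait $L < \infty$) et les temps de s\'ejour individuels, moyenn\'es, convergent vers $W<\infty$, ce qui, via un argument de type Borel--Cantelli ou de domination, permet de montrer que $R(t) = o(t)$ presque s\^urement. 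Le passage \`a la limite dans l'identit\'e fondamentale donne alors pr\'ecis\'ement $L = \lambda^* W$. On notera que la beaut\'e du r\'esultat r\'eside dans sa g\'en\'eralit\'e~: il n'utilise ni la loi particuli\`ere des arriv\'ees, ni celle des temps de service, ni m\^eme l'hypoth\`ese d'un serveur unique, seul le comptage exact des aires et la stabilit\'e asymptotique intervenant.
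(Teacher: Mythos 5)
Le polycopi\'e \'enonce la loi de Little sans la d\'emontrer~: il se contente de la v\'erifier par un calcul explicite dans le cas M/M/1 ($L=\lambda/(\mu-\lambda)$ et $W=1/(\mu-\lambda)$). Il n'y a donc pas de preuve de r\'ef\'erence \`a laquelle comparer la v\^otre. Votre argument est la d\'emonstration classique \lq\lq par les aires\rq\rq\ (double comptage \`a la Stidham) et sa structure est correcte~: identit\'e fondamentale, limites $A(t)/t\to\lambda^*$ et $\frac1{A(t)}\sum_iW_i\to W$, puis r\'eduction au contr\^ole du reste $R(t)$. Deux remarques. D'abord, vous prenez $W_i$ \'egal au temps de s\'ejour total et $X_t$ \'egal au nombre de clients dans le syst\`eme~; c'est le choix coh\'erent avec la v\'erification M/M/1 du texte, mais il m\'erite d'\^etre explicit\'e car les d\'efinitions~\eqref{GG05}--\eqref{GG06} sont ambigu\"es sur ce point (et il faut aussi dire un mot des clients pr\'esents au temps $0$, dont la contribution est n\'egligeable apr\`es division par $t$). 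Ensuite, le contr\^ole de $R(t)$ est bien le seul point d\'elicat, mais votre majoration $R(t)\leqs X_t\max_iW_i$ est trop grossi\`ere pour conclure~: $X_t$ n'est pas born\'e et le maximum des $W_i$ peut cro\^\i tre, de sorte que l'invocation de Borel--Cantelli reste un v\oe u pieux. La voie standard \'evite compl\`etement cette estimation gr\^ace \`a l'encadrement
\begin{equation*}
\sum_{i=1}^{D(t)}W_i \;\leqs\; \int_0^t X_s\,\6s \;\leqs\; \sum_{i=1}^{A(t)}W_i\;,
\end{equation*}
o\`u $D(t)$ d\'esigne le nombre de clients ayant quitt\'e le syst\`eme avant $t$ (pour une discipline FIFO), combin\'e avec le fait que la stabilit\'e entra\^\i ne $A(t)-D(t)=X_t=o(t)$, donc $D(t)/t\to\lambda^*$~; les deux bornes divis\'ees par $t$ convergent alors toutes deux vers $\lambda^*W$ et le reste dispara\^\i t sans argument suppl\'ementaire. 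Telle quelle, votre proposition est donc un sch\'ema correct de la bonne preuve, avec une lacune technique honn\^etement identifi\'ee mais non combl\'ee.
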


Dans le cas de la file M/M/1, on a $\lambda^*=\lambda$. Par calcul explicite,
nous avons obtenu une longueur de file moyenne de
$(\lambda/\mu)/(1-(\lambda/\mu))$, et un temps d'attente moyen de
$1/(\mu-\lambda)$. La loi de Little est donc v\'erifi\'ee dans ce cas. On
notera toutefois qu'il s'agit l\`a de valeurs moyennes par rapport \`a la
distribution stationnaire. Mais il se trouve que ces moyennes sont \'egales aux
moyennes ergodiques pour les processus markoviens de sauts. 

Consid\'erons finalement le cas particulier important des files M/G/1,
c'est-\`a-dire que les clients arrivent selon un processus de Poisson
d'intensit\'e $\lambda$. Nous savons que le serveur passe alternativement par
des phases occup\'ees et libres. Soit $O_n$ la longueur de la $n$i\`eme phase
occup\'ee. Les phases libres ont une dur\'ee moyenne de $1/\lambda$. La
proportion du temps pendant laquelle le serveur est libre est donn\'ee par 
\begin{equation}
 \label{GG08}
\frac{1/\lambda}{1/\lambda + \expec{O_n}}\;. 
\end{equation} 
D'autre part, le Th\'eor\`eme~\ref{thm_GG1} montre que cette proportion est
\'egale \`a $1-\lambda/\mu$. On en conclut que 
\begin{equation}
 \label{GG09}
\expec{O_n} = \frac{1/\mu}{1-\lambda/\mu}\;. 
\end{equation} 
Enfin, une autre propri\'et\'e importante des files M/G/1 est la suivante.

\begin{theorem}[Propri\'et\'e PASTA]
\label{thm_PASTA} 
Soit $\pi(n)$ la proportion du temps pendant laquelle la file a une longueur
$n$, et soit $a_n$ la proportion asymptotique des clients trouvant une file de
longueur $n$ \`a leur arriv\'ee. Alors 
\begin{equation}
 \label{GG10}
a_n = \pi(n)\;. 
\end{equation} 
\end{theorem}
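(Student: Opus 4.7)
The plan is to exploit the two defining features of an M/G/1 queue: that arrivals form a Poisson process of intensity $\lambda$, and that this Poisson process is independent of past service times. The key technical ingredient is the \emph{lack of anticipation} property: for any $t\geqs 0$ and $s>0$, the number of arrivals $N_{]t,t+s]}$ is independent of the history $\sigma(X_u : u\leqs t)$, because $X_t$ depends only on past arrivals and past service times, both independent of the future increments of the Poisson process.

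First I would define the counting process
\[
A_n(t) = \sum_{k=1}^{N_t} \indexfct{X_{T_k^-}=n}\;,
\]
where $T_k$ are the successive arrival times, so that $A_n(t)/N_t$ is the empirical proportion of arriving customers finding queue length $n$, whose limit is $a_n$. Similarly, by the regenerative structure of the M/G/1 queue in steady state (with regeneration epochs given by returns to the empty state, which are almost surely finite when $\lambda<\mu$ by Theorem~\ref{thm_GG1}), one has
\[
\pi(n) = \lim_{t\to\infty} \frac{1}{t} \int_0^t \indexfct{X_s=n} \,\dd s
\qquad\text{almost surely.}
\]

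The central step is to show that
\[
\lim_{t\to\infty} \frac{A_n(t)}{t} = \lambda \pi(n)\;.
\]
To do this, I would partition $[0,t]$ into small intervals of length $h$ and compute the expected number of arrivals in each subinterval $[kh,(k+1)h]$ that find the queue in state $n$. By the lack of anticipation property, conditionally on $X_{kh}=n$, the number of arrivals in $[kh,(k+1)h]$ is Poisson of parameter $\lambda h$ independent of $X_{kh}$. Hence
\[
\expec{A_n(t)} = \int_0^t \lambda \,\prob{X_s=n} \,\dd s + \order{1}\;,
\]
where the error term comes from the (negligible) possibility of multiple arrivals in a single subinterval. Dividing by $t$ and passing to the limit using the ergodic theorem for the time-average yields $\expec{A_n(t)}/t \to \lambda \pi(n)$, and a second-moment estimate combined with the law of large numbers for the Poisson process (the intensity $\lambda$ argument also controls the fluctuations) upgrades this to almost-sure convergence.

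Finally, combining $A_n(t)/t \to \lambda\pi(n)$ with $N_t/t \to \lambda$ gives
\[
a_n = \lim_{t\to\infty} \frac{A_n(t)}{N_t}
= \lim_{t\to\infty} \frac{A_n(t)/t}{N_t/t} = \pi(n)\;.
\]
The main obstacle will be making the lack of anticipation argument rigorous, namely justifying that the Poisson increments after time $t$ are independent of $\sigma(X_u : u\leqs t)$ when service times are general and may currently be in progress; this requires observing that the remaining service time of a customer in service at time $t$ is a function of past arrivals and past service durations, and hence independent of the future Poisson increments. Once this independence is established, the rest is a standard application of the ergodic theorem and Campbell's formula.
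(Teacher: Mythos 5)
The paper states the PASTA property without proof (it is given as a result to be admitted), so there is nothing to compare your argument against line by line; your proposal has to stand on its own. The strategy you outline --- counting the arrivals that find the queue in state $n$, using the independence of future Poisson increments from the past of the queue to get $A_n(t)/t \to \lambda\pi(n)$, then dividing by $N_t/t\to\lambda$ --- is the standard and correct route (essentially Wolff's argument), and you correctly identify lack of anticipation as the crux.

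Two steps, however, are asserted rather than established, and they are precisely where the work lies. First, in the discretization the error is not only the possibility of several arrivals in one subinterval: the indicator in $A_n(t)$ is evaluated at $X_{T_k^-}$, the state just before the arrival instant, whereas you condition on $X_{kh}$. A departure occurring between $kh$ and $T_k$ changes the state, so you must bound the expected number of subintervals containing both a departure and an arrival and show that its total contribution is $o(t)$ as $h\to0$; this requires a regularity estimate on the departure process that you do not mention. Second, and more seriously, the upgrade from convergence of $\expec{A_n(t)}/t$ to almost-sure convergence of $A_n(t)/t$ is the technical heart of the theorem: the natural object is the process $A_n(t)-\lambda\int_0^t \indexfct{X_{s^-}=n}\,\dd s$, which the lack-of-anticipation property makes a martingale, and one concludes with a martingale strong law of large numbers. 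Invoking \lq\lq a second-moment estimate combined with the law of large numbers for the Poisson process\rq\rq\ does not by itself deliver this, since the summands $\indexfct{X_{T_k^-}=n}$ are neither independent nor identically distributed. With these two points filled in, your proof is complete.
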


L'acronyme PASTA vient de l'anglais \lq\lq Poisson arrivals see time
averages\rq\rq. Cette propri\'et\'e n'est pas n\'ecessairement vraie lorsque les
temps d'arriv\'ee ne suivent pas une loi de Poisson.

%%%%%%%%%%%%%%%%%%%%%%%%%%%%%%%%%%%%%%%%%%%%%%%%%%%%%%%%%%%%%%%%%%%%%%%%%%%

\section{Exercices}
\label{sec_faexo}

\begin{exercice}
\label{exo_fa01} 
Une station service comporte une seule pompe \`a essence. Des voitures arrivent
selon un processus de Poisson de taux 20 voitures par heure. Le temps de
service suit une loi exponentielle d'esp\'erance 2 minutes.
\begin{enum}
\item	Donner la distribution stationnaire du nombre de voitures dans la
station.
\item	D\'eterminer le temps d'attente moyen avant d'\^etre servi, et le temps
de s\'ejour total.
\item	Quelle proportion des voitures doit attendre avant
de pouvoir faire le plein? Quelle proportion doit attendre plus de 2 minutes?
\end{enum}
On suppose maintenant que tout conducteur trouvant 2 voitures dans la station
repart aussit\^ot. 
\begin{enum}
\setcounter{enumi}{3}
\item	Donner la distribution stationnaire du nombre de voitures dans la
station. Quelle est la probabilit\'e qu'une voiture reparte sans faire le plein?
\item	D\'eterminer le temps d'attente et le temps de s\'ejour moyens. 
\end{enum}
\end{exercice}

\goodbreak

\begin{exercice}
\label{exo_fa02} 
Des clients arrivent dans un salon de coiffure selon un processus de Poisson de
taux 5 clients par heure. On suppose qu'il y a un seul coiffeur, qui met un
temps exponentiel de moyenne un quart d'heure pour coiffer un client. La salle
d'attente comporte deux chaises. Si un client arrive et que toutes les chaises
sont occup\'ees, il repart. 
\begin{enum}
\item	Calculer la distribution stationnaire. 
\item	Quelle est la probabilit\'e qu'un client doive attendre avant d'\^etre
servi?
\item	D\'eterminer le temps d'attente moyen.
\item	Quel est le nombre moyen de clients servis par heure?
\item	On suppose maintenant qu'il y a deux coiffeurs. Chacun met un temps
exponentiel de moyenne une demi-heure pour s'occuper d'un client. Calculer le
nombre moyen de clients servis par heure. 
\end{enum}
\end{exercice}

\goodbreak

\begin{exercice}
\label{exo_fa03} 
Le centre d'appel d'un compagnie d'assurance re\c coit en moyenne 40 appels par
heure. Il y a trois op\'erateurs pour r\'epondre aux appels. Le temps des appels
est exponentiel de moyenne 3 minutes. 
\begin{enum}
\item	Quel est le nombre moyen d'op\'erateurs occup\'es?
\item	Quelle est la probabilit\'e qu'un client
doive attendre avant qu'on lui r\'eponde?
\end{enum}
\end{exercice}

\goodbreak

\begin{exercice}
\label{exo_fa04} 

La salle d'attente du Docteur H comprend 2 chaises. Les patients arrivent selon
un processus de Poisson de taux 6 patients par heure. Les patients trouvant les
3 chaises occup\'ees partent chercher un autre m\'edecin. Les consultations
suivent une loi exponentielle de moyenne 15 minutes. 

\begin{enum}
\item	Quelle est la probabilit\'e que la salle d'attente soit pleine?
\item	Calculer le temps d'attente moyen d'un patient avant la consultation.
\item	Combien de patients le Docteur traite-t-il par heure en moyenne?
\end{enum}

\end{exercice}

\goodbreak

\begin{exercice}
\label{exo_fa041}

Thelma et Louise tiennent un salon de coiffure, dont la salle d'attente
comporte deux chaises. Pour coiffer un client, chacune passe un temps de loi
exponentielle, de moyenne 30 minutes. 
Les clients arrivent selon un processus de Poisson avec un taux de 5 par heure. 
Si les deux chaises de la salle d'attente sont occup\'ees lors de l'arriv\'ee
d'un client, celui-ci repart aussit\^ot. 

\begin{enum}
\item	D\'eterminer le distribution stationnaire du processus.
\item	Quelle est la probabilit\'e que la salle d'attente soit pleine? 
\item	Quelle est la probabilit\'e que les deux coiffeuses, l'une des deux, ou
aucune des deux ne soit occup\'ee? 
\item 	Quel est le temps d'attente moyen des clients?
\item 	Pendant quelle fraction de temps Louise est-elle occup\'ee \`a coiffer
un client? Avez-vous fait une hypoth\`ese particuli\`ere pour arriver \`a ce
r\'esultat?
\end{enum}
\end{exercice}

\goodbreak

\begin{exercice}
\label{exo_fa042} 

Madame Jamilah, diseuse de bonne aventure, offre ses services \`a la f\^ete
foraine de Patelin-sur-Loire. On suppose que les clients arrivent selon un
processus ponctuel de Poisson d'intensit\'e 4 clients par heure, et que les
consultations ont une dur\'ee de loi exponentielle de moyenne 10 minutes.

\begin{enum}
\item	On suppose que la longueur de le file d'attente devant la tente de
Madame Jamilah est illimit\'ee. Calculer 
\begin{enum}
\item 	la distribution stationnaire de la longueur de la file;
\item 	le temps d'attente moyen d'un client;
\item 	le nombre moyen de clients par heure.
\end{enum}

\item 	Suite \`a des probl\`emes avec le service d'ordre, les organisateurs de
la f\^ete interdisent toute file d'attente. Madame Jamilah \'etablit alors une
salle d'attente dans sa tente, avec une seule place. Toute personne arrivant
alors que Madame Jamilah et la salle d'attente sont occup\'ees repart
aussit\^ot. D\'eterminer 
\begin{enum}
\item 	la distribution stationnaire du nombre de clients;
\item 	le temps d'attente moyen d'un client;
\item 	le nombre moyen de clients par heure.
\end{enum}
\end{enum}
\end{exercice}

\goodbreak

\begin{exercice}
\label{exo_fa05} 

On consid\`ere une file d'attente M/M/2 traitant les clients au taux $\mu$, et
une file d'attente M/M/1 traitant les clients au taux $2\mu$. Pour laquelle de
ces files le serveur a-t-il la plus grande probabilit\'e d'\^etre occup\'e?  
\end{exercice}

\goodbreak

\begin{exercice}[File d'attente M/M/$s$/0]
\label{exo_fa06} 
Des appels arrivent dans une centrale t\'el\'epho\-nique selon un processus de
Poisson de taux $\lambda$. Il y a $s$ lignes disponibles, et les appels ont une
dur\'ee exponentielle de moyenne $1/\mu$. Un appel arrivant alors que toutes les
lignes sont occup\'ees est refus\'e. 
\begin{enum}
\item	Trouver la distribution stationnaire. 
\item	Calculer la probabilit\'e qu'un appel soit rejet\'e. 
\end{enum}
\end{exercice}

\goodbreak

\begin{exercice}
\label{exo_fa07} 
Le but du probl\`eme est de comparer deux types de files d'attente \`a deux
serveurs.

Dans le premier cas, les clients forment une seule file et choisissent le
premier serveur qui se lib\`ere (file M/M/2). On suppose que les clients
arrivent selon un processus de Poisson de taux $\lambda$, et qu'ils sont servis
pendant un temps exponentiel de param\`etre $\mu=\lambda$.
% \medskip
% 
% \centerline{
%  \includegraphics*[clip=true,height=25mm]{figs/file_ex1}
%  }

\begin{center}
\begin{tikzpicture}[->,>=stealth',shorten >=10pt,shorten <=10pt,auto,node
distance=3.0cm, thick,main node/.style={circle,scale=0.7,minimum size=0.4cm,
fill=green!20,draw,font=\sffamily\Large}]

  \node[main node] (1) at(0,0) {};
  \node[main node] (2) at(0.5,0) {};
  \node[main node] (3) at(1,0) {};
  \node[main node] (4) at(1.5,0) {};
  \node[main node] (s1) at(-1.5,0.75) {};
  \node[main node] (s2) at(-1.5,-0.75) {};

  \node (r) at(3.5,0) {};
  \node (l1) at(-3.8,0.75) {};
  \node (l2) at(-3.8,-0.75) {};
  \node (s-1) at(-1.8,0.75) {};
  \node (s-2) at(-1.8,-0.75) {};

  \path[thick,-,shorten >=0pt, shorten <=0pt]
    (-1.5,1.2) edge (-1.8,1.2)
    (-1.8,1.2) edge (-1.8,0.3)
    (-1.8,0.3) edge (-1.5,0.3)
    (-1.5,-1.2) edge (-1.8,-1.2)
    (-1.8,-1.2) edge (-1.8,-0.3)
    (-1.8,-0.3) edge (-1.5,-0.3)
  ;

  \path[every node/.style={font=\sffamily\footnotesize}]
    (1) edge (s1)
    (1) edge (s2)
    (r) edge [above] node {$\lambda$} (4)
    (s-1) edge [above] node {$\lambda$} (l1)
    (s-2) edge [above] node {$\lambda$} (l2)
    ;
\end{tikzpicture}
\end{center}
 
\begin{enum}
\item	D\'eterminer la distribution stationnaire $\pi$ de la file.
\item	Quelle est la probabilit\'e qu'un client ne doive pas attendre avant
d'\^etre servi?
\item	Quel est le temps d'attente moyen avant d'\^etre servi?
\item	Soit $S$ le nombre de serveurs occup\'es. D\'eterminer
$\expecin{\pi}{S}$. 
\end{enum}

Dans le second cas, il y a une file distincte devant chaque serveur. 
Les clients choisissent une file ou l'autre avec probabilit\'e $1/2$. 

% \bigskip
% 
% \centerline{
%  \includegraphics*[clip=true,height=25mm]{figs/file_ex2}
%  }

\begin{center}
\begin{tikzpicture}[->,>=stealth',shorten >=10pt,shorten <=10pt,auto,node
distance=3.0cm, thick,main node/.style={circle,scale=0.7,minimum size=0.4cm,
fill=green!20,draw,font=\sffamily}]

  \node[main node] (+1) at(0,0.75) {};
  \node[main node] (+2) at(0.5,0.75) {};
  \node[main node] (+3) at(1,0.75) {};
  \node[main node] (-1) at(0,-0.75) {};
  \node[main node] (-2) at(0.5,-0.75) {};
  \node[main node] (s1) at(-1.5,0.75) {};
  \node[main node] (s2) at(-1.5,-0.75) {};

  \node (-3) at(1,-0.75) {};
  \node (r) at(4.0,0) {};
  \node (r-) at(2.0,0) {};
  \node (r+) at(2.5,0) {};
  \node (l1) at(-3.8,0.75) {};
  \node (l2) at(-3.8,-0.75) {};
  \node (s-1) at(-1.8,0.75) {};
  \node (s-2) at(-1.8,-0.75) {};

  \path[thick,-,shorten >=0pt, shorten <=0pt]
    (-1.5,1.2) edge (-1.8,1.2)
    (-1.8,1.2) edge (-1.8,0.3)
    (-1.8,0.3) edge (-1.5,0.3)
    (-1.5,-1.2) edge (-1.8,-1.2)
    (-1.8,-1.2) edge (-1.8,-0.3)
    (-1.8,-0.3) edge (-1.5,-0.3)
  ;

  \path[every node/.style={font=\sffamily\footnotesize}]
    (r) edge [above] node {$\lambda$} (r-)
    (r+) edge [above right] node {$\lambda/2$} (+3)
    (r+) edge [below right] node {$\lambda/2$} (-3)
    (s-1) edge [above] node {$\lambda$} (l1)
    (s-2) edge [above] node {$\lambda$} (l2)
    ;
\end{tikzpicture}
\end{center}

\begin{enum}
\setcounter{enumi}{4}
\item	Expliquer pourquoi du point de vue du client, ce cas est \'equivalent
\`a une file M/M/1 avec taux $\lambda/2$ et $\lambda$.
\item	D\'eterminer la distribution stationnaire $\pi$ du syst\`eme.
\item	Quelle est la probabilit\'e qu'un client ne doive pas attendre avant
d'\^etre servi?
\item	Quel est le temps d'attente moyen avant d'\^etre servi?
\item	Soit $S$ le nombre de serveurs occup\'es. D\'eterminer
$\expecin{\pi}{S}$. 
\item	Comparer les deux syst\`emes. 
\end{enum}
\end{exercice}

\renewcommand{\appendixname}{Appendice}
\appendix

\chapter{Solution de quelques exercices}

Cet appendice contient les solutions de certains exercices. Les r\'eponses
donn\'ees servent seulement \`a v\'erifier vos calculs, mais ne constituent pas
une correction d\'etaill\'ee.

\section{Exercices du Chapitre \ref{chap_fini}}

\subsection*{Exercice~\ref{exo_mf01}}
\begin{enum}
\item 	On num\'erote les \'etats dans l'ordre 
T\^ete Rousse, Aiguille du Go\^uter, Nid d'Aigle, Sommet du Mont Blanc. 
\[
P = 
\begin{pmatrix}
0 & p & q & 0 \\
q & 0 & 0 & p \\
0 & 0 & 1 & 0 \\
0 & 0 & 0 & 1
\end{pmatrix}\;, \qquad
Q = 
\begin{pmatrix}
0 & p \\ q & 0 
\end{pmatrix}\;, \qquad
R = 
\begin{pmatrix}
q & 0 \\ 0 & p 
\end{pmatrix}\;.
\]
Matrice fondamentale~:
\[
F = \frac{1}{1-pq}
\begin{pmatrix}
1 & p \\ q & 1
\end{pmatrix}\;.
\]

\item 	$\probin{1}{X_\tau=4} = p^2/(1-pq)$. 

\item 	$p^\star = (\sqrt{5}-1)/2$. 

\item 	$\expecin{1}{\tau} = (1+p^\star)/[1-p^\star(1-p^\star)]
= (1+\sqrt{5})/[2(3-\sqrt{5})] \cong 2.118$. 

\end{enum}

\subsection*{Exercice~\ref{exo_mf02}}

\begin{enum}
\item 	
\[
F = 
\begin{pmatrix}
3/2 & 1/2 & 1/2 \\
3/4 & 5/4 & 1/4 \\
3/4 & 1/4 & 5/4 
\end{pmatrix}\;.
\]

\item 	$\probin{1}{X_\tau=4} = 1/2$, 
$\probin{2}{X_\tau=4} = 3/4 = \probin{3}{X_\tau=4}$.

\item 	$\expecin{1}{\tau} = 5/2$, 
$\expecin{2}{\tau} = 9/4 = \expecin{3}{\tau}$. 
\end{enum}

\subsection*{Exercice~\ref{exo_mf03}}

\begin{enum}
\item 	Avec l'ordre des \'etats~: Egalit\'e, Avantage A, Avantage B, A gagne, 
B gagne,  
\[
Q = 
\begin{pmatrix}
0 & 3/5 & 2/5 \\
2/5 & 0 & 0 \\
3/5 & 0 & 0 
\end{pmatrix}\;, \qquad
R = 
\begin{pmatrix}
0 & 0 \\ 3/5 & 0 \\ 0 & 2/5 
\end{pmatrix}\;.
\]

\addtocounter{enumi}{1}
\item $\probin{1}{A \text{ gagne}} = 9/13$.

\item $\expecin{1}{\tau} = 50/13$. 
\end{enum}

\subsection*{Exercice~\ref{exo_mf031}}

\begin{enum}
\item 	La matrice de transition sous forme canonique et la matrice
fondamentale sont donn\'ees par 
\[
Q = 
\begin{pmatrix}
0 & 1/3 & 1/3 \\
1/4 & 0 & 1/4 \\
1/3 & 1/3 & 0 
\end{pmatrix}\;, \qquad
R = 
\begin{pmatrix}
1/3 & 0 \\ 1/4 & 1/4 \\ 0 & 1/3 
\end{pmatrix}\;, \qquad
F = \frac{1}{24}
\begin{pmatrix}
33 & 16 & 15 \\ 12 & 32 & 12 \\ 15 & 16 & 33 
\end{pmatrix}\;.
\]

\item 	$\expecin{1}{\tau} = 8/3$. 
\item 	$\probin{1}{X_\tau=5} = 3/8$. 
\end{enum}

\subsection*{Exercice~\ref{exo_mf04}}

\begin{center}
\begin{tikzpicture}[->,>=stealth',shorten >=2pt,shorten <=2pt,auto,node
distance=4.0cm, thick,main node/.style={circle,scale=0.7,minimum size=1.0cm,
fill=blue!20,draw,font=\sffamily\Large}]

  \node[main node] (1) {1};
  \node[main node] (2) [below left of=1] {2};
  \node[main node] (3) [below right of=1] {3};
  \node[main node] (4) [below right of=2] {4};

  \path[every node/.style={font=\sffamily\small}]
    (1) edge [bend right, above left] node {$1/2$} (2)
    (2) edge [below right, out=35, in=-125] node {$1/16$} (1)
    (1) edge [below left, out=-55, in=145] node {$1/2$} (3)
    (3) edge [bend right, above right] node {$1/16$} (1)
    (2) edge [bend right, below left] node {$1/2$} (4)
    (4) edge [above right, out=125, in=-35] node {$1/4$} (2)
    (3) edge [above left, out=-145, in=55] node {$1/2$} (4)
    (4) edge [bend right, below right] node {$1/4$} (3)
    (2) edge [loop right, left,distance=1.5cm,out=150,in=-150] node {$7/16$} (2)
    (3) edge [loop right, right,distance=1.5cm,out=-30,in=30] node {$7/16$} (3)
   ;
\end{tikzpicture}
\end{center}

\begin{enum}
\addtocounter{enumi}{1}
\item 	$P^2$ n'a que des \'el\'ements positifs, donc la \chaine\ est
r\'eguli\`ere. 

\item 	$\pi = \frac{1}{33}(1,8,8,16)$. 
\end{enum}

\subsection*{Exercice~\ref{exo_mf05}}

\begin{enum}
\item 
\[
P = 
\begin{pmatrix}
0 & 1 & 0 & 0 \\
1/4 & 1/2 & 1/4 & 0 \\
0 & 1/2 & 0 & 1/2 \\
0 & 0 & 1 & 0 
\end{pmatrix}\;.
\]

\item 	La \chaine\ est irr\'eductible.

\item 	La \chaine\ est r\'eguli\`ere ($P^4$ n'a que des \'el\'ements
strictement positifs). 

\item 	$\pi = (\frac18, \frac12, \frac14, \frac18)$. 

\item 	La \chaine\ est r\'eversible. 
\end{enum}

\subsection*{Exercice~\ref{exo_mf051}}

\begin{enum}
\item 
\[
P = 
\begin{pmatrix}
0 & 1/2 & 0 & 1/2 \\
1/2 & 0 & 1/2 & 0 \\
1/2 & 0 & 1/2 & 0 \\
0 & 1/2 & 0 & 1/2
\end{pmatrix}\;.
\]
La \chaine\ est irr\'eductible.

\item 	La \chaine\ est r\'eguli\`ere ($P^2$ n'a que des \'el\'ements
strictement positifs). 

\item 	$\pi = (\frac14, \frac14, \frac14, \frac14)$. 

\item 	La \chaine\ n'est pas r\'eversible. 
Par exemple $\pi_1p_{13} = 0 \neq \pi_3 p_{31}$. 

\end{enum}

\subsection*{Exercice~\ref{exo_mf06}}

\begin{center}
\begin{tikzpicture}[->,>=stealth',shorten >=2pt,shorten <=2pt,auto,node
distance=4.0cm, thick,main node/.style={circle,scale=0.7,minimum size=1.0cm,
fill=blue!20,draw,font=\sffamily\Large}]

  \node[main node] (B) {B};
  \node[main node] (P) [right of=B] {P};
  \node[main node] (N) [right of=P] {N};

  \path[every node/.style={font=\sffamily\small}]
    (B) edge [bend left, above] node {$3/4$} (P)
    (P) edge [bend left, above] node {$1/6$} (N)
    (N) edge [bend left, below] node {$1/2$} (P)
    (P) edge [bend left, below] node {$1/2$} (B)
    (P) edge [loop right, above,distance=1.5cm,out=120,in=60] node {$1/3$} (P)
    (B) edge [loop right, left,distance=1.5cm,out=210,in=150] node {$1/4$} (B)
    (N) edge [loop right, right,distance=1.5cm,out=30,in=-30] node {$1/2$} (N)
  ;
\end{tikzpicture}
\end{center}

\begin{enum}
\item  C'est une \chaine\ r\'eguli\`ere de matrice (sur 
$\cX=(\text{Beau},\text{Pluie},\text{Neige})$) 
\[
P = 
\begin{pmatrix}
1/4 & 3/4 & 0 \\
1/2 & 1/3 & 1/6 \\
0 & 1/2 & 1/2 
\end{pmatrix}\;.
\]

\item 	$\pi = (\frac13,\frac12,\frac16)$. 

\item 	$\expecin{\text{Neige}}{\tau_{\text{Neige}}} = 6$. 
\end{enum}

\subsection*{Exercice~\ref{exo_mf07}}

\begin{center}
\begin{tikzpicture}[->,>=stealth',shorten >=2pt,shorten <=2pt,auto,node
distance=4.0cm, thick,main node/.style={circle,scale=0.7,minimum size=1.0cm,
fill=blue!20,draw,font=\sffamily\Large}]

  \node[main node] (0) {0};
  \node[main node] (3) [right of=0] {3};
  \node[main node] (1) [right of=3] {1};
  \node[main node] (2) [right of=1] {2};

  \path[every node/.style={font=\sffamily\small}]
    (0) edge [bend left, above] node {$1$} (3)
    (3) edge [bend left, above] node {$1/3$} (1)
    (1) edge [bend left, above] node {$2/3$} (2)
    (2) edge [bend left, below] node {$2/3$} (1)
    (1) edge [bend left, below] node {$1/3$} (3)
    (3) edge [bend left, below] node {$2/3$} (0)
    (2) edge [loop right, right,distance=1.5cm,out=30,in=-30] node {$1/3$} (2)
  ;
\end{tikzpicture}
\end{center}

\begin{enum}
\item  
\[
P = 
\begin{pmatrix}
0 & 0 & 0 & 1 \\
0 & 0 & 2/3 & 1/3 \\
0 & 2/3 & 1/3 & 0 \\
2/3 & 1/3 & 0 & 0 
\end{pmatrix}\;.
\]

\item	La \chaine\ est r\'eguli\`ere. 

\item 	$\pi=(\frac{2}{11}, \frac{3}{11}, \frac{3}{11}, \frac{3}{11})$, donc 
$\pi(0) = \frac{2}{11}$. 

\item 	$\frac13\cdot\pi(0) = \frac{2}{33}$. 
\end{enum}

\subsection*{Exercice~\ref{exo_mf08}}

\[
p_{ij} = 
\begin{cases}
\frac{i}{N} & \text{si $j=i-1$\;,} \\
1-\frac{i}{N} & \text{si $j=i+1$\;,} \\
0 & \text{sinon\;,} \\
\end{cases} 
\quad \text{donc} \quad
p_{ij} = 
\begin{cases}
\frac{j+1}{N} & \text{si $i=j+1$\;,} \\
\frac{N-j+1}{N} & \text{si $i=j-1$\;,} \\
0 & \text{sinon\;.} \\
\end{cases}
\]
Soit 
\[
\pi_i = \frac{1}{2^N} \frac{N!}{i!(N-i)!}\;.
\]
Alors 
\begin{align*}
\sum_{i=0}^N \pi_i p_{ij} 
&= \frac{N!}{2^N} 
\biggbrak{\frac{1}{(j-1)!(N-j+1)!}\frac{N-j+1}{N} + 
\frac{1}{(j+1)!(N-j-1)!}\frac{j+1}{N}} \\
&= \frac{N!}{N\cdot 2^N} \frac{j+N-j}{j!(N-j)!}
= \pi_j\;.
\end{align*}

\subsection*{Exercice~\ref{exo_mf09}}

\begin{enum}
\item 	La \chaine\ est
\begin{itemiz}
\item 	absorbante si $p=0$ ou $q=0$;
\item 	irr\'eductible non r\'eguli\`ere si $p=q=1$;
\item 	r\'eguli\`ere dans les autres cas.
\end{itemiz}

\item 	Le noyau est engendr\'e par $\transpose{(p,-q)}$; l'image par
$\transpose{(1,1)}$.

\item 	On obtient par calcul explicite 
\[
Q = \frac{1-(p+q)}{p+q}
\begin{pmatrix}
p & -p \\ -q & q 
\end{pmatrix}
\]
puis $Q^2=[1-(p+q)]Q$ et donc $Q^n=[1-(p+q)]^{n-1}Q$ pour tout $n\geqs2$.

\item 	$P^2=(Q+\Pi)^2=Q^2+\Pi Q+Q\Pi+\Pi^2 = Q^2 + \Pi$, et donc 
\[
P^n = Q^n + \Pi 
= \frac{[1-(p+q)]^n}{p+q}
\begin{pmatrix}
p & -p \\ -q & q 
\end{pmatrix}
+ \frac{1}{p+q}
\begin{pmatrix}
q & p \\ q & p 
\end{pmatrix}\;.
\]
Par cons\'equent, si $0<p+q<2$ on a $\lim_{n\to\infty}P^n=\Pi$. 

Si $p=q=1$, alors $P^n=I$ si $n$ est pair 
et $P^n=P=\bigpar{\begin{smallmatrix}0&1\\1&0\end{smallmatrix}}$
si $n$ est impair. 

Si $p=q=0$, la matrice $Q$ n'est pas d\'efinie, mais $P=I$ donc $P^n=I$
pour tout $n$.
\end{enum}

\subsection*{Exercice~\ref{exo_mf10}}

Le temps de r\'ecurrence moyen vaut $60/2=30$. 

\subsection*{Exercice~\ref{exo_mf11}}

\begin{enum}
\item 	$420/3=140$.
\item 	$208/3$. 
\item 	$280/7=40$. (Attention, le fou ne se d\'eplace que sur les cases d'une
couleur!)
\end{enum}

\subsection*{Exercice~\ref{exo_mf12}}

\begin{enum}
\item 	La distribution stationnaire est uniforme~: 
$\pi_i=1/N \:\forall i$. 

\item 	La \chaine\ est r\'eversible si et seulement si $p=1/2$.  
\end{enum}

\subsection*{Exercice~\ref{exo_mf13}}

\begin{enum}
\addtocounter{enumi}{1}
\item 	$\pi_i = N_i/\sum_{j\in V}N_j$ o\`u $N_i$ est le nombre de voisins du
sommet $i$. 
\end{enum}

\section{Exercices du Chapitre \ref{chap_den}}

%\subsection*{Exercice~\ref{exo_md01}}

\subsection*{Exercice~\ref{exo_md02}}

\begin{enum}
\item 	La \chaine\ est irr\'eductible pour $0<p<1$.
\item 	$\probin{0}{\tau_0=n}=p^{n-1}(1-p)$ (c'est une loi g\'eom\'etrique). 
\item 	En sommant la s\'erie g\'eom\'etrique, on v\'erifie que 
$\sum_{n\geqs0}\probin{0}{\tau_0=n}=1$. 
\item 	On a $\expecin{0}{\tau_0}=\sum_{n\geqs0}np^{n-1}(1-p) = 1/(1-p) <
\infty$. 
\item 	Comme $\probin{0}{X_1=0}>0$ et $\probin{0}{X_2=0}>0$, l'\'etat $0$ est
ap\'eriodique. 
\item 	$\pi_0 = 1/\expecin{0}{\tau_0} = 1-p$. 
\item 	$\pi_i = (1-p)p^i$ pour tout $i\geqs0$ (c'est encore une loi
g\'eom\'etrique). 
\end{enum}

\subsection*{Exercice~\ref{exo_md03}}

\begin{enum}
\item 	La \chaine\ est irr\'eductible si 
pour tout $n\geqs0$, il existe $m\geqs n$ tel que $p_m>0$.
\item 	$\probin{0}{\tau_0=n}=p_{n-1}(1-p)$. 
\item 	$\probin{0}{\tau_0<\infty} = \sum_{n\geqs2}p_{n-1}=1$. 
\item 	On a $\expecin{0}{\tau_0}=\sum_{n\geqs2}np_{n-1} 
=\sum_{n\geqs1}(n+1)p_n =\expec{L}+1<\infty$. 
\item 	Il suffit que $\pgcd\setsuch{k}{p_k>0}=1$. 
\item 	$\pi_0 = 1/(\expec{L}+1)$. 
\item 	$\pi_1=\pi_0$ et 
$\pi_i = (1-p_1-p_2-\dots-p_{i-1})\pi_0$ pour tout $i\geqs2$.
\end{enum}

\subsection*{Exercice~\ref{exo_md04}}

\begin{enum}
\item 	La \chaine\ est irr\'eductible.
\item 	$\probin{1}{\tau_1=2n}=2^{-n}$ donc $1$ est r\'ecurrent. 
\item 	$\expecin{1}{\tau_1} = 4$ donc $1$ est r\'ecurrent positif. 
\item 	L'\'etat $1$ n'est pas ap\'eriodique, il est de p\'eriode $2$. 
\item 	$\pi_1 = 1/4$. 
\item 	$\pi_i = 2^{-(\abs{i}+1)}$ pour tout $i\neq0$.
\end{enum}

\subsection*{Exercice~\ref{exo_md05}}

\begin{enum}
\item 	$p(0)=0$ et $p(N)=1$. 
\addtocounter{enumi}{5}
\item 	$p(i) = i/N$ pour $i=0,1,\dots N$. 
\end{enum}

\subsection*{Exercice~\ref{exo_md06}}

\begin{enum}
\addtocounter{enumi}{1}
\item 	$f(0)=f(N)=0$. 
\addtocounter{enumi}{4}
\item 	$f(i) = i(N-i)$ pour $i=0,1,\dots N$. 
\end{enum}

\subsection*{Exercice~\ref{exo_md07}}

\begin{enum}
\item 	$h(0)=1$ puisque $\tau_0=1$. 
\item 	$h(i)=1$ pour tout $i$.
\addtocounter{enumi}{3}
\item 	$p\alpha^2-\alpha+1-p=0$ donc $\alpha\in\set{1,(1-p)/p}$.
\item 	Lorsque $p<1/2$, $\alpha=1$ est la seule solution admissible, 
elle donne $h(i)=1$ $\forall i$.

Lorsque $p>1/2$, les deux solutions $\alpha\in\set{1,(1-p)/p}$ fournissent une
probabilit\'e. En fait on sait montrer que $\alpha=(1-p)/p$ est la solution
correcte, donc $h(i)=\brak{(1-p)/p}^i$. 
\end{enum}

\subsection*{Exercice~\ref{exo_md08}}

\begin{enum}
\item 	La \chaine\ est irr\'eductible pour $0<p<1$.
\item 	La \chaine\ est ap\'eriodique.
\item 	$\alpha_n = (p/(1-p))^n \alpha_0$. 
\item 	Il faut que $\sum_{n\geqs0}\alpha_n < \infty$, donc $p<1/2$. 
Dans ce cas on obtient  
\[
\pi_n = \frac{\alpha_n}{\sum_{m\geqs0}\alpha_m}
= \frac{1-2p}{1-p} \biggpar{\frac{p}{1-p}}^n\;.
\]
\item 	La \chaine\ est r\'ecurrente positive pour $p<1/2$.
\item 	$\expecin{0}{\tau_0} = 1/\pi_0 = (1-p)/(1-2p)$. 
\item 	La position moyenne est  
\[
\expecin{\pi}{X} = \sum_{n\geqs0} n \pi_n = \frac{p}{1-2p}\;.
\]
\end{enum}

\subsection*{Exercice~\ref{exo_md09}}

\begin{enum}
\item 	$f(0,\lambda)=0$ et $f(N,\lambda)=1$. 
\addtocounter{enumi}{2}
\item 	$\cosh(c)=\e^\lambda$. 
\item 	$a=-b=2/\sinh(cN)$, et donc 
\[
f(i,\lambda) = \frac{\sinh(ci)}{\sinh(cN)}\;.
\]
\item 	On a 
\[
f(i,\lambda) = \frac{i}{N} - \frac{i(N^2-i^2)}{3N} \lambda 
+ \Order{\lambda^2}\;,
\]
donc $\probin{i}{X_\tau=N} = i/N$ et 
$\expecin{i}{\tau\indexfct{X_\tau=N}} = i(N^2-i^2)/(3N)$. 

\end{enum}

\subsection*{Exercice~\ref{exo_md10}}

\begin{enum}
\item 	La \chaine\ est irr\'eductible pour $0<p<1$.
\item 	La \chaine\ n'est pas ap\'eriodique (sa p\'eriode est $2$). 
\item 	
\[
\bigprobin{0}{X_{2n}=0} = p^n (1-p)^n \binom{2n}{n}
\]
\item 	
\[
\bigprobin{0}{X_{2n}=0} \simeq \frac{\bigpar{4p (1-p)}^n}{\sqrt{\pi n}}\;.
\]
La cha\^ine est transiente pour $p\neq1/2$, r\'ecurrente pour $p=1/2$. 
\addtocounter{enumi}{1}
\item 	$Y_n$ est transiente si $p>1/2$. 
\item 	$Y_n$ admet une distribution stationnaire et est donc r\'ecurrente
positive si et seulement si $p<1/2$. 
\item 	$Y_n$ est r\'ecurrente nulle si $p=1/2$. 
\end{enum}

\subsection*{Exercice~\ref{exo_md11}}

\begin{enum}
\item 	$f(0)=0$ et $f(N)=1$.
\addtocounter{enumi}{1}
\item 	$z_+=1$ et $z_-=\rho$.
\item 	$a=1/(1-\rho^N)=-b$, donc 
\[
\probin{i}{X_\tau=N} = \frac{1-\rho^i}{1-\rho^N}\;.
\]
\addtocounter{enumi}{1}
\item 	$\gamma=1/(1-2p)$.
\item 	$a=-N/((1-2p)(1-\rho^N))=-b$, donc
\[
\expecin{i}{\tau} = \frac{N}{1-2p}
\biggbrak{\frac iN - \frac{1-\rho^i}{1-\rho^N}}\;.
\]

\end{enum}

\section{Exercices du Chapitre \ref{chap_rappel}}

\subsection*{Exercice~\ref{exo_generatrice1}}

\begin{enum}
\item 	Bernoulli: $G_X(z)=1-q+qz$.
\item 	Binomiale: $G_X(z)=(1-q+qz)^n$.
\item 	Poisson: $G_X(z)=\e^{\lambda(z-1)}$.
\item 	G\'eom\'etrique: $G_X(z)=qz/[1-(1-q)z]$.
\end{enum}

\subsection*{Exercice~\ref{exo_generatrice2}}

\begin{enum}
\item 	Bernoulli: $\expec{X}=q$, $\variance(X)=q(1-q)$.
\item 	Binomiale: $\expec{X}=nq$, $\variance(X)=nq(1-q)$.
\item 	Poisson: $\expec{X}=\lambda$, $\variance(X)=\lambda$.
\item 	G\'eom\'etrique: $\expec{X}=1/q$, $\variance(X)=(1-q)/q^2$.
\end{enum}

%\subsection*{Exercice~\ref{exo_generatrice3}}

\subsection*{Exercice~\ref{exo_generatrice4}}

\begin{enum}
\item 	$\expec{z^{S_n}} = G_X(z)^n$.
\addtocounter{enumi}{1}
\item 	$G_S(z) = \e^{\lambda q(z-1)}$ donc $S$ suit une loi de Poisson 
de param\`etre $q\lambda$. 
\end{enum}

\subsection*{Exercice~\ref{exo_rappel05}}

\begin{enum}
\addtocounter{enumi}{1}
\item 	$\prob{Y\leqs y} = \prob{U\leqs \ph^{-1}(y)} = \ph^{-1}(y)$. 
\item 	$\ph(u) = -\log(1-u)/\lambda$. 
\end{enum}

\section{Exercices du Chapitre \ref{chap_ppp}}

\subsection*{Exercice~\ref{exo_poisson01}}

\begin{enum}
\item 	$1/9$. 
\item 	$5/9$. 
\end{enum}

\subsection*{Exercice~\ref{exo_poisson02}}

\begin{enum}
\item 	$\e^{-2} - \e^{-4}$. 
\item 	$\e^{-5}$.
\item 	$1/4$.
\item 	$3/4$.
\end{enum}

\subsection*{Exercice~\ref{exo_poisson03}}

\begin{enum}
\item 	$\e^{-4}$. 
\item 	Apr\`es $1$ minute. 
\item 	$(2/3)^3=8/27$. 
\item 	$8\e^{-4}/(1-\e^{-2})$.
\end{enum}

\subsection*{Exercice~\ref{exo_poisson031}}

\begin{enum}
\item 	$5$. 
\item 	$(37/2)\e^{-5}$. 
\item 	$1/16$ et $1/4$. 
\item 	$1/4$.
\end{enum}

\subsection*{Exercice~\ref{exo_poisson04}}

\begin{enum}
\item 	Une loi Gamma de param\`etres $(n,2)$. 
\item 	La loi de $(X_n - n/2)/\sqrt{n/4}$ est approximativement 
normale centr\'ee r\'eduite.
\item 	$T\cong 13.59$ minutes.
\end{enum}

\subsection*{Exercice~\ref{exo_poisson05}}

\begin{enum}
\item 	$\e^{-\lambda t}(\lambda t)^n/n!$. 
\item 	$\e^{-\lambda (t+s)}(\lambda t)^n/n!$. 
\item 	$\e^{-\lambda s}$. 
\item 	$1/\lambda$, donc \'egal au temps moyen entre passages. 
\end{enum}

\subsection*{Exercice~\ref{exo_poisson06}}

C'est un processus de Poisson d'intensit\'e $\lambda+\mu$. 

\subsection*{Exercice~\ref{exo_poisson07}}

\begin{enum}
\item 	$N_t$ suit une loi de Poisson de param\`etre $\lambda t$.
\item 	On a 
\[
\pcond{M_t=l}{N_t=k} = \frac{1}{2^k} \frac{k!}{l!(k-l)!}\;. 
\]
\item 	$M_t$ suit une loi de Poisson de param\`etre $\lambda t/2$.
\item 	L'intensit\'e de $Y_n$ est $\lambda/2$. 
\item 	$Y_n$ est un processus de Poisson d'intensit\'e $q\lambda$. 
\end{enum}

\section{Exercices du Chapitre \ref{chap_psm}}

\subsection*{Exercice~\ref{exo_saut01}}

\begin{enum}
\item 	\hfill

\begin{center}
\begin{tikzpicture}[->,>=stealth',shorten >=2pt,shorten <=2pt,auto,node
distance=4.0cm, thick,main node/.style={circle,scale=0.7,minimum size=1.0cm,
fill=violet!20,draw,font=\sffamily\Large}]

  \node[main node] (1) {$1$};
  \node[main node] (2) [right of=1] {$2$};
  \node[main node] (3) [below of=1] {$3$};
  \node[main node] (4) [below of=2] {$4$};

  \path[every node/.style={font=\sffamily\small}]
    (1) edge [bend left, above] node {$1$} (2)
    (2) edge [below, out=-170, in=-10] node {$2$} (1)
    (2) edge [below right] node {$1$} (3)
    (2) edge [right] node {$2$} (4)
    (3) edge [above, out=10, in=170] node {$1$} (4)
    (4) edge [bend left, below] node {$1$} (3)
    (1) edge [right, out=-80, in=80] node {$1$} (3)
    (3) edge [bend left, left] node {$2$} (1)
    ;
\end{tikzpicture}
\end{center}
\item 	$\pi = (\frac{5}{16},\frac{1}{16},\frac{4}{16},\frac{6}{16})$. 
\item 	Le processus est irr\'eductible.
\item 	Le processus n'est pas r\'eversible. 
\end{enum}

\subsection*{Exercice~\ref{exo_saut02}}

\begin{enum}
\item  \hfill
\begin{minipage}{7cm}
\begin{center}
$
L = 
\begin{pmatrix}
-4 & 2 & 2 \\
3 & -4 & 1 \\
5 & 0 & -5
\end{pmatrix}\;.
$
\end{center}
\end{minipage}
\begin{minipage}{7cm}
\begin{center}
\begin{tikzpicture}[->,>=stealth',shorten >=2pt,shorten <=2pt,auto,node
distance=4.0cm, thick,main node/.style={circle,scale=0.7,minimum size=1.0cm,
fill=violet!20,draw,font=\sffamily\Large}]

  \node[main node] (P) at (0,0) {P};
  \node[main node] (M) at (1.5,-2.25) {M};
  \node[main node] (B) at (-1.5,-2.25){B};

  \path[every node/.style={font=\sffamily\small}]
    (P) edge [bend left, above right] node {$2$} (M)
    (P) edge [bend right, above left] node {$2$} (B)
    (B) edge [below] node {$1$} (M)
    (B) edge [below right, out=45, in=-115] node {$3$} (P)
    (M) edge [below left, out=135, in=-65] node {$5$} (P)
    ;
\end{tikzpicture}
\end{center}
\end{minipage}

\item 	$\pi = (\frac12,\frac14,\frac14)$. 

\item 	$12$ voyages par an. 
\end{enum}

\subsection*{Exercice~\ref{exo_saut03}}

\begin{enum}
\item  \hfill
\begin{center}
\begin{tikzpicture}[->,>=stealth',shorten >=2pt,shorten <=2pt,auto,node
distance=3.0cm, thick,main node/.style={circle,scale=0.7,minimum size=1.0cm,
fill=violet!20,draw,font=\sffamily\Large}]

  \node[main node] (0) {$0$};
  \node[main node] (1) [right of=0] {$1$};
  \node[main node] (2) [right of=1] {$2$};
  \node[main node] (3) [right of=2] {$3$};

  \path[every node/.style={font=\sffamily\small}]
    (0) edge [bend left, above] node {$1$} (2)
    (1) edge [bend left, above] node {$1$} (3)
    (3) edge [bend left, below] node {$2$} (2)
    (2) edge [bend left, below] node {$2$} (1)
    (1) edge [bend left, below] node {$2$} (0)
   ;
\end{tikzpicture}
\end{center}

\[
L = 
\begin{pmatrix}
-1 & 0 & 1 & 0 \\
2 & -3 & 0 & 1 \\
0 & 2 & -2 & 0 \\
0 & 0 & 2 & -2 
\end{pmatrix}\;.
\]

\item 	$\pi = (0.4,0.2,0.3,0.1)$. 

\item 	$1.2$ ordinateurs par mois. 
\end{enum}

\subsection*{Exercice~\ref{exo_saut04}}

\begin{enum}
\item 	$\pi = \frac{1}{105}(100,1,4)$. 

\item 	Pendant une proportion $\frac{100}{105}=\frac{20}{21}$ du temps. 

\item 	En g\'en\'eral, 
\[
\pi(0) = \frac{1}{1+\sum_j \mu_j/\lambda_j}
\qquad\text{et}\qquad
\pi(i) = \frac{\mu_i/\lambda_i}{1+\sum_j \mu_j/\lambda_j}
\quad\text{pour $i=1,\dots,N$\;.}
\]

\end{enum}

\subsection*{Exercice~\ref{exo_saut05}}

Les fractions de temps sont \'egales aux valeurs de $\pi$, qui s'expriment
comme dans l'exercice pr\'ec\'edent, cas g\'en\'eral. 

\subsection*{Exercice~\ref{exo_saut06}}

\begin{enum}
\item 	$\pi = (\frac{1}{7},\frac{2}{7},\frac{4}{7})$. 
\item 	$50/7$ clients par heure. 
\end{enum}

\subsection*{Exercice~\ref{exo_saut08}}

\begin{enum}
\item 	Pour $N=1$, 
\[
L = 
\begin{pmatrix}
0 & 0 \\ -\mu & \mu 
\end{pmatrix}\;, 
\qquad
L^n = (-\mu)^{n-1}L\quad\forall n>1\;,
\qquad
P_t = 
\begin{pmatrix}
1 & 0 \\ 1-\e^{-\mu t} & \e^{-\mu t}
\end{pmatrix}\;.
\]

\item 	Pour $N=2$, 
\begin{align*}
P_t(2,2) &= \e^{-\mu t}\;, \\
P_t(2,1) &= \mu t \e^{-\mu t}\;, \\
P_t(2,0) &= 1 - (1+\mu t)\e^{-\mu t}\;.
\end{align*}

\item 	Pour $N$ quelconque, 
\begin{align*}
P_t(N,j) &= \frac{(\mu t)^{N-j}}{(N-j)!} \e^{-\mu t}\;, 
&& j=1,\dots, N\;, \\
P_t(N,0) &= 1 - 
\biggpar{1+\mu t + \dots + \frac{(\mu t)^{N-1}}{(N-1)!}}\e^{-\mu t}\;.
\end{align*}

\item 	On a 
\[
\expec{Y_t} = \sum_{j=0}^N (N-j) P_t(N,j) = 
\sum_{k=1}^{N-1} \frac{(\mu t)^{k}}{(k-1)!}\e^{-\mu t}
+ N P_t(N,0)\;,
\]
ce qui implique 
\[
\lim_{N\to\infty} \expec{Y_t} = \mu t\;.
\]

\end{enum}

\subsection*{Exercice~\ref{exo_saut07}}

\[
P_t = 
\begin{pmatrix}
1 & 0 & 0 & \dots & \dots \\
1-\e^{-\mu t} & \e^{-\mu t} & 0 & \dots  & \dots\\
1-(1+\mu t)\e^{-\mu t} & \mu t \e^{-\mu t} & \e^{-\mu t} & 0 & \dots \\
\vdots &   & \ddots  & \ddots  & \ddots
\end{pmatrix}\;.
\]

\subsection*{Exercice~\ref{exo_saut09}}

\begin{enum}
\item 	
\[
L = 
\begin{pmatrix}
-\lambda & \lambda & 0 \\
0 & -\lambda & \lambda \\
\lambda & 0 & -\lambda
\end{pmatrix}\;.
\]
\item 	$\pi = (\frac13,\frac13,\frac13)$. 
\item 
\[
R = 
\begin{pmatrix}
0 & 1 & 0 \\ 0 & 0 & 1 \\ 1 & 0 & 0
\end{pmatrix}\;, 
\qquad
R^2 = 
\begin{pmatrix}
0 & 0 & 1 \\ 1 & 0 & 0 \\ 0 & 1 & 0
\end{pmatrix}\;, 
\qquad
R^3 = I\;,
\]
puis $R^n = R^{n\pmod 3}$. 

\item 
\[
\e^{\lambda t R} = 
\begin{pmatrix}
f(\lambda t) & f''(\lambda t) & f'(\lambda t) \\
f'(\lambda t) & f(\lambda t) & f''(\lambda t) \\
f''(\lambda t) & f'(\lambda t) & f(\lambda t) 
\end{pmatrix}
\]
et $P_t = \e^{-\lambda t}\e^{\lambda t R}$. 

\end{enum}

\subsection*{Exercice~\ref{exo_saut10}}

\begin{enum}
\item 	
\[
L = 
\begin{pmatrix}
-N\lambda & \lambda & \lambda & \dots & \lambda \\
\mu       & -\mu    & 0       & \dots & 0 \\
\mu       & 0       & -\mu    & \dots & 0 \\
\vdots    & \vdots  &         & \ddots &  \\
\mu       & 0       & 0       & \dots & -\mu 
\end{pmatrix}\;.
\]

\item 	
\[
 \pi = \frac{1}{\mu+N\lambda} (\mu,\lambda,\dots,\lambda)\;.
\]

\item 	Le syst\`eme est r\'eversible.

\item 	
\[
P_t = \frac{1}{\lambda+\mu}
\begin{pmatrix}
\mu + \lambda\e^{-(\lambda+\mu)t} & \lambda - \lambda\e^{-(\lambda+\mu)t} \\
\mu - \mu\e^{-(\lambda+\mu)t} & \lambda + \mu\e^{-(\lambda+\mu)t} 
\end{pmatrix}\;.
\]

\item 	Pour $j=0$,
\begin{align*}
P_t(0,0) &= \frac{1+N\e^{-(N+1)\mu t}}{N+1}\;, \\
Q_t(0) &= \frac{1-\e^{-(N+1)\mu t}}{N+1}\;,
\end{align*}
et pour $j\neq0$
\begin{align*}
P_t(0,j) &= \frac{1-\e^{-(N+1)\mu t}}{N+1}\;, \\
Q_t(j) &= \frac{1+\frac1N\e^{-(N+1)\mu t}}{N+1}\;.
\end{align*}

\item 	Pour tout $i\neq 0$, $P_t(i,0)=Q_t(0)$ et 
\[
P_t(i,j) = \Bigpar{\delta_{ij}-\frac1N} \e^{-\mu t} + Q_t(j)
\]
pour tout $j\neq0$. 

\item 	$\displaystyle\lim_{t\to\infty} P_t(i,j) = \frac{1}{N+1}$ pour tout
$i,j$.
\end{enum}

\section{Exercices du Chapitre \ref{chap_tfa}}

\subsection*{Exercice~\ref{exo_fa01}}

\begin{enum}
\item 	$\pi(n) = (1/3)(2/3)^n$.
\item 	$4$ minutes et $6$ minutes.
\item 	$2/3$ et $(2/3)\e^{-1/3}\cong 0.48$. 
\item 	$\pi=\frac{1}{19}(9,6,4)$. La probabilit\'e de repartir est $4/19$. 
\item 	$28/19\cong1.47$ minutes et $66/19\cong3.47$ minutes. 
\end{enum}

\subsection*{Exercice~\ref{exo_fa02}}

\begin{enum}
\item 	$\pi(0) = 64/369$ et $\pi(n)=(5/4)^n\pi(0)$.
\item 	$305/369 \cong 0.827$. 
\item 	$\pi(0) \cdot 655/256 \cong 0.444$ heures $=26.6$ minutes. 
\item 	$1220/369 \cong 3.31$ clients par heure.
\item 	$4\cdot1685/1973 \cong 3.41$ clients par heure.
\end{enum}

\subsection*{Exercice~\ref{exo_fa03}}

\begin{enum}
\item 	$47/24 \cong 1.958$ op\'erateurs occup\'es.
\item 	$11/24 \cong 0.458$.
\end{enum}

\subsection*{Exercice~\ref{exo_fa04}}

\begin{enum}
\item 	$\pi(3) = 27/65 \cong 0.415$.
\item 	$387/13 \cong 29.77$ minutes. 
\item 	$228/65 \cong 3.5$ clients par heure.
\end{enum}

\subsection*{Exercice~\ref{exo_fa041}}

\begin{enum}
\item 	$\pi(0) = 625/1364\,(1,2/5,8/25,32/125,128/625)$.
\item 	$128/1363 \cong 9.39\%$. 
\item 	$35.8\%$, $18.34\%$ et $45.85\%$. 
\item 	$(904/1363)\cdot 15$ minutes $\cong 9.95$ minutes.
\item 	$613/1363 \cong 44.97\%$. 
\end{enum}

\subsection*{Exercice~\ref{exo_fa042}}

\begin{enum}
\item
\begin{enum}
\item 	$\pi_n = \frac13\bigpar{\frac23}^n$. 
\item 	$\expecin{\pi}{W} = \frac13$ heure $=20$ minutes.
\item 	$4$ clients par heure.
\end{enum}

\item
\begin{enum}
\item 	$\pi = \frac{1}{19}(9,6,4)$.
\item 	$\expecin{\pi}{W} = \frac{140}{19}\cong 7.37$ minutes.
\item 	$\frac{60}{19} \cong 3.16$ clients par heure.
\end{enum}
\end{enum}

\subsection*{Exercice~\ref{exo_fa05}}

La probabilit\'e est deux fois plus grande pour la \chaine\ M/M/1.

\subsection*{Exercice~\ref{exo_fa06}}

\begin{enum}
\item 
\[
\pi(n) = \frac{\dfrac{1}{n!} \biggpar{\dfrac{\lambda}{\mu}}^n}
{\displaystyle\sum_{i=1}^s \dfrac{1}{i!} \biggpar{\dfrac{\lambda}{\mu}}^i}
\;, \qquad
n=0,\dots, s\;.
\]
\item 	Un appel est rejet\'e avec probabilit\'e $\pi(s)$. 
\end{enum}

\subsection*{Exercice~\ref{exo_fa07}}

\begin{enum}
\item 	$\pi= \bigpar{\frac13,\frac13,\frac16,\frac1{12},\frac{1}{24}\dots}$.
\item 	$\probin{\pi}{W=0} = 2/3$.
\item 	$\expecin{\pi}{W} = 1/(3\lambda)$.
\item 	$\expecin{\pi}{S} = 1$. 
\addtocounter{enumi}{1}
\item 	$\pi(n,m) = 2^{-(n+m+2)}$, $n$ et $m$ \'etant le nombre de clients 
dans chaque file.
\item 	$\probin{\pi}{W=0} = 1/2$.
\item 	$\expecin{\pi}{W} = 2/\lambda$.
\item 	$\expecin{\pi}{S} = 5/4$. 
\item 	Du point de vue du client, le premier syst\`eme est bien plus
avantageux: le temps d'attente moyen est plus court, et la probabilit\'e
d'\^etre servi tout de suite est plus grande. Le second syst\`eme donne un taux
d'occupation l\'eg\`erement plus grand. 
\end{enum}

\newpage
\chapter*{Bibliographie et sources d'inspiration}

\begin{itemize}
\item	I.\ Adan, J.\ Resing, {\it Queueing Theory}, notes de cours, 
Eindhoven University of Technology (2002)
\item 	E.\ Bolthausen, {\it Einf\"uhrung in die Stochastik}, notes de
cours, Universit\'e de Zurich (2007)
\item	P.\ Bougerol, {\it Processus de Sauts et Files d'Attente}, notes de
cours, Universit\'e Pierre et Marie Curie (2002)
\item	R.\ Durrett, {\it Essentials of Stochastic Processes}, Springer, 1999.
\item	C.\,M.\ Grinstead, J.\,L.\ Snell, {\it Introduction to
Probability}, web book, 

{\tt http://math.dartmouth.edu/$\sim$doyle/docs/prob/prob.pdf}
\item	J.\ Lacroix, {\it \Chaine s de Markov et Processus de Poisson},
notes de cours, Universit\'e Pierre et Marie Curie (2002)
\end{itemize}

\end{document}